\documentclass[10pt]{amsart}
\usepackage{latexsym}
\usepackage{amsmath}
\usepackage{amssymb}
\usepackage{mathrsfs}
\usepackage{graphicx}
\usepackage{color}
\usepackage{pgfpages}
\usepackage{ifthen}
\usepackage{leftidx,tensor}
\usepackage[T1]{fontenc}
\usepackage[latin1]{inputenc}
\usepackage{mathtools}
\usepackage{comment}
\usepackage{dsfont}
\usepackage[nocompress]{cite}

\usepackage[shortlabels]{enumitem}
\usepackage{aliascnt}
\usepackage[bookmarks=true,pdfstartview=FitH, pdfborder={0 0 0}, colorlinks=true,citecolor=red, linkcolor=blue]{hyperref}
\usepackage{bbm}
\usepackage{nicefrac}

\theoremstyle{plain}
\newtheorem{thm}{Theorem}[section]
\newaliascnt{cor}{thm}
\newaliascnt{prop}{thm}
\newaliascnt{lem}{thm}
\newtheorem{cor}[cor]{Corollary}
\newtheorem{prop}[prop]{Proposition}
\newtheorem{lem}[lem]{Lemma}
\aliascntresetthe{cor}
\aliascntresetthe{prop}
\aliascntresetthe{lem} 
\theoremstyle{definition}
\newaliascnt{defn}{thm}
\newaliascnt{asu}{thm}
\newaliascnt{con}{thm}
\aliascntresetthe{defn}
\aliascntresetthe{asu}
\aliascntresetthe{con}
\newcounter{stp}
\newcounter{stpi}
\newcounter{stpci}
\newcounter{stpiii}

\theoremstyle{remark}
\newaliascnt{rem}{thm}
\newaliascnt{exa}{thm}
\newaliascnt{masu}{thm}
\newaliascnt{nota}{thm}
\newaliascnt{sett}{thm}
\newtheorem{rem}[rem]{Remark}

\aliascntresetthe{rem}
\aliascntresetthe{exa}
\aliascntresetthe{masu}
\aliascntresetthe{nota}
\aliascntresetthe{sett}
\numberwithin{equation}{section}

\setlist[enumerate]{font = \normalfont}

\newcommand{\E}{\mathbb{E}}
\newcommand{\R}{\mathbb{R}}
\newcommand{\C}{\mathbb{C}}

\newcommand{\N}{\mathbb{N}}
\newcommand{\bP}{\mathbb{P}}

\newcommand{\D}{\mathbb{D}}

\renewcommand{\th}{\Tilde{h}}
\newcommand{\tZ}{\Tilde{Z}}

\newcommand{\rW}{\mathrm{W}} 
\newcommand{\rL}{\mathrm{L}}

\newcommand{\rH}{\mathrm{H}}

\newcommand{\rC}{\mathrm{C}}
\newcommand{\rB}{\mathrm{B}}

\newcommand{\rD}{\mathrm{D}}

\newcommand{\rX}{\mathrm{X}}
\newcommand{\rY}{\mathrm{Y}}

\newcommand{\rc}{\mathrm{c}}

\newcommand{\rd}{\mathrm{d}}
\newcommand{\srd}{\,\mathrm{d}}

\newcommand{\fs}{\mathrm{fs}}

\newcommand{\fl}{\mathrm{fl}}
\newcommand{\pr}{\mathrm{pr}}

\newcommand{\mre}{\mathrm{e}}

\newcommand{\ess}{\mathrm{ess}}

\newcommand{\rf}{\mathrm{f}}
\newcommand{\rs}{\mathrm{s}}

\newcommand{\rp}{\mathrm{p}}
\newcommand{\per}{\mathrm{per}}
\renewcommand{\rm}{\mathrm{m}}

\newcommand{\cA}{\mathcal{A}}
\newcommand{\cS}{\mathcal{S}}
\newcommand{\cF}{\mathcal{F}}

\newcommand{\cB}{\mathcal{B}}
\newcommand{\cL}{\mathcal{L}}

\newcommand{\cG}{\mathcal{G}}

\newcommand{\cT}{\mathcal{T}}
\newcommand{\cR}{\mathcal{R}}

\newcommand{\cH}{\mathcal{H}}

\newcommand{\cW}{\mathcal{W}}
\newcommand{\Hinfty}{\cH^\infty}
\newcommand{\cBIP}{\mathcal{BIP}}

\newcommand{\frF}{\mathfrak{F}}

\newcommand{\tv}{\Tilde{v}}

\newcommand{\tA}{\Tilde{A}}

\newcommand{\tpi}{\Tilde{\pi}}
\newcommand{\tu}{\Tilde{u}}
\newcommand{\teta}{\Tilde{\eta}}

\newcommand{\bfone}{\boldsymbol{1}}
\newcommand{\ocF}{\overline{\cF}}
\newcommand{\eps}{\varepsilon}
\newcommand{\del}{\partial}
\DeclareMathOperator{\tr}{tr}
\DeclareMathOperator{\diag}{diag}
\DeclareMathOperator{\Id}{Id}
\DeclareMathOperator{\mdiv}{div}
\DeclareMathOperator{\Rep}{Re}

\newcommand{\tin}{\enspace \text{in} \enspace}
\newcommand{\ton}{\enspace \text{on} \enspace}
\newcommand{\tfor}{\enspace \text{for} \enspace}
\newcommand{\tforall}{\enspace \text{for all} \enspace}
\newcommand{\tand}{\enspace \text{and} \enspace}

\newcommand{\tif}{\enspace \text{if} \enspace}
\newcommand{\twhere}{\enspace \text{where} \enspace}
\newcommand{\twith}{\enspace \text{with} \enspace}

\begin{document}

\title[Stochastically forced Navier-Stokes equations interacting with an elastic structure]{Stochastically forced Navier-Stokes equations interacting with an elastic structure}

\author{Felix Brandt}
\address{Department of Mathematics, University of California at Berkeley, Berkeley, 94720, CA, USA.}
\email{fbrandt@berkeley.edu}
\author{Matthias Hieber}
\address{Technische Universit\"{a}t Darmstadt,
Schlo\ss{}gartenstra{\ss}e 7, 64289 Darmstadt, Germany.}
\email{hieber@mathematik.tu-darmstadt.de}
\author{Arnab Roy}
\address{Basque Center for Applied Mathematics (BCAM), Alameda de Mazarredo 14, 48009 Bilbao, Spain.}
\address{IKERBASQUE, Basque Foundation for Science, Plaza Euskadi 5, 48009 Bilbao, Bizkaia, Spain.}
\email{aroy@bcamath.org}
\subjclass[2020]{35Q35, 74F10, 60H15, 35R60}
\keywords{Stochastically forced Navier-Stokes equations interacting with an elastic plate, global strong well-posedness, bounded $\Hinfty$-calculus, stochastic maximal regularity, decoupling approach, a priori estimates}

\begin{abstract}
We prove global-in-time strong pathwise well-posedness for a stochastic fluid-structure interaction problem coupling a two-dimensional incompressible Navier-Stokes fluid to a one-dimensional damped Kirchhoff plate. 
The coupling is imposed on a fixed interface through continuity of velocities and balance of normal stresses, and stochastic forcing, modeled by a cylindrical Wiener process, acts on both the fluid and structure equations.
We split the problem into a linear stochastic part and a nonlinear deterministic remainder. 
The linear stochastic problem is treated by proving that the associated fluid-structure operator admits a bounded \(\mathcal{H}^\infty\)-calculus, yielding stochastic maximal regularity. 
This requires a decoupling procedure for the non-diagonal operator domain, and pressure estimates via suitable lifting constructions. 
The deterministic remainder is solved locally by quasilinear methods, and the resulting blow-up criterion is ruled out by higher-order a priori estimates.

This is the first global-in-time strong pathwise well-posedness result for a stochastically forced Navier-Stokes system interacting with a deformable elastic structure.
\end{abstract}

\maketitle

\section{Introduction}\label{sec:intro}

This paper establishes a strong pathwise well-posedness theory for a stochastically forced Navier-Stokes system interacting with an elastic structure located at a part of the fluid boundary.
The main result is global-in-time existence and uniqueness for a two-dimensional incompressible Navier-Stokes fluid coupled to a one-dimensional damped Kirchhoff plate subject to additive noise.
A central ingredient, and an independent contribution of the paper, is the investigation of the underlying fluid-structure operator.
The main analytical difficulties are the non-diagonal operator domain, and the non-local fluid pressure in the structure equation.
In order to decouple the non-diagonal domain of this operator, we invoke a Stokes lifting to obtain its bounded $\Hinfty$-calculus, which is the basis for treating the stochastic convolution in UMD spaces.
In particular, the stochastic forcing is considered in the space of $\gamma$-radonifying operators.
For handling the pressure, we employ Neumann lifting arguments.
For the global-in-time existence, we establish a blow-up criterion.
The blow-up scenario is ruled out by a priori estimates that differ significantly from the classical Navier-Stokes case:
First, the contributions from the stochastic forcing have to be taken into account.
Second, the coupling with the elastic structure requires new elliptic estimates for the associated stationary Stokes problem.

The deterministic Navier-Stokes equations, which form the basic fluid model in the present work, have been studied extensively over the past century.
We refer to the pioneering works of Leray \cite{Ler:34} and Hopf \cite{Hopf:51} on global-in-time weak solutions, as well as to the works of Ladyzhenskaya \cite{Lad:69}, Fujita and Kato \cite{FK:64}, or Koch and Tataru \cite{MR1808843} on strong solutions.
In two spatial dimensions, the classical theory yields global-in-time uniqueness and regularity under natural finite-energy assumptions.
A systematic treatment of the deterministic Navier-Stokes theory can be found in the monographs of Temam \cite{Tem:79}, Sohr \cite{Sohr:01}, Galdi \cite{Gal:11}, and Bedrossian and Vicol \cite{MR4475666}.

The {\em stochastic} Navier-Stokes equations have also been studied extensively. Early rigorous contributions include the work of Bensoussan and Temam \cite{BT:73}, and a comprehensive early survey was given by Bensoussan~\cite{Ben:95}.
Martingale and stationary solutions for stochastic Navier-Stokes equations under general assumptions
on the diffusion coefficient were obtained by Flandoli and G{\k{a}}tarek \cite{FG:95}.
The long-time behavior of stochastic Navier-Stokes dynamics has been investigated in several directions, including ergodicity and invariant measures; we mention, among others, the works of Da Prato and Debussche \cite{DPD:03}, Hairer and Mattingly \cite{HM:06}, and Flandoli and Romito \cite{FR:08}.
The question whether noise may improve the well-posedness theory for fluid equations, especially in three dimensions, is also a central theme in the literature; see, for instance, the review by Bianchi and Flandoli \cite{BF:20} as well as the monograph by Flandoli and Luongo \cite{FL:23} and the references therein.

A related line of research concerns stochastic perturbations of fluid equations by transport-type noise, arising naturally from stochastic Lagrangian descriptions of fluid motion.
In particular, Holm introduced a stochastic variational framework for fluid dynamics leading to the so-called stochastic advection by Lie transport, or SALT, equations \cite{Hol:15}.
This framework has motivated a substantial amount of recent work on fluid equations with transport noise.
For Navier-Stokes equations, local and global well-posedness results with stochastic Lie transport and related noise structures have been obtained, for instance, by Goodair~\cite{Goo:23a, Goo:23b}.
For the role of transport noise for turbulent models including the Navier-Stokes equations, we also refer to the very recent article \cite{DP:26}.

Before addressing the stochastic FSI problem under consideration, let us first elaborate on the underlying deterministic model.
In fact, the mathematical theory of deterministic fluid-structure interaction problems is itself extensive.
The existence of global weak solutions to such FSI problems was established in works such as \cite{CDEG:05, Gra:08, MC13, LR:14, mindrilua2026existence}.
Strong well-posedness theory for such FSI problems is particularly delicate due to coupling at the interface and the limited regularity transferred between the fluid and the plate.
With regard to the dynamic coupling condition made precise in \eqref{eq:damped beam eq} below, the fluid pressure enters the plate equation.
For strong solutions, this generally requires invoking the so-called {\em added mass operator} in order to control the pressure at the boundary, since a priori, the fluid equations only yield control of the gradient of the pressure.
Local-in-time strong solutions were studied, e.g., by Beir\~ao da Veiga \cite{BdV:04}; in this context, we also mention the works \cite{GHL:19, MRR:20, MT:21, BMR:26}.
A remarkable result is due to Grandmont and Hillairet \cite{GH:16}, who proved global-in-time strong solutions for a two-dimensional Navier-Stokes fluid interacting with a one-dimensional viscoelastic beam.

We now describe the underlying deterministic problem.
For simplicity, and since this does not affect the analysis, all physical constants are normalized to one.
Let \(T>0\) and let $\cF_0 = (0,L)\times(-1,0)\subset\R^2$ be the rectangular fluid domain.
The fluid is assumed to be viscous, incompressible, and Newtonian.
Its velocity and pressure are denoted by $u \colon (0,T)\times\cF_0\to\R^2$ and $\pi \colon (0,T)\times\cF_0\to\R$, respectively.
The fluid equations are given by
\begin{equation}\label{eq:fluid eq}
    \begin{aligned}
        \del_t u+(u\cdot\nabla)u-\mdiv\sigma_\rf(u,\pi)
        &=0, \enspace \mdiv u = 0, &&\tin (0,T)\times\cF_0,
    \end{aligned}
\end{equation}
where the fluid stress tensor is defined by $\sigma_\rf(u,\pi) = (\nabla u+\nabla u^\top)-\pi I_2$.
The motion of the structure is described by a damped linear beam equation.
For the structure displacement $\eta \colon (0,T)\times(0,L)\to\R$, this equation reads
\begin{equation}\label{eq:damped beam eq}
    \del_{tt}\eta
    +\del_{xxxx}\eta
    +\del_{xxxxt}\eta
    =
    \phi(u,\pi),
    \tin (0,T)\times(0,L).
\end{equation}
The right-hand side $\phi(u,\pi)$ in \eqref{eq:damped beam eq} accounts for the continuity of normal stresses at the interface and is given by $\phi(u,\pi) = -\mre_2\cdot\sigma_\rf(u,\pi)\mre_2$, where \(\mre_2=\binom{0}{1}\) denotes the outer unit normal vector on the upper boundary of the fluid domain.

The system is complemented by coupling and boundary conditions.
The kinematic coupling condition expresses equality of the fluid and structure velocities at the interface and reads
\begin{equation}\label{eq:kin cc}
    u(t,x,0)
    =
    \del_t\eta(t,x)\mre_2,
    \tfor t \in (0,T), \enspace x \in (0,L).
\end{equation}
Moreover, we assume periodicity in the horizontal variable \(x\) for the fluid, the pressure, and the structure.
Since the fluid is assumed to be at rest at the lower boundary, the boundary conditions are given by
\begin{equation*}
    \begin{aligned}
        (u,\pi)(t,0,y)
        &=
        (u,\pi)(t,L,y), \enspace \eta(t,0) = \eta(t,L)
        &&\tfor t\in(0,T), \enspace y\in(-1,0),\\
        u(t,x,-1)
        &=
        0,
        &&\tfor t\in(0,T), \enspace x\in(0,L).
    \end{aligned}
\end{equation*}

The incompressibility condition in \eqref{eq:fluid eq} and the kinematic coupling condition \eqref{eq:kin cc} imply conservation of the spatial average of the beam displacement.
Indeed, invoking the boundary conditions of $u$, for all $t > 0$, we get $\frac{\rd}{\rd t}\int_0^L \eta(t,x)\srd x = \int_0^L u(t,x,0)  \cdot \mre_2 \srd x = \int_{\cF_0} \mdiv u \srd(x,y) = 0$, so the quantity $m\coloneqq \int_0^L \eta(t,x)\srd x$ is constant in time.
Thus, if the initial displacement $\eta_1^0(x) = \eta(0,x)$ satisfies $\int_0^L \eta_1^0(x)\srd x=0$, then the same property is preserved by \(\eta(t,\cdot)\) for all \(t\in(0,T)\).
This motivates the use of the space of functions with zero spatial average $\rL_\rm^q(0,L)$ with associated projection $P_\rm \colon \rL^q(0,L) \to \rL_\rm^q(0,L)$, defined by
\begin{equation}\label{eq:avg zero space and projection}
    \rL_\rm^q(0,L)
    \coloneqq
    \left\{
        f\in\rL^q(0,L):
        \int_0^L f\srd x=0
    \right\}, \tand P_\rm f
    \coloneqq
    f-\frac1L\int_0^L f\srd x,
    \tfor f\in\rL^q(0,L).
\end{equation}
After applying this projection to the damped beam equation, the deterministic FSI system underlying the present work can be written as
\begin{equation}\label{eq:underlying det FSI probl}
    \left\{
    \begin{aligned}
        \del_t u+(u\cdot\nabla)u-\mdiv\sigma_\rf(u,\pi)
        &=0, \enspace \mdiv u = 0,
        &&\tin (0,T)\times\cF_0,\\
        \del_{tt}\eta
        +P_\rm\del_{xxxx}\eta
        +P_\rm\del_{xxxxt}\eta
        &=
        -P_\rm\bigl(\mre_2\cdot\sigma_\rf(u,\pi)\mre_2\bigr),
        &&\tin (0,T)\times(0,L),\\
        u(t,x,0)
        &=
        \del_t\eta(t,x)\mre_2,
        &&\tfor t\in(0,T), \enspace x\in(0,L),\\
        (u,\pi)(t,0,y)
        &=
        (u,\pi)(t,L,y), \enspace \eta(t,0) = \eta(t,L),
        &&\tfor t\in(0,T), \enspace y\in(-1,0),\\
        u(t,x,-1)
        &=
        0,
        &&\tfor t\in(0,T), \enspace x\in(0,L),
    \end{aligned}
    \right.
\end{equation}
supplemented by suitable initial data $u_0(x) = u(0,x)$, $\eta_1^0(x) = \eta(0,x)$, and $\eta_2^0(x) = \del_t\eta(0,x)$.

The analysis of {\em stochastic} FSI problems, which is motivated by real-world applications such as blood flow in vessels and the associated question whether FSI problems are robust under stochastic perturbations, is much more recent.
The first result in this direction appears to be due to Kuan and \v{C}ani\'c \cite{KC:22}, who studied a reduced linearly coupled model described by a stochastic viscous wave equation.
In \cite{KC:24}, the same authors considered a benchmark stochastic FSI problem coupling a 2D Stokes fluid to a 1D linearly elastic membrane.
Nonlinear stochastic FSI problems on moving domains were then studied by Tawri and \v{C}ani\'c \cite{TC:25}, who proved the existence of martingale solutions for a 2D Navier-Stokes fluid coupled to an elastic lateral wall.
Further developments include stochastic FSI models with nonzero longitudinal displacement \cite{Taw:25} and Navier-slip boundary conditions \cite{Taw:24}.
In three spatial dimensions, Breit, Mensah, and Moyo \cite{BMM:24} obtained martingale solutions for an incompressible Navier-Stokes fluid interacting with a Koiter-type shell subject to transport noise.
All these works construct solutions in a {\em weak PDE sense}.
For an overview of deterministic and stochastic FSI problems, we also refer to the recent monograph \cite{CKMT:25}.

We now describe the stochastic problem.
Let \((\Omega,\cA,\bP)\) be a probability space endowed with a filtration \(\frF=(\frF_t)_t\), and let \(\cH\) be a separable Hilbert space.
The precise definition of an \(\frF\)-cylindrical Wiener process $W_\cH \colon \rL^2((0,\infty);\cH)\to\rL^2(\Omega)$ will be recalled in \autoref{sec:lin stoch probl}.
For an orthonormal basis \((e_n)_n\) of \(\cH\), the processes $\beta_n(t)\coloneqq W_\cH(t)e_n$ are standard \(\frF\)-Brownian motions, and one has the representation
\begin{equation}\label{eq:repr cylindr Wiener proc}
    W_\cH(t)\colon\cH\to\rL^2(\Omega),
    \twith
    W_\cH(t)f
    =
    \sum_{n=1}^\infty
    \beta_n(t)\langle f,e_n\rangle_\cH.
\end{equation}
The stochastic integral with respect to \(W_\cH\) is defined for processes with values in spaces of \(\gamma\)-radonifying operators; see \cite{vNVW:12a} and also \cite[Ch.~9]{HvNVW:17}.
The relevant spaces and assumptions will be specified in the context of the linear stochastic problem in \autoref{sec:lin stoch probl}.

For suitable stochastic forcing terms \(f\) and \(g\), and for the fluid velocity, the fluid pressure, and the structure displacement being random variables $\tu\colon (0,T)\times\Omega\times\cF_0\to\R^2$, $\tpi\colon (0,T)\times\Omega\times\cF_0\to\R$, and $\teta\colon (0,T)\times\Omega\times(0,L)\to\R$, the linearly coupled stochastic FSI problem studied in this manuscript reads
\begin{equation}\label{eq:lin coupled stoch FSI probl}
    \left\{
    \begin{aligned}
        \rd\tu +(\tu\cdot\nabla)\tu\srd t-\mdiv\sigma_\rf(\tu,\tpi)\srd t
        &=
        f\srd W_\cH, \enspace \mdiv\tu = 0,
        &&\tin (0,T)\times\cF_0,\\
        \rd\del_t\teta
        +\bigl(P_\rm\del_{xxxx}\teta+P_\rm\del_{xxxxt}\teta\bigr)\srd t
        &=
        -P_\rm\bigl(\mre_2\cdot\sigma_\rf(\tu,\tpi)\mre_2\bigr)\srd t\\
        &\quad +g\srd W_\cH,
        &&\tin (0,T)\times(0,L),\\
        \tu(t,x,0)
        &=
        \del_t\teta(t,x)\mre_2,
        &&\tfor t\in(0,T), \enspace x\in(0,L),\\
        (\tu,\tpi)(t,0,y)
        =
        (\tu,\tpi)(t,L,y), \enspace \teta(t,0)
        &=
        \teta(t,L),
        &&\tfor t\in(0,T), \enspace y\in(-1,0),\\
        \tu(t,x,-1)
        &=
        0,
        &&\tfor t\in(0,T), \enspace x\in(0,L),\\
        \tu(0,\cdot)
        &=
        \tu_0(\cdot),
        &&\tin\cF_0,\\
        \teta(0,\cdot)
        &=
        \teta_1^0(\cdot),
        \enspace
        \del_t\teta(0,\cdot)
        =
        \teta_2^0(\cdot),
        &&\tin(0,L).
    \end{aligned}
    \right.
\end{equation}

The paraphrased main result is as follows.
For the precise statement, see \autoref{thm:global strong well posedness 2d1d}.

\begin{thm}\label{thm:simplified main result}
Under the compatibility conditions and regularity assumptions on the data made precise in \autoref{thm:global strong well posedness 2d1d}, the system~\eqref{eq:lin coupled stoch FSI probl} admits a unique, global, strong pathwise solution.
\end{thm}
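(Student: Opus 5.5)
The proof follows the splitting strategy from the abstract. Write $(\tu,\tpi,\teta) = (v,p,\zeta) + (w,q,\xi)$. Here $(v,p,\zeta)$ solves the \emph{linear} stochastic FSI problem, which is \eqref{eq:lin coupled stoch FSI probl} without the convective term $(\tu\cdot\nabla)\tu$, with the forcing $f\srd W_\cH$, $g\srd W_\cH$ and zero initial data. The remainder $(w,q,\xi)$ solves, pathwise for fixed $\omega$, a \emph{deterministic} nonlinear FSI problem of the form \eqref{eq:underlying det FSI probl}. It carries the initial data $(\tu_0,\teta_1^0,\teta_2^0)$ and the additional right-hand side $-\bigl((v+w)\cdot\nabla\bigr)(v+w)+(w\cdot\nabla)w$, that is, the cross terms and $(v\cdot\nabla)v$.

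\emph{Step 1: linear stochastic part.} We recast the linear problem as an abstract evolution equation $\rd U + A U\srd t = G\srd W_\cH$ for $U=(v,\zeta,\del_t\zeta)$. The state space $X_0$ is built from solenoidal $\rL^q$ fields and zero-mean structure components. The fluid-structure operator $A$ eliminates the pressure: a Neumann lifting expresses $p$ through $v$ and the beam variables, so the nonlocal added-mass term becomes part of $A$. The key claim is that $A$ (after a shift, if needed) admits a bounded $\Hinfty$-calculus of angle $<\pi/2$ on $X_0$.

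To prove it, we decouple the non-diagonal domain, in which the trace condition $v|_{y=0}=\del_t\zeta\,\mre_2$ couples the components. A Stokes lifting $\cL\colon \del_t\zeta\mapsto$ (Stokes solution with that boundary datum) gives a similarity transform $U\mapsto (v-\cL\del_t\zeta,\zeta,\del_t\zeta)$. This conjugates $A$ to an upper-triangular operator plus a lower-order perturbation. Its diagonal consists of the Dirichlet Stokes operator and the damped-plate operator $\begin{pmatrix}0&-1\\ P_\rm\del_{x}^4 & P_\rm\del_x^4\end{pmatrix}$, each of which has a bounded $\Hinfty$-calculus by known results. Perturbation theorems for triangular operator matrices, together with relative boundedness of the off-diagonal and lifting remainders, then transfer the calculus back to $A$.

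With $X_0$ UMD of type 2, the van Neerven--Veraar--Weis theorem yields stochastic maximal $\rL^p$-regularity. In particular $v\in \rL^p(\Omega;\rL^p(0,T;X_{1/2}))\cap \rL^p(\Omega;\rC([0,T];X_{1-1/p,p}))$ under the $\gamma$-radonifying assumptions on $f,g$. By choosing more regular noise one can also place $v$ pathwise in spaces with extra space--time Hölder regularity, which Step 3 needs.

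\emph{Step 2: local deterministic remainder.} For a.e.\ $\omega$, the $(w,\xi)$ problem is semilinear (in fact quasilinear-type) with coefficients depending on the fixed path $v$. The deterministic maximal $\rL^p$-regularity of $A$ is a consequence of the $\Hinfty$-calculus. Combined with estimates of the bilinear term in the critical trace space, it gives a unique local strong solution via Prüss--Simonett--Wilke theory. Adaptedness follows because the fixed-point construction is measurable in $\omega$. The same theory supplies the blow-up criterion: if the maximal time $T^*<T$, then $\|(w,\xi,\del_t\xi)\|_{X_{1-1/p,p}}$ cannot stay bounded (or a weaker Serrin-type norm blows up).

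\emph{Step 3: global a priori bounds, the main obstacle.} We first derive the energy estimate by testing the $w$-equation with $w$ and the beam equation with $\del_t\xi$. The coupling terms cancel through the interface, and the mixed terms involving $v$ are absorbed by Gronwall using the pathwise regularity of $v$. This yields $\rL^\infty\rL^2\cap\rL^2\rH^1$ control together with beam energy. Next, higher-order estimates test with $\del_t w$ and the Stokes-type operator, in the spirit of Grandmont--Hillairet. These require elliptic regularity for the stationary Stokes problem with the inhomogeneous interface datum $\del_t\xi\,\mre_2$ and pressure trace control. Two-dimensional Ladyzhenskaya/Gagliardo--Nirenberg inequalities close the nonlinear terms, and the plate damping $\del_x^4\del_t\xi$ supplies the needed regularity of $\del_t\xi$. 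Finally, bootstrapping via maximal regularity lifts the $\rH^1$-level bound to the $X_{1-1/p,p}$ norm. This contradicts the blow-up criterion, so $T^*=T$ for every $T$. Uniqueness follows from pathwise uniqueness of $(w,\xi)$ combined with uniqueness of the linear stochastic problem.

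The elliptic Stokes estimate with moving interface data and the treatment of the pressure trace in the beam equation are where I expect the real difficulty. If energy-level testing does not close there, I would fall back on $\rL^p$-$\rL^q$ estimates from Step 1 applied to the $w$-equation with the nonlinearity as forcing, using a smallness-in-time argument.
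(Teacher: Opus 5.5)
Your architecture is the paper's: a linear stochastic part plus a pathwise deterministic remainder, the $\Hinfty$-calculus of $A_\fs$ via the Stokes-lifting similarity transform (triangular main part plus a perturbation bounded by a fractional power), stochastic maximal regularity, Pr\"uss--Simonett--Wilke local theory with a blow-up criterion, and Grandmont--Hillairet-type a priori bounds. Putting all initial data into the remainder and keeping a shift on the finite interval $[0,T]$ are harmless variations.

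The genuine gap is the first estimate of Step~3. On the fixed domain $\cF_0$, the kinematic condition gives $w\cdot\nu=\del_t\xi$ on $\Gamma_u$, which is in general nonzero. So the purely nonlinear term does not drop out when you test with $w$:
\begin{equation*}
    \int_{\cF_0}(w\cdot\nabla)w\cdot w\srd(x,y)=\frac12\int_{\del\cF_0}|w|^2\,(w\cdot\nu)\srd S=\frac12\int_0^L(\del_t\xi)^3\srd x .
\end{equation*}
This term is cubic in the remainder and has no stochastic factor, so it is not one of the mixed terms that Gronwall absorbs. The natural bounds are:
\begin{itemize}
    \item $\bigl|\int_0^L(\del_t\xi)^3\srd x\bigr|\le C\|w\|_{\rL^2}\|\nabla w\|_{\rL^2}^2$ (trace estimate plus Ladyzhenskaya), which can be absorbed only if $\|w\|_{\rL^2}$ is small;
    \item $\bigl|\int_0^L(\del_t\xi)^3\srd x\bigr|\le\varepsilon\|\del_{xxt}\xi\|_{\rL^2}^2+C_\varepsilon\|\del_t\xi\|_{\rL^2}^4$, which gives a Riccati-type inequality $E'\lesssim E^2+\dots$ and hence a bound only up to a data-dependent time.
\end{itemize}
Everything downstream uses the bound $w\in\rL^\infty(0,T;\rL^2)\cap\rL^2(0,T;\rH^1)$: the Ladyzhenskaya absorption, the control of $\int_0^t\|\nabla w\|_{\rL^2}^4$, and your final bootstrap. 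So, as written, the argument does not rule out blow-up for large data.

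Following the paper does not repair this. Its identity \eqref{eq:fluid energy} in \autoref{prop:energy ests} lists only the three $U$-dependent convective terms and passes over the same boundary flux. In the moving-domain model of Grandmont and Hillairet this flux is cancelled by the Reynolds transport term. In the linearly coupled model you need an extra ingredient, for example smallness of the data, or a dynamic condition with the Bernoulli pressure $\pi+\frac12|u|^2$, which restores the energy identity.

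Two smaller points. First, Step~3 does not need smoother noise. For $p=q=2$, $Z\in\rH^\theta(0,T;\rX_{1-\theta})$ with $\theta<\frac12$ already gives, pathwise, $U\in\rL^2(0,T;\rH^2)\cap\rL^4(0,T;\rL^\infty)\cap\rL^4(0,T;\rW^{1,3})$. That is all the estimates use; strengthening the assumptions on $f$ and $g$ would prove less than the statement claims.

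Second, the blow-up norm contains $\|\eta_1\|_{\rH^4}$, because the first structure component of $\rX_0$ is already $\rW^{4,q}$. Your $\rH^1$-level bounds only give $\xi\in\rL^\infty(0,T;\rH^2)$. The paper closes this by testing the plate equation with $\del_x^4\eta$ and using \autoref{lem:est of the boundary term}. A maximal-regularity bootstrap also works, but only after you show the convective forcing lies in $\rL^2(0,T;\rL^2)$, that is, after controlling $w$ in $\rL^2(0,T;\rH^2)$ through the elliptic Stokes estimate.
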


To the best of our knowledge, this is the first global-in-time strong pathwise well-posedness result for a stochastically forced Navier-Stokes system coupled to a deformable elastic structure located at the fluid boundary.
In \cite{BR:26}, local strong well-posedness and blow-up criteria were obtained for a stochastic fluid-rigid body interaction problem.
The present manuscript continues this line of research, but the presence of an elastic boundary structure leads to a substantially different linear theory.
While maximal regularity for certain fluid-structure operators with elastic boundary dynamics has been established in deterministic settings, see for instance \cite{MT:21}, we prove here the boundedness of the \(\Hinfty\)-calculus for the fluid-structure operator associated with the damped Kirchhoff plate model.
This operator-theoretic result is of independent interest and is the key input for stochastic maximal regularity.

The strategy for analyzing \eqref{eq:lin coupled stoch FSI probl} is to split the solution into a linear stochastic part and a deterministic nonlinear remainder.
The linear stochastic problem is treated using the theory of stochastic maximal regularity developed by van Neerven, Veraar, and Weis \cite{vNVW:12a, vNVW:12b}.
A substantial part of the paper is thus devoted to proving that the fluid-structure operator \(A_\fs\), which encodes the Stokes dynamics, the damped plate dynamics, and the kinematic and dynamic coupling conditions, admits a bounded \(\Hinfty\)-calculus, which paves the way for the stochastic maximal regularity in a second step.
The non-diagonal domain caused by the kinematic coupling condition is handled by a decoupling procedure, while the pressure contribution in the plate equation requires lifting constructions and estimates in negative Sobolev spaces.
The perturbation argument is carried out at the level of the bounded \(\Hinfty\)-calculus, which is considerably more delicate than for sectoriality or maximal \(\rL^p\)-regularity; see \cite{McIY:90}.

The deterministic remainder is handled by quasilinear evolution methods \cite{PW:17, PSW:18, PS:16}, which yield local well-posedness and a blow-up criterion.
The passage from local to global solutions is not a direct consequence of the standard two-dimensional Navier-Stokes energy inequality.
Indeed, the blow-up criterion is formulated in the interpolation space associated with \(A_\fs\), and therefore requires uniform control of the fluid \(\rH^1\)-norm together with high Sobolev norms of the plate.
We obtain these bounds by combining the basic energy identity with higher-order estimates, elliptic regularity for the coupled Stokes problem, and a time-local absorption argument for the nonlinear convection term.

The manuscript is organized as follows.
\autoref{sec:prelims} addresses some preliminaries.
In \autoref{sec:Hoo-calculus fluid-structure op}, we prove the boundedness of the \(\Hinfty\)-calculus of the fluid-structure operator in this setting.
\autoref{sec:lin stoch probl} is dedicated to clarifying the stochastic setting and analyzing the linear stochastic problem by combining the bounded \(\Hinfty\)-calculus of the fluid-structure operator with stochastic maximal regularity theory.
In \autoref{sec:loc wp}, we use the solution constructed in \autoref{sec:lin stoch probl} to reduce the nonlinear stochastic problem to a deterministic non-autonomous one, and we prove local-in-time well-posedness together with blow-up criteria.
The purpose of \autoref{sec:glob wp lin coupled case} is to establish the a priori estimates needed to rule out the blow-up scenario for the deterministic part of the solution, thereby proving global strong pathwise well-posedness of the stochastic FSI problem.
In the final \autoref{sec:further discussion}, we discuss well-posedness results in the 3D/2D case.

\section{Preliminaries}\label{sec:prelims}

The purpose of this section is to introduce some pieces of notation such as function spaces, and to present some auxiliary (deterministic) tools that are needed for the analysis carried out in this section.
This includes the analysis of the Stokes operator on a cylindrical domain as well as the structure operator associated with the present plate problem.

We start with function spaces with periodic boundary conditions.
For $m \in \N_0$, we say that a smooth function $f \colon \ocF_0 \to \R$ is {\em periodic of order $m$} (with respect to the variable $x$) if $ \frac{\del^{\alpha}f}{\del x^\alpha}(0,y) = \frac{\del^{\alpha}f}{\del x^\alpha}(L,y)$ for all multi-indices $\alpha$ with $0 \le |\alpha| \le m$.
Likewise, one can define periodicity of order $m$ for a smooth function $g \colon [0,L] \to \R$.
In the following, we also use $\Gamma_l$ to denote the lateral boundary of the fluid domain, i.e., $\Gamma_l \coloneqq \{0\} \times (-1,0) \cup \{L\} \times (-1,0)$, and we introduce the spaces
\begin{equation*}
    \begin{aligned}
        \rC_\per^\infty(\ocF_0) 
        &\coloneqq \{f \in \rC^\infty(\ocF_0) : f \text{ is periodic of arbitrary order on } \Gamma_l\}, \tand\\
        \rC_\per^\infty([0,L])
        &\coloneqq \{f \in \rC^\infty([0,L]) : f \text{ is periodic of arbitrary order}\}.
    \end{aligned}
\end{equation*}
For $m \in \N_0$ and $q \in [1,\infty]$, we define the periodic Sobolev spaces by
\begin{equation*}
    \rW_\per^{m,q}(\cF_0) \coloneqq \overline{\rC_\per^\infty(\ocF_0)}^{\| \cdot \|_{\rW^{m,q}(\cF_0)}} \tand \rW_\per^{m,q}(0,L) \coloneqq \overline{\rC_\per^\infty([0,L])}^{\| \cdot \|_{\rW^{m,q}(0,L)}}.
\end{equation*}
Note that the spaces $\rW_\per^{0,q}(\cF_0)$ and $\rW_\per^{0,q}(0,L)$ are identified with $\rL^q(\cF_0)$ and $\rL^q(0,L)$, respectively.
Moreover, for $s \ge 0$, $p \in (1,\infty)$, and $q \in [1,\infty]$, the periodic Bessel potential, Sobolev-Slobodeckij, and Besov spaces can be defined by interpolation.
More precisely, for $[\cdot,\cdot]_\theta$ and $(\cdot,\cdot)_{\theta,p}$ denoting the complex and real interpolation functor, respectively, we set
\begin{equation*}
    \begin{aligned}
        \rH_\per^{s,q}(\cF_0) 
        &\coloneqq [\rL^q(\cF_0),\rW_\per^{m,q}(\cF_0)]_{\frac{s}{m}}, \enspace \rW_\per^{s,q}(\cF_0) \coloneqq (\rL^q(\cF_0),\rW_\per^{m,q}(\cF_0))_{\frac{s}{m},q}, \tand \\
        \rB_{qp,\per}^s(\cF_0) 
        &\coloneqq (\rL^q(\cF_0),\rW_\per^{m,q}(\cF_0))_{\frac{s}{m},p}.
    \end{aligned}
\end{equation*}
Note that the spaces $\rH_\per^{s,q}(0,L)$, $\rW_\per^{s,q}(0,L)$, and $\rB_{qp,\per}^s(0,L)$ are defined analogously.

In the sequel, we discuss certain properties of the concept of the boundedness of the $\Hinfty$-calculus that will be important in the context of the present fluid-structure operator.
Instead of elaborating on this concept in detail, we refer to the monographs \cite{DHP:03, KW:04}.
We only note that we will denote the class of sectorial operators on a Banach space $\rX$ by $\cS(\rX)$, while we use $\Hinfty(\rX)$ to denote the class of operators admitting a bounded $\Hinfty$-calculus.
For $A \in \cS(\rX)$, the associated spectral angle is denoted by $\phi_A$, while~$\phi_A^\infty$ represents the so-called $\Hinfty$-angle of an operator $A \in \Hinfty(\rX)$.

We start by discussing stability and perturbation properties of the bounded $\Hinfty$-calculus.
The assertion of~(a) can be found in \cite[Lemma~2.3.15]{Bra:24}, while for~(b), we refer, e.g., to \cite[Prop.~2.11(vi)]{DHP:03}, and the perturbation result in~(c) can be found in \cite[Cor.~3.3.15]{PS:16}.

\begin{lem}\label{lem:stability and pert results Hoo}
\begin{enumerate}[(a)]
    \item Let $A \in  \Hinfty(\rX)$ with $\phi_A^\infty \in [0,\pi)$, and let $\rY \subset \rX$ be a closed subspace such that there exists a projection $P \in \cL(\rX,\rY)$ with $P x \in \rD(A)$ and $P A x = A P x$ for all $x \in \rD(A)$.
    Then for $B \coloneqq P A$, it follows that $B \in \Hinfty(\rY)$ with $\Phi_B^\infty \le \Phi_A^\infty$.
    \item Let $\rX$ and $\rY$ be Banach spaces, consider $T \in \cL(\rX,\rY)$ with inverse $T^{-1}$, and set $A_1 \coloneqq T A T^{-1}$.
    Then $A \in \Hinfty(\rX)$ if and only if $A_1 \in \Hinfty(\rY)$, and we have $\phi_A^\infty = \phi_{A_1}^\infty$.
    \item Assume that $A \in \Hinfty(\rX)$ with angle $\phi_A^\infty$, and let $B$ be a linear operator in $\rX$ such that for $\alpha \in [0,1)$, we have $\rD(A^\alpha) \subset \rD(B)$, and for constants, it holds that
    \begin{equation*}
        \| B x \| \le a \cdot \| x \| + b \cdot \| A^\alpha x \|, \tfor x \in \rD(A^\alpha).
    \end{equation*}
    If $A + B \in \cS(\rX)$ with $\phi_{A + B}$ and $0 \in \rho(A+B)$, then $A + B \in \Hinfty(\rX)$ and $\phi_{A+B}^\infty \le \max\{\phi_A^\infty,\phi_{A+B}\}$.
\end{enumerate}
\end{lem}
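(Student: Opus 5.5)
The lemma collects three standard facts, each attributed to the literature: (a) to \cite[Lemma~2.3.15]{Bra:24}, (b) to \cite[Prop.~2.11(vi)]{DHP:03}, and (c) to \cite[Cor.~3.3.15]{PS:16}. I would therefore indicate the mechanism of each rather than reprove the deep parts.

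\emph{Part (a).} I first show that $B = PA$, with $\rD(B) = \rD(A)\cap\rY$, is sectorial on $\rY$. Since $P$ commutes with $A$ on $\rD(A)$, it also commutes with the resolvent. Indeed, for $y \in \rY$, put $x = (\lambda - A)^{-1}y$. Then
\[
(\lambda - A)Px = P(\lambda - A)x = Py = y,
\]
so $Px = x$, and $(\lambda - A)^{-1}$ maps $\rY$ into $\rD(A)\cap\rY$. Hence $(\lambda-B)^{-1} = (\lambda-A)^{-1}|_\rY$. Density of $\rD(B)$ in $\rY$ follows from $P\rD(A) \subset \rD(A)\cap\rY$ and the density of $\rD(A)$ in $\rX$. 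For $h$ in the Dunford class $\Hinfty_0(\Sigma_\phi)$ with $\phi > \phi_A^\infty$, the Cauchy-integral representation then gives $h(B) = h(A)|_\rY$. Consequently
\[
\|h(B)\|_{\cL(\rY)} \le \|h(A)\|_{\cL(\rX)} \le C\|h\|_\infty,
\]
which yields $\phi_B^\infty \le \phi_A^\infty$.

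\emph{Part (b).} Similarity transports everything. From $(\lambda - A_1)^{-1} = T(\lambda-A)^{-1}T^{-1}$ I get sectoriality with the same angle. The Cauchy integral then gives $h(A_1) = T h(A) T^{-1}$, so the norm bounds transfer with constant $\|T\|\,\|T^{-1}\|$. Applying the same argument with $T^{-1}$ gives the converse and hence equality of the angles.

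\emph{Part (c).} This is the genuinely nontrivial statement, namely the relatively $A^\alpha$-bounded perturbation theorem for the $\Hinfty$-calculus. Here I would cite \cite[Cor.~3.3.15]{PS:16}, but the idea is as follows. Write
\[
(\lambda + A + B)^{-1} - (\lambda + A)^{-1} = -(\lambda+A+B)^{-1}B(\lambda+A)^{-1}.
\]
The lower-order estimate $\|B x\| \le a\|x\| + b\|A^\alpha x\|$ with $\alpha < 1$ gives decay of order $|\lambda|^{\alpha-1}$ for $\|B(\lambda+A)^{-1}\|$. Combined with the assumed sectoriality and invertibility of $A+B$, the difference $h(A+B) - h(A)$ is then bounded by an absolutely convergent Cauchy integral, with bound $C\|h\|_\infty$. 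The invertibility $0\in\rho(A+B)$ is what allows me to use homogeneous estimates at small $|\lambda|$. The angle bound $\max\{\phi_A^\infty,\phi_{A+B}\}$ arises because the integration contour must lie in both resolvent sectors. The main obstacle is this integrability estimate near $0$ and $\infty$, which \cite{PS:16} (following McIntosh--Yagi \cite{McIY:90}) handles via a comparison with $A^\alpha$-type square functions. I would rely on that reference rather than redo it.
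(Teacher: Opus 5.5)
Your proposal follows the same route as the paper, which gives no proof of its own and simply cites \cite[Lemma~2.3.15]{Bra:24}, \cite[Prop.~2.11(vi)]{DHP:03} and \cite[Cor.~3.3.15]{PS:16}. Your sketches of (a) and (b) are correct. In (a) you should also record that $B$ has dense range: $PAx = B(Px)$ gives $P\,\mathrm{R}(A)\subset\mathrm{R}(B)$, and $P\,\mathrm{R}(A)$ is dense in $\rY$.

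The heuristic you offer for (c), however, is wrong as stated, and the obstacle is not where you place it.

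\emph{The failing step.} Comparing $h(A+B)$ with $h(A)$, the integrand $h(\lambda)(\lambda-(A+B))^{-1}B(\lambda-A)^{-1}$ causes no trouble at infinity, where it is $O(\|h\|_\infty|\lambda|^{\alpha-2})$. Near $\lambda=0$, however, it is only bounded by $C\|h\|_\infty(a|\lambda|^{-1}+b|\lambda|^{\alpha-1})$, even after using $0\in\rho(A+B)$, because $A$ itself need not be invertible. The term $a|\lambda|^{-1}$ is not integrable at $0$. For $B=\mu\Id$ the absolute-value bound on the contour integral genuinely fails, so "an absolutely convergent Cauchy integral with bound $C\|h\|_\infty$" is false in general.

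\emph{The remedy.} Compare with $A+\mu$ for a fixed $\mu>0$, i.e.\ write $A+B=(A+\mu)+(B-\mu)$, noting that $\rD(A+B)=\rD(A)$. Then $A+\mu\in\Hinfty(\rX)$ with $\phi^\infty_{A+\mu}\le\phi^\infty_A$. On $\Gamma=\partial\Sigma_{\phi'}$ with $\phi'>\max\{\phi^\infty_A,\phi_{A+B}\}$ one has two bounds:
\begin{itemize}
\item $\|(\lambda-(A+B))^{-1}\|\le C(1+|\lambda|)^{-1}$, by sectoriality together with $0\in\rho(A+B)$;
\item $\|(B-\mu)(\lambda-(A+\mu))^{-1}\|\le C(1+|\lambda|)^{\alpha-1}$, by the moment inequality for the invertible operator $A+\mu$ and $\|A^\alpha x\|\le C(\|(A+\mu)^\alpha x\|+\|x\|)$.
\end{itemize}
The resolvent identity $(\lambda-(A+B))^{-1}-(\lambda-(A+\mu))^{-1}=(\lambda-(A+B))^{-1}(B-\mu)(\lambda-(A+\mu))^{-1}$ then bounds the integrand of $h(A+B)-h(A+\mu)$ by $C\|h\|_\infty(1+|\lambda|)^{\alpha-2}$. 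This is integrable over all of $\Gamma$ because $\alpha<1$. Hence $\|h(A+B)\|\le C\|h\|_\infty$ for $h\in\Hinfty_0(\Sigma_\phi)$, $\phi>\phi'$, which gives the claimed angle.

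So a crude Cauchy-integral estimate does suffice once the comparison operator is shifted. Invertibility of $A+B$ is needed only to control $(\lambda-(A+B))^{-1}$ near $\lambda=0$, and no square-function argument is required. Also, \cite{McIY:90} is cited in the paper only as evidence that perturbation at the $\Hinfty$ level is delicate (it constructs operators without a bounded $\Hinfty$-calculus), not as the source of a proof technique.
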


The following results on the $\Hinfty$-calculus of block operator matrices will be crucial for the analysis of the fluid-structure operator.
We refer here to \cite[Cor.~7.1]{AH:23} for part~(a) and \cite[Cor.~7.11]{AH:23} for part~(b).

\begin{lem}\label{lem:bdd Hoo-calculus block op matrix}
\begin{enumerate}[(a)]
    \item For spaces $\rX_1$, $\rX_2$ and operators $A \colon \rD(A) \subset \rX_1 \to \rX_1$, $B \colon \rD(D) \subset \rX_2 \to \rX_1$ and $D \colon \rD(D) \subset \rX_2 \to \rX_2$, consider the operator matrix $\cA \colon \rD(\cA) = \rD(A) \times \rD(D) \subset \rX \to \rX$, where $\rX = \rX_1 \times \rX_2$, defined by
    \begin{equation*}
        \cA = \begin{pmatrix}
            A & B\\
            0 & D
        \end{pmatrix}.
    \end{equation*}
    If $A \in \Hinfty(\rX_1)$ and $D \in \Hinfty(\rX_2)$ with $\phi_A^\infty$ and $\phi_D^\infty$, and if there exists $\delta > 0$ such that
    \begin{equation*}
        B(\rD(D^{1+\delta})) \subset \rD(A^\delta) \tand \| A^\delta B y \|_{\rX_1} \le C \cdot \| D^{1+\delta} y \|_{\rX_2}, \tforall y \in \rD(D^{1+\delta})
    \end{equation*}
    for some constant $C > 0$, then $\cA \in \Hinfty(\rX)$ with $\phi_\cA^\infty \le \max\{\phi_A^\infty,\phi_D^\infty\}$.
    \item Let $S \colon \rD(S) \subset \rY_0 \to \rY_0$ and $T \colon \rD(T) \subset \rY_0 \to \rY_0$ be operators on a Banach space $\rY_0$, and suppose that $T$ is closed and densely defined, and $S$ is relatively $T$-bounded, i.e., there are $C_0$, $C_1 > 0$ such that $\| S y \|_{\rY_0} \le C_0 \cdot \| T y \|_{\rY_0} + C_1 \cdot \| y \|_{\rY_0}$ for all $y \in \rD(T)$.
    Then for $\rY \coloneqq \rD(T) \times \rY_0$
    \begin{equation*}
        \tand \cB \colon \rD(\cB) \subset \rY \to \rY \enspace \text{defined by} \enspace \cB = \begin{pmatrix}
            0 & -\Id\\
            S & T
        \end{pmatrix}, \twith \rD(\cB) = \rD(T) \times \rD(T),
    \end{equation*}
    if $T \in \Hinfty(\rY_0)$ with $\phi_T^\infty \in [0,\pi)$, then there is $\omega \ge 0$ such that $\cB + \omega \in \Hinfty(\rY)$ with $\phi_{\cB}^\infty \in [0,\phi_T^\infty]$.
\end{enumerate}
\end{lem}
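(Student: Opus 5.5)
\textbf{Part (a).} I would write the functional calculus of $\cA$ explicitly in block form, reducing boundedness of the calculus to one uniform estimate on an off-diagonal term. After a harmless shift I assume $0\in\rho(A)\cap\rho(D)$, so that $K\coloneqq A^\delta B D^{-1-\delta}$ is bounded from $\rX_2$ to $\rX_1$ by the hypothesis and density of $\rD(D^{1+\delta})$. For $\lambda\in\rho(A)\cap\rho(D)$ the resolvent of $\cA$ is upper triangular:
\begin{equation*}
    R(\lambda,\cA)=\begin{pmatrix} R(\lambda,A) & R(\lambda,A)BR(\lambda,D)\\ 0 & R(\lambda,D)\end{pmatrix}.
\end{equation*}
This shows $\cA$ is sectorial with angle at most $\max\{\phi_A,\phi_D\}$, provided $\|R(\lambda,A)BR(\lambda,D)\|\lesssim|\lambda|^{-1}$. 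I would obtain that bound from $R(\lambda,A)BR(\lambda,D)=A^{-\delta}R(\lambda,A)\,K\,D^{1+\delta}R(\lambda,D)$ together with the moment inequalities $\|A^{-\delta}R(\lambda,A)\|\lesssim|\lambda|^{-1-\delta}$ and $\|D^{\delta}\,DR(\lambda,D)D^{-\delta}\|$-type bounds. For $f\in\Hinfty_0(\Sigma_\phi)$ with $\phi>\max\{\phi_A^\infty,\phi_D^\infty\}$, the Dunford integral gives
\begin{equation*}
    f(\cA)=\begin{pmatrix} f(A) & X_f\\ 0& f(D)\end{pmatrix}, \qquad X_f=\frac{1}{2\pi i}\int_{\del\Sigma_\phi}f(\lambda)R(\lambda,A)BR(\lambda,D)\srd\lambda .
\end{equation*}
It therefore suffices to prove $\|X_f\|\le C\|f\|_\infty$.

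For this estimate I would split the integrand symmetrically as $A^{1-\delta/2}\ldots$. Concretely, I would factor it as $\bigl(A^{-\delta}\cdot\text{(decaying function of }A)\bigr)\,K\,\bigl(\text{(decaying function of }D)\cdot D^{1+\delta}\bigr)$ with $\lambda$-weights $|\lambda|^{-1-\delta/2}$ and $|\lambda|^{\delta/2}$. The two factors then become $\psi_1(A/\lambda)$ and $\psi_2(D/\lambda)$ with $\psi_1,\psi_2\in\Hinfty_0$, the functions $z^{-\delta/2}(1-z)^{-1}$ and $z^{1+\delta/2}(1-z)^{-1}$ suitably regularized. Pairing with $x^*\in\rX_1^*$ and using Cauchy--Schwarz in $\frac{\rd|\lambda|}{|\lambda|}$ reduces the estimate to McIntosh square function estimates for $A^*$ and $D$. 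These estimates follow from the bounded $\Hinfty$-calculi; in general Banach spaces they take the form of $\gamma$-/$R$-bounded square functions. I expect this step to be the main obstacle: the gain $A^{-\delta}$ must exactly compensate the loss $D^{\delta}$ so that both auxiliary functions decay at $0$ and at $\infty$. If the direct square-function route is too delicate in the Banach setting, I would instead try to show $X_f$ is a bounded solution of a Sylvester-type identity and invoke the Kalton--Weis sum theorem.

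\textbf{Part (b).} I would reduce to (a) plus a bounded perturbation. For $\omega>0$ write
\begin{equation*}
    \cB+\omega=\begin{pmatrix}\omega & -\Id\\ 0 & T+\omega\end{pmatrix}+\begin{pmatrix}0&0\\ S&0\end{pmatrix}\eqqcolon \cB_0+\cS_0 .
\end{equation*}
The first summand is covered by (a) with $\rX_1=\rD(T)$, $\rX_2=\rY_0$, $A=\omega\Id$ (which has bounded calculus with angle $0$), $D=T+\omega$ (in $\Hinfty(\rY_0)$ with angle $\phi_T^\infty$, by translation), and $B=-\Id$. The compatibility condition reads $\|\omega^\delta y\|_{\rD(T)}\lesssim\|(T+\omega)y\|_{\rY_0}\lesssim\|(T+\omega)^{1+\delta}y\|_{\rY_0}$, which holds since $T+\omega$ is invertible. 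Hence $\cB_0\in\Hinfty(\rY)$ with angle at most $\phi_T^\infty$.

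The perturbation $\cS_0(y,z)=(0,Sy)$ is bounded on $\rY=\rD(T)\times\rY_0$, since $S$ is $T$-bounded. For $\omega$ large, $\cB_0+\cS_0$ is sectorial with angle close to that of $\cB_0$ and $0$ in its resolvent set, by a Neumann series, because $\|\cS_0R(\lambda,-\cB_0)\|\to0$ uniformly as $\omega\to\infty$. \autoref{lem:stability and pert results Hoo}(c) with $\alpha=0$ then yields $\cB+\omega\in\Hinfty(\rY)$ with angle $\le\phi_T^\infty$.
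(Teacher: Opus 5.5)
The paper gives no proof of this lemma; it quotes both parts from \cite{AH:23} (Cor.~7.1 and Cor.~7.11). Your reduction of (b) is fine \emph{given} (a): with $\rX_1=\rD(T)$, $A=\omega\Id$ and $B=-\Id$, the compatibility condition holds. The perturbation $\cS_0$ is bounded on $\rY$ because $S$ is $T$-bounded, and $\|\cS_0(\lambda+\cB_0)^{-1}\|\lesssim|\lambda+\omega|^{-1}$ is small on a fixed sector once $\omega$ is large.

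\textbf{The gap is in (a).} It sits in the one step that carries the proof, $\|X_f\|\le C\|f\|_\infty$.

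\emph{Wrong ingredients.} Several auxiliary claims are incorrect.
\begin{itemize}
\item The bound $\|A^{-\delta}R(\lambda,A)\|\lesssim|\lambda|^{-1-\delta}$ is false. Since $A^{-\delta}$ is bounded, this norm is only $O(|\lambda|^{-1})$ as $|\lambda|\to\infty$.
\item $D^{1+\delta}R(\lambda,D)$ is unbounded on $\rX_2$.
\item The resolvent bound $\|R(\lambda,A)BR(\lambda,D)\|\lesssim|\lambda|^{-1}$ must come from $Y\coloneqq BD^{-1}\in\cL(\rX_2,\rX_1)$, i.e.\ from $B\colon\rD(D)\to\rX_1$ being bounded. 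That is implicit in the setting, but it is not implied by the $\delta$-condition: take $A=\Id$, $D=\diag(2^n)$, $B=\diag(2^{n(1+\delta)})$ on $\ell^2$.
\end{itemize}

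\emph{The factorization cannot work.} Your functions $z^{-\delta/2}(1-z)^{-1}$ and $z^{1+\delta/2}(1-z)^{-1}$ are not in $\Hinfty_0$: the first is unbounded at $0$, the second at $\infty$. Redistributing powers of $\lambda$ only changes the weights, and no power of $A$ or $D$ can be moved across $K$. The obstruction is structural. In the scalar model $A=a$, $D=d$, $B=b$, the hypotheses allow $|b|\sim d$ when $a\ll d$. The symbol of $X_f$ is then
\[
b\,\frac{f(a)-f(d)}{a-d}\approx\frac{b}{d}\bigl(f(d)-f(a)\bigr),
\]
which has size $\|f\|_\infty$ uniformly in $d/a$. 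By contrast, any expression $\int f(\lambda)\,\psi_1(A/\lambda)K'\psi_2(D/\lambda)\frac{\srd\lambda}{\lambda}$ with $\psi_1,\psi_2\in\Hinfty_0$ and $K'$ bounded tends to $0$ as $d/a\to\infty$, uniformly in $\|f\|_\infty\le 1$.

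\emph{The fallback fails too.} The Sylvester/Kalton--Weis route does not apply as stated. $X_f$ solves $AX_f-X_fD=f(A)B-Bf(D)$, and $X\mapsto AX-XD$ is in general not invertible, since $\sigma(A)$ and $\sigma(D)$ overlap.

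\textbf{The missing idea} is to split off the non-decaying part before using square functions. From $BR(\lambda,D)=Y\bigl(\lambda R(\lambda,D)-\Id\bigr)$ and $\lambda R(\lambda,A)=\Id+AR(\lambda,A)$ one gets
\begin{equation*}
    X_f=Yf(D)-f(A)Y+\frac{1}{2\pi i}\int_{\del\Sigma_\phi}f(\lambda)\,A^{1-\delta}R(\lambda,A)\,K\,D^{\delta}R(\lambda,D)\srd\lambda,\qquad K=A^\delta BD^{-1-\delta}.
\end{equation*}
The first two terms are bounded by $C\|Y\|\,\|f\|_\infty$.

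In the integral, write $A^{1-\delta}R(\lambda,A)=\lambda^{-\delta}\psi_1(A/\lambda)$ and $D^{\delta}R(\lambda,D)=\lambda^{\delta-1}\psi_2(D/\lambda)$, with $\psi_1(z)=z^{1-\delta}(1-z)^{-1}$ and $\psi_2(z)=z^{\delta}(1-z)^{-1}$. For $\delta\in(0,1)$ these rotated functions (via $\lambda=t\mre^{\pm i\phi}$) genuinely lie in $\Hinfty_0$; larger $\delta$ reduces to this case by interpolating with the $D$-boundedness of $B$. The weight is now the scale-invariant $\lambda^{-1}$.

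With this, your duality argument goes through:
\begin{itemize}
\item in Hilbert space, by Cauchy--Schwarz together with McIntosh's quadratic estimates for $D$ and $A^*$;
\item in the UMD spaces of the paper, by $\gamma$-square function estimates together with trace duality and the ideal property for $K$.
\end{itemize}
This proves (a), and your argument for (b) then applies.
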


Next, we discuss two relevant (deterministic) consequences of the boundedness of the $\Hinfty$-calculus.
The one stated in~(a) below concerns a characterization of the fractional power domains. 
It follows from the fact that the boundedness of the $\Hinfty$-calculus implies the {\em boundedness of imaginary powers}, denoted by $A \in \cBIP(\rX)$, see, e.g., \cite[Sec.~4.4]{DHP:03}, together with the fact that $A \in \cBIP(\rX)$ yields a characterization of the fractional power domains, see, for example, \cite[Thm.~3.3.7]{PS:16}.
The second consequence is on the {\em deterministic} maximal $\rL^p$-regularity that especially follows from the bounded $\Hinfty$-calculus on so-called {\em UMD (unconditional martingale differences)} spaces.
Many classical function spaces such as $\rL^q$-spaces, Sobolev spaces $\rW^{s,q}$, Bessel potential spaces $\rH^{s,q}$, and Besov spaces $\rB_{qp}^s$ are UMD spaces if $p$, $q \in (1,\infty)$, see also \cite[Secs.~III.4.4 and III.4.5]{Ama:95}.
The assertion of~(b) is implied by the fact that the bounded $\Hinfty$-calculus especially implies the so-called {\em $\cR$-sectoriality}, see again \cite[Sec.~4.4]{DHP:03}, and the celebrated result on the characterization of the maximal $\rL^p$-regularity in terms of $\cR$-sectoriality due to Weis \cite[Thm.~4.2]{Wei:01}.

\begin{lem}\label{lem:cons of H00}
\begin{enumerate}[(a)]
    \item Let $A \in \Hinfty(\rX)$.
    Then for $\theta \in (0,1)$, we have $\rX_{A^\theta} \simeq [\rX,\rD(A)]_\theta$, where $\rX_{A^\theta}$ is defined by $\rX_{A^\theta} \coloneqq (\rD(A^\theta),\|\cdot\|_\theta)$, and $\| x \|_\theta \coloneqq \| x \| + \| A^\theta x \|$.
    \item Let $\rX$ be a UMD space, and $A \in \Hinfty(\rX)$ with $\phi_A^\infty < \frac{\pi}{2}$.
    Then $A$ has maximal $\rL^p$-regularity.
\end{enumerate}
\end{lem}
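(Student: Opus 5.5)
Both parts of \autoref{lem:cons of H00} are obtained by chaining standard results, following the route indicated before the statement. No new estimates are needed.

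For~(a), let $A \in \Hinfty(\rX)$ with angle $\phi_A^\infty$. First show that $A \in \cBIP(\rX)$. For $s \in \R$, the function $z \mapsto z^{is}$ is bounded and holomorphic on every sector $\Sigma_\phi$ with $\phi \in (\phi_A^\infty,\pi)$, and $\sup_{z \in \Sigma_\phi} |z^{is}| = \mre^{\phi|s|}$. The bounded $\Hinfty$-calculus therefore gives $\|A^{is}\| \le C \mre^{\phi|s|}$. Hence the imaginary powers are bounded with power angle $\theta_A \le \phi_A^\infty < \pi$; see \cite[Sec.~4.4]{DHP:03}.

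With $A \in \cBIP(\rX)$ in hand, \cite[Thm.~3.3.7]{PS:16} identifies complex interpolation spaces with fractional power domains. For $\theta \in (0,1)$ it gives $[\rX,\rD(A)]_\theta = \rD(A^\theta)$ with equivalent norms, where $\rD(A)$ carries the graph norm. The only bookkeeping point is invertibility. If $0 \notin \rho(A)$, apply the argument to $A+\mu$ with $\mu>0$; this operator again has a bounded $\Hinfty$-calculus, and $\rD((A+\mu)^\theta) = \rD(A^\theta)$ with equivalent norms $\|x\|+\|A^\theta x\|$ by the moment inequality. This yields $\rX_{A^\theta} \simeq [\rX,\rD(A)]_\theta$.

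For~(b), take $\phi \in (\phi_A^\infty,\pi/2)$. The main step is to show that $\{\lambda(\lambda+A)^{-1} : |\arg\lambda| \le \pi-\phi\}$ is $\cR$-bounded. In a UMD space $\rX$, which has Pisier's property~$(\alpha)$ or at least finite cotype, the Kalton--Weis theorem states that a bounded $\Hinfty$-calculus maps uniformly bounded families of $\Hinfty$-functions to $\cR$-bounded sets. This can be applied to $f_\lambda(z) = \lambda(\lambda+z)^{-1}$, which is uniformly bounded on a sector slightly larger than $\Sigma_{\phi_A^\infty}$. The consequence that $A$ is $\cR$-sectorial with $\phi_A^{\cR} \le \phi_A^\infty < \pi/2$ is recorded in \cite[Sec.~4.4]{DHP:03}. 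Weis' characterization \cite[Thm.~4.2]{Wei:01} then states that, on a UMD space, $\cR$-sectoriality with angle below $\pi/2$ is equivalent to maximal $\rL^p$-regularity on $(0,T)$ for $p \in (1,\infty)$, which finishes the proof.

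The only genuinely nontrivial ingredient is this passage from a bounded calculus to $\cR$-boundedness in~(b). I would cite it rather than reprove it. I would also note that if $0 \in \sigma(A)$, maximal regularity holds on finite intervals, possibly after a shift.
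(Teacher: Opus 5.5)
Your proposal follows the paper's route exactly. For~(a), bounded $\Hinfty$-calculus gives $A\in\cBIP(\rX)$, and then \cite[Thm.~3.3.7]{PS:16} identifies $[\rX,\rD(A)]_\theta$ with $\rD(A^\theta)$. For~(b), bounded $\Hinfty$-calculus gives $\cR$-sectoriality with $\phi_A^{\cR}\le\phi_A^\infty<\frac{\pi}{2}$, to which Weis' theorem \cite[Thm.~4.2]{Wei:01} applies. Your shift bookkeeping for non-invertible $A$ is fine.

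One correction: a UMD space need not have Pisier's property~$(\alpha)$ (e.g.\ the Schatten classes $S^p$, $p\neq 2$). So the Kalton--Weis version sending all uniformly bounded families of $\Hinfty$-functions to $\cR$-bounded sets is not available in general. What you actually need is the weaker Kalton--Weis result for spaces with property~$(\Delta)$, which every UMD space has. It yields only $\cR$-sectoriality with $\phi_A^{\cR}\le\phi_A^\infty$, which is the implication the paper takes from \cite[Sec.~4.4]{DHP:03}.
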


We now make precise the fluid operator in the present setting.
At this stage, we also introduce two pieces of notation for the upper and bottom boundaries, namely, we set $\Gamma_u \coloneqq (0,L) \times \{0\}$ and $\Gamma_b \coloneqq (0,L) \times \{-1\}$.

We start with the space of solenoidal vector fields $\rL_\sigma^q(\cF_0)$.
In fact, it is defined by
\begin{equation}\label{eq:L_sigma^q}
    \rL_\sigma^q(\cF_0) \coloneqq \overline{\{f \in \rC_\rc^\infty(\cF_0)^2 : \mdiv f = 0 \tin \cF_0\}}^{\| \cdot \|_{\rL^q(\cF_0)^2}}.
\end{equation}
In a similar way as in \cite{SS:92}, it can be shown that the space $\rL_\sigma^q(\cF_0)$ admits the characterization
\begin{equation}\label{eq:char L_sigma^q}
    \rL_\sigma^q(\cF_0) = \{f \in \rL^q(\cF_0)^2 : \mdiv f = 0 \tand f \cdot \nu = 0 \ton \Gamma_u \cup \Gamma_b\},
\end{equation}
where $\nu = \pm \mre_2$ denotes the outer unit normal vector.

We refer to the projection $\bP \colon \rL^q(\cF_0)^2 \to \rL_\sigma^q(\cF_0)$ onto the space $\rL_\sigma^q(\cF_0)$ as the \textit{Helmholtz projection}, whose existence in $\rL^q(\cF_0)$ for cylindrical domains as the present fluid domain has been shown in \cite{Nau:15}, and we define the associated Stokes operator by
\begin{equation}\label{eq:Stokes op}
    A_0 u \coloneqq \bP \Delta u, \tfor u \in \rD(A_0) \coloneqq \left\{u \in \rW_\per^{2,q}(\cF_0)^2 \cap \rL_\sigma^q(\cF_0) : u = 0 \ton \Gamma_u \cup \Gamma_b\right\}. 
\end{equation}

The following result on the boundedness of the $\Hinfty$-calculus of the Stokes operator can be obtained in a similar manner as \cite[Thm.~9.9]{Nau:12}.
Let us provide a few more details on the ideas to obtain \autoref{lem:Hoo-calc Stokes op}.
In fact, the situation of a cylindrical domain with periodicity at the lateral boundary can be reduced to the case of an infinite layer and then also the half space by means of a suitable extension and cutoff procedure.
For the boundedness of the $\Hinfty$-calculus of the Stokes operator on the half space, we refer to \cite[Thm.~6.3]{DHP:01}.

\begin{lem}\label{lem:Hoo-calc Stokes op}
For $q \in (1,\infty)$, the Stokes operator $A_0$ satisfies $-A_0 \in \Hinfty(\rL_\sigma^q(\cF_0))$ with angle $\phi_{-A_0}^\infty < \frac{\pi}{2}$.
\end{lem}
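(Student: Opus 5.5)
The plan is to reduce the periodic channel $\cF_0=(0,L)\times(-1,0)$ to the infinite layer $\R\times(-1,0)$, and then to the half-space results of \cite[Thm.~6.3]{DHP:01}. The reduction will follow the localization scheme of \cite[Thm.~9.9]{Nau:12}.

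\textbf{Step 1 (periodic extension).} Extending functions $L$-periodically in $x$ identifies $\rL^q(\cF_0)$, and likewise $\rL_\sigma^q(\cF_0)$ and $\rD(A_0)$, with the closed subspace of $L$-periodic fields in the corresponding layer spaces taken with local norms. Horizontal translations $\tau_h$ commute with $\bP$ and with the Stokes resolvent. Hence, once the layer Stokes operator $A_{\mathrm{lay}}$ is known to lie in $\Hinfty$ on $\rL_\sigma^q$ of the layer, the averaging operator $P f = \frac1L\int_0^L \tau_h f\srd h$ is not directly available, since periodic functions are not in $\rL^q(\R\times(-1,0))$. So instead of \autoref{lem:stability and pert results Hoo}(a), I would use a Fourier-series argument. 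Expand $f(x,y)=\sum_k \hat f_k(y)\mre^{2\pi\mathrm{i}kx/L}$. The resolvent problem then decouples into one-dimensional ODE systems in $y$ with the parameter $\xi_k=2\pi k/L$, which are exactly the partial-Fourier-transformed layer problems evaluated at discrete frequencies. The layer resolvent is a Fourier multiplier in $\xi$ with operator-valued symbol acting in $y$, holomorphic in $\lambda$. Transference of operator-valued multipliers from $\R$ to the torus (de Leeuw type, valid for $\cR$-bounded symbols continuous in $\xi$) then transfers the $\Hinfty$-estimate $\|h(-A_{\mathrm{lay}})\|\le C\|h\|_\infty$ to the periodic setting.

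\textbf{Step 2 (layer via half-spaces).} For the layer I would follow \cite{Nau:12}. Cover the layer by two overlapping strips near $y=0$ and $y=-1$, each identified with a half-space, and use cut-offs $\chi_0,\chi_1$ depending only on $y$. Build an approximate resolvent from the half-space Stokes resolvents (with $\Hinfty$-calculus from \cite[Thm.~6.3]{DHP:01}). The commutator terms produced by the cut-offs are lower order: they involve $\nabla\chi_j\cdot u$, terms like $\Delta\chi_j u$, the pressure times $\nabla\chi_j$, and a divergence correction via a Bogovski\u{\i} operator on the overlap. These terms are bounded relative to fractional powers $(-A)^\alpha$ with $\alpha<1$. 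Alternatively, $\Hinfty$ for the localized operator follows by the perturbation result \autoref{lem:stability and pert results Hoo}(c), applied to the direct sum of half-space operators shifted by $\omega$, transported back through the localization isomorphism of \autoref{lem:stability and pert results Hoo}(b). The angle remains below $\frac{\pi}{2}$ because the half-space angle is arbitrarily small and the sectoriality angle of the perturbed operator is computed from the resolvent estimates.

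\textbf{Step 3 (removing the shift).} Poincar\'e's inequality (no-slip at $y=-1$ and $y=0$) gives $0\in\rho(A_0)$ and, by a standard argument, sectoriality without shift. \autoref{lem:stability and pert results Hoo}(c) applied to $-A_0=(-A_0+\omega)-\omega$ then removes $\omega$.

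\textbf{Main obstacle.} I expect the hardest part to be Step 2, specifically controlling the pressure in the commutator terms. The localized pressure is only determined up to constants and is controlled merely in $\rL^q$ near the cut-off region, which requires a weak Neumann problem estimate on the layer. Rather than reproving this, I would first try to import the pressure estimate directly from \cite{Nau:12} and \cite{Nau:15}.
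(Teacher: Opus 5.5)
The paper only sketches this lemma: it reduces the periodic channel to the infinite layer and then to the half-space by extension and cut-off, in the spirit of \cite[Thm.~9.9]{Nau:12}, and then uses \cite[Thm.~6.3]{DHP:01}. Your Steps~2--3 are in line with this. Your Step~1, however, replaces the extension/cut-off reduction from the channel to the layer by Fourier series in $x$ and de Leeuw transference, and it fails at the zero mode. You assert that the mode problems are exactly the layer symbols evaluated at $\xi_k=2\pi k/L$, and transference needs the symbol to be continuous at the lattice points. Both are false for $k=0$. For $\xi\neq0$, a solenoidal layer symbol satisfies $\mathrm{i}\xi\hat u_1+\hat u_2'=0$ with $\hat u_2(0)=\hat u_2(-1)=0$, hence $\int_{-1}^0\hat u_1(\xi,y)\srd y=0$. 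Consequently, as $\xi\to0$ the layer resolvent symbols converge to the resolvent of the Dirichlet operator $\del_y^2$ on mean-zero profiles, with a constant Lagrange multiplier (the limit of $\mathrm{i}\xi\hat\pi$). In the periodic channel, however, the $x$-independent part of $\rL_\sigma^q(\cF_0)$, as characterized in \eqref{eq:char L_sigma^q}, is $\{(v(y),0): v\in\rL^q(-1,0)\}$. It contains fields with nonzero horizontal flux, e.g.\ $u=(y(y+1),0)\in\rD(A_0)$, and on it $A_0$ acts as the unconstrained Dirichlet operator $v\mapsto v''$. So transference produces a multiplier whose zero mode is the wrong operator, and as written your argument gives no bound for $h(-A_0)$ on the flux modes.

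The repair is short. The $x$-average $Q$ commutes with $A_0$, and on $Q\rL_\sigma^q(\cF_0)\cong\rL^q(-1,0)$ the operator $-A_0$ acts as $-\del_y^2$ with Dirichlet conditions, which has a bounded $\Hinfty$-calculus of angle $0$. On $(\Id-Q)\rL_\sigma^q(\cF_0)$ you may apply transference to $\psi\, m$. Here $m$ is the symbol of $h(-A_{\mathrm{lay}})\bP_{\mathrm{lay}}$, and $\psi$ is a smooth scalar cut-off vanishing near $\xi=0$ and equal to $1$ near $2\pi k/L$ for $k\neq0$. Note that de Leeuw's theorem requires boundedness of the multiplier on $\rL^q(\R;\rL^q(-1,0)^2)$ and strong continuity at the lattice points; $\cR$-boundedness of the symbol is not what is needed.

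So repaired, your route makes the channel-to-layer step exact, with no commutator terms and no shift. The paper's cut-offs $\chi_j(x)$ instead treat all modes uniformly, at the price of lower-order perturbations. There the flux enters only through the compatibility condition $\int_{\cF_0}\chi_j'u_1\srd(x,y)=\Phi\int\chi_j'\srd x=0$ for the divergence correction, where $\Phi=\int_{-1}^0u_1\srd y$ is independent of $x$.

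A minor point in Step~2: the cut-off and divergence-correction maps form a retraction--coretraction pair, not an isomorphism. So \autoref{lem:stability and pert results Hoo}(b) does not apply literally, and you need a localization principle for the $\Hinfty$-calculus, or the known $\Hinfty$-result for the Stokes operator in a layer. This is, however, no less detailed than the paper's own sketch.
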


Next, we show that the operator matrix corresponding to the reformulation of the plate equation as a first-order-in-time problem admits a bounded $\Hinfty$-calculus.

For $\rL_\rm^q(0,L)$ as introduced in \eqref{eq:avg zero space and projection}, we define the ground space by $\rX_0^\rs \coloneqq \rW_\per^{4,q}(0,L) \cap \rL_\rm^q(0,L) \times \rL_\rm^q(0,L)$.
We will denote the $\rL^q(0,L)$-realization of $\del_{xxxx}$ by $\Delta_\rs^2$, i.e., $\Delta_\rs^2 g \coloneqq \del_{xxxx} g$ for $g \in \rW_\per^{4,q}(0,L)$.
Recalling $P_\rm \colon \rL^q(0,L) \to \rL_\rm^q(0,L)$ from \eqref{eq:avg zero space and projection}, we set the operator matrix $A_\rs \colon \rD(A_\rs) \subset \rX_0^\rs \to \rX_0^\rs$ to be
\begin{equation}\label{eq:strongly damped plate op matrix}
    A_\rs \coloneqq \begin{pmatrix}
        0 & \Id\\
        -P_\rm \Delta_\rs^2 & -P_\rm \Delta_\rs^2
    \end{pmatrix}, \twith \rD(A_\rs) = \rW_\per^{4,q}(0,L) \cap \rL_\rm^q(0,L) \times \rW_\per^{4,q}(0,L) \cap \rL_\rm^q(0,L).
\end{equation}
Before studying $A_\rs$ from \eqref{eq:strongly damped plate op matrix}, we first consider the operator $P_\rm \Delta_s^2 \colon \rW_\per^{4,q}(0,L) \cap \rL_\rm^q(0,L) \to \rL_\rm^q(0,L)$.

\begin{lem}\label{lem:bdd Hoo-calculus proj bi-Laplacian}
For $q \in (1,\infty)$, we have $P_\rm \Delta_\rs^2 \in \Hinfty(\rL_\rm^q(0,L))$ with $\phi_{P_\rm \Delta_\rs^2}^\infty < \frac{\pi}{2}$.
\end{lem}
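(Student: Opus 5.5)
The plan is to get the claim from the bounded $\Hinfty$-calculus of a shifted periodic bi-Laplacian on the whole of $\rL^q(0,L)$, and then restrict to the mean-zero subspace using \autoref{lem:stability and pert results Hoo}(a).

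First, I would consider $\Delta_\rs^2$ on $\rL^q(0,L)$ with domain $\rW_\per^{4,q}(0,L)$. It is convenient to identify $\rL^q(0,L)$ with $\rL^q(\mathbb{T}_L)$, where $\mathbb{T}_L$ is the torus of length $L$. Then $\Delta_\rs^2$ is the Fourier multiplier with symbol $(2\pi k/L)^4$, $k\in\mathbb{Z}$. For $\mu>0$, the operator $\mu+\Delta_\rs^2$ has a bounded $\Hinfty$-calculus of any angle $\phi\in(0,\pi)$. To see this, note that for $h\in\Hinfty(\Sigma_\phi)$ the function $m(\xi)=h(\mu+\xi^4)$ satisfies the Mikhlin condition $|\xi^j m^{(j)}(\xi)|\le C\|h\|_\infty$ for $j\le 2$. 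This follows from Cauchy estimates for $h$ on the sector, since $\xi\mapsto\mu+\xi^4$ maps $\R$ into $[\mu,\infty)$, which lies well inside the sector. The periodic Marcinkiewicz multiplier theorem on $\mathbb{T}_L$, or transference from $\R$, then gives $\|h(\mu+\Delta_\rs^2)\|_{\cL(\rL^q)}\le C\|h\|_\infty$. An alternative route is to reduce, by an extension/cutoff argument as the paper does for \autoref{lem:Hoo-calc Stokes op}, to the whole-line bi-Laplacian, whose $\Hinfty$-calculus is classical.

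Second, I would restrict. The projection $P_\rm$ from \eqref{eq:avg zero space and projection} is bounded on $\rL^q(0,L)$. It maps $\rW_\per^{4,q}(0,L)$ into itself, because it only subtracts a constant. It also commutes with $\Delta_\rs^2$: for periodic $g\in\rW_\per^{4,q}$ we have $\int_0^L\del_{xxxx}g\srd x=0$ by periodicity, so $\del_{xxxx}P_\rm g=\del_{xxxx}g=P_\rm\del_{xxxx}g$. Consequently $P_\rm$ commutes with $\mu+\Delta_\rs^2$. By \autoref{lem:stability and pert results Hoo}(a), the part $P_\rm(\mu+\Delta_\rs^2)=\mu+P_\rm\Delta_\rs^2$ lies in $\Hinfty(\rL_\rm^q(0,L))$ with angle less than $\pi/2$.

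Third, I would remove the shift $\mu$. This is the main obstacle, because a shift is not automatically harmless for the $\Hinfty$-calculus. On $\rL_\rm^q$ the zero mode is excluded, so the spectrum of $P_\rm\Delta_\rs^2$ is $\{(2\pi k/L)^4:k\neq0\}\subset[(2\pi/L)^4,\infty)$, and in particular $0\in\rho(P_\rm\Delta_\rs^2)$. The simplest route is to apply the multiplier argument directly on the mean-zero space. For $h\in\Hinfty(\Sigma_\phi)$, $h(P_\rm\Delta_\rs^2)$ is the Fourier multiplier $k\mapsto h((2\pi k/L)^4)$ for $k\neq0$ and $0$ for $k=0$. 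This coincides with the multiplier $h(\xi^4)\chi(\xi)$, where $\chi$ is a smooth cutoff vanishing near $0$ and equal to $1$ for $|\xi|\ge 2\pi/L$. That symbol satisfies the Mikhlin bounds uniformly in $\|h\|_\infty$, again by Cauchy estimates on the sector away from the origin. This gives a bounded calculus of any angle in $(0,\pi)$, and in particular angle below $\pi/2$.

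As a fallback for the third step, I would use \autoref{lem:stability and pert results Hoo}(c). Take $A=\mu+P_\rm\Delta_\rs^2$ and $B=-\mu$, which is bounded, so $\alpha=0$. One checks that $P_\rm\Delta_\rs^2$ is sectorial of angle $0$: it is self-adjoint-type with positive spectrum on the torus, and its resolvent is a Mikhlin multiplier. Together with $0\in\rho(P_\rm\Delta_\rs^2)$, this gives the claim with $\phi^\infty\le\max\{\phi_A^\infty,\phi_{A+B}\}<\pi/2$.
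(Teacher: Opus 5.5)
Your proposal is correct and follows the same route as the paper: you take a shifted periodic bi-Laplacian with bounded $\Hinfty$-calculus on $\rL^q(0,L)$, restrict it to $\rL_\rm^q(0,L)$ via \autoref{lem:stability and pert results Hoo}(a) using that $P_\rm$ commutes with $\Delta_\rs^2$, and remove the shift using the invertibility of $P_\rm\Delta_\rs^2$. You differ in deriving the shifted calculus from a periodic Mikhlin/Marcinkiewicz multiplier theorem instead of citing \cite[Thm.~4.6]{NS:12}, and in actually justifying the shift removal, which the paper only asserts; your direct multiplier argument on the mean-zero space already proves the whole lemma by itself, with $\Hinfty$-angle $0$.
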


\begin{proof}
First, \cite[Thm.~4.6]{NS:12} yields that there exists $\omega \ge 0$ such that $\Delta_\rs^2 + \omega \in \Hinfty(\rL^q(0,L))$ with $\phi_{\Delta_\rs^2 + \omega}^\infty < \frac{\pi}{2}$.
The assertion of the lemma then follows from \autoref{lem:stability and pert results Hoo}(a), since $P_\rm$ fits into the framework of \autoref{lem:stability and pert results Hoo}(a) by the periodic boundary conditions.
The shift can be omitted thanks to the fact that the projection $P_\rm$ rules out the constants, thereby leading to the invertibility of $P_\rm \Delta_\rs^2$.
\end{proof}

We now discuss the boundedness of the $\Hinfty$-calculus of $A_\rs$.

\begin{lem}\label{lem:Hoo-calc thick layer op matrix}
Let $q \in (1,\infty)$.
Then for $A_\rs$ from \eqref{eq:strongly damped plate op matrix}, it holds that $-A_\rs \in \Hinfty(\rX_0^\rs)$ with $\phi_{-A_\rs}^\infty  < \frac{\pi}{2}$.
\end{lem}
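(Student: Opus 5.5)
The plan is to apply \autoref{lem:bdd Hoo-calculus block op matrix}(b) with $\rY_0 \coloneqq \rL_\rm^q(0,L)$ and $S = T \coloneqq P_\rm \Delta_\rs^2$ on $\rD(T) = \rW_\per^{4,q}(0,L) \cap \rL_\rm^q(0,L)$. Then $\rY = \rD(T) \times \rY_0 = \rX_0^\rs$ with the graph norm of $T$, and
\begin{equation*}
    \cB = \begin{pmatrix}
        0 & -\Id\\
        P_\rm \Delta_\rs^2 & P_\rm \Delta_\rs^2
    \end{pmatrix} = -A_\rs, \qquad \rD(\cB) = \rD(A_\rs).
\end{equation*}
The hypotheses are easy to verify. $T$ is closed and densely defined, since it is invertible and sectorial by \autoref{lem:bdd Hoo-calculus proj bi-Laplacian}. $S = T$ is trivially relatively $T$-bounded. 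Finally, $T \in \Hinfty(\rL_\rm^q(0,L))$ with $\phi_T^\infty < \frac{\pi}{2}$, again by \autoref{lem:bdd Hoo-calculus proj bi-Laplacian}. Hence there is $\omega \ge 0$ with $-A_\rs + \omega \in \Hinfty(\rX_0^\rs)$, with angle at most $\phi_T^\infty < \frac{\pi}{2}$.

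The remaining task, and the main point, is to remove the shift $\omega$. For this I will show that $0 \in \rho(A_\rs)$ and that $-A_\rs$ is sectorial with angle $< \frac{\pi}{2}$ and no shift. Then \autoref{lem:stability and pert results Hoo}(c) applies: take $A = -A_\rs + \omega$ and $B = -\omega$, a bounded perturbation, so $\alpha = 0$. This yields $-A_\rs \in \Hinfty$ with angle at most $\max\{\phi^\infty_{-A_\rs+\omega}, \phi_{-A_\rs}\} < \frac{\pi}{2}$.

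Invertibility is explicit. The system $A_\rs(\eta_1,\eta_2) = (f_1,f_2)$ forces $\eta_2 = f_1 \in \rD(T)$, and then $-T\eta_1 = f_2 + T f_1$. Since $T f_1 \in \rL_\rm^q$ and $T$ is invertible, $\eta_1 = -T^{-1}(f_2 + T f_1) \in \rD(T)$, with bounds in the graph norm.

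For sectoriality I will use the fact that $T$ commutes with everything in sight. Solving $(\lambda - A_\rs)(\eta_1,\eta_2) = (f_1,f_2)$ gives $\eta_2 = \lambda\eta_1 - f_1$ and
\begin{equation*}
    \bigl(\lambda^2 + (1+\lambda)T\bigr)\eta_1 = (\lambda + T) f_1 + f_2 .
\end{equation*}
The resolvent is therefore given by functions of $T$ such as
\begin{equation*}
    \frac{1+\lambda}{\lambda^2 + (1+\lambda)T}, \qquad \frac{\lambda}{\lambda^2 + (1+\lambda)T}, \qquad \frac{T}{\lambda^2 + (1+\lambda)T},
\end{equation*}
and their bounds follow from the bounded $\Hinfty$-calculus of $T$. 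Because $T$ is invertible, its spectrum lies in a sector truncated away from $0$. The roots in $\lambda$ of $\lambda^2 + (1+\lambda)\mu$, for $\mu$ in that spectrum, lie in the open left half plane: for $\mu > 0$ real this is immediate, and for small angles it follows by continuity. Hence $\lambda \mapsto (\lambda - A_\rs)^{-1}$ is bounded on $\{\Rep \lambda \ge 0\}$ near the origin, which together with the shifted estimate near infinity gives sectoriality of $-A_\rs$ with angle $< \frac{\pi}{2}$.

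I expect this uniform root-location estimate to be the delicate step. If it becomes cumbersome, I will instead combine the shifted sectoriality for large $|\lambda|$ with a compactness argument near $0$, using the compact resolvent of $T$ and the fact that the eigenvalues of $-A_\rs$ have negative real part. Those eigenvalues can be computed via Fourier series, since every nonzero mode gives roots of $\lambda^2 + (1+\lambda)k^4 = 0$ in the left half plane.
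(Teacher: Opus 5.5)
Your proposal is correct and follows the paper's route: \autoref{lem:bdd Hoo-calculus block op matrix}(b) with $S=T=P_\rm\Delta_\rs^2$ gives the claim up to a shift, and the shift is then removed using the invertibility of $A_\rs$; the paper merely points to the explicit formula for $A_\rs^{-1}$. Your additional step placing $\sigma(A_\rs)$ in the open left half-plane is what actually makes this shift removal rigorous, since invertibility alone does not suffice; it is cleanest to use $\sigma(T)=\{(2\pi k/L)^4 : k\neq 0\}\subset(0,\infty)$, so that for $\lambda\neq -1$ one has $\lambda\in\sigma(A_\rs)$ if and only if $\lambda^2+(1+\lambda)(2\pi k/L)^4=0$ for some $k\neq 0$ (and $-1$, which is essential spectrum rather than an eigenvalue, is harmless), rather than a continuity argument in $\arg\mu$, which only works in a narrow truncated sector that \autoref{lem:bdd Hoo-calculus proj bi-Laplacian} does not provide.
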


\begin{proof}
From \autoref{lem:bdd Hoo-calculus proj bi-Laplacian}, it follows that $P_\rm \Delta_\rs^2 \in \Hinfty(\rL_\rm^q(0,L))$ with $\phi_{P_\rm \Delta_\rs^2}^\infty  < \frac{\pi}{2}$.
Moreover, the structure of $A_\rs$ directly yields that we are in the scope of \autoref{lem:bdd Hoo-calculus block op matrix}(b), since $P_\rm \Delta_\rs^2$ is relatively bounded with respect to itself.
Thus, up to a shift, the assertion of the lemma is a consequence of \autoref{lem:bdd Hoo-calculus block op matrix}(b).
The shift can be removed upon invoking the explicit form of the inverse of $A_\rs^{-1}$ given by
\begin{equation*}
    A_\rs^{-1} = \begin{pmatrix}
        -\Id & (-P_\rm \Delta_\rs^2)^{-1}\\
        \Id & 0
    \end{pmatrix}. \qedhere
\end{equation*}
\end{proof}

\section{Bounded $\Hinfty$-calculus of the fluid-structure operator}\label{sec:Hoo-calculus fluid-structure op}

In this section, we investigate the fluid-structure operator associated with the linearized problem.
The main result, \autoref{thm:bdd Hoo-calculus fluid-structure op}, discusses the boundedness of the $\Hinfty$-calculus of the fluid-structure operator.
On the one hand, this will pave the way for the stochastic maximal regularity discussed in \autoref{sec:lin stoch probl}.
On the other hand, this result goes beyond the state-of-the-art concerning operator theoretic results in the context of fluid-structure interaction problems with the structure located at a part of the fluid boundary, and it is of independent interest.

The linearized {\em deterministic} problem is given by
\begin{equation}\label{eq:lin per multilayered interaction probl}
    \left\{
    \begin{aligned}
        \del_t u - \mdiv \sigma_\rf(u,\pi)
        &= f, \enspace \mdiv u = 0, && t > 0, \enspace (x,y) \in \cF_0,\\
        \del_t \eta_1
        &= \eta_2, && t > 0, \enspace x \in (0,L),\\
        \del_t \eta_2 + P_\rm \Delta_s^2 \eta_1 + P_\rm \Delta_s^2 \eta_2
        &= -P_\rm \left(\mre_2 \cdot \sigma_\rf(u,\pi) \mre_2 \right) + P_\rm g, && t > 0, \enspace x \in (0,L),\\
        u(t,x,0)
        &= P_\rm(\eta_2(t,x)) \mre_2, && t > 0, \enspace x \in (0,L),\\
        u(t,x,-1)
        &= 0, && t > 0, \enspace x \in (0,L),\\
        (u,\pi)(t,0,y) = (u,\pi)(t,L,y), \enspace (\eta_1,\eta_2)(t,0)
        &= (\eta_1,\eta_2)(t,L), && t > 0, \enspace y \in (-1,0),\\
        u(0,x,y) 
        &= u_0(x,y), && (x,y) \in \cF_0,\\
        \eta_1(0,x) = \eta_{1,0}(x) \tand \eta_2(0,x) 
        &= \eta_{2,0}(x), && x \in (0,L).
    \end{aligned}
    \right.
\end{equation}

For the reformulation of \eqref{eq:lin per multilayered interaction probl} in operator form, we start by discussing the lifting of the kinematic coupling condition in the Stokes problem.
In fact, we consider the following stationary Stokes problem:
\begin{equation}\label{eq:stat Stokes probl with inhom bc}
    \left\{
    \begin{aligned}
        -\mdiv \sigma_\rf(w,\psi)
        &= 0, \enspace \mdiv w = 0, &&\tin \cF_0,\\
        w(x,0)
        &= b(x), &&\tfor x \in (0,L),\\
        w(x,-1)
        &= 0, &&\tfor x \in (0,L),\\
        (w,\psi)(0,y) = (w,\psi)(L,y), \enspace b(0)
        &= b(L), &&\tfor y \in (-1,0).
    \end{aligned}
    \right.
\end{equation}
The case of $b$ with low regularity will be of particular importance to establish the bounded $\Hinfty$-calculus of the fluid-structure operator.
In the situation of a sufficiently regular boundary, for $b \in \rW^{2-\nicefrac{1}{q},q}$, the analogue of \autoref{lem:Stokes lifting} is classical, and we refer e.g., to \cite[Prop.~2.3]{Tem:79}.
The present case of a cylindrical domain can be handled in a similar manner upon using the extension and cutoff procedure mentioned in the context of the Stokes operator to reduce the problem to an infinite layer, or the half-space case.
For the (non-stationary) Stokes equations on the half-space and in the situation of inhomogeneous Dirichlet boundary conditions, we also refer to \cite[Sec.~7.2]{PS:16}.

We require some further function spaces before stating \autoref{lem:Stokes lifting}.
For $q \in (1,\infty)$, we define
\begin{equation*}
    \rW_{\per,n}^{s,q}(0,L) \coloneqq \left\{g \in \rW_\per^{s,q}(0,L)^2 : \int_0^L g \cdot \mre_2 \srd x = 0\right\}, \tfor s \ge 0, \tand  \rW_{\per,n}^{s,q}(0,L) \coloneqq (\rW_{\per,n}^{-s,q'}(0,L))',
\end{equation*}
for $s < 0$, and where $q' \in (1,\infty)$ is such that $\frac{1}{q} + \frac{1}{q'} = 1$.
The subscript ``$\mathrm{n}$'' represents ``normal condition'' here.
Likewise, we define
\begin{equation*}
    \cW^{s,q}(\cF_0) /\R \coloneqq \rW_\per^{s,q}(\cF_0) /\R, \tfor s \ge 0, \tand \cW^{s,q}(\cF_0) /\R \coloneqq (\rW_\per^{-s,q'}(\cF_0) /\R)', \tfor s < 0.
\end{equation*}
The situation of data $b$ with negative Sobolev regularity was addressed in \cite[Thm.~A.1 and Cor.~A.1]{Ray:07} in the Hilbert space setting, and in the case of standard $\rC^2$-domains.
The result below can be obtained in a similar way as \cite[Cor.~A.1]{Ray:07}. 

\begin{lem}\label{lem:Stokes lifting}
Let $q \in (1,\infty)$ and $-\frac{1}{q} \le s \le 2 - \frac{1}{q}$, and consider $b \in \rW_{\per,n}^{s,q}(0,L)$.
Then there exists a unique solution $(w,\psi) \in \rW_\per^{s+\nicefrac{1}{q},q}(\cF_0)^2 \times \cW^{s-1+\nicefrac{1}{q},q}(\cF_0) / \R$ to \eqref{eq:stat Stokes probl with inhom bc}.
\end{lem}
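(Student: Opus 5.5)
The plan is to prove the lemma first at the two endpoint regularities, $s = 2-\frac1q$ and $s = -\frac1q$, and then to cover the intermediate range by interpolation.

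\emph{Upper endpoint.} For $s = 2-\frac1q$ we take $b \in \rW_{\per,n}^{2-\nicefrac1q,q}(0,L)$. The condition $\int_0^L b\cdot\mre_2\srd x=0$ is exactly the compatibility condition that makes $\mdiv w=0$ consistent with the boundary data, since the flux through $\Gamma_u$ must vanish. We construct a divergence-free extension $w_b\in\rW_\per^{2,q}(\cF_0)^2$ with trace $b$ on $\Gamma_u$ and $0$ on $\Gamma_b$. One way is to take a periodic $\rW^{2,q}$-extension of $b$, correct its divergence with a Bogovski\u{\i}-type operator on the periodic cylinder, and use the vanishing flux to guarantee solvability. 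Next we write $w=w_b+v$. Then $v$ solves the stationary Stokes problem with homogeneous Dirichlet data on $\Gamma_u\cup\Gamma_b$ and right-hand side $\Delta w_b\in\rL^q$. Using \autoref{lem:Hoo-calc Stokes op}, in particular $0\in\rho(A_0)$ because the bounded $\Hinfty$-calculus is with angle, we get $v\in\rD(A_0)$. We then recover $\psi\in\rW^{1,q}_\per/\R$ from $(\Id-\bP)$, via the weak Neumann problem underlying the Helmholtz decomposition of \cite{Nau:15}. Uniqueness follows from the injectivity of $A_0$.

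\emph{Lower endpoint, by duality.} For $s=-\frac1q$ we follow \cite[Cor.~A.1]{Ray:07}. Let $(w,\psi)$ solve the problem for smooth $b$. We test it against the solution $(z,\theta)$ of the homogeneous-Dirichlet Stokes problem $-\mdiv\sigma_\rf(z,\theta)=h$, $\mdiv z=0$, with $h\in\rL^{q'}(\cF_0)^2$. By the upper-endpoint theory in $\rL^{q'}$ this solution satisfies $z\in\rW^{2,q'}_\per$ and $\theta\in\rW^{1,q'}_\per/\R$. Green's formula gives
\begin{equation*}
    \int_{\cF_0} w\cdot h \srd(x,y)= -\int_0^L b\cdot \sigma_\rf(z,\theta)\mre_2\srd x .
\end{equation*}
The periodic lateral terms cancel, and so does the pressure constant, because $b$ has zero normal flux. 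Since the trace satisfies $\sigma_\rf(z,\theta)\mre_2\in\rW^{1-\nicefrac1{q'},q'}=\rW^{\nicefrac1q,q'}$, which is precisely the dual of $\rW^{-\nicefrac1q,q}_{\per,n}$, we obtain $\|w\|_{\rL^q}\lesssim\|b\|_{\rW^{-\nicefrac1q,q}_{\per,n}}$. Defining $w$ by this transposition yields existence and uniqueness in $\rL^q$. The pressure is then recovered in $\cW^{-1,q}/\R$ from $\nabla\psi=\Delta w\in\rW^{-2,q}$, via the negative-order Ne\v{c}as/de~Rham estimate on the cylinder. Equivalently, $\psi$ can be defined by a second duality with the problem $\mdiv z=k$.

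\emph{Intermediate range.} The solution map $b\mapsto(w,\psi)$ is linear and bounded at both endpoints, so for $-\frac1q<s<2-\frac1q$ we apply real or complex interpolation, matching whichever scale the spaces $\rW^{s,q}$ require. We must check that the spaces with the flux constraint, and the pressure spaces modulo constants, form retracts of the standard periodic scales. This holds because the constraints are cut out by bounded projections, such as subtracting $\frac1L\int b\cdot\mre_2$ times a fixed profile. Uniqueness for every $s$ follows from uniqueness at the lowest level $s=-\frac1q$.

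\emph{Main obstacle.} We expect the duality step at $s=-\frac1q$ to be the hardest part. It needs sharp trace regularity of the normal stress of the adjoint problem on the cylindrical (non-smooth, periodic) geometry. It also requires a careful identification of the pressure in a negative space modulo constants, where the zero-flux condition is exactly what makes the pairing well defined. We would justify the $\rW^{2,q'}$-regularity near the corners of $\Gamma_l$ through the periodic extension, reducing to an infinite layer as for \autoref{lem:Hoo-calc Stokes op}.
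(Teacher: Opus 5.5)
Your plan follows the paper's own (only sketched) route: the classical $\rW^{2,q}$-theory at $s=2-\frac{1}{q}$ after reducing the periodic cylinder to a layer, Raymond's transposition argument at $s=-\frac{1}{q}$, and interpolation in between, which is what the paper means by arguing as in \cite[Cor.~A.1]{Ray:07}. For the pressure at $s=-\frac{1}{q}$, use your second duality with $\mdiv z=k$: the Ne\v{c}as route only determines $\psi$ as a distribution in $\cF_0$, not as an element of $\cW^{-1,q}(\cF_0)/\R=(\rW^{1,q'}_\per(\cF_0)/\R)'$.

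Two small corrections. First, $0\in\rho(A_0)$ does not follow from the bounded $\Hinfty$-calculus (consider $-\Delta$ on $\rL^q(\R^2)$); it follows from the Poincar\'e inequality in $y$ together with the compact resolvent on the periodic cell. Second, at the integer values $s\in\{0,1\}$, interpolation between the Besov-type endpoint spaces only produces data in $\rB^{s}_{qr,\per}$, never in $\rW^{s,q}_\per=\rH^{s,q}_\per$. This is harmless for $q\ge 2$, since $\rH^{s,q}_\per\hookrightarrow\rB^{s}_{qq,\per}$. For $q<2$, however, the assertion itself fails at $s=1$: the trace of $\rW^{1+\nicefrac{1}{q},q}_\per(\cF_0)=\rB^{1+\nicefrac{1}{q}}_{qq,\per}(\cF_0)$ lies in $\rB^{1}_{qq,\per}(0,L)\subsetneq\rW^{1,q}_\per(0,L)$. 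So this caveat affects the lemma as stated just as much as your argument.
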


Based on \autoref{lem:Stokes lifting}, we now introduce suitable Stokes lifting operators.
In fact, for $s \ge -\frac{1}{q}$ as well as $q$, $q' \in (1,\infty)$ with $\frac{1}{q} + \frac{1}{q'} = 1$, we define
\begin{equation*}
    \rW_{\per,\rm}^{s,q}(0,L) \coloneqq \rW_\per^{s,q}(0,L) \cap \rL_\rm^q(0,L), \tfor s \ge 0, \tand \rW_{\per,\rm}^{s,q}(0,L) \coloneqq (\rW_{\per,\rm}^{-s,q'}(0,L))', \tfor s < 0.
\end{equation*}
We then observe that if $b \in \rW_{\per,\rm}^{s,q}(0,L)$, then for $b' \coloneqq P_\rm(b) \mre_2$, it follows that $b' \in \rW_{\per,n}^{s,q}(0,L)$.
This gives rise to the following lemma.

\begin{lem}\label{lem:mapping props Stokes lifting}
Let $q \in (1,\infty)$ and $s \in [-\frac{1}{q},2-\frac{1}{q}]$.
Then the Stokes lifting operators $D_\fl$ and $D_\pr$ defined by $D_\fl b \coloneqq w$ and $D_\pr b \coloneqq \psi$, where $(w,\psi)$ is the unique solution to \eqref{eq:stat Stokes probl with inhom bc} with boundary data $b' = P_\rm(b) \mre_2$, satisfy $D_\fl \in \cL(\rW_{\per,\rm}^{s,q}(0,L),\rW_\per^{s+\nicefrac{1}{q},q}(\cF_0)^2)$ and $D_\pr \in \cL(\rW_{\per,\rm}^{s,q}(0,L),\cW^{s-1+\nicefrac{1}{q},q}(\cF_0) / \R)$.
\end{lem}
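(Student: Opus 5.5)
The statement is essentially a corollary of \autoref{lem:Stokes lifting}, so the plan is to factor $D_\fl$ and $D_\pr$ through the map $b \mapsto b' = P_\rm(b)\mre_2$ and the solution operator of \eqref{eq:stat Stokes probl with inhom bc}. The task then splits into three parts: show this first map is bounded $\rW_{\per,\rm}^{s,q}(0,L)\to\rW_{\per,n}^{s,q}(0,L)$, show the solution operator is linear, and show it is bounded.

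First, for $s\ge 0$: $P_\rm$ acts as the identity on $\rW_{\per,\rm}^{s,q}(0,L)$. Hence $b'=b\,\mre_2$ is periodic with $\int_0^L b'\cdot\mre_2\srd x=\int_0^L b\srd x=0$. Therefore $b'\in\rW_{\per,n}^{s,q}(0,L)$ with $\|b'\|\le\|b\|$.

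For $s\in[-\frac1q,0)$ both spaces are duals, so I define the map by duality. For $g\in\rW_{\per,n}^{-s,q'}(0,L)$, the second component $g\cdot\mre_2$ lies in $\rW_{\per,\rm}^{-s,q'}(0,L)$, and I set $\langle b',g\rangle\coloneqq\langle b,g\cdot\mre_2\rangle$. This is the adjoint of the bounded restriction $g\mapsto g\cdot\mre_2$, and it extends the smooth case, so $\|b'\|\le\|b\|$ again.

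Second, the solution operator of \autoref{lem:Stokes lifting} is linear, since the problem is linear and the solution is unique. Boundedness is the only real issue, because \autoref{lem:Stokes lifting} as stated gives existence and uniqueness but no estimate. For $s\ge 0$, I would close this with the open mapping theorem. Consider the closed linear map $(w,\psi)\mapsto w|_{\Gamma_u}$ on the Banach space of pairs in $\rW_\per^{s+\nicefrac1q,q}(\cF_0)^2\times\cW^{s-1+\nicefrac1q,q}(\cF_0)/\R$ that solve the homogeneous Stokes system with $w=0$ on $\Gamma_b$ and are periodic. This map is bijective onto $\rW_{\per,n}^{s,q}(0,L)$ by \autoref{lem:Stokes lifting}, so its inverse is bounded.

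The main obstacle is the negative range $s<0$. There the traces of $w$ and $\psi$ are not classically defined, so the problem must be understood in the very weak (transposition) sense of \cite{Ray:07}. In that framework the estimate comes directly from duality: $(w,\psi)$ is characterized by testing against solutions of the adjoint Stokes problem with right-hand sides in $\rW_\per^{-s-\nicefrac1q,q'}$ and divergence data. The pairing with $b'$ is controlled through the normal stress $\sigma_\rf(v,\phi)\mre_2$ on $\Gamma_u$, which lies in $\rW_{\per,n}^{-s,q'}$ by the regular theory. Taking the supremum over test data gives the bound for $w$, and an analogous argument with divergence data gives the bound for $\psi$ in $\cW^{s-1+\nicefrac1q,q}/\R$. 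Alternatively, once the regular case is known, complex interpolation between $s=-\frac1q$ and $s=2-\frac1q$ covers the intermediate range.

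Composing the bounded map $b\mapsto b'$ with the bounded solution operator yields $D_\fl\in\cL(\rW_{\per,\rm}^{s,q}(0,L),\rW_\per^{s+\nicefrac1q,q}(\cF_0)^2)$ and $D_\pr\in\cL(\rW_{\per,\rm}^{s,q}(0,L),\cW^{s-1+\nicefrac1q,q}(\cF_0)/\R)$, as claimed.
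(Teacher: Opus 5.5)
Your plan follows the paper's route. The paper reads this lemma off directly from \autoref{lem:Stokes lifting}, together with the observation that $b\mapsto P_\rm(b)\mre_2$ maps $\rW_{\per,\rm}^{s,q}(0,L)$ into $\rW_{\per,n}^{s,q}(0,L)$, and takes continuity of the solution map implicitly from the construction in \cite[Cor.~A.1]{Ray:07}; your duality definition of $b'$ for $s<0$ and your closed-graph and transposition arguments simply make explicit what the paper leaves unstated.

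Two small corrections are needed. First, at $s=0$ you only have $w\in\rW_\per^{\nicefrac{1}{q},q}(\cF_0)$, where the trace is not defined, so this endpoint must be handled by your transposition argument rather than by the open-mapping one. Second, the interpolation alternative should use the real method $(\cdot,\cdot)_{\theta,q}$, because the paper defines the $\rW^{s,q}_\per$ scale by real interpolation, whereas complex interpolation between $\rL^q$ and $\rW^{2,q}$ produces Bessel potential spaces.
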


After addressing the lifting of the kinematic boundary conditions, we next elaborate on the way to handle the pressure.
This is done by solving suitable Neumann problems.
More precisely, we consider
\begin{equation}\label{eq:weak Neumann probl}
    \Delta \varphi = 0 \tin \cF_0, \enspace \del_\nu \varphi = c \ton \Gamma_u \cup \Gamma_b, \enspace \varphi \text{ periodic } \ton \Gamma_l, \tand \int_{\cF_0} \varphi \srd (x,y) = 0,
\end{equation}
where, as above, $\nu = \pm \mre_2$ represents the outer unit normal vector to $\Gamma_u$ and $\Gamma_b$.
Note that for $c$, we will also identify $\Gamma_u$ and $\Gamma_b$ with $(0,L)$.
If $\varphi$ is a solution to \eqref{eq:weak Neumann probl}, then the divergence theorem yields that $\int_0^L c \srd x = 0$.
Below, the subscript ``$\rm$'' in the space $\rW_{\per,\rm}^{s+1+\nicefrac{1}{q}-\eps,q}(\cF_0)$ indicates again that this space incorporates functions with average zero on $\cF_0$. 
We now define the solution operator $N$ to the Neumann problem \eqref{eq:weak Neumann probl} by $N c \coloneqq \varphi$.
In a similar way as in \cite[Thm.~4.2 and~4.3]{LM:62}, for $q \in (1,\infty)$ and $s \in (-1,1-\frac{1}{q}]$, it can be shown that
\begin{equation}\label{eq:mapping props Neumann op}
    N \in \cL(\rW_{\per,\rm}^{s,q}(\Gamma_u \cup \Gamma_b),\rW_{\per,\rm}^{s+1+\nicefrac{1}{q}-\eps,q}(\cF_0)),
\end{equation}
where $\eps \ge 0$ can be chosen equal to zero provided $s$ is not an integer.
For practical reasons, we also define the adjusted Neumann lifting operator $N_1$ by
\begin{equation}\label{eq:lifting op N_1}
    N_1 c_1 = N c, \twhere c = 
    \left\{
    \begin{aligned}
        c_1, &\ton \Gamma_u,\\
        0, &\ton \Gamma_b.
    \end{aligned}
    \right.
\end{equation}
Finally, we invoke the Neumann lifting for the Helmholtz projection.
In fact, we investigate the problem 
\begin{equation}\label{eq:weak Neumann probl Helmholtz}
    \Delta \varphi = \mdiv f, \tin \cF_0, \enspace \del_\nu \varphi = f \cdot \nu, \ton \Gamma_u \cup \Gamma_b, \enspace \varphi \text{ periodic } \ton \Gamma_l,
\end{equation}
so $\varphi \in \rW_\per^{1,q}(\cF_0)$ solves $\int_{\cF_0} \nabla \varphi \cdot \nabla \psi \srd y = \int_{\cF_0} f \cdot \nabla \psi \srd y$, for $\psi \in \rW_\per^{1,q'}(\cF_0)$ and $\nicefrac{1}{q} + \nicefrac{1}{q'} = 1$.
For the solvability of this problem in a slightly different geometric setting, we refer to \cite{SS:92}.
For $\varphi$ solving \eqref{eq:weak Neumann probl Helmholtz}, we define the solution operator $N_2 \in \cL(\rL^q(\cF_0)^2,\rW_{\per,\rm}^{1,q}(\cF_0))$ by
\begin{equation}\label{eq:lifting op N_2}
    N_2 f \coloneqq \varphi.
\end{equation}

We are now in a position to reformulate \eqref{eq:lin per multilayered interaction probl} in terms of the above lifting operators.
In fact, we have the following result.
We observe that the forcing $-P_\rm \left(\mre_2 \cdot \left.\sigma_\rf(u,\pi)\right|_{\Gamma_u} \mre_2 \right)$ exerted by the fluid on the structure via the dynamic coupling condition reduces to the pressure $P_\rm(\left.\pi \right|_{\Gamma_u})$ due to the divergence free condition, the periodic boundary conditions, and the kinematic coupling condition.

\begin{lem}\label{lem:reform lin probl}
Let $p$, $q \in (1,\infty)$, and consider
\begin{equation*}
    \begin{aligned}
        u
        &\in \rW^{1,p}(0,T;\rL^q(\cF_0)^2) \cap \rL^p(0,T;\rW_\per^{2,q}(\cF_0)^2), \enspace \pi \in \rL^p(0,T;\rW_{\per,\rm}^{1,q}(\cF_0)),\\
        \eta_1 
        &\in \rW^{2,p}(0,T;\rL_\rm^q(0,L)) \cap \rW^{1,p}(0,T;\rW_{\per,\rm}^{2,q}(0,L)) \cap \rL^p(0,T;\rW_{\per,\rm}^{4,q}(0,L)), \tand\\
        \eta_2 
        &\in \rW^{1,p}(0,T;\rL_\rm^q(0,L)) \cap \rL^p(0,T;\rW_{\per,\rm}^{4,q}(0,L)). 
    \end{aligned}
\end{equation*}
Then $(u,\pi,\eta_1,\eta_2)$ is a solution to \eqref{eq:lin per multilayered interaction probl} if and only if
\begin{equation*}
    \left\{
    \begin{aligned}
        \bP u'
        &= A_0 \bP(u - D_\fl \eta_2) + \bP f, &&\tin (0,T),\\
        \eta_1'
        &= \eta_2, &&\tin (0,T),\\
        (\Id + \gamma_\rm N_1) \eta_2' + P_\rm \Delta_\rs^2 \eta_1 + P_\rm \Delta_\rs^2 \eta_2
        &= \gamma_\rm N(\Delta \bP u \cdot \nu) + P_\rm g + \gamma_\rm N_2 f, &&\tin (0,T),\\
        (\Id - \bP)u
        &= (\Id - \bP)D_\fl \eta_2,\\
        \pi
        &= N(\Delta \bP u \cdot \nu) - N_1 \eta_2' + N_2 f,\\
        (\bP u,\eta_1,\eta_2)(0)
        &= (\bP u_0,\eta_{1,0},\eta_{2,0}),
    \end{aligned}
    \right.
\end{equation*}
where $\gamma_\rm \coloneqq P_\rm \tr_{\Gamma_u}$ denotes the modified trace to the upper boundary $\Gamma_u$.
\end{lem}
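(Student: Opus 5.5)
We prove both implications by decomposing the fluid equation with the Helmholtz projection $\bP$ and identifying the pressure through the Neumann operators $N$, $N_1$, $N_2$. For the direction ``$\Rightarrow$'', let $(u,\pi,\eta_1,\eta_2)$ solve \eqref{eq:lin per multilayered interaction probl}.

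First we treat the normal stress. Since $\mdiv u = 0$, we have $\del_y u_2 = -\del_x u_1$ on $\Gamma_u$. The kinematic condition gives $u_1 = 0$ on $\Gamma_u$, hence $\del_x u_1 = 0$ there and $\mre_2\cdot\sigma_\rf(u,\pi)\mre_2 = 2\del_y u_2 - \pi = -\pi$ on $\Gamma_u$. The structure equation therefore reads $\eta_2' + P_\rm\Delta_\rs^2\eta_1 + P_\rm\Delta_\rs^2\eta_2 = \gamma_\rm\pi + P_\rm g$.

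Next we split $u$. The difference $v\coloneqq u - D_\fl\eta_2$ vanishes on $\Gamma_u\cup\Gamma_b$ and is divergence free, so $v\in\rL_\sigma^q(\cF_0)$ by \eqref{eq:char L_sigma^q}, and $(\Id-\bP)u = (\Id-\bP)D_\fl\eta_2$. For a.e.\ $t$ we also have $\bP v = v\in\rD(A_0)$ with $\bP(u - D_\fl\eta_2) = v$. Since $\Delta D_\fl\eta_2 = \nabla D_\pr\eta_2$, we get $\bP\Delta u = \bP\Delta v = A_0\bP(u-D_\fl\eta_2)$. Using $\mdiv\sigma_\rf(u,\pi) = \Delta u - \nabla\pi$ and applying $\bP$ to the fluid equation, the gradient is annihilated, which yields the first line.

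To identify $\pi$, we take the divergence and the normal trace of $\nabla\pi = f + \Delta u - u'$.
\begin{itemize}
\item The interior equation is $\Delta\pi = \mdiv f$, because $\mdiv\Delta u = 0$ and $\mdiv u' = 0$.
\item The boundary condition is $\del_\nu\pi = f\cdot\nu + \Delta u\cdot\nu - u'\cdot\nu$ on $\Gamma_u\cup\Gamma_b$.
\end{itemize}
Here $u'\cdot\nu$ equals $\eta_2'$ on $\Gamma_u$ and $0$ on $\Gamma_b$.

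We then split $\Delta u\cdot\nu = \Delta\bP u\cdot\nu + \Delta(\Id-\bP)u\cdot\nu$. The term $(\Id-\bP)u = \nabla\chi$ is a harmonic gradient: $\Delta\chi = \mdiv u = 0$. Hence $\Delta(\Id-\bP)u = \nabla\Delta\chi = 0$, and only $\Delta\bP u\cdot\nu$ survives.

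Linearity and uniqueness of the periodic Neumann problems \eqref{eq:weak Neumann probl}, \eqref{eq:weak Neumann probl Helmholtz} (modulo constants, fixed by the mean-zero normalization in $\rW_{\per,\rm}^{1,q}$) then give
\[
\pi = N(\Delta\bP u\cdot\nu) - N_1\eta_2' + N_2 f.
\]
This formula is meaningful provided the Neumann data have zero mean, which holds by the divergence theorem. Taking $\gamma_\rm$ of this identity and inserting it into the structure equation gives the third line with $(\Id+\gamma_\rm N_1)\eta_2'$. The initial conditions follow by applying $\bP$.

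For the converse ``$\Leftarrow$'', we reverse these steps. Set $u = \bP u + (\Id-\bP)D_\fl\eta_2$ and define $\pi$ by the stated formula. Then $\nabla\pi$ and $f + \Delta u - u'$ have the same divergence and the same normal traces, so they differ by a harmonic-gradient-type field that also lies in $\rL_\sigma^q(\cF_0)$. This field is therefore annihilated by $\Id-\bP$, and its $\bP$-part vanishes by the first equation, so it is zero. The boundary conditions follow from $\bP(u-D_\fl\eta_2)\in\rD(A_0)$, which gives $u = D_\fl\eta_2 = P_\rm(\eta_2)\mre_2$ on $\Gamma_u$ and $u = 0$ on $\Gamma_b$.

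The main obstacle is making the Neumann identification rigorous at the given regularity. The trace $\Delta\bP u\cdot\nu$ is only in $\rW^{-1/q,q}$, so $N$ must be used in the weak sense of \eqref{eq:mapping props Neumann op} with negative $s$. Similarly $u'$ only lies in $\rL^q$, so its normal trace must be understood weakly via $\mdiv u' = 0$. We will handle both by working with the weak formulation tested against $\nabla\psi$, $\psi\in\rW_\per^{1,q'}(\cF_0)$.
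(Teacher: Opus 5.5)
The paper states this lemma without proof; it relies only on the preceding remark that $-P_\rm(\mre_2\cdot\sigma_\rf(u,\pi)\mre_2)$ reduces to $P_\rm(\pi|_{\Gamma_u})$. Your argument is exactly the standard derivation behind it, and it is correct: the Helmholtz splitting, $\Delta D_\fl\eta_2=\nabla D_\pr\eta_2$, harmonicity of $(\Id-\bP)u$, and identification of $\pi$ through the mean-zero weak Neumann problems. Two small points in the converse direction need fixing.

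\emph{The residual.} The field $R=f+\Delta u-u'-\nabla\pi$ is not a ``harmonic gradient'', and it does not need to be. Your weak computation shows
\[
\int_{\cF_0}R\cdot\nabla\psi\srd(x,y)=0 \quad \text{for all } \psi\in\rW_\per^{1,q'}(\cF_0),
\]
that is, $R\in\rL_\sigma^q(\cF_0)$. Then $\bP R=0$ by the first equation, so $R=0$.

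\emph{The initial condition.} This is a genuine omission. The reformulated system only prescribes $\bP u(0)=\bP u_0$. To recover $u(0)=u_0$ you need $(\Id-\bP)u_0=(\Id-\bP)D_\fl\eta_{2,0}$, and then continuity of both sides of $(\Id-\bP)u(t)=(\Id-\bP)D_\fl\eta_2(t)$ in $\rL^q(\cF_0)$ up to $t=0$. The identity $(\Id-\bP)u_0=(\Id-\bP)D_\fl\eta_{2,0}$ amounts to the compatibility conditions $\mdiv u_0=0$ and $u_0\cdot\nu=\cT\eta_{2,0}\cdot\nu$ on $\Gamma_u\cup\Gamma_b$. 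These are tacit in the lemma and are only imposed later, in \eqref{eq:comp conds init data}, so you should state them explicitly as part of the hypotheses for the ``if'' direction.
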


Next, we introduce the so-called added mass operator defined by $M_s \coloneqq \Id + \gamma_\rm N_1$.
The result below on the invertibility of $M_s$ and its mapping properties will be important for the boundedness of the $\Hinfty$-calculus of the fluid-structure operator.
Note that this seems to be the first result discussing the added mass operator on Sobolev spaces with negative regularity.

\begin{lem}\label{lem:mapping props added mass}
Let $s \in (\max\{-1,-\frac{3}{2}+\frac{1}{q}\},1)$ and $q \in (1,\infty)$.
Then 
\begin{enumerate}[(a)]
    \item the added mass operator $M_s = \Id + \gamma_\rm N_1$ is an automorphism on $\rW_{\per,\rm}^{s,q}(0,L)$,
    \item for $\eps > 0$ sufficiently small, it holds that $M_s^{-1} - \Id \in \cL(\rW_{\per,\rm}^{s,q}(0,L),\rW_{\per,\rm}^{s+1-\eps,q}(0,L))$, and
    \item the operator $M_s^{-1} - \Id$ is compact on $\rL_\rm^q(0,L)$.
\end{enumerate}
\end{lem}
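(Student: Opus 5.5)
Write $K \coloneqq \gamma_\rm N_1$, so that $M_s = \Id + K$. The plan is to show that $K$ is smoothing by almost one derivative, that it is nonnegative with respect to the $\rL^2$ pairing, and then to combine a Fredholm argument with a bootstrap.

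\textbf{Smoothing of $K$.} For $c_1 \in \rW_{\per,\rm}^{s,q}(0,L)$, the function $c$ from \eqref{eq:lifting op N_1} lies in $\rW_{\per,\rm}^{s,q}(\Gamma_u\cup\Gamma_b)$. Since $s<1$, extension by zero on $\Gamma_b$ is harmless: the two boundary components are disjoint. The zero mean condition is inherited from $c_1$. By \eqref{eq:mapping props Neumann op},
\begin{equation*}
    N_1 c_1 \in \rW^{s+1+\nicefrac1q-\eps,q}_{\per,\rm}(\cF_0).
\end{equation*}
The trace on $\Gamma_u$ then lands in $\rW^{s+1-\eps,q}_\per(0,L)$, provided $s+1+\frac1q-\eps>\frac1q$, i.e.\ $s>-1$. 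This is exactly where the lower bound on $s$ enters. The extra restriction $s>-\frac32+\frac1q$ is needed to stay inside the admissible range of \eqref{eq:mapping props Neumann op} and of the trace theorem in the negative-regularity regime.

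If $s+1$ lies in the admissible window, one can take $\eps = 0$ when $s$ is not an integer. In general one obtains
\begin{equation*}
    K \in \cL\bigl(\rW_{\per,\rm}^{s,q}(0,L),\rW_{\per,\rm}^{s+1-\eps,q}(0,L)\bigr).
\end{equation*}
Since $(0,L)$ is bounded, the compact embedding $\rW^{s+1-\eps,q}\hookrightarrow \rW^{s,q}$ (for $\eps<1$) shows that $K$ is compact on $\rW^{s,q}_{\per,\rm}$. In particular, this already gives (c) for $s=0$. Consequently $M_s$ is Fredholm of index zero, and (a) reduces to proving injectivity.

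\textbf{Injectivity.} Let $M_s c_1 = 0$. Then $c_1 = -Kc_1$, and bootstrapping with the smoothing estimate gains $1-\eps$ derivatives per step, so $c_1 \in \rW^{\sigma,q}$ for some $\sigma$ close to $1$. The admissible range caps the gain below $1$, which suffices. Once $c_1 \in \rL^2$, and hence $\varphi = N_1 c_1 \in \rH^1$, we test the Neumann problem \eqref{eq:weak Neumann probl} with $\varphi$:
\begin{equation*}
    \int_{\cF_0}|\nabla\varphi|^2 = \int_{\Gamma_u} c_1\,\varphi\srd x = \int_0^L c_1\,\gamma_\rm\varphi\srd x .
\end{equation*}
Here $\varphi$ may be replaced by $\gamma_\rm\varphi$ because $c_1$ has mean zero. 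Hence
\begin{equation*}
    0 = \langle M_s c_1,c_1\rangle = \|c_1\|_{\rL^2}^2 + \|\nabla\varphi\|_{\rL^2}^2,
\end{equation*}
so $c_1 = 0$.

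For $q<2$, or for low $s$, the regularity bootstrap has to be run carefully, using Sobolev embedding in one dimension to pass between different $q$. I expect this to be the main technical point: one must check that every step stays inside the admissible ranges of \eqref{eq:mapping props Neumann op} and of the trace map. The cleanest way is to iterate in $s$ first at fixed $q$ up to $s\in(\frac1q,1)$, and only then use the embedding $\rW^{s,q}\hookrightarrow \rL^2$.

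\textbf{Conclusion.} Bijectivity together with boundedness gives (a) via the open mapping theorem. For (b), write
\begin{equation*}
    M_s^{-1}-\Id = -K M_s^{-1},
\end{equation*}
and use $M_s^{-1}\in\cL(\rW^{s,q}_{\per,\rm})$ together with the mapping property of $K$. Finally, (c) follows from (b) with $s=0$ and the compact embedding $\rW^{1-\eps,q}(0,L)\hookrightarrow \rL^q(0,L)$.
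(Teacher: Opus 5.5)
Your proposal is correct and follows the paper's proof. The shared steps are: the smoothing property $\gamma_\rm N_1\in\cL(\rW_{\per,\rm}^{s,q}(0,L),\rW_{\per,\rm}^{s+1-\eps,q}(0,L))$; the reduction of (a) to $\ker M_s=\{0\}$, which you justify explicitly via compactness and Fredholm index zero while the paper leaves it implicit; the identity $0=\|f\|_{\rL^2}^2+\|\nabla N_1 f\|_{\rL^2}^2$; and Rellich--Kondrachov for (c). The differences are minor. The paper obtains $f\in\rL^2$ in one step from $\rW^{s+1-\eps,q}(0,L)\hookrightarrow\rL^2(0,L)$, which is exactly where $s>-\frac{3}{2}+\frac{1}{q}$ is used (not, as you claim, for the Neumann or trace ranges), so your bootstrap makes that hypothesis superfluous for injectivity. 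Also, your ordering $M_s^{-1}-\Id=-\gamma_\rm N_1 M_s^{-1}$ for (b) is cleaner than the paper's $-M_s^{-1}\gamma_\rm N_1$, which tacitly needs $M_s^{-1}$ bounded on $\rW_{\per,\rm}^{s+1-\eps,q}(0,L)$.
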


\begin{proof}
From \eqref{eq:mapping props Neumann op} and \eqref{eq:lifting op N_1}, for $\eps > 0$ sufficiently small, we get $N_1 \in \cL(\rW_{\per,\rm}^{s,q}(0,L),\rW_{\per,\rm}^{s+1+\nicefrac{1}{q}-\eps,q}(\cF_0))$.
In particular, $\eps > 0$ can be chosen sufficiently small such that $s + 1 + \frac{1}{q} - \eps > \frac{1}{q}$.
Thus, the trace and then also $\gamma_\rm$ are well-defined on $\rW_{\per,\rm}^{s+1+\nicefrac{1}{q}-\eps,q}(\cF_0)$, and we obtain
\begin{equation}\label{eq:mapping props gamma_m N_1}
    \gamma_\rm N_1 \in \cL(\rW_{\per,\rm}^{s,q}(0,L),\rW_{\per,\rm}^{s+1-\eps,q}(0,L)),
\end{equation}
where $s+1-\eps > 0$.
In order to show the assertion of~(a), it hence suffices to show that $\ker M_s = \{0\}$.
To this end, let $f \in \rW_{\per,\rm}^{s,q}(0,L)$, and suppose that $f \in \ker M_s$.
In particular, we find that $f = - \gamma_\rm N_1 f$.
Together with $s > -\frac{3}{2} + \frac{1}{q}$ and a choice of $\eps > 0$ sufficiently small such that $s - \eps > -\frac{3}{2} + \frac{1}{q}$ as well as the resulting Sobolev embedding $\rW^{s+1-\eps,q}(0,L) \hookrightarrow \rL^2(0,L)$, this yields that $f \in \rW_{\per,\rm}^{s+1-\eps,q}(0,L) \hookrightarrow \rL_\rm^2(0,L)$.
Therefore, $N_1 f \in \rH_{\per,\rm}^1(\cF_0)$, and setting $g \coloneqq N_1 f$, we test the identity $(\Id + \gamma_\rm N_1)f = 0$ by $f$, integrate by parts, and exploit the definition of $g = N_1 f$ to get
\begin{equation}\label{eq:proof of ker M_s equals 0}
    0 = \int_0^L [(\Id + \gamma_\rm N_1)f] f \srd x = \int_0^L f^2 \srd x + \int_{\cF_0} |\nabla g|^2 \srd(x,y).
\end{equation}
This implies that $f = 0$ and thus $\ker M_s = \{0\}$, completing the proof of~(a).
Let us now invoke
\begin{equation*}
    M_s^{-1} - \Id = M_s^{-1} - M_s^{-1}(\Id + \gamma_\rm N_1) = -M_s^{-1} \gamma_\rm N_1.
\end{equation*}
The assertion of~(b) then follows from \eqref{eq:mapping props gamma_m N_1} together with~(a), while for~(c), we additionally use the Rellich-Kondrachov theorem for the compactness of the Sobolev embedding.
\end{proof}

\begin{rem}
The condition $s > -\frac{3}{2} + \frac{1}{q}$ in \autoref{lem:mapping props added mass} is needed in order to get the embedding $\rW^{s+1-\eps,q}(0,L) \hookrightarrow \rL^2(0,L)$, which is in turn required to make sense of the integral $\int_0^L f^2 \srd x$ in \eqref{eq:proof of ker M_s equals 0}. 
\end{rem}

We now define the fluid-structure operator in the present setting.
Let us first make precise the underlying ground space $\rX_0$.
It is given by
\begin{equation}\label{eq:ground space}
    \rX_0 \coloneqq \{(\bP u,\eta_1,\eta_2) \in \rL_\sigma^q(\cF_0) \times \rW_{\per,\rm}^{4,q}(0,L) \times \rL_\rm^q(0,L) : \bP u - \bP D_\fl \eta_2 \in \rL_\sigma^q(\cF_0)\}.
\end{equation}
With regard to the characterization of $\rL_\sigma^q(\cF_0)$ and the construction of the lifting operator $D_\fl$, this means that the kinematic coupling condition is satisfied in normal direction, i.e., $u \cdot \mre_2 = \eta_2$ on $\Gamma_u$.
The fluid-structure operator $A_\fs \colon \rD(A_\fs) \subset \rX_0 \to \rX_0$ is then defined by
\begin{equation}\label{eq:fluid-structure op}
    \begin{aligned}
        A_\fs 
        &\coloneqq \begin{pmatrix}
        A_0 & 0 & -A_0 \bP D_\fl\\
        0 & 0 & \Id\\
        M_s^{-1} \gamma_\rm N \Delta(\cdot) \cdot \nu & -M_s^{-1} P_\rm \Delta_\rs^2 & -M_\rs^{-1} P_\rm \Delta_\rs^2
    \end{pmatrix}, \twith
    \end{aligned}
\end{equation}
\begin{equation*}
    \rX_1 \coloneqq \rD(A_\fs)
    \coloneqq \{(\bP u,\eta_1,\eta_2) \in \rW_\per^{2,q}(\cF_0)^2 \cap \rL_\sigma^q(\cF_0) \times \rW_{\per,\rm}^{4,q}(0,L) \times \rW_{\per,\rm}^{4,q}(0,L): \bP u - \bP D_\fl \eta_2 \in \rD(A_0)\}.
\end{equation*}
Note in particular that $A_\fs$ has a {\em non-diagonal domain}.

By \autoref{lem:reform lin probl}, we can rewrite the linearized problem \eqref{eq:lin per multilayered interaction probl} in terms of $A_\fs$ as 
\begin{equation}\label{eq:reform of lin probl in terms of the fluid-structure op}
    \frac{\srd}{\srd t} \begin{pmatrix}
        \bP u\\ \eta_1\\ \eta_2
        \end{pmatrix} = A_\fs \begin{pmatrix}
            \bP u\\ \eta_1\\ \eta_2
        \end{pmatrix} + \begin{pmatrix}
            \bP f\\ 0\\ \overline{g}
        \end{pmatrix}, \tfor t \in (0,T), \enspace 
        \begin{pmatrix}
            \bP u\\ \eta_1\\ \eta_2
        \end{pmatrix}(0)
        = \begin{pmatrix}
            \bP u_0\\ \eta_{1,0}\\ \eta_{2,0}
        \end{pmatrix},
\end{equation}
where $\overline{g} = M_s^{-1} P_\rm g + M_s^{-1} \gamma_\rm N_2 f$.

Next, we state the main result of this section on the boundedness of the $\Hinfty$-calculus of the fluid-structure operator $A_\fs$.
To the best of our knowledge, this is the first such result in the situation of a(n) (visco-)elastic structure located at a part of the fluid boundary, and this result is of independent interest.

\begin{thm}\label{thm:bdd Hoo-calculus fluid-structure op}
Let $q \in (1,\infty)$.
Then $-A_\fs \in \Hinfty(\rX_0)$ with $\phi_{-A_\fs}^\infty < \frac{\pi}{2}$.
\end{thm}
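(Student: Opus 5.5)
We first remove the non-diagonal domain through a similarity transformation. Let $\Theta \colon \rX_0 \to \tilde\rX_0$ be given by $\Theta(\bP u,\eta_1,\eta_2) \coloneqq (\bP u - \bP D_\fl \eta_2,\eta_1,\eta_2)$, where $\tilde\rX_0 \coloneqq \rL_\sigma^q(\cF_0) \times \rW_{\per,\rm}^{4,q}(0,L) \times \rL_\rm^q(0,L)$. This map is bounded, and its inverse is $(v,\eta_1,\eta_2) \mapsto (v + \bP D_\fl \eta_2,\eta_1,\eta_2)$. Since $\eta_2 \in \rL_\rm^q$, \autoref{lem:mapping props Stokes lifting} with $s=0$ gives $D_\fl \eta_2 \in \rW_\per^{\nicefrac1q,q}(\cF_0)^2$, so both maps are well defined. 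Moreover, $\Theta$ sends $\rD(A_\fs)$ onto the diagonal domain $\rD(A_0) \times \rW_{\per,\rm}^{4,q} \times \rW_{\per,\rm}^{4,q}$. By \autoref{lem:stability and pert results Hoo}(b), it therefore suffices to treat $\tilde A \coloneqq \Theta A_\fs \Theta^{-1}$. For $v = \bP u - \bP D_\fl\eta_2$ we have $\partial_t v = A_0 v - \bP D_\fl \eta_2'$. Writing $\eta_2'$ through the third row and using $M_s^{-1} = \Id + (M_s^{-1}-\Id)$, we get
\begin{equation*}
    \tilde A = \begin{pmatrix} A_0 & \bP D_\fl P_\rm\Delta_\rs^2 & \bP D_\fl P_\rm\Delta_\rs^2\\ 0 & 0 & \Id \\ 0 & -P_\rm\Delta_\rs^2 & -P_\rm\Delta_\rs^2\end{pmatrix} + \cB,
\end{equation*}
where $\cB$ collects the terms involving $M_s^{-1}-\Id$ and the fluid-to-structure coupling $M_s^{-1}\gamma_\rm N(\Delta(v+\bP D_\fl\eta_2)\cdot\nu)$.

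The main part is an upper triangular block matrix. We apply \autoref{lem:bdd Hoo-calculus block op matrix}(a) with $A = A_0$ (using \autoref{lem:Hoo-calc Stokes op}), $D = A_\rs$ (using \autoref{lem:Hoo-calc thick layer op matrix}), and off-diagonal operator $B(\eta_1,\eta_2) = \bP D_\fl P_\rm \Delta_\rs^2(\eta_1+\eta_2)$. For $(\eta_1,\eta_2) \in \rD(A_\rs^{1+\delta})$ we get $\eta_2 \in \rW^{4+4\delta,q}$ and $\eta_1+\eta_2$ in a comparable space. Hence $P_\rm\Delta_\rs^2(\eta_1+\eta_2) \in \rW^{4\delta,q}$, and \autoref{lem:mapping props Stokes lifting} gives $D_\fl(\cdot) \in \rW^{4\delta+\nicefrac1q,q}$, which lies in $\rD((-A_0)^{\delta})$ for small $\delta$ provided $2\delta < 4\delta + \nicefrac1q$ and the boundary traces are compatible. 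The vanishing trace on $\Gamma_u$ is needed only when $2\delta > \nicefrac1q$, so taking $\delta$ small keeps us in the trace-free range. We also need to check that $\rD(A_\rs^{1+\delta})$ is characterized by interpolation, which \autoref{lem:cons of H00}(a) provides.

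It remains to show that $\cB$ is a lower-order perturbation in the sense of \autoref{lem:stability and pert results Hoo}(c). This is the main obstacle, because the coupling term $\gamma_\rm N(\Delta v\cdot\nu)$ involves second derivatives of $v$. We write $\Delta v\cdot\nu$ in negative boundary regularity, $\rW^{-1-\nicefrac1q+2\alpha',q}$-type, and use the mapping property \eqref{eq:mapping props Neumann op} down to $s > -1$ together with the trace. Here we use the strategy of the pressure lifting: $\gamma_\rm N(\Delta v \cdot \nu)$ should equal the trace of a harmonic pressure whose gradient equals $\Delta v$ tested against harmonic functions. Duality then bounds it in $\rL^q$ by $\|v\|_{\rW^{2\alpha,q}}$, and hence by $\|(-A_0)^\alpha v\|$ with $\alpha<1$. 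The terms with $M_s^{-1}-\Id$ gain almost one derivative by \autoref{lem:mapping props added mass}(b), applied in negative spaces ($s>-1$). Consequently $(M_s^{-1}-\Id)P_\rm\Delta_\rs^2\eta_2$ is controlled by $\|\eta_2\|_{\rW^{3+\eps,q}}$, which is fractional relative to $\rW^{4,q}$. The same gain applies after $\bP D_\fl$ in the first row. Thus $\|\cB x\| \le a\|x\| + b\|(-\tilde A_{\mathrm{main}})^\alpha x\|$.

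Finally, we must verify that $-\tilde A$ is sectorial of angle $<\pi/2$ and invertible. The perturbation gives $\omega$-sectoriality for large shifts. To remove the shift and control the angle, we use an energy argument: the $\rL^2$ energy identity (fluid dissipation, plate kinetic and potential energy, added mass inner product) shows that the spectrum of $A_\fs$ lies in a sector of the left half-plane away from $0$. This extends to $\rL^q$ because the resolvent is compact, by \autoref{lem:mapping props added mass}(c) and compact Sobolev embeddings, so the spectrum is $q$-independent. Injectivity of $A_\fs$ follows as in the proof of \autoref{lem:mapping props added mass}(a). Then \autoref{lem:stability and pert results Hoo}(c) yields the claim with angle $<\pi/2$.
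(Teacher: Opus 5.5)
Up to the shift, your argument is the paper's. The similarity transform $\Theta$ (the paper's $S$), the upper triangular main part handled by \autoref{lem:bdd Hoo-calculus block op matrix}(a), and the bound of $\cB$ by a fractional power combined with \autoref{lem:stability and pert results Hoo}(c) give $-A_\fs+\mu\in\Hinfty(\rX_0)$ for some $\mu\ge0$, as in \autoref{prop:Hinfty up to shift}. One small slip there: $(\eta_1,\eta_2)\in\rD(A_\rs^{1+\delta})$ gives only $\eta_2\in\rW^{4,q}$ and $P_\rm\Delta_\rs^2(\eta_1+\eta_2)\in\rH^{4\delta,q}$, not $\eta_2\in\rW^{4+4\delta,q}$. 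This does no harm, because $P_\rm\Delta_\rs^2(\eta_1+\eta_2)\in\rL^q$ together with $D_\fl\colon\rL_\rm^q\to\rW^{\nicefrac{1}{q},q}\hookrightarrow\rH^{2\delta,q}$ for $\delta<\nicefrac{1}{2q}$ already suffices.

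The genuine gap is in removing the shift. You claim the resolvent of $A_\fs$ is compact, so that its spectrum consists of eigenvalues that the energy identity places in $\{\Rep\lambda<0\}$. This is false, because the $\eta_1$-component carries the $\rW_{\per,\rm}^{4,q}$-norm both in $\rX_0$ and in $\rD(A_\fs)$. For instance, $(0,\eta_1^{(n)},0)$ with $\eta_1^{(n)}$ bounded but not precompact in $\rW^{4,q}$ is bounded in the graph norm but not precompact in $\rX_0$.

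Already for the uncoupled Kelvin--Voigt plate the spectrum is not purely discrete. Solving $(-1-A_\rs)(\eta_1,\eta_2)=(f,g)$ forces $\eta_2=-g-P_\rm\Delta_\rs^2 f$, which lies only in $\rL^q$. Hence $-1-A_\rs$ is injective with dense, non-closed range, and $-1\in\sigma_\ess(A_\rs)$; the modal eigenvalues solving $\lambda^2+(1+\lambda)\mu_k=0$, $\mu_k=(2\pi k/L)^4$, accumulate at $-1$. Consequently the energy identity, which only excludes eigenvalues with $\Rep\lambda\ge0$, does not give $0\in\rho(A_\fs)$ or $s(A_\fs)<0$, even combined with injectivity. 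So \autoref{lem:stability and pert results Hoo}(c) cannot be applied without the shift.

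The missing ingredient is control of the essential spectrum. Show that $\cB$ is relatively compact with respect to the triangular main part, using:
\begin{itemize}
\item compactness of $\bP D_\fl\colon\rL_\rm^q(0,L)\to\rL_\sigma^q(\cF_0)$;
\item compactness of $M_s^{-1}-\Id$ on $\rL_\rm^q(0,L)$, from \autoref{lem:mapping props added mass}(c);
\item compactness of $\rD(A_0)\hookrightarrow\rD((-A_0)^\alpha)$ and $\rW_{\per,\rm}^{4,q}\hookrightarrow\rH_{\per,\rm}^{4\alpha,q}$, for the coupling terms.
\end{itemize}
Kato's stability theorem then makes $\sigma_\ess(\tA_\fs)$ coincide with that of the main part. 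By the triangular factorization, the latter lies in $\sigma(A_0)\cup\sigma(A_\rs)\subset\{\Rep\lambda<0\}$. Only then does the spectrum in $\{\Rep\lambda\ge0\}$ reduce to isolated eigenvalues of finite multiplicity, which your energy argument excludes. The $q$-independence of eigenvalues must come from regularity of eigenfunctions, not from compactness. This is what the paper does in \autoref{prop:spectral props fluid-structure op}, after which the shift can be taken equal to $s(A_\fs)<0$.
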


The proof of \autoref{thm:bdd Hoo-calculus fluid-structure op} consists of two parts:
showing that the assertion of \autoref{thm:bdd Hoo-calculus fluid-structure op} is valid up to a shift, and discussing the spectral properties of $A_\fs$.
For the first part addressed in \autoref{prop:Hinfty up to shift}, the main difficulty arises from the non-diagonal domain of $A_\fs$.
To overcome this, we use a decoupling approach based on a similarity transform.
We then decompose the decoupled operator matrix with diagonal domain into a main part handled by means of theory for block operator matrices and a perturbative part.

\begin{prop}\label{prop:Hinfty up to shift}
Let $q \in (1,\infty)$.
Then there is $\mu \ge 0$ such that $-A_\fs + \mu \in \Hinfty(\rX_0)$ with $\phi_{-A_\fs + \mu}^\infty < \frac{\pi}{2}$.
\end{prop}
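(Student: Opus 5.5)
The plan is to remove the non-diagonal domain by a similarity transform, and then handle the resulting operator matrix as an upper triangular main part plus a lower order perturbation.

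\emph{Decoupling.} Define $\Theta \colon \rX_0 \to \tilde{\rX}_0 \coloneqq \rL_\sigma^q(\cF_0) \times \rW_{\per,\rm}^{4,q}(0,L) \times \rL_\rm^q(0,L)$ by
\begin{equation*}
    \Theta(\bP u,\eta_1,\eta_2) \coloneqq (\bP u - \bP D_\fl \eta_2,\eta_1,\eta_2).
\end{equation*}
By \autoref{lem:mapping props Stokes lifting} with $s=0$, $\bP D_\fl \in \cL(\rL_\rm^q(0,L),\rL^q(\cF_0)^2)$, so $\Theta$ is bounded. Its inverse, $(v,\eta_1,\eta_2) \mapsto (v + \bP D_\fl \eta_2,\eta_1,\eta_2)$, is bounded as well, so $\Theta$ is an isomorphism. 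It maps $\rD(A_\fs)$ onto the diagonal domain $\rD(A_0) \times \rW_{\per,\rm}^{4,q} \times \rW_{\per,\rm}^{4,q}$. For $v = \bP u - \bP D_\fl\eta_2$ one computes
\begin{equation*}
    \tA \coloneqq \Theta A_\fs \Theta^{-1}, \quad
    \tA(v,\eta_1,\eta_2) = \Bigl(A_0 v + \bP D_\fl M_s^{-1}P_\rm \Delta_\rs^2(\eta_1+\eta_2) - \bP D_\fl R(v,\eta_2),\ \eta_2,\ R(v,\eta_2) - M_s^{-1}P_\rm\Delta_\rs^2(\eta_1+\eta_2)\Bigr),
\end{equation*}
where $R(v,\eta_2) = M_s^{-1}\gamma_\rm N(\Delta(v+\bP D_\fl\eta_2)\cdot\nu)$. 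By \autoref{lem:stability and pert results Hoo}(b), it suffices to show that $-\tA + \mu \in \Hinfty(\tilde{\rX}_0)$ with angle $<\frac{\pi}{2}$.

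\emph{Main part.} Write $M_s^{-1} = \Id + K$ with $K \coloneqq M_s^{-1}-\Id$. By \autoref{lem:mapping props added mass}(b), $K$ gains almost one derivative, including on negative Sobolev spaces. Split
\begin{equation*}
    \tA = \cA_{\mathrm{main}} + \cB, \qquad
    \cA_{\mathrm{main}} = \begin{pmatrix} A_0 & \bP D_\fl P_\rm\Delta_\rs^2 & \bP D_\fl P_\rm\Delta_\rs^2 \\ 0 & A_\rs & \end{pmatrix},
\end{equation*}
with $A_\rs$ occupying the lower right $2\times 2$ block. The operator $\cB$ collects all terms containing $K$ or $R$. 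To apply \autoref{lem:bdd Hoo-calculus block op matrix}(a) to $-\cA_{\mathrm{main}}$, use \autoref{lem:Hoo-calc Stokes op} for $-A_0$ and \autoref{lem:Hoo-calc thick layer op matrix} for $-A_\rs$. One then needs to check the off-diagonal condition: for $(\eta_1,\eta_2) \in \rD(A_\rs^{1+\delta})$, the element $\bP D_\fl P_\rm\Delta_\rs^2(\eta_1+\eta_2)$ should lie in $\rD((-A_0)^\delta)$.

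By \autoref{lem:cons of H00}(a), $\rD(A_\rs^{1+\delta})$ gives $\eta_2 \in \rW^{4+4\delta,q}$ up to $\eps$, so $\Delta_\rs^2\eta_2 \in \rW^{4\delta,q}$. By \autoref{lem:mapping props Stokes lifting}, $D_\fl$ of this lies in $\rW^{4\delta+1/q,q}$. For $2\delta < 1/q$, this space embeds into $[\rL^q_\sigma,\rD(A_0)]_\delta$, since no boundary trace constraint is active below order $1/q$. For $\eta_1$ there is extra regularity, since $\eta_1 \in \rW^{8,q}$-type. I expect this interpolation/trace bookkeeping to be delicate but routine.

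\emph{Perturbation.} The remaining task is to show that $\cB$ is $(-\cA_{\mathrm{main}})^\alpha$-bounded for some $\alpha<1$, and then to apply \autoref{lem:stability and pert results Hoo}(c) after a shift $\mu$ large enough to give sectoriality and invertibility; that is, standard relative-bound perturbation of resolvents for $\mu$ large. The terms are handled as follows:
\begin{itemize}
    \item $K P_\rm\Delta_\rs^2(\eta_1+\eta_2)$ in the third component: $\Delta_\rs^2\eta_2 \in \rL^q$ maps to $\rW^{1-\eps,q}$, hence this term is of order $<1$ relative to $\rD(A_\rs)$. The corresponding terms in the first row are of the same type.
    \item $R$: $\Delta v \cdot \nu$ on the boundary lies in $\rW^{-1-1/q,q}$ for $v \in \rW^{2,q}$. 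This is where the main obstacle lies, since $N$ is only available for $s>-1$ by \eqref{eq:mapping props Neumann op}. I would first rewrite $\Delta v\cdot\nu$ via $\mdiv v=0$ as a tangential derivative of a trace of $\nabla v$ (in 2D, $\Delta v_2 = \del_x(\del_x v_2 - \del_y v_1)$). Combined with the divergence-free identity, this should allow reading $N(\Delta v\cdot\nu)$ as a harmonic function controlled in $\rW^{1,q}$ by $\|v\|_{\rW^{2-\theta,q}}$, with a trace in $\rW^{1-1/q-\theta,q} \subset \rL^q$ for small $\theta>0$. This gives a bound by $\|(-A_0)^{1-\theta/2}v\|$.
    \item $\bP D_\fl\eta_2$ inside $R$ is similarly lower order, using $\eta_2 \in \rW^{4,q}$.
\end{itemize}
If this gain of $\theta$ fails, the fallback is to absorb the leading part of $R$ into the main triangular block by working in a lower-regularity ground space for $\eta_2$.
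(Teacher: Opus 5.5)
Your proposal is correct and follows the paper's proof essentially step for step. It uses the same decoupling transform, the same split into the upper triangular part (handled by \autoref{lem:bdd Hoo-calculus block op matrix}(a) with $\delta<\frac{1}{2q}$) plus a perturbation bounded by a fractional power $\alpha<1$, then \autoref{lem:stability and pert results Hoo}(c) after a shift and transfer back via \autoref{lem:stability and pert results Hoo}(b). Two small corrections: the obstacle you flag for $R$ rests on a miscount, since for divergence-free $v\in\rW^{2,q}$ the normal trace $\Delta v\cdot\nu$ lies in $\rW^{-1/q,q}$, not $\rW^{-1-1/q,q}$, so $N$ applies directly as in the paper for $\alpha\in(\frac12+\frac{1}{2q},1)$ (your vorticity identity $\Delta v_2=\del_x(\del_x v_2-\del_y v_1)$ is a clean justification of exactly that estimate); and $\rD((-A_\rs)^{1+\delta})$ improves only $\eta_1+\eta_2$, not $\eta_1$ and $\eta_2$ separately, which is harmless because $P_\rm\Delta_\rs^2(\eta_1+\eta_2)\in\rL_\rm^q(0,L)$ already suffices there.
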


\begin{proof}
We first make precise the decoupling argument.
In fact, with regard to $\rD(A_\fs)$, the aim is to consider $\bP \Tilde{u} \coloneqq \bP u - \bP D_\fl \eta_2$.
Thus, the similarity transform is given by
\begin{equation*}
    S = \begin{pmatrix}
        \Id & 0 & -\bP D_\fl\\
        0 & \Id & 0\\
        0 & 0 & \Id
    \end{pmatrix} \tand S^{-1} = \begin{pmatrix}
        \Id & 0 & \bP D_\fl\\
        0 & \Id & 0\\
        0 & 0 & \Id
    \end{pmatrix}.
\end{equation*}
In this context, we also introduce the associated ``decoupled'' ground space $\rY_0$ given by
\begin{equation*}
    \rY_0 = S \rX_0 = \rL_\sigma^q(\cF_0) \times \rW_{\per,\rm}^{4,q}(0,L) \times \rL_\rm^q(0,L).
\end{equation*}
We then calculate the decoupled operator matrix $\tA_\fs \coloneqq S A_\fs S^{-1} \colon \rD(\tA_\fs) \subset \rY_0 \to \rY_0$ given by
\begin{equation}\label{eq:decoupled fluid-structure op}
    \begin{aligned}
        &\quad \tA_\fs = \\
        &\begin{pmatrix}
        A_0 - \bP D_\fl M_s^{-1} \gamma_\rm N \Delta(\cdot) \nu & \bP D_\fl M_s^{-1} P_\rm \Delta_\rs^2 & -\bP D_\fl M_s^{-1} \gamma_\rm N (\Delta(\bP D_\fl(\cdot)) \nu) + \bP D_\fl M_s^{-1} P_\rm \Delta_\rs^2\\
        0 & 0 & \Id\\
        M_s^{-1} \gamma_\rm N \Delta(\cdot) \nu & -M_s^{-1} P_\rm \Delta_\rs^2 & M_s^{-1} \gamma_\rm N (\Delta(\bP D_\fl(\cdot)) \nu) -M_s^{-1} P_\rm \Delta_\rs^2
        \end{pmatrix},
    \end{aligned}
\end{equation}
with $\rD(\tA_\fs) = \rD(A_0) \times \rD(A_\rs)$.
For the analysis, we further decompose $\tA_\fs$ as $\tA_\fs = A_\fs^1 + B_\fs $, where
\begin{equation*}
    \begin{aligned}
        A_\fs^1 
        &= \begin{pmatrix}
            A_0 & \bP D_\fl P_\rm \Delta_\rs^2 & \bP D_\fl P_\rm \Delta_\rs^2\\
            0 & 0 & \Id\\
            0 & -P_\rm \Delta_\rs^2 & -P_\rm \Delta_\rs^2
        \end{pmatrix}, \tand\\
        B_\fs
        &= \begin{pmatrix}
            - \bP D_\fl M_s^{-1} \gamma_\rm N \Delta(\cdot) \nu & \bP D_\fl M_\eta & -\bP D_\fl M_s^{-1} \gamma_\rm N (\Delta(\bP D_\fl(\cdot)) \nu) + \bP D_\fl M_\eta\\
            0 & 0 & 0\\
            M_s^{-1} \gamma_\rm N \Delta(\cdot) \nu & -M_\eta & M_s^{-1} \gamma_\rm N (\Delta(\bP D_\fl(\cdot)) \nu)-M_\eta
        \end{pmatrix}, 
    \end{aligned}
\end{equation*}
where $M_\eta \coloneqq (M_s^{-1} - \Id) P_\rm \Delta_\rs^2$.
We start by showing that $A_\fs^1 \in \Hinfty(\rY_0)$.
First, observe that 
\begin{equation*}
    \begin{aligned}
        A_\fs^1 
        &= \begin{pmatrix}
            A_0 & B\\
            0 & A_\rs
        \end{pmatrix}, \twhere B = \begin{pmatrix}
            \bP D_\fl P_\rm \Delta_\rs^2 & \bP D_\fl P_\rm \Delta_\rs^2
        \end{pmatrix} \tand \rD(A_\fs^1) = \rD(\tA_\fs) = \rD(A_0) \times \rD(A_\rs).
    \end{aligned}
\end{equation*}

From \autoref{lem:Hoo-calc Stokes op} and \autoref{lem:Hoo-calc thick layer op matrix}, we recall $-A_0 \in \Hinfty(\rL_\sigma^q(\cF_0))$ with $\phi_{-A_0}^\infty < \frac{\pi}{2}$ and $-A_\rs \in \Hinfty(\rX_0^\rs)$ with $\phi_{-A_\rs}^\infty < \frac{\pi}{2}$, respectively.
Moreover, it readily follows that $B \colon \rD(A_\rs) \to \rL_\sigma^q(\cF_0)$ due to \autoref{lem:mapping props Stokes lifting}.
By \autoref{lem:bdd Hoo-calculus block op matrix}(a), it remains to show that there is $\delta > 0$ such that $B(\rD((-A_\rs)^{1+\delta})) \subset \rD((-A_0)^\delta)$ and
\begin{equation*}
    \begin{aligned}
        \| (-A_0)^\delta B (\eta_1,\eta_2) \|_{\rL_\sigma^q(\cF_0)} 
        &\le C \cdot \| (-A_\rs)^{1+\delta} (\eta_1,\eta_2) \|_{\rX_0^\rs}, \tforall (\eta_1,\eta_2) \in \rD((-A_\rs)^{1+\delta})
    \end{aligned}
\end{equation*}
for some constant $C > 0$.
Let us observe that $(\eta_1,\eta_2) \in \rD((A_\rs)^{1+\delta})$ especially implies $\eta_1$, $\eta_2 \in \rW_\rm^{4,q}(0,L)$.
From \autoref{lem:Hoo-calc Stokes op} and \autoref{lem:cons of H00}(a), it follows that $\rD((-A_0)^\delta) \simeq \rH^{2\delta,q}(\cF_0)^2 \cap \rL_\sigma^q(\cF_0)$, so the task reduces to showing that for $f \in \rW_\rm^{4,q}(0,L)$, it holds that $\bP D_\fl P_\rm \Delta_\rs^2 f \in \rH^{2\delta,q}(\cF_0)^2 \cap \rL_\sigma^q(\cF_0)$, or, equivalently, $D_\fl P_\rm \Delta_\rs^2 f \in \rH^{2\delta,q}(\cF_0)^2$.
Now, $f \in \rW_\rm^{4,q}(0,L)$ implies $P_\rm \Delta_\rs^2 f \in \rL_\rm^q(0,L)$, so from \autoref{lem:Stokes lifting}, it follows that $D_\fl P_\rm \Delta_\rs^2 f \in \rW^{\nicefrac{1}{q},q}(\cF_0)^2 \hookrightarrow \rH^{2 \delta,q}(\cF_0)^2$ if $\delta < \nicefrac{1}{2q}$.
In summary, $B(\rD((-A_\rs)^{1+\delta})) \subset \rD((-A_0)^\delta)$ for $\delta \in (0,\frac{1}{2q})$.
Similarly, for such $\delta > 0$, and for $(\eta_1,\eta_2) \in \rD((A_\rs)^{1+\delta})$, we find 
\begin{equation*}
    \begin{aligned}
        \|  (-A_0)^\delta B (\eta_1,\eta_2) \|_{\rL_\sigma^q(\cF_0)} 
        &\le C \cdot \| D_\fl P_\rm \Delta_\rs^2 (\eta_1 + \eta_2) \|_{\rH^{2\delta,q}(\cF_0)} \le C \cdot \| P_\rm \Delta_\rs^2(\eta_1 + \eta_2) \|_{\rL_\rm^q(0,L)}\\
        &\le C \cdot \left\| -A_\rs \binom{\eta_1}{\eta_2} \right\|_{\rX_0^\rs} \le C \cdot \left\| (-A_\rs)^{1+\delta} \binom{\eta_1}{\eta_2} \right\|_{\rX_0^\rs}.
    \end{aligned}
\end{equation*}
From \autoref{lem:bdd Hoo-calculus block op matrix}(a), we then derive that $-A_\fs^1 \in \Hinfty(\rY_0)$ with $\phi_{-A_\fs^1}^\infty < \frac{\pi}{2}$.

The second part of the proof consists of showing that $A_\fs^1$ and $B_\fs$ fit into the setting of \autoref{lem:stability and pert results Hoo}(c).
First, thanks to $-A_\fs^1 \in \Hinfty(\rY_0)$, \autoref{lem:cons of H00}(a),  and the fact that the (strong) trace is well-defined for $\alpha > \frac{1}{2q}$, for $\alpha \in (\frac{1}{2q},1)$, it holds that $\rD((-A_\fs^1)^\alpha) = \rD((-A_0)^\alpha) \times \rD((-A_\rs)^\alpha)$, so
\begin{equation*}
    \begin{aligned}
        \rD((-A_\fs^1)^\alpha) 
        &= \left\{ (u,\eta_1,\eta_2) \in \rH_\per^{2 \alpha,q}(\cF_0)^2 \cap \rL_\sigma^q(\cF_0) \times \rW_{\per,\rm}^{4,q}(0,L) \times \rH_{\per,\rm}^{4 \alpha,q}(0,L) : u = 0 \ton \Gamma_u \cup \Gamma_b\right\}.
    \end{aligned}
\end{equation*}
For such $\alpha$, consider $(\bP \Tilde{u},\eta_1,\eta_2) \in \rD((-A_\fs^1)^\alpha)$.
The invertibility of $M_s$ on $\rL_\rm^q(0,L)$, the continuity of $\gamma_\rm \colon \rW_{\per,\rm}^{\nicefrac{1}{q}+\eps,q}(\cF_0) \to \rL_\rm^q(0,L)$, and the mapping properties of $N$ as determined in~\eqref{eq:mapping props Neumann op} imply that
\begin{equation}\label{eq:bddness of lowest entry}
    \begin{aligned}
        \| M_s^{-1} \gamma_\rm N (\Delta \bP \Tilde{u}) \cdot \nu \|_{\rL_\rm^q(0,L)}
        &\le C \cdot \| \gamma_\rm N (\Delta \bP \Tilde{u}) \cdot \nu \|_{\rL_\rm^q(0,L)}\le C \cdot \| N (\Delta \bP \Tilde{u}) \cdot \nu \|_{\rW_{\per,\rm}^{\nicefrac{1}{q}+\eps,q}(\cF_0)}\\
        &\le C \cdot \| (\Delta \bP \Tilde{u}) \cdot \nu \|_{\rW_{\per,\rm}^{-1 + \eps,q}(\Gamma_u \cup \Gamma_b)} 
        \le C \cdot \| \bP \Tilde{u} \|_{\rH_\per^{2 \alpha,q}(\cF_0) \cap \rL_\sigma^q(\cF_0)}\\
        &\le C \cdot \| (-A_0)^\alpha \bP \Tilde{u} \|_{\rL_\sigma^q(\cF_0)}
    \end{aligned}
\end{equation}
for $\alpha \in (\frac{1}{2}+\frac{1}{2q},1)$.
Likewise, invoking that $\bP D_\fl \in \cL(\rL_\rm^q(0,L),\rL_\sigma^q(\cF_0))$, for $\alpha \in (\frac{1}{2}+\frac{1}{2q},1)$, we find that
\begin{equation*}
    \| \bP D_\fl M_s^{-1} \gamma_\rm N (\Delta \bP \Tilde{u}) \cdot \nu \|_{\rL_\rm^q(0,L)} \le C \cdot \| (-A_0)^\alpha \bP \Tilde{u} \|_{\rL_\sigma^q(\cF_0)}.
\end{equation*}
Next, using \autoref{lem:mapping props added mass}(b)  with $s > \max\{-1,-\frac{3}{2}+\frac{1}{q}\}$, and combining \autoref{lem:cons of H00}(a) with \autoref{lem:Hoo-calc thick layer op matrix}, for $\alpha \in (\max\{\frac{3}{4},\frac{5}{8}+\frac{1}{4q}\},1)$, we get
\begin{equation*}
    \begin{aligned}
        \| (M_s^{-1} - \Id) P_\rm \Delta_\rs^2 \eta_i \|_{\rL_\rm^q(0,L)}
        &\le C \cdot \| P_\rm \Delta_\rs^2 \eta_i \|_{\rW_{\per,\rm}^{s,q}(0,L)} \le C \cdot \| \eta_i \|_{\rW_{\per,\rm}^{4+s,q}(0,L)}\\
        &\le C \left(\left\| (-A_\rs)^\alpha \binom{\eta_1}{\eta_2} \right\|_{\rX_0^\rs} + \left\| \binom{\eta_1}{\eta_2} \right\|_{\rX_0^\rs}\right).
    \end{aligned}
\end{equation*}
Again, the corresponding term with $\bP D_\fl$ can be estimated in the same way by the above observation that $\bP D_\fl \in \cL(\rL_\rm^q(0,L),\rL_\sigma^q(\cF_0))$, which is due to \autoref{lem:mapping props Stokes lifting}.

Finally, we tackle $M_s^{-1} \gamma_\rm N (\Delta(\bP D_\fl(\cdot)) \nu)$ and $\bP D_\fl M_s^{-1} \gamma_\rm N (\Delta(\bP D_\fl(\cdot)) \nu)$.
In an analogous way as in \eqref{eq:bddness of lowest entry}, additionally using \autoref{lem:mapping props Stokes lifting} as well as \autoref{lem:cons of H00}(a) together with \autoref{lem:Hoo-calc thick layer op matrix} for the characterization of $\rD((A_s)^\alpha)$, for $\alpha \in (\frac{1}{4},1)$, and for $\eps > 0$ sufficiently small, we get
\begin{equation}\label{eq:est of last term}
    \begin{aligned}
        \| M_s^{-1} \gamma_\rm N (\Delta(\bP D_\fl(\eta_2)) \nu) \|_{\rL_\rm^q(0,L)}
        &\le C \cdot \| \bP D_\fl(\eta_2) \|_{\rW_\per^{1+\frac{1}{q}+\eps,q}(\cF_0)} \le C \cdot \| \eta_2 \|_{\rW_{\per,\rm}^{1+\eps,q}(0,L)}\\
        &\le C \cdot \| \eta_2 \|_{\rH_{\per,\rm}^{4 \alpha,q}(0,L)} \le C \cdot \left\| (-A_\rs)^\alpha \binom{\eta_1}{\eta_2} \right\|_{\rX_0^\rs}.
    \end{aligned}
\end{equation}
The corresponding estimate of the term $\bP D_\fl M_s^{-1} \gamma_\rm N (\Delta(\bP D_\fl(\cdot)) \nu)$ follows from there upon invoking that $\bP D_\fl \in \cL(\rL_\rm^q(0,L),\rL_\sigma^q(\cF_0))$.

In total, we argue that there exist constants $a$, $b > 0$ and $\alpha \in (\max\{\frac{3}{4},\frac{5}{8} + \frac{1}{4q},\frac{1}{2}+\frac{1}{2q}\},1)$ such that
\begin{equation}\label{eq:rel bddness wrt frac power}
    \left\| B_\fs \begin{pmatrix}
        \bP \Tilde{u}\\ \eta_1\\ \eta_2
    \end{pmatrix} \right\|_{\rY_0} \le a \cdot \left\| \begin{pmatrix}
        \bP \Tilde{u}\\ \eta_1\\ \eta_2
    \end{pmatrix} \right\|_{\rY_0} + b \cdot \left\| (-A_\fs^1)^\alpha \begin{pmatrix}
        \bP \Tilde{u}\\ \eta_1\\ \eta_2
    \end{pmatrix} \right\|_{\rY_0}.
\end{equation}
On the other hand, $-A_\fs^1 \in \Hinfty(\rY_0)$ especially yields that $-A_\fs^1$ is sectorial on $\rY_0$, and \eqref{eq:rel bddness wrt frac power} shows that~$B_\fs$ is relatively bounded with respect to a (lower) fractional power of $A_\fs^1$.
Perturbation theory for sectorial operators, see, e.g., \cite[Eq.~(3.17) and Cor.~3.1.6]{PS:16}, then yields that there exists $\mu \ge 0$ such that $-\tA_\fs + \mu = -(A_\fs^1 + B_\fs) + \mu$ is sectorial on $\rY_0$ with $\phi_{-\tA_\fs + \mu} < \frac{\pi}{2}$.
For sufficiently large $\mu \ge 0$, the invertibility of $-\tA_\fs + \mu$ is valid.
Therefore, \autoref{lem:stability and pert results Hoo}(c) implies that $-\tA_\fs + \mu \in \Hinfty(\rY_0)$ with $\phi_{-A_\fs + \mu}^\infty < \frac{\pi}{2}$.
The assertion of the proposition then follows from the decoupling argument and \autoref{lem:stability and pert results Hoo}(b), yielding that $-A_\fs + \mu \in \Hinfty(\rX_0)$ with $\phi_{-A_\fs + \mu}^\infty < \frac{\pi}{2}$.
\end{proof}

We now address the spectral theory of the fluid-structure operator.
Note that the Kelvin-Voigt-type damping and the resulting functional setting cause that the embedding $\rD(A_\fs) \hookrightarrow \rX_0$ is not necessarily compact, implying that the spectrum does not necessarily consist of eigenvalues.
Therefore, a careful spectral analysis inspired by \cite[Sec.~5.5]{BMR:26} is required.

\begin{prop}\label{prop:spectral props fluid-structure op}
Let $q \in (1,\infty)$.
Then there exists $\mu < 0$ such that $\sigma(A_\fs) \subset \{\lambda \in \C : \Rep \lambda \le \mu\}$, and it holds that $s(A_\fs) = \sup\{\Rep \lambda : \lambda \in \sigma(A_\fs)\} < 0$.
\end{prop}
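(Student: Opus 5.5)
The plan is to combine the shifted $\Hinfty$-result of \autoref{prop:Hinfty up to shift} with a Fredholm-type argument and an energy identity. Throughout I work with the decoupled operator $\tA_\fs = S A_\fs S^{-1} = A_\fs^1 + B_\fs$ on $\rY_0$. Since $S$ is an isomorphism, $\sigma(A_\fs) = \sigma(\tA_\fs)$, so it suffices to locate $\sigma(\tA_\fs)$.

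\emph{Step 1: the unperturbed part.} By \autoref{lem:Hoo-calc Stokes op} and \autoref{lem:Hoo-calc thick layer op matrix}, both $-A_0$ and $-A_\rs$ are invertible and sectorial with angle $< \frac{\pi}{2}$. Hence there is $\mu_0 < 0$ with $\sigma(A_0) \cup \sigma(A_\rs) \subset \{\Rep \lambda \le \mu_0\}$, with uniform resolvent bounds on $\{\Rep \lambda \ge \mu_0/2\}$. Since $A_\fs^1$ is upper triangular with diagonal domain, its resolvent can be written out explicitly as a triangular matrix; the off-diagonal entry $(\lambda - A_0)^{-1} B (\lambda - A_\rs)^{-1}$ is bounded because $B \in \cL(\rD(A_\rs), \rL_\sigma^q(\cF_0))$. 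This gives $\{\Rep\lambda > \mu_0\} \subset \rho(A_\fs^1)$.

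\emph{Step 2: Fredholm structure.} For $\Rep \lambda > \mu_0$, write
\begin{equation*}
    \lambda - \tA_\fs = (\lambda - A_\fs^1)\bigl(\Id - (\lambda - A_\fs^1)^{-1} B_\fs\bigr).
\end{equation*}
The aim is to show that $K(\lambda) := (\lambda - A_\fs^1)^{-1} B_\fs$ is compact on $\rD(\tA_\fs)$, or equivalently that $B_\fs$ is compact from $\rD(\tA_\fs)$ to $\rY_0$. I would check this entry by entry.
\begin{itemize}
    \item The entries involving $M_\eta = (M_s^{-1} - \Id) P_\rm \Delta_\rs^2$ are compact. Indeed, $P_\rm \Delta_\rs^2$ maps into $\rL_\rm^q$, and $M_s^{-1} - \Id$ is compact there by \autoref{lem:mapping props added mass}(c); composing with the bounded map $\bP D_\fl$ preserves compactness.
    \item The entries $M_s^{-1}\gamma_\rm N(\Delta \bP \tu \cdot \nu)$ and their $\bP D_\fl$-versions factor through the compact embedding $\rD(A_0) \hookrightarrow \rH^{2\alpha,q}(\cF_0)$ with $\alpha < 1$, using the estimate \eqref{eq:bddness of lowest entry}.
    \item The entries $M_s^{-1}\gamma_\rm N(\Delta(\bP D_\fl \eta_2)\nu)$ factor through the compact embedding $\rW^{4,q}_{\per,\rm} \hookrightarrow \rW^{1+\eps,q}_{\per,\rm}$, as in \eqref{eq:est of last term}.
\end{itemize}
The point of this split is that the only non-compact direction is the $\eta_1$-component in $\rW^{4,q}$, and $\eta_1$ enters $B_\fs$ only through $M_\eta$.

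Given compactness, $\Id - K(\lambda)$ is an analytic family of Fredholm operators of index zero on $\{\Rep \lambda > \mu_0\}$, and it is invertible for large real $\lambda$ by \autoref{prop:Hinfty up to shift}. The analytic Fredholm theorem then shows that $\sigma(\tA_\fs) \cap \{\Rep\lambda > \mu_0\}$ consists of isolated eigenvalues of finite multiplicity.

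\emph{Step 3: excluding eigenvalues with $\Rep\lambda \ge 0$.} Let $\lambda$ be such an eigenvalue, with eigenvector transformed back to $(u, \pi, \eta_1, \eta_2)$ solving the resolvent version of \eqref{eq:lin per multilayered interaction probl} with $f = g = 0$. Elliptic regularity (\autoref{lem:Stokes lifting}, bootstrapping in $q$) lets me assume $q = 2$ and smooth data. I test the fluid equation with $\bar u$ and the plate equation with $\bar\eta_2$, and use $\eta_2 = \lambda \eta_1$ and the kinematic coupling. The boundary pressure terms cancel, and taking real parts gives
\begin{equation*}
    \Rep\lambda\bigl(\|u\|_2^2 + \|\eta_2\|_2^2 + \|\del_{xx}\eta_1\|_2^2\bigr) + 2\|\mathrm{D}u\|_2^2 + \|\del_{xx}\eta_2\|_2^2 = 0 ,
\end{equation*}
where $\mathrm{D}u$ denotes the symmetric gradient. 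Korn's inequality with the bottom Dirichlet condition then yields $u = 0$. Next, $\del_{xx}\eta_2 = 0$ together with periodicity and zero mean gives $\eta_2 = 0$.
\begin{itemize}
    \item If $\lambda \neq 0$, then $\eta_1 = \eta_2/\lambda = 0$.
    \item If $\lambda = 0$, the Stokes system forces $\pi$ to be constant, so $P_\rm \pi|_{\Gamma_u} = 0$. The plate equation then reduces to $P_\rm \Delta_\rs^2 \eta_1 = 0$, and $\eta_1 = 0$ by \autoref{lem:bdd Hoo-calculus proj bi-Laplacian}.
\end{itemize}
In either case the eigenvector vanishes, so there is no eigenvalue with $\Rep\lambda \ge 0$.

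\emph{Step 4: a uniform gap.} By \autoref{prop:Hinfty up to shift}, $\sigma(A_\fs)$ lies in a sector with vertex at the shift and half-opening angle $< \frac{\pi}{2}$. Hence $\sigma(A_\fs) \cap \{\Rep \lambda \ge \mu_0/2\}$ is contained in a compact set. By Step 2 this set contains only finitely many eigenvalues, and by Step 3 all of them have negative real part. Taking $\mu$ to be the maximum of $\mu_0/2$ and the largest of these real parts gives $\sigma(A_\fs) \subset \{\Rep \lambda \le \mu\}$ with $\mu < 0$, and therefore $s(A_\fs) < 0$.

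I expect the main obstacle to be the compactness in Step 2, in particular ensuring that no term of $B_\fs$ sees $\eta_1$ with full $\rW^{4,q}$-regularity except through $M_s^{-1} - \Id$. A second delicate point is justifying the cancellation of the pressure in the energy identity of Step 3 for general $q$, which needs the bootstrap to the Hilbert case.
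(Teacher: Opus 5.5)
Your proof is correct and follows the paper's route: the decoupled splitting $\tA_\fs = A_\fs^1 + B_\fs$, the triangular resolvent of $A_\fs^1$, relative compactness of $B_\fs$ via the same entrywise estimates \eqref{eq:bddness of lowest entry}, \eqref{eq:est of last term} and \autoref{lem:mapping props added mass}(c), and the $q = 2$ energy identity to exclude eigenvalues with $\Rep \lambda \ge 0$. Where the paper invokes Kato's invariance of the essential spectrum and the splitting $\sigma = \sigma_\ess \cup \sigma_\rd$, you apply the analytic Fredholm theorem on the connected half-plane $\{\Rep \lambda > \mu_0\}$ and combine the resulting discreteness with the sector from \autoref{prop:Hinfty up to shift} (your Step~4); this makes explicit the uniform gap $\mu < 0$ that the paper's proof, which ends at $\sigma(A_\fs) \subset \{\Rep \lambda < 0\}$, leaves implicit, and your treatment of $\lambda = 0$ via the constant pressure is likewise more complete.
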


\begin{proof}
The following proof consists of two parts.
First, we discuss the point spectrum of $A_\fs$, and in a second step, we establish control of the essential spectrum by invoking the decoupling approach used in \autoref{prop:Hinfty up to shift} to show that the operator admits a bounded $\Hinfty$-calculus.

We observe that the point spectrum $\sigma_\rp(A_\fs)$ is independent of $q$, so we may consider the eigenvalue problem $(\lambda - A_\fs)(\bP u,\eta_1,\eta_2) = 0$ in the case $q = 2$.
As in the reformulation of the linear problem in terms of the fluid-structure operator in \eqref{eq:reform of lin probl in terms of the fluid-structure op}, the eigenvalue problem is equivalent with the problem
\begin{equation}\label{eq:eigenvalue probl per setting}
    \left\{
    \begin{aligned}
        \lambda u - \mdiv \sigma_\rf(u,\pi)
        &= 0, &&\tin \cF_0,\\
        \mdiv u
        &= 0, &&\tin \cF_0,\\
        \lambda \eta_1 - \eta_2
        &= 0, &&\tin (0,L),\\
        \lambda \eta_2 + P_\rm \Delta_s^2 \eta_1 + P_\rm \Delta_s^2 \eta_2
        &= \gamma_\rm \pi, &&\tin (0,L),\\
        u
        &= P_\rm(\eta_2) \mre_2, &&\ton \Gamma_u,\\
        u
        &= 0, &&\ton \Gamma_b,\\
        (u,\pi)(0,y) = (u,\pi)(L,y), \enspace (\eta_1,\eta_2)(0)
        &= (\eta_1,\eta_2)(L), &&\tfor y \in (-1,0),
    \end{aligned}
    \right.
\end{equation}
where, as above, we observe that $-P_\rm \left(\mre_2 \cdot \left.\sigma_\rf(u,\pi)\right|_{\Gamma_u} \mre_2\right) = \gamma_\rm \pi$ results from the kinematic coupling condition and $\mdiv u = 0$.
Testing \eqref{eq:eigenvalue probl per setting}$_1$ by $\overline{u}$ and \eqref{eq:eigenvalue probl per setting}$_4$ by $\overline{\eta}_2 = \overline{\lambda} \overline{\eta}_1$, invoking $\int_0^L f P_\rm g \srd x = \int_0^L (P_\rm f)  g \srd x$ to cancel the terms involving $\pi$, and adding both equations, we infer that
\begin{equation*}
    \lambda \int_{\cF_0} |u|^2 \srd (x,y) + 2 \int_{\cF_0} |\D(u)|^2 \srd (x,y) + \lambda \int_0^L |\eta_2|^2 \srd x + \overline{\lambda} \int_0^L |\Delta_s \eta_1|^2 \srd x + \int_0^L |\Delta_s \eta_2|^2 \srd x = 0.
\end{equation*}
It follows that $\Rep \lambda \le 0$.
Moreover, considering $\lambda = 0$, we find that $\D(u) = 0$ as well as $\Delta_s \eta_2 = 0$.
With regard to the boundary and coupling conditions as well as the fact that $\eta_1$ and $\eta_2$ have spatial average zero, this yields that $u = 0$, $\eta_2 = 0$ and $\eta_1 = 0$.
In total, we have verified that $\sigma_\rp(A_\fs) \subset \{\lambda \in \C : \Rep \lambda < 0\}$.

The second step is now to analyze the essential spectrum, where we employ the decoupling argument from the proof of \autoref{prop:Hinfty up to shift}.
More precisely, from \eqref{eq:decoupled fluid-structure op}, we recall the decoupled operator $\tA_\fs$ with diagonal domain $\rD(\tA_\fs) = \rD(A_0) \times \rD(A_s)$ that emerges from the similarity transform given by $S$ and~$S^{-1}$ to decouple.
In particular, the spectra of $A_\fs$ and $\tA_\fs$ coincide, so we have
\begin{equation*}
    \sigma(A_\fs) = \sigma(\tA_\fs) = \sigma_\ess(\tA_\fs) \cup \sigma_\rd(\tA_\fs),
\end{equation*}
where we note that $\sigma_\rd(\tA_\fs) \subset \sigma_\rp(\tA_\fs) = \sigma_\rp(A_\fs)$, and the latter has been determined in the first part of the proof.
We also invoke the decomposition of $\tA_\fs = A_\fs^1 + B_\fs$, where
\begin{equation*}
    \begin{aligned}
        A_\fs^1
        &= \begin{pmatrix}
            A_0 & -\bP D_\fl P_\rm \Delta_\rs^2 & -\bP D_\fl P_\rm \Delta_\rs^2\\
            0 & 0 & \Id\\
            0 & -P_\rm \Delta_\rs^2 & -P_\rm \Delta_\rs^2
        \end{pmatrix} = \begin{pmatrix}
            A_0 & B\\
            0 & A_\rs
        \end{pmatrix}, \enspace B = \begin{pmatrix}
            -\bP D_\fl P_\rm \Delta_\rs^2 & -\bP D_\fl P_\rm \Delta_\rs^2
        \end{pmatrix}, \tand\\
        B_\fs
        &= \begin{pmatrix}
            - \bP D_\fl M_s^{-1} \gamma_\rm N \Delta(\cdot) \nu & \bP D_\fl M_\eta & -\bP D_\fl M_s^{-1} \gamma_\rm N (\Delta(\bP D_\fl(\cdot)) \nu) + \bP D_\fl M_\eta\\
            0 & 0 & 0\\
            M_s^{-1} \gamma_\rm N \Delta(\cdot) \nu & -M_\eta & M_s^{-1} \gamma_\rm N (\Delta(\bP D_\fl(\cdot)) \nu)-M_\eta
        \end{pmatrix}, 
    \end{aligned}
\end{equation*}
with $M_\eta = (M_s^{-1} - \Id) P_\rm \Delta_\rs^2$.
Employing the representation 
\begin{equation*}
    \lambda - A_\fs^1 = \begin{pmatrix}
        \lambda - A_0 & -B\\
        0 & \lambda - A_\rs
    \end{pmatrix} = \begin{pmatrix}
        \lambda - A_0 & 0\\
        0 & \lambda - A_\rs
    \end{pmatrix} \cdot \begin{pmatrix}
        \Id & -R(\lambda,A_0)B\\
        0 & \Id 
    \end{pmatrix},
\end{equation*}
for $\lambda \in \rho(\diag(A_0,A_\rs))$, we deduce that $\lambda \in \rho(A_\fs^1)$, implying that $\sigma(A_\fs^1) \subset \{\lambda \in \C : \Rep \lambda < 0\}$ thanks to \autoref{lem:Hoo-calc Stokes op} and \autoref{lem:Hoo-calc thick layer op matrix}, and then also $s(A_\fs^1) < 0$.

Now, the goal is to verify that $B_\fs$ is a relatively compact perturbation of $A_\fs^1$.
First, we recall from \autoref{lem:mapping props Stokes lifting} that $D_\fl \in \cL(\rW_{\per,\rm}^{s,q}(0,L),\rW_\per^{s+\nicefrac{1}{q},q}(\cF_0)^2)$ for $s \in [-\frac{1}{q},1+\frac{1}{q}]$.
In particular, $\bP D_\fl$ is compact as an operator from $\rL_\rm^q(0,L)$ to $\rL_\sigma^q(\cF_0)$.
In view of the shape of $B_\fs$, it is thus sufficient to establish the relative compactness of the operators $M_s^{-1} \gamma_\rm N \Delta(\cdot) \nu$, $(M_s^{-1} - \Id) P_\rm \Delta_\rs^2$, and $M_s^{-1} \gamma_\rm N (\Delta(\bP D_\fl(\cdot)) \nu)$.

Concerning the first operator, we consider $u \in \rD(A_0)$ and note that the embedding $\rD(A_0) \hookrightarrow \rD(A_0^\alpha)$ is compact for all $\alpha \in [0,1)$.
On the other hand, from \eqref{eq:bddness of lowest entry}, we recall that $M_s^{-1} \gamma_\rm N \Delta(\cdot) \nu$ is continuous from $\rD(A_0^\alpha)$ to $\rL_\rm^q(\omega)$ for $\alpha \in (\frac{1}{2}+\frac{1}{2q},1)$.
This reveals the relative compactness of $M_s^{-1} \gamma_\rm N \Delta(\cdot) \nu$.
With regard to $(M_s^{-1} - \Id) P_\rm \Delta_\rs^2$, we first note that $P_\rm \Delta_\rs^2 \in \cL(\rW_\rm^{4,q}(0,L),\rL_\rm^q(0,L))$, and we exploit the compactness of $M_s^{-1} - \Id$, see \autoref{lem:mapping props added mass}(c), to conclude that $(M_s^{-1} - \Id) P_\rm \Delta_\rs^2$ is relatively compact.
For $M_s^{-1} \gamma_\rm N (\Delta(\bP D_\fl(\cdot)) \nu)$, we invoke \eqref{eq:est of last term} to argue that this operator is continuous from $\rH_{\per,\rm}^{4 \alpha,q}(0,L)$ to $\rL_\rm^q(0,L)$ for $\alpha \in (\frac{1}{4},1)$.
By the compactness of the embedding $\rW_{\per,\rm}^{4,q}(0,L) \hookrightarrow \rH_{\per,\rm}^{4 \alpha,q}(0,L)$ for such~$\alpha$, and in view of the domain $\rD(A_\fs)$ from \eqref{eq:fluid-structure op}, we conclude the relative compactness of this term.

Putting the previous arguments together, we find that $B_\fs$ is a relatively compact perturbation of $A_\fs^1$, so \cite[Thm.~5.35]{Kat:95} yields that the essential spectra of $A_\fs^1$ and $\tA_\fs = A_\fs^1 + B_\fs$ coincide.
Together with the above considerations on the essential spectrum of $A_\fs^1$, we argue that
\begin{equation*}
    \sigma_\ess(\tA_\fs) = \sigma_\ess(A_\fs^1) \subset \sigma(A_\fs^1) \subset  \{\lambda \in \C : \Rep \lambda < 0\}.
\end{equation*}
Since on the other hand, we have $\sigma_\rd(\tA_\fs) \subset \sigma_\rp(\tA_\fs) = \sigma_\rp(A_\fs) \subset \{\lambda \in \C : \Rep \lambda < 0\}$, we finally obtain
\begin{equation*}
    \sigma(A_\fs) = \sigma(\tA_\fs) = \sigma_\ess(\tA_\fs) \cup \sigma_\rd(\tA_\fs) \subset \{\lambda \in \C : \Rep \lambda < 0\}. \qedhere
\end{equation*}
\end{proof}

\begin{proof}[Proof of~\autoref{thm:bdd Hoo-calculus fluid-structure op}]
The assertion of \autoref{thm:bdd Hoo-calculus fluid-structure op} follows by combining \autoref{prop:Hinfty up to shift} on the boundedness of the $\Hinfty$-calculus of $-A_\fs$ up to a shift, the negativity of the spectral bound $s(A_\fs) < 0$ as examined in \autoref{prop:spectral props fluid-structure op}, and the well-known fact that the shift in \autoref{prop:Hinfty up to shift} can be taken to be equal to the spectral bound.
\end{proof}

In the proposition below, we provide a characterization of the real and complex interpolation spaces.
The arguments to obtain these characterizations via the decoupling arguments parallel the ones employed in the proof of \cite[Cor.~8.5]{BBH:26}.
Also, let us observe that the boundedness of the $\Hinfty$-calculus of $A_\fs$, which is even valid without shift thanks to \autoref{prop:spectral props fluid-structure op}, especially yields that the complex interpolation spaces coincide with the domains of the fractional powers of the operator $A_\fs$.
In order to shorten notation, we introduce the map $\cT \colon \rL^q(0,L) \to \rL^q(\Gamma_u \cup \Gamma_b)$ defined by
\begin{equation}\label{eq:not cT}
    (\cT g)(x) = \left\{
    \begin{aligned}
        (P_\rm g(x)) \mre_2, &\tif (x,0) \in \Gamma_u,\\
        0, &\tif (x,-1) \in \Gamma_b.
    \end{aligned}
    \right.
\end{equation}

\begin{prop}\label{prop:char of the interpol spaces}
Let $p$, $q \in (1,\infty)$ and $\theta \in (0,1) \setminus \{\frac{1}{2q}\}$, and denote by $(\cdot,\cdot)_{\theta,p}$ and $[\cdot,\cdot]_\theta$ the real and complex interpolation functor, respectively.
Then it holds that
\begin{equation*}
    \begin{aligned}
        (\rX_0,\rX_1)_{\theta,p} 
        &= \left\{(u,\eta_1,\eta_2) \in \rB_{qp,\per}^{2 \theta}(\cF_0)^2 \cap \rL_\sigma^q(\cF_0) \times \rW_{\per,\rm}^{4,q}(0,L) \times \rB_{qp,\per,\rm}^{4 \theta}(0,L) : (u,\eta_1,\eta_2) \text{ satisfy } \eqref{eq:compat conds}\right\},\\
        [\rX_0,\rX_1]_\theta 
        &= \left\{(u,\eta_1,\eta_2) \in \rH_\per^{2\theta,q}(\cF_0)^2 \cap \rL_\sigma^q(\cF_0) \times \rW_{\per,\rm}^{4,q}(0,L) \times \rH_{\per,\rm}^{4 \theta,q}(0,L) : (u,\eta_1,\eta_2) \text{ satisfy } \eqref{eq:compat conds}\right\},
    \end{aligned}
\end{equation*}
where
\begin{equation}\label{eq:compat conds}
    \left\{
    \begin{aligned}
        u \cdot \nu = \cT \eta_2 \cdot \nu, &\ton \Gamma_u \cup \Gamma_b, &&\tif \theta < \frac{1}{2q},\\
        u = \cT \eta_2, &\ton \Gamma_u \cup \Gamma_b, &&\tif \theta > \frac{1}{2q}.
    \end{aligned}
    \right.
\end{equation}
\end{prop}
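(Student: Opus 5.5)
The plan is to transport the problem via the decoupling similarity transform $S$ from the proof of \autoref{prop:Hinfty up to shift}. Since $S \in \cL(\rX_0,\rY_0)$ is an isomorphism, and it also maps $\rX_1 = \rD(A_\fs)$ isomorphically onto $\rY_1 \coloneqq \rD(\tA_\fs) = \rD(A_0) \times \rD(A_\rs)$, interpolation functors commute with $S$. Thus $(\rX_0,\rX_1)_{\theta,p} = S^{-1}(\rY_0,\rY_1)_{\theta,p}$, and likewise for $[\cdot,\cdot]_\theta$. The decoupled spaces have diagonal structure, so they split componentwise:
\begin{equation*}
    (\rY_0,\rY_1)_{\theta,p} = (\rL_\sigma^q(\cF_0),\rD(A_0))_{\theta,p} \times (\rX_0^\rs,\rD(A_\rs))_{\theta,p}.
\end{equation*}

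For the structure part, $\rX_0^\rs = \rW_{\per,\rm}^{4,q} \times \rL_\rm^q$ and $\rD(A_\rs) = \rW_{\per,\rm}^{4,q} \times \rW_{\per,\rm}^{4,q}$. The first component is therefore constant, and the second interpolates to $\rB^{4\theta}_{qp,\per,\rm}(0,L)$, respectively $\rH^{4\theta,q}_{\per,\rm}(0,L)$. This uses standard periodic interpolation, with $P_\rm$ acting as a retraction.

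For the Stokes part, I will use the known characterization of the interpolation spaces between $\rL^q_\sigma$ and $\rD(A_0)$. One way to get it is through a retraction/coretraction argument with $\bP$ and the $\rW^{2,q}$-realization of the Laplacian with Dirichlet conditions on $\Gamma_u \cup \Gamma_b$. This gives $\rB^{2\theta}_{qp,\per} \cap \rL^q_\sigma$ with $\tu = 0$ on $\Gamma_u \cup \Gamma_b$ when $\theta > \frac{1}{2q}$, and no extra boundary condition beyond the normal one already built into $\rL^q_\sigma$ when $\theta < \frac{1}{2q}$. The complex case is analogous and is also consistent with \autoref{lem:cons of H00}(a).

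Next I apply $S^{-1}$, that is, $\bP u = \bP\tu + \bP D_\fl \eta_2$. First I have to check that $\bP D_\fl \eta_2$ has the right regularity. For $\eta_2 \in \rB^{4\theta}_{qp,\per,\rm}(0,L)$, \autoref{lem:mapping props Stokes lifting} together with interpolation of the lifting (applied to $D_\fl$ between two admissible values of $s$) gives $D_\fl \eta_2 \in \rB^{4\theta+1/q}_{qp,\per} \subset \rB^{2\theta}_{qp,\per}$ whenever $4\theta \le 2 - \frac{1}{q}$. For larger $\theta$, I cap the smoothness at $s = 2 - \frac{1}{q}$, which still yields $\rW^{2,q} \hookrightarrow \rB^{2\theta}_{qp}$.

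Boundary traces come next. $D_\fl\eta_2$ has trace $\cT \eta_2$ on $\Gamma_u \cup \Gamma_b$, and the gradient part $(\Id - \bP) D_\fl \eta_2 = \nabla\varphi$ must be controlled through the Neumann problem \eqref{eq:weak Neumann probl Helmholtz}. I expect this to be the main obstacle. The point is that $\bP D_\fl \eta_2$ need not itself have full trace $\cT\eta_2$, since only the normal component is preserved by $\bP$. So I will instead describe the set as $u = \tu + D_\fl \eta_2$, where the boundary condition $u = \cT\eta_2$ is imposed on $u$ itself, as written in \eqref{eq:compat conds} and matching the definition of $\rX_1$. To handle this, I will work with the unprojected lifting and use that $\nabla \varphi$ with $\varphi = N_2 D_\fl\eta_2$ is smooth enough; indeed $\mdiv D_\fl \eta_2 = 0$, so $\varphi$ solves a homogeneous Neumann problem with zero data and vanishes. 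Consequently $\bP D_\fl\eta_2 = D_\fl\eta_2$, which removes the difficulty.

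With this, the two inclusions become direct. Given $(u,\eta_1,\eta_2)$ in the claimed set, I set $\tu = u - D_\fl\eta_2$. This $\tu$ has the required Besov or Bessel regularity and is divergence free. Its trace vanishes when $\theta > \frac{1}{2q}$, and only its normal trace vanishes when $\theta < \frac{1}{2q}$. Hence $\tu$ lies in the Stokes interpolation space. The converse inclusion follows by reversing these steps. The value $\theta = \frac{1}{2q}$ is excluded exactly where the trace characterization of the Stokes interpolation space fails.
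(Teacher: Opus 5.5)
Your overall route is the decoupling argument the paper refers to via \cite[Cor.~8.5]{BBH:26}: conjugate with $S$, interpolate $\rD(A_0)\times\rD(A_\rs)$ componentwise, then transport back. The gap is in the step you use to remove what you yourself flag as the main obstacle. For $f = D_\fl\eta_2$, the Neumann problem \eqref{eq:weak Neumann probl Helmholtz} does have $\mdiv f = 0$ in $\cF_0$. Its boundary datum, however, is $f\cdot\nu = P_\rm\eta_2$ on $\Gamma_u$ (and $0$ on $\Gamma_b$), which is not zero. Hence $N_2 D_\fl\eta_2 = N_1\eta_2$ and $(\Id-\bP)D_\fl\eta_2 = \nabla N_1\eta_2\neq 0$. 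This is exactly the term that produces $-N_1\eta_2'$ in the pressure in \autoref{lem:reform lin probl}, and hence the added mass operator $M_s = \Id+\gamma_\rm N_1$; if it vanished, $M_s$ would be the identity. Your identity $\bP D_\fl\eta_2 = D_\fl\eta_2$ also contradicts \eqref{eq:char L_sigma^q}, since $D_\fl\eta_2\cdot\mre_2 = P_\rm\eta_2$ on $\Gamma_u$. Likewise, $\bP$ does not preserve the normal component on $\Gamma_u\cup\Gamma_b$; it annihilates it. In fact $S^{-1}(\tu,\eta_1,\eta_2) = (\tu + D_\fl\eta_2 - \nabla N_1\eta_2,\eta_1,\eta_2)$, so your two inclusions match the interpolation space with the wrong first component. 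Read literally, a field in $\rL^q_\sigma(\cF_0)$ with trace $\cT\eta_2$ on $\Gamma_u$ forces $\eta_2 = 0$.

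The fix is to keep the gradient part rather than argue it away. Show that $\eta_2\mapsto\nabla N_1\eta_2$ maps $\rB_{qp,\per,\rm}^{4\theta}(0,L)$ (resp.\ $\rH_{\per,\rm}^{4\theta,q}(0,L)$) boundedly into $\rB_{qp,\per}^{2\theta}(\cF_0)^2$ (resp.\ $\rH_\per^{2\theta,q}(\cF_0)^2$). This follows from \eqref{eq:mapping props Neumann op} together with standard higher-order elliptic regularity for the Neumann problem on the periodic strip. Then $(\bP u,\eta_1,\eta_2)\mapsto(\bP u+\nabla N_1\eta_2,\eta_1,\eta_2)$ is an isomorphism from $(\rX_0,\rX_1)_{\theta,p}$ (resp.\ $[\rX_0,\rX_1]_\theta$) onto the set of triples whose first entry is the full velocity $u$. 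That $u$ is divergence free, has the stated regularity, and satisfies \eqref{eq:compat conds}. With $\tu = u - D_\fl\eta_2 = \bP u - \bP D_\fl\eta_2$, your inclusion arguments then go through as written. In particular, ``$\cap\,\rL^q_\sigma(\cF_0)$'' in the statement has to be read as ``divergence free, with normal trace given by \eqref{eq:compat conds}''. Equivalently, for $\theta>\frac{1}{2q}$ the condition on the projected component is $\bP u = \cT\eta_2 - \nabla N_1\eta_2$ on $\Gamma_u\cup\Gamma_b$. A smaller point: the retraction argument you sketch for the Stokes interpolation spaces does not work as stated. The projection $\bP$ does not map the Dirichlet domain of $\Delta$ into $\rD(A_0)$, because it destroys the tangential no-slip condition. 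Quote the known characterization of $(\rL^q_\sigma(\cF_0),\rD(A_0))_{\theta,p}$ and $[\rL^q_\sigma(\cF_0),\rD(A_0)]_\theta$ directly instead.
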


\section{The linear stochastic problem}\label{sec:lin stoch probl}

The purpose of this section is twofold.
We first introduce the required stochastic preliminaries, including the concept of {\em stochastic maximal regularity} and its relation with the boundedness of the $\Hinfty$-calculus.
In a second step, we employ these concepts in the present setting to get a result on the linear stochastic problem, capitalizing on \autoref{thm:bdd Hoo-calculus fluid-structure op} on the bounded $\Hinfty$-calculus of the fluid-structure operator.

For a separable Hilbert space $\cH$, a bounded linear operator $W_\cH \colon \rL^2((0,\infty);\cH) \to \rL^2(\Omega)$ is an $\frF$-cylindrical Wiener process or a cylindrical Brownian motion if for all $f$, $g \in \cH$ and $0 \le t \le t'$,
\begin{enumerate}[(a)]
    \item the random variable $W_\cH(t) f \coloneqq W_\cH(\bfone_{[0,t]} \otimes f)$ is centered Gaussian and $\frF_t$-measurable,
    \item it holds that $\E[W_\cH(t')f \cdot W_\cH(t)g] = t \langle f,g \rangle_{\cH}$, and
    \item the random variable $W_\cH(t')f - W_\cH(t)f$ is independent of $\frF_t$.
\end{enumerate}
 We then recall the representation of the $\frF$-cylindrical Wiener process $W_\cH(t) \colon \cH \to \rL^2(\Omega)$ from \eqref{eq:repr cylindr Wiener proc}.
The definition of the stochastic integral with respect to $W_\cH(t)$ has been discussed by van Neerven, Veraar, and Weis \cite{vNVW:12a}.
In fact, they considered the stochastic integral for functions with values in the space of $\gamma$-radonifying operators $\gamma(\cH,\rX_{\nicefrac{1}{2}})$.
Concerning $\gamma(\cH,\rX_{\nicefrac{1}{2}})$, we first invoke \autoref{prop:char of the interpol spaces} to find that
\begin{equation}\label{eq:X_1/2}
     \rX_{\nicefrac{1}{2}} = \left\{(u,\eta_1,\eta_2) \in \rW_\per^{1,q}(\cF_0)^2 \cap \rL_\sigma^q(\cF_0) \times \rW_{\per,\rm}^{4,q}(0,L) \times \rW_{\per,\rm}^{2,q}(0,L) : u = \cT \eta_2 \ton \Gamma_u \cup \Gamma_b\right\},
\end{equation}
where $\cT$ is as in \eqref{eq:not cT}.

For the set of $\gamma$-radonifying operators, if $(S,\Sigma,\mu)$ is a measure space, then $\gamma(\cH;\rL^q(S)) \simeq \rL^q(S;\cH)$, see \cite[Prop.~9.3.2]{HvNVW:17}, and this isomorphy extends to the situation of arbitrary Banach spaces of finite co-type, see \cite[Thm.~9.3.6]{HvNVW:17}.
It can be shown that $\rX_{\nicefrac{1}{2}}$ has finite co-type provided $q \ge 2$, see \cite[Sec.~7.1]{HvNVW:17} for more details.
Thus, for $q \ge 2$, it follows that
\begin{equation}\label{eq:gamma H X_1/2}
    \begin{aligned}
        \gamma(\cH;\rX_{\nicefrac{1}{2}}) \simeq \bigl\{(&u,\eta_1,\eta_2) \in \rW_\per^{1,q}(\cF_0;\cH)^2 \cap \rL_\sigma^q(\cF_0;\cH) \times \rW_{\per,\rm}^{4,q}(0,L;\cH) \times \rW_{\per,\rm}^{2,q}(0,L;\cH) :\\
        &u = \cT \eta_2 \ton \Gamma_u \cup \Gamma_b\bigr\} \eqqcolon \rX_{\nicefrac{1}{2}}(\cH),
    \end{aligned}
\end{equation}
where similarly as in \eqref{eq:L_sigma^q}, the space $\rL_\sigma^q(\cF_0;\cH)$ is defined by
\begin{equation*}
    \rL_\sigma^q(\cF_0;\cH) \coloneqq \overline{\{f \in \rC_\rc^\infty(\cF_0;\cH)^2 : \mdiv f = 0 \tin \cF_0\}}^{\| \cdot \|_{\rL^q(\cF_0;\cH)^2}}.
\end{equation*}
 Let us now address the linear stochastic problem associated with the fluid-structure operator.
More precisely, with the fluid-structure operator $A_\fs$ as in \eqref{eq:fluid-structure op},  and for the principal variable $Z = (U,H_1,H_2)$, the linear stochastic evolution equation is given by
\begin{equation}\label{eq:lin stoch evol eq}
    \rd Z(t) - A_\fs Z(t) \srd t = h(t) \srd W_\cH(t), \tfor t \in (0,T), \enspace Z(0) = Z_0,
\end{equation}
for $h \in \rX_{\nicefrac{1}{2}}(\cH)$, where the identification of the latter space with $\gamma(\cH;\rX_{\nicefrac{1}{2}})$ has been discussed in \eqref{eq:gamma H X_1/2}.
A strong solution to \eqref{eq:lin stoch evol eq} is then given by
\begin{equation}\label{eq:stoch conv}
    Z(t) \coloneqq \mre^{t A_\fs} Z_0 + \int_0^t \mre^{(t-s) A_\fs} h(s) \srd W_\cH(s).
\end{equation}

We discuss the well-posedness of \eqref{eq:stoch conv} and the regularity properties of the emerging $Z$ below.

\begin{prop}\label{prop:sol lin stoch probl}
Consider $T \in (0,\infty)$, and let $p$, $q \ge 2$ such that $q > 2$ if $p \neq 2$.
Then for all strongly $\frF_0$-measurable $Z_0 = (U_0,H_1^0,H_2^0) \colon \Omega \to (\rX_0,\rX_1)_{1-\nicefrac{1}{p},p}$, where the latter  space has been characterized in \autoref{prop:char of the interpol spaces}, and $\frF$-adapted $h \in \rL^p(\Omega;\rL^p(0,T;\rX_{\nicefrac{1}{2}}(\cH)))$, the stochastic convolution \eqref{eq:stoch conv} is well-defined, $Z$ given there is $\frF$-adapted, and constitutes the unique solution to \eqref{eq:lin stoch evol eq} that satisfies pathwise
\begin{equation*}
    Z \in \rH^{\theta,p}(0,T;\rX_{1-\theta}) \cap \rC([0,T];(\rX_0,\rX_1)_{1-\nicefrac{1}{p},p}) \tforall \theta \in [0,\nicefrac{1}{2}),
\end{equation*}
where the space $\rX_{1-\theta}$ has been made precise in \autoref{prop:char of the interpol spaces}.
\end{prop}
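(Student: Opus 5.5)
The plan is to reduce everything to the stochastic maximal regularity theory of van Neerven, Veraar and Weis \cite{vNVW:12a, vNVW:12b}. That theory requires three inputs: a UMD space of type $2$, here $\rX_0$; an operator $-A_\fs$ with bounded $\Hinfty$-calculus of angle $<\frac{\pi}{2}$; and noise $h \in \rL^p_{\frF}(\Omega\times(0,T);\gamma(\cH;\rX_{\nicefrac{1}{2}}))$.

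\textbf{Geometry of the ground space.} We first check that $\rX_0$ has the required geometric properties.
\begin{itemize}
\item Via the similarity transform $S$ from the proof of \autoref{prop:Hinfty up to shift}, $\rX_0$ is isomorphic to $\rY_0 = \rL_\sigma^q(\cF_0) \times \rW_{\per,\rm}^{4,q}(0,L)\times \rL_\rm^q(0,L)$.
\item $\rY_0$ is a product of closed subspaces of $\rL^q$ and Sobolev spaces, so it is UMD.
\item For $q\ge 2$ it has type $2$, and it is isomorphic to a closed subspace of an $\rL^q$-space.
\end{itemize}

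\textbf{Deterministic part.} \autoref{thm:bdd Hoo-calculus fluid-structure op} gives $-A_\fs\in\Hinfty(\rX_0)$ with angle $<\frac{\pi}{2}$ and $0\in\rho(A_\fs)$. The semigroup term $\mre^{tA_\fs}Z_0$ is then handled by deterministic maximal $\rL^p$-regularity (\autoref{lem:cons of H00}(b)) and the trace method. For pathwise $Z_0\in(\rX_0,\rX_1)_{1-\nicefrac{1}{p},p}$ this yields $\mre^{\cdot A_\fs}Z_0\in \rW^{1,p}(0,T;\rX_0)\cap\rL^p(0,T;\rX_1)\cap\rC([0,T];(\rX_0,\rX_1)_{1-\nicefrac{1}{p},p})$. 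Mixed-derivative embeddings then place it in $\rH^{\theta,p}(0,T;\rX_{1-\theta})$; here $\rX_{1-\theta}=[\rX_0,\rX_1]_{1-\theta}$ by \autoref{lem:cons of H00}(a) and \autoref{prop:char of the interpol spaces}. $\frF_0$-measurability of $Z_0$ gives adaptedness.

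\textbf{Stochastic convolution.} For the convolution term we invoke the stochastic maximal $\rL^p$-regularity theorem \cite[Thm.~1.1]{vNVW:12b}, using the identification \eqref{eq:gamma H X_1/2}, $\gamma(\cH;\rX_{\nicefrac{1}{2}})\simeq \rX_{\nicefrac{1}{2}}(\cH)$, valid for $q\ge2$. That theorem is stated for $\rL^q$-spaces, or more generally for spaces isomorphic to closed subspaces of $\rL^q(S)$ with $q\ge 2$, and it covers $p>2$, or $p=q=2$. This is exactly where the restriction "$q>2$ if $p\ne 2$" enters: the endpoint cases are excluded, and for $p=2$ one uses the Hilbert/Itô isometry case.

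The theorem gives
\begin{equation*}
\int_0^\cdot \mre^{(\cdot-s)A_\fs}h\srd W_\cH\in \rL^p(\Omega;\rH^{\theta,p}(0,T;\rX_{1-\theta}))\quad\text{for } \theta\in[0,\tfrac12).
\end{equation*}
The continuity in $(\rX_0,\rX_1)_{1-\nicefrac{1}{p},p}$ follows from the accompanying trace/continuity statement for $p>2$, obtained by combining $\rH^{\theta,p}(0,T;\rX_{1-\theta})$ for $\theta$ close to $\frac12$ with the mixed-derivative and trace embedding. Adaptedness of the convolution is standard.

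\textbf{Solution concept and uniqueness.} Finally we show that $Z$ is a strong solution. From $h\in\rL^p(\rX_{\nicefrac{1}{2}}(\cH))$ and $Z\in\rL^p(0,T;\rX_1)$ almost surely, the standard equivalence of mild and strong solutions for analytic semigroups \cite{vNVW:12a} gives that $Z$ satisfies $Z(t)-Z_0-\int_0^tA_\fs Z\srd s=\int_0^t h\srd W_\cH$. For uniqueness, take the difference of two solutions: it solves the deterministic problem with zero data pathwise, so it vanishes.

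\textbf{Main obstacle.} The step I expect to be hardest is verifying the geometric hypothesis needed to apply \cite{vNVW:12b} on the non-standard space $\rX_0$, which is not literally an $\rL^q$-space. I would first try to use the isomorphism $S$, together with \autoref{lem:stability and pert results Hoo}(b), to transfer the whole problem to $\rY_0$. There the isomorphism with a closed complemented subspace of $\rL^q$ is explicit: $\rW^{4,q}_{\per,\rm}$ embeds isometrically into $\rL^q$ via derivatives, and $\rL^q_\sigma$ is complemented via $\bP$. One must also check that $S$ maps $\rX_{\nicefrac12}$ compatibly with the $\gamma$-space identification.
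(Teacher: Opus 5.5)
Your overall route is the paper's. The paper's proof also just combines $-A_\fs\in\Hinfty(\rX_0)$ with angle $<\frac{\pi}{2}$, the fact that $\rX_0$ is isomorphic to a closed subspace of an $\rL^q$-space with $q\ge 2$, and the identification \eqref{eq:gamma H X_1/2}, and then cites \cite{vNVW:12a, vNVW:12b} and \cite[Thm.~3.13]{AV:25}.

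The genuine problem is your claim that the parameter restriction in the statement is ``exactly'' the one in these theorems. As you yourself recall, stochastic maximal regularity is available for $p\in(2,\infty)$ with $q\in[2,\infty)$, and additionally for $p=q=2$. The hypothesis ``$q>2$ if $p\neq 2$'' describes a different set.
\begin{itemize}
\item It excludes $p>2=q$. This is harmless, since it only weakens the claim.
\item It admits $p=2<q$. In that case your proof has nothing to invoke.
\end{itemize}
For $p=2<q$, the $\rL^q$-theorem of van Neerven, Veraar and Weis requires $p>2$. The Hilbert-space It\^o-isometry argument you fall back on for $p=2$ needs $\rX_0$ to be a Hilbert space, i.e.\ $q=2$. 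Stochastic maximal $\rL^2$-regularity on $\rL^q$ with $q>2$ is not covered by any of the cited results and is not expected to hold in general, so a better reference will not fix this. The hypothesis appears to have $p$ and $q$ interchanged and should read ``$p>2$, or $p=q=2$''. The paper's two-line proof has the same blind spot. Nothing used for \autoref{thm:global strong well posedness 2d1d} is affected, since there $p=q=2$. Still, your write-up should state which range it actually proves rather than asserting the match.

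A smaller point: the pathwise continuity in $(\rX_0,\rX_1)_{1-\nicefrac{1}{p},p}$ cannot be obtained the way you sketch it. From $Z\in\rH^{\theta,p}(0,T;\rX_{1-\theta})$ with $\theta<\frac12$, you get continuity only when $\theta>\frac1p$. Even then it is continuity with values in $\rX_{1-\theta}$, which is strictly weaker than $(\rX_0,\rX_1)_{1-\nicefrac{1}{p},p}$ because $1-\theta<1-\frac1p$. Taking $\theta$ close to $\frac12$ makes this worse, not better. The sharp statement is a separate trace theorem for stochastic maximal regularity, and it has to be quoted rather than derived from the $\rH^{\theta,p}$-regularity. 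For $p>2$ it is available in the framework surveyed in \cite{AV:25}; for $p=q=2$ it is the classical Hilbert-space result.
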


\begin{proof}
In the present case of $\rX_0$ being isomorphic to a closed subspace of $\rL^q$, for $q \ge 2$, it has been shown that $-A_\fs \in \Hinfty(\rX_0)$ as revealed in \autoref{thm:bdd Hoo-calculus fluid-structure op} is sufficient for the stochastic maximal regularity of~$-A_\fs$, see \cite{vNVW:12a, vNVW:12b} or also \cite[Thm.~3.13]{AV:25}.
Together with the isomorphism $\rX_{\nicefrac{1}{2}}(\cH) \simeq \gamma(\cH,\rX_{\nicefrac{1}{2}})$ discussed in \eqref{eq:gamma H X_1/2}, this yields the well-posedness of~\eqref{eq:stoch conv} and the asserted regularity.
\end{proof}

We now introduce compatibility conditions.
First, for $u_0$, $\eta_1^0$, and $\eta_2^0$ sufficiently regular, assume that
\begin{equation}\label{eq:comp conds init data}
    \mdiv u_0 = 0 \tin \cF_0, \, \int_0^L \eta_1^0 \srd x = \int_0^L \eta_2^0 \srd x = 0, \, u_0(x,-1) = 0, \, u_0(x,0) = \eta_2^0(x) \mre_2 \tforall x \in (0,L).
\end{equation}
Next, for $f$ and $g$ sufficiently regular, and with $\overline{g} = M_s^{-1} P_\rm g + M_s^{-1} \gamma_\rm N_2 f$ as in \eqref{eq:reform of lin probl in terms of the fluid-structure op}, suppose that
\begin{equation}\label{eq:comp conds RHS}
    \bP f(x,-1) = 0 \tand \bP f(x,0) = \overline{g}(x) \tforall x \in (0,L). 
\end{equation}
 Below, we discuss the solvability of the actual linear stochastic problem under consideration below.
\begin{cor}\label{cor:sol lin stoch probl}
Let $T \in (0,\infty)$ as well as $p$, $q \ge 2$ such that $q > 2$ if $p \neq 2$.
Consider the linear stochastic problem
\begin{equation}\label{eq:lin stoch PDE}
    \left\{
    \begin{aligned}
        \rd U - \mdiv \sigma_\rf(U,\Pi) \srd t
        &= f \srd W_\cH, \enspace \mdiv U = 0, &&\tin (0,T) \times \cF_0,\\
        \rd H_1 
        &= H_2 \srd t, &&\tin (0,T) \times (0,L),\\
        \rd H_2 + \bigl(P_\rm \Delta_s^2 H_1 + P_\rm \Delta_s^2 H_2\bigr) \srd t
        &= -P_\rm\bigl(\mre_2 \cdot \sigma_\rf(U,\Pi) \mre_2\bigr) \srd t\\
        &\quad + g \srd W_\cH, &&\tin (0,T) \times (0,L),\\
        U(t,x,0) 
        &= H_2(t,x) \mre_2, &&\tfor t \in (0,T), \enspace x \in (0,L),\\
        (U,\Pi)(t,0,y) 
        &= (U,\Pi)(t,L,y), &&\tfor t \in (0,T), \enspace y \in (-1,0),\\
        (H_1,H_2)(t,0) 
        &= (H_1,H_2)(t,L), &&\tfor t \in (0,T),\\
        U(t,x,-1) 
        &= 0, &&\tfor t \in (0,T), \enspace x \in (0,L),\\
        U(0,\cdot)
        &= U_0(\cdot), &&\tin \cF_0,\\
        H_1(0,\cdot) &= H_1^0(\cdot), \enspace H_2(0,\cdot) = H_2^0(\cdot), &&\tin (0,L).
    \end{aligned}
    \right.
\end{equation}
Assume that
\begin{enumerate}[(a)]
    \item $Z_0 = (U_0,H_1^0,H_2^0)$ is strongly $\frF_0$-measurable such that $U_0 \in \rB_{qp,\per}^{2-\frac{2}{p}}(\cF_0)^2$, $H_1^0 \in \rW_\per^{4,q}(0,L)$, and $H_2^0 \in \rB_{qp,\per}^{4-\frac{4}{p}}(0,L)$ satisfy \eqref{eq:comp conds init data},
    \item $h = (f,0,g)$ is $\frF$-adapted, and the forcing terms $f \in \rL^p(\Omega;\rL^p(0,T;\rW_\per^{1+\frac{1}{q},q}(\cF_0;\cH)^2))$ as well as $g \in \rL^p(\Omega;\rL^p(0,T;\rW_\per^{2,q}(0,L;\cH)))$ fulfill \eqref{eq:comp conds RHS}.
\end{enumerate}
Then there exists a unique solution $Z = (\bP U,H_1,H_2)$ to \eqref{eq:lin stoch PDE}, and $Z$ is $\frF$-adapted and satisfies pathwise
\begin{equation*}
    Z \in \rH^{\theta,p}(0,T;\rX_{1-\theta}) \cap \rC([0,T];(\rX_0,\rX_1)_{1-\nicefrac{1}{p},p}) \tforall \theta \in [0,\nicefrac{1}{2}).
\end{equation*}
\end{cor}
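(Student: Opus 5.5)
The plan is to reduce \autoref{cor:sol lin stoch probl} to \autoref{prop:sol lin stoch probl} by the reformulation in \autoref{lem:reform lin probl}, now applied pathwise with stochastic forcing. Concretely, I would first check that \eqref{eq:lin stoch PDE} is equivalent, for the unknowns $Z = (\bP U, H_1, H_2)$, to the abstract equation \eqref{eq:lin stoch evol eq} with noise coefficient
\begin{equation*}
    h = (\bP f, 0, \overline{g}), \qquad \overline{g} = M_s^{-1} P_\rm g + M_s^{-1} \gamma_\rm N_2 f .
\end{equation*}
The pressure and the non-solenoidal part are then recovered as in \autoref{lem:reform lin probl}, namely $(\Id - \bP)U = (\Id - \bP) D_\fl H_2$ and $\Pi = N(\Delta \bP U \cdot \nu) - N_1 \,\rd H_2 + N_2 f \,\rd W_\cH$, understood in the integrated (Itô) sense. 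The derivation of \autoref{lem:reform lin probl} is purely algebraic in the time derivative: one applies $\bP$ and the Neumann liftings $N$, $N_1$, $N_2$, which are deterministic bounded operators, and inverts $M_s$ via \autoref{lem:mapping props added mass}(a). Since these operators commute with the stochastic integral, the equivalence carries over once the equations are written in integral form.

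Second, I would verify the hypotheses of \autoref{prop:sol lin stoch probl}.

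\emph{Initial data.} By (a) and \eqref{eq:comp conds init data}, together with the characterization in \autoref{prop:char of the interpol spaces} at $\theta = 1 - \nicefrac{1}{p}$, we get $Z_0 \in (\rX_0,\rX_1)_{1-\nicefrac{1}{p},p}$. Here $2 - \nicefrac{2}{p} \geq 1 > \nicefrac{1}{q}$, so the full condition $u = \cT\eta_2$ is required, and it is exactly the boundary part of \eqref{eq:comp conds init data}. Strong $\frF_0$-measurability is preserved under this isomorphism.

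\emph{Noise coefficient.} We need $h(t,\omega) \in \rX_{\nicefrac{1}{2}}(\cH) \simeq \gamma(\cH;\rX_{\nicefrac{1}{2}})$ with the description \eqref{eq:gamma H X_1/2}. This amounts to three checks:
\begin{enumerate}[(i)]
    \item $\bP f \in \rW_\per^{1,q}(\cF_0;\cH)^2$, which follows from $f \in \rW_\per^{1+\nicefrac{1}{q},q}$ and the $\cH$-valued boundedness of $\bP$ on $\rW^{1,q}$;
    \item $\overline{g} \in \rW_{\per,\rm}^{2,q}(0,L;\cH)$;
    \item the boundary identity $\bP f = \cT \overline{g}$ on $\Gamma_u \cup \Gamma_b$, which is exactly \eqref{eq:comp conds RHS}.
\end{enumerate}
Adaptedness and the $\rL^p(\Omega;\rL^p(0,T;\cdot))$ bound are inherited because all maps involved are deterministic, bounded, and linear. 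Each of these operators acts on $\rL^q(\,\cdot\,;\cH)$ by tensor extension, which is legitimate by the $\gamma$-extension property, since $\cH$ is a Hilbert space and $\rL^q(\cH)$ is UMD. The hypotheses $p, q \geq 2$, with $q > 2$ when $p \neq 2$, are simply passed on to \autoref{prop:sol lin stoch probl}.

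Third, \autoref{prop:sol lin stoch probl} yields a unique adapted $Z$ with the stated pathwise regularity. Uniqueness for \eqref{eq:lin stoch PDE} follows because any solution produces, via the reformulation, a solution of \eqref{eq:lin stoch evol eq}.

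The main obstacle is (ii), the regularity of $\overline{g}$. The term $M_s^{-1} P_\rm g$ is handled by \autoref{lem:mapping props added mass}(b): $M_s^{-1} - \Id$ gains almost one derivative, and applying $M_s = \Id + \gamma_\rm N_1$ with \eqref{eq:mapping props Neumann op} iteratively shows that $M_s^{-1}$ preserves $\rW^{2,q}$. The range there is limited to $s < 1$, so I would bootstrap by writing $M_s^{-1} = \Id - \gamma_\rm N_1 M_s^{-1}$ and using elliptic regularity of the Neumann problem at higher order. The term $\gamma_\rm N_2 f$ is harder. $N_2 f$ solves a Neumann problem with data $\mdiv f \in \rW^{\nicefrac{1}{q},q}$ and $f\cdot\nu$ on the boundary, so $N_2 f \in \rW^{2+\nicefrac{1}{q},q}$, and its trace lies in $\rW^{2,q}(0,L)$. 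This is exactly why $f$ is assumed to have $1 + \nicefrac{1}{q}$ derivatives. I would establish this higher-order Neumann estimate by the same extension/cutoff reduction to the layer used for the Stokes lifting.
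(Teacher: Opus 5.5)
Your proposal is correct and follows the paper's proof essentially step by step. You recast \eqref{eq:lin stoch PDE} as the abstract equation driven by $(\bP f,0,\overline{g})$ with $\overline{g} = M_s^{-1}P_\rm g + M_s^{-1}\gamma_\rm N_2 f$, place the initial data in $(\rX_0,\rX_1)_{1-\nicefrac{1}{p},p}$ via \autoref{prop:char of the interpol spaces}, verify membership in $\rX_{\nicefrac{1}{2}}(\cH)$ through \eqref{eq:comp conds RHS}, $N_2 f \in \rW_\per^{2+\nicefrac{1}{q},q}$ and the $\rW^{2,q}$-invariance of $M_s^{-1}$ (your bootstrap via $M_s^{-1} = \Id - \gamma_\rm N_1 M_s^{-1}$ just makes explicit the paper's ``mimic \autoref{lem:mapping props added mass} plus elliptic regularity''), and conclude with \autoref{prop:sol lin stoch probl}.
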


\begin{proof}
The task here is to show that the analysis of \eqref{eq:lin stoch PDE} reduces to the investigation of the linear stochastic problem \eqref{eq:lin stoch evol eq} given in terms of the fluid-structure operator $A_\fs$.
Note that similarly as in~\eqref{eq:reform of lin probl in terms of the fluid-structure op}, it can be shown that \eqref{eq:lin stoch PDE} is equivalent with 
\begin{equation*}
    \rd \tZ(t) - A_\fs \tZ(t) \srd t = \th(t) \srd W_\cH(t), \tfor t \in (0,T), \enspace \tZ(0) = \tZ_0,
\end{equation*}
where $\tZ = (\bP U,Z_1,Z_2)$, $\th = (\bP f,0,\overline{g})$, with $\overline{g} = M_s^{-1} P_\rm g + M_s^{-1} \gamma_\rm N_2 f$, and $\tZ_0 = (\bP U_0,H_1^0,H_2^0)$.
With regard to the characterization of the space $(\rX_0,\rX_1)_{1-\nicefrac{1}{p},p}$ in \autoref{prop:char of the interpol spaces}, the assumptions in~(a) ensure that, indeed, we have that $\tZ_0$ is strongly $\frF_0$-measurable and satisfies $\tZ_0 \colon \Omega \to (\rX_0,\rX_1)_{1-\nicefrac{1}{p},p}$.
In other words, the initial data lie in the scope of \autoref{prop:sol lin stoch probl}.

It remains to verify that the right-hand side also fits into the framework provided by \autoref{prop:sol lin stoch probl}.
To this end, we invoke the space $\rX_{\nicefrac{1}{2}}(\cH)$ introduced in \eqref{eq:gamma H X_1/2}.
Note that the compatibility conditions included in this space are satisfied with regard to the assumptions made in~(b), and the task reduces to verifying the required regularities of $\bP f$ and $\overline{g}$.
For the former, this is an immediate consequence of the assumption $f \in \rL^p(\Omega;\rL^p(0,T;\rW_\per^{1+\frac{1}{q},q}(\cF_0;\cH)^2))$.
For the latter, we need to show that
\begin{equation*}
    \overline{g} = M_s^{-1} P_\rm g + M_s^{-1} \gamma_\rm N_2 f \in \rL^p(\Omega;\rL^p(0,T;\rW_{\per,\rm}^{2,q}(0,L;\cH))).
\end{equation*}
For this purpose, we first deduce that by construction, it follows that $P_\rm g \in \rW_{\per,\rm}^{2,q}(0,L;\cH)$.
Mimicking the proof of \autoref{lem:mapping props added mass}, where we additionally exploit classical elliptic regularity, and observing that the $\cH$-valued case can be handled in the same way, we note that $M_s^{-1} P_\rm g \in \rW_{\per,\rm}^{2,q}(0,L;\cH)$, establishing the required regularity for the first addend of $\overline{g}$. 
For the second addend $M_s^{-1} \gamma_\rm N_2 f$, we first note that by elliptic regularity and the same argument as above for the $\cH$-valued case, we have $N_2 f \in \rW_{\per,\rm}^{2+\frac{1}{q},q}(\cF_0;\cH)$.
Thus mapping properties of the trace yield that $\gamma_\rm N_2 f \in \rW_{\per,\rm}^{2,q}(0,L;\cH)$.
The above observation on the inverse of the added mass operator $M_s^{-1}$, we find that $M_s^{-1} \gamma_\rm N_2 f \in \rW_{\per,\rm}^{2,q}(0,L;\cH)$.
Concatenating the previous two arguments, we conclude the assertion of the corollary as a consequence of \autoref{prop:sol lin stoch probl}.
\end{proof}

\begin{rem}
The assumption on the spatial regularity of $f$ in \autoref{cor:sol lin stoch probl} is stronger than one would expect with regard to \eqref{eq:gamma H X_1/2}. 
The reason is that due to the reformulation in terms of the fluid-structure operator, we consider $\overline{g} = M_s^{-1} P_\rm g + M_s^{-1} \gamma_\rm N_2 f$, so the right-hand side $f$ of the fluid equation also enters the structure equation, leading to a stronger requirement regarding the spatial regularity.
\end{rem}

\section{Local-in-time well-posedness of the linearly coupled stochastic FSI problem}\label{sec:loc wp}

In this section, we address the local well-posedness and provide blow-up criteria for the solution. 
For this purpose, from \autoref{cor:sol lin stoch probl}, we recall the solution $Z = (U,H_1,H_2)$ to the linear stochastic evolution equation \eqref{eq:lin stoch PDE}.
On the other hand, we invoke a solution $\tv = (\tu,\teta_1,\teta_2) = (\tu,\teta,\del_t \teta)$ to the complete stochastic FSI problem \eqref{eq:lin coupled stoch FSI probl} reformulated as a first-order-in-time problem.
We then set
\begin{equation*}
    v = (u,\eta_1,\eta_2) \coloneqq (\tu - U,\teta_1 - H_1,\teta_2 - H_2) = \tv - Z.
\end{equation*}
Observing that the noise terms cancel out, we find that the investigation reduces to a purely {\em deterministic problem} given by
\begin{equation}\label{eq:resulting det PDE}
    \left\{
    \begin{aligned}
        \del_t u - \mdiv \sigma_\rf(u,\pi)
        &= -\bigl((u+U) \cdot \nabla\bigr)(u+U), \enspace \mdiv u = 0, &&\tin (0,T) \times \cF_0,\\
        \del_t \eta_1
        &= \eta_2, &&\tin (0,T) \times (0,L),\\
        \del_t \eta_2 + P_\rm \Delta_s^2 \eta_1 
        &= -P_\rm \Delta_s^2 \eta_2 - P_\rm (\mre_2 \cdot \sigma_\rf(u,\pi)\mre_2), &&\tin (0,T) \times (0,L),\\
        u(t,x,0,)
        &= \eta_2(t,x) \mre_2, &&\tfor t \in (0,T), \enspace x \in (0,L),\\
        (u,\pi)(t,0,y) 
        &= (u,\pi)(t,L,y), \enspace (\eta_1,\eta_2)(t,0) = (\eta_1,\eta_2)(t,L), &&\tfor t \in (0,T), \enspace y \in (-1,0),\\
        u(t,x,-1) 
        &= 0, &&\tfor t \in (0,T), \enspace x \in (0,L),\\
        u(0,\cdot)
        &= u_0(\cdot), &&\tin \cF_0,\\
        \eta_1(0,\cdot) &= \eta_1^0(\cdot), \enspace \eta_2(0,\cdot) = \eta_2^0(\cdot), &&\tin (0,L).
    \end{aligned}
    \right.
\end{equation}

Next, we reformulate the deterministic PDE \eqref{eq:resulting det PDE} as a semilinear PDE in terms of the fluid-structure operator.
In fact, similarly as in \eqref{eq:reform of lin probl in terms of the fluid-structure op}, we find that \eqref{eq:resulting det PDE} can be recast as
\begin{equation}\label{eq:det evol eq}
    \frac{\rd}{\rd t}\begin{pmatrix}
        \bP u\\ \eta_1\\ \eta_2
    \end{pmatrix} = A_\fs \begin{pmatrix}
        \bP u\\ \eta_1\\ \eta_2
    \end{pmatrix} + F(v+Z), \tfor t \in (0,T), \enspace \begin{pmatrix}
        \bP u\\ \eta_1\\ \eta_2
    \end{pmatrix}(0) = \begin{pmatrix}
        \bP u_0\\ \eta_1^0\\ \eta_2^0
    \end{pmatrix},
\end{equation}
where the nonlinearity $F$ is bilinear and given by 
\begin{equation}\label{eq:nonlin term}
    F(v+Z) = G(v+Z,v+Z) = \begin{pmatrix}
        -\bP\bigl[\bigl((u+U) \cdot \nabla\bigr)(u+U)\bigr]\\ 0\\ 0
    \end{pmatrix},
\end{equation}
and $(\bP u_0,\eta_1^0,\eta_2^0)^\top$ captures the deterministic part of the initial data.

In the following, we set $\rX_{0,s} \coloneqq \rL_\sigma^s(\cF_0) \times \rW_{\per,\rm}^{4,s}(0,L) \times \rL_\rm^s(0,L)$, and likewise for $\rX_{1,s}$.
We also define
\begin{equation}\label{eq:X_s,theta}
    \rX_{s,\theta} \coloneqq [\rX_{0,s},\rX_{1,s}]_\theta.
\end{equation}
The local pathwise strong well-posedness result is now given as follows.

\begin{thm}\label{thm:loc pathwise wp}
Let $T \in (0,\infty)$, $p$, $q \in (1,\infty)$, as well as $r$, $s \ge 2$ with $s > 2$ if $r > 2$, and such that $\frac{1}{r} - \frac{1}{2p} < \frac{1}{2}$.
Moreover,
\begin{enumerate}[(i)]
    \item if $q < 2$, assume that $\frac{1}{p} + \frac{1}{q} \le \frac{3}{2}$, $\frac{1}{s} - \frac{2-q}{4} \le \frac{1}{2}$ and $\frac{1}{r} - \frac{1}{2p} \le \frac{1}{2} - \frac{1}{s} + \frac{2-q}{4}$, or
    \item if $q \ge 2$, assume that $\frac{1}{s} - \frac{1}{q} \le \frac{1}{2}$ and $\frac{1}{r} - \frac{1}{2p} \le \frac{1}{2} - \frac{1}{s} + \frac{1}{q}$.
\end{enumerate}
Assume that 
\begin{enumerate}[(a)]
    \item $Z_0 = (U_0,H_1^0,H_2^0)$ is strongly $\frF_0$-measurable such that $U_0 \in \rB_{sr,\per}^{2-\frac{2}{r}}(\cF_0)^2$, $H_1^0 \in \rW_\per^{4,s}(0,L)$, and $H_2^0 \in \rB_{sr,\per}^{4-\frac{4}{s}}(0,L)$ satisfy \eqref{eq:comp conds init data},
    \item $h = (f,0,g)$ is $\frF$-adapted, and the forcing terms $f \in \rL^r(\Omega;\rL^r(0,T;\rW_\per^{1+\frac{1}{s},s}(\cF_0;\cH)^2))$ as well as $g \in \rL^r(\Omega;\rL^r(0,T;\rW_\per^{2,s}(0,L;\cH)))$ are such that \eqref{eq:comp conds RHS} holds true,
    \item and for the deterministic part $v_0 = (u_0,\eta_1^0,\eta_2^0)$ of the initial data, it is valid that $u_0 \in \rB_{qp,\per}^{2-\frac{2}{p}}(\cF_0)^2$, $\eta_1^0 \in \rW_\per^{4,q}(0,L)$, and $\eta_2^0 \in \rB_{qp,\per}^{4-\frac{4}{p}}(0,L)$ fulfill \eqref{eq:comp conds init data}.
\end{enumerate}
Then there exists a unique, local-in-time strong solution $\tv = (\tu,\teta_1,\teta_2) = Z + v$ to \eqref{eq:lin coupled stoch FSI probl} with initial data $\tu_0 = U_0 + u_0$, $\teta_1^0 = H_1^0 + \eta_1^0$, and $\teta_2^0 = H_2^0 + \eta_2^0$.
Here $Z = (\bP U,H_1,H_2)$ is the solution to the linear stochastic PDE \eqref{eq:lin stoch PDE} and satisfies $Z \in \rH^{\theta,r}(0,T;\rX_{s,1-\theta}) \cap \rC([0,T];(\rX_{0,s},\rX_{1,s})_{1-\nicefrac{1}{r},r})$ for all $\theta \in [0,\nicefrac{1}{2})$.
Moreover, there exists $T^* \in (0,T]$ so that $v = (\bP u,\eta_1,\eta_2) \in \rW^{1,p}(0,T^*;\rX_0) \cap \rL^p(0,T^*;\rX_1)$ is a deterministic function solving \eqref{eq:resulting det PDE}.
\end{thm}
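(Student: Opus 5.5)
Starting from the splitting $\tv = Z + v$, I first apply \autoref{cor:sol lin stoch probl} with $(p,q)$ replaced by $(r,s)$. The conditions $r,s\ge 2$, with $s>2$ if $r>2$, together with assumptions (a) and (b), are exactly its hypotheses. This yields a unique $\frF$-adapted $Z=(\bP U,H_1,H_2)$ with pathwise regularity $Z \in \rH^{\theta,r}(0,T;\rX_{s,1-\theta}) \cap \rC([0,T];(\rX_{0,s},\rX_{1,s})_{1-\nicefrac{1}{r},r})$. From here on I fix $\omega$ outside a null set and treat $Z(\omega)$ as a given deterministic coefficient. By the argument leading to \eqref{eq:det evol eq}, the remainder $v$ must solve the deterministic semilinear nonautonomous problem $v' = A_\fs v + G(v+Z,v+Z)$ with $v(0)=v_0 \in (\rX_0,\rX_1)_{1-\nicefrac1p,p}$. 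The membership of $v_0$ follows from (c) and \autoref{prop:char of the interpol spaces}.

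The linear part has maximal $\rL^p$-regularity on $\rX_0$ by \autoref{thm:bdd Hoo-calculus fluid-structure op} and \autoref{lem:cons of H00}(b), since $\rX_0$ is a closed subspace of a UMD space. I expand $G(v+Z,v+Z) = G(v,v) + G(v,Z)+G(Z,v) + G(Z,Z)$, which splits the problem into four parts:
\begin{enumerate}[(1)]
    \item a quadratic part $G(v,v)$, handled by the critical/subcritical semilinear theory of \cite{PSW:18, PW:17};
    \item two linear terms $G(v,Z)$ and $G(Z,v)$ with time-dependent coefficient $Z$;
    \item a forcing term $G(Z,Z)$;
    \item a fixed point in $\mathbb{E}_1(T^*)=\rW^{1,p}(0,T^*;\rX_0)\cap\rL^p(0,T^*;\rX_1)$.
\end{enumerate}
Only the first component of $G$ is nonzero, namely $-\bP[(a\cdot\nabla)b]$. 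For (1), I use the mixed-derivative embedding $\mathbb{E}_1 \hookrightarrow \rH^{\vartheta,p}(0,T;[\rX_0,\rX_1]_{1-\vartheta})$, Hölder, and Sobolev embeddings in two space dimensions to bound $\|(u\cdot\nabla)u\|_{\rL^p(\rL^q)}$ by $\|v\|_{\mathbb{E}_1}^2$ times a positive power of $T$. Alternatively, I verify the structural condition $\|G(v,w)\|_{\rX_0}\lesssim \|v\|_{\rX_\beta}\|w\|_{\rX_\beta}$ with $2\beta-1\le 1-\nicefrac1p$ (up to criticality), using $[\rX_0,\rX_1]_\beta\hookrightarrow \rH^{2\beta,q}$ from \autoref{prop:char of the interpol spaces}.

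The main obstacle is (2) and (3), the terms involving $Z$. Here the integrability exponents $(r,s)$ of $Z$ differ from $(p,q)$, and $Z$ has only $\rH^{\theta,r}$ time regularity with $\theta<\nicefrac12$. The plan is to estimate, for instance, $\|(U\cdot\nabla)u\|_{\rL^p(0,T^*;\rL^q)}$ by placing $U$ in $\rL^{a}(0,T;\rL^{b})$ or $\rL^\infty$-type spaces and $\nabla u$ in a complementary mixed space from $\mathbb{E}_1$. For $Z$ I would use $Z\in \rH^{\theta,r}(0,T;\rH^{2-2\theta,s})$ with $\theta$ close to $\nicefrac12$. This gives $\nabla U \in \rH^{\theta,r}(0,T;\rH^{1-2\theta,s})$, which embeds into $\rL^{\tilde r}(0,T;\rL^{\tilde s})$ with $\frac1{\tilde r} = \frac1r - \theta$ and $\frac1{\tilde s} = \frac1s - \frac{1-2\theta}{2}$. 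Choosing $\theta\uparrow\nicefrac12$ turns the scaling conditions into exactly (i) and (ii). In particular, $\frac1r - \frac1{2p}<\frac12$ guarantees $(U\cdot\nabla)U\in\rL^p(0,T;\rL^q)$. The condition $\frac1r-\frac1{2p}\le \frac12-\frac1s+\frac1q$ (respectively its $q<2$ analogue via $\rL^q\hookleftarrow \rL^2$ relations) lets Hölder pair the factors. The case distinction $q<2$ versus $q\ge2$ comes from whether $\rL^s$ embeds into $\rL^q$ directly or only through the Sobolev gain $\frac{2-q}{4}$.

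With these bounds in hand, the linear terms in $v$ with small factor $T^\kappa$ (or small $\|Z\|$ on short intervals, by absolute continuity of the integral) are absorbed as a perturbation of maximal regularity. A contraction argument on a ball in $\mathbb{E}_1(T^*)$ then yields a unique local solution $v$. Finally, I reconstruct $\pi$ and $(\Id-\bP)u$ via \autoref{lem:reform lin probl} and set $\tv=Z+v$. Uniqueness of $\tv$ follows from uniqueness of $Z$ and of the fixed point.
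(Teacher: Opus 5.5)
Your overall architecture matches the paper's. You obtain $Z$ from \autoref{cor:sol lin stoch probl} with $(p,q)$ replaced by $(r,s)$, treat $Z$ pathwise as a given coefficient, and get maximal $\rL^p$-regularity of $A_\fs$ from the $\Hinfty$-calculus. You then expand $G(v+Z,v+Z)$ bilinearly and require $U$ to lie in $\rL^{2p}$ in time, so that it can be paired with $v\in\rL^{2p}(0,T^*;\rX_\beta)$, $\beta\le 1-\frac{1}{2p}$. The paper feeds the resulting nonautonomous semilinear problem directly into \cite[Thm.~1.2]{PW:17} instead of running a contraction by hand, and that difference is only packaging. One caveat: in the critical case $\frac1p+\frac1q=\frac32$ allowed by (i), there is no factor $T^\kappa$ in front of $G(v,v)$. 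A hand-made contraction must then be centred at a reference solution and use the smallness of its $\rL^{2p}(0,T^*;\rX_\beta)$-norm as $T^*\to0$.

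The step that fails as written is your derivation of (i) and (ii): you cannot let $\theta\uparrow\frac12$. Two constraints on $\theta$ compete:
\begin{enumerate}[(1)]
    \item the time embedding $\rH^{\theta,r}(0,T)\hookrightarrow\rL^{2p}(0,T)$ requires $\theta\ge\frac1r-\frac1{2p}$;
    \item the spatial embedding needed for the $Z$-terms, namely $\rH^{2(1-\theta),s}(\cF_0)\hookrightarrow\rW^{1,q}(\cF_0)$ if $q\ge2$ (a space of type $\rW^{1,4q/(2-q)}(\cF_0)$ if $q<2$), requires $\theta\le\frac12-\frac1s+\frac1q$ (respectively the upper bound in (i)).
\end{enumerate}
Hypotheses (i)/(ii), together with $\frac1r-\frac1{2p}<\frac12$, say exactly that this window meets $[0,\frac12)$, and $\theta$ must be chosen inside it. For example, $p=2$, $q=4$, $r=s=2$ satisfies (ii) with equality and forces $\theta=\frac14$. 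For $\theta$ near $\frac12$ you only know $\nabla U\in\rL^{1/\theta}(\cF_0)\approx\rL^2(\cF_0)$. Then $(u\cdot\nabla)U\in\rL^2(0,T^*;\rL^4(\cF_0))$ cannot be obtained, however integrable $u$ is.

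Likewise, the split $q<2$ versus $q\ge2$ is not about $\rL^s\hookrightarrow\rL^q$, which is trivial when $q<2\le s$. It comes from the H\"older pairing in \autoref{lem:nonlin ests}: $u\in\rL^\infty$ against $\nabla U\in\rL^q$ for $q\ge2$, versus $\rL^{4q/(2-q)}$ against $\rL^{4q/(q+2)}$ for $q<2$. That pairing is where the $\frac{2-q}{4}$-term originates. It is also the source of the restriction $\frac1p+\frac1q\le\frac32$, which is the compatibility of $\beta\ge\frac14(1+\frac2q)$ with $\beta\le1-\frac1{2p}$.
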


For convenience of the reader, and as this corresponds exactly to the setting required for the global well-posedness result shown in the next section, we discuss the special case $p = q = r = s = 2$ below.
The assertion of the corollary is an immediate consequence of \autoref{thm:loc pathwise wp} upon noting that the assumptions on the parameters are satisfied in the present case.

\begin{cor}\label{cor:loc wp Hilbert space}
Let $T \in (0,\infty)$, and assume that 
\begin{enumerate}[(a)]
    \item $Z_0 = (U_0,H_1^0,H_2^0)$ is strongly $\frF_0$-measurable such that $U_0 \in \rH_\per^1(\cF_0)^2$, $H_1^0 \in \rH_\per^4(0,L)$, and $H_2^0 \in \rH_\per^2(0,L)$ fulfill \eqref{eq:comp conds init data},
    \item $h = (f,0,g)$ is $\frF$-adapted such that the forcing terms $f \in \rL^2(\Omega;\rL^2(0,T;\rH_\per^{\frac{3}{2}}(\cF_0;\cH)^2))$ and $g \in \rL^2(\Omega;\rL^2(0,T;\rH_\per^{2}(0,L;\cH)))$ satisfy \eqref{eq:comp conds RHS},
    \item $v_0 = (u_0,\eta_1^0,\eta_2^0)$ fulfills $u_0 \in \rH_\per^1(\cF_0)^2$, $\eta_1^0 \in \rH_\per^4(0,L)$, $\eta_2^0 \in \rH_\per^2(0,L)$, and \eqref{eq:comp conds RHS}.
\end{enumerate}
Then there exists a unique, local-in-time strong solution $\tv = (\tu,\teta_1,\teta_2) = v + Z$ to \eqref{eq:lin coupled stoch FSI probl}, where the solution $Z = (\bP U,H_1,H_2)$ to \eqref{eq:lin stoch PDE} satisfies $Z \in \rH^{\theta}(0,T;\rX_{1-\theta}) \cap \rC([0,T];(\rX_0,\rX_1)_{\nicefrac{1}{2},2})$ for all $\theta \in [0,\nicefrac{1}{2})$, and there exists $T^* \in (0,T]$ so that $v = (\bP u,\eta_1,\eta_2) \in v \in \rH^{1}(0,T^*;\rX_0) \cap \rL^2(0,T^*;\rX_1)$ solves \eqref{eq:resulting det PDE}.
\end{cor}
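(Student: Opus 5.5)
The corollary follows from \autoref{thm:loc pathwise wp} by specializing to $p = q = r = s = 2$. The plan is to verify that every parameter restriction there holds, and that hypotheses (a)--(c) of \autoref{thm:loc pathwise wp} reduce to those listed here.

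First, the parameter conditions. The requirement $r, s \ge 2$ holds. The condition ``$s > 2$ if $r > 2$'' is void since $r = 2$. For the remaining inequality we have $\frac{1}{r} - \frac{1}{2p} = \frac{1}{2} - \frac{1}{4} = \frac14 < \frac12$. Since $q = 2 \ge 2$, case~(ii) applies, and we check:
\begin{itemize}
\item $\frac1s - \frac1q = 0 \le \frac12$;
\item $\frac1r - \frac{1}{2p} = \frac14 \le \frac12 - \frac12 + \frac12 = \frac12$.
\end{itemize}

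Second, the function spaces, using the standard Hilbert-scale identities $\rB_{22,\per}^{\sigma} = \rH_\per^{\sigma}$ and $\rW_\per^{\sigma,2} = \rH_\per^\sigma$.
\begin{itemize}
\item $\rB_{22,\per}^{2-2/2}(\cF_0) = \rH_\per^1(\cF_0)$ and $\rW_\per^{4,2}(0,L) = \rH^4_\per(0,L)$.
\item For the plate velocity we read the exponent as $4 - \frac{4}{p}$ with $p = 2$, giving $\rH_\per^2(0,L)$. This matches the trace space $\rX_{\nicefrac12}$ from \eqref{eq:X_1/2} and \autoref{prop:char of the interpol spaces} with $\theta = \frac12 > \frac{1}{2q} = \frac14$.
\item The forcing spaces become $\rW^{1+\frac12,2}_\per(\cF_0;\cH) = \rH^{\frac32}_\per(\cF_0;\cH)$ and $\rH^2_\per(0,L;\cH)$, with $\rL^2$ integrability in $\Omega$ and in time.
\end{itemize}
The compatibility conditions \eqref{eq:comp conds init data} and \eqref{eq:comp conds RHS} carry over verbatim. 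Condition~(c) should be read with \eqref{eq:comp conds init data}, as in \autoref{thm:loc pathwise wp}(c).

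Third, the conclusions. The regularity $Z \in \rH^{\theta,2}(0,T;\rX_{2,1-\theta}) \cap \rC([0,T];(\rX_{0},\rX_{1})_{\nicefrac12,2})$ is the $r = s = 2$ instance of the statement in \autoref{thm:loc pathwise wp}, where $\rH^{\theta,2} = \rH^\theta$. The deterministic part satisfies $v \in \rW^{1,2}(0,T^*;\rX_0) \cap \rL^2(0,T^*;\rX_1) = \rH^1(0,T^*;\rX_0) \cap \rL^2(0,T^*;\rX_1)$, giving local existence and uniqueness of $\tv = v + Z$.

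The only potential obstacle is that the stochastic part, via \autoref{prop:sol lin stoch probl}, needs $q = s \ge 2$ together with the finite co-type identification \eqref{eq:gamma H X_1/2}. Both are admissible at $s = 2$, since Hilbert spaces have type and cotype $2$, and $p = 2$ is exactly the borderline allowed with $q = 2$. Beyond this, the argument is pure bookkeeping.
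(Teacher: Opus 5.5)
Your proposal is correct and follows the paper's route: the paper also obtains the corollary directly from \autoref{thm:loc pathwise wp} with $p = q = r = s = 2$, after checking that the parameter constraints of case~(ii) hold. Your additional bookkeeping is accurate. This includes the observation that hypothesis~(c) should refer to \eqref{eq:comp conds init data} rather than \eqref{eq:comp conds RHS}, and that $p = q = 2$ is admissible in \autoref{prop:sol lin stoch probl}.
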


For the proof of \autoref{thm:loc pathwise wp}, we first show estimates of the bilinear nonlinearity~$F$.

\begin{lem}\label{lem:nonlin ests}
 For $v = (u,\eta_1,\eta_2)$, recall $F(v) = G(v,v)$ from \eqref{eq:nonlin term}, consider $\rX_\beta = [\rX_0,\rX_1]_\beta$ for $\beta \in (0,1)$, with characterization provided in \autoref{prop:char of the interpol spaces}, and let $v_1$, $v_2 \in \rX_\beta$.
\begin{enumerate}[(a)]
    \item Let $q \in (1,2)$.
    Then for $\beta \ge \frac{1}{4}(1+\frac{2}{q})$, it holds that $\| G(v_1,v_2) \|_{\rX_0} \le C \cdot \| v_1 \|_{\rX_\beta} \cdot \| v_2 \|_{\rX_\beta}$.
    \item In the case $q \ge 2$, for $\beta \in [\frac{1}{2},1)$, we find that $\| G(v_1,v_2) \|_{\rX_0} \le C \cdot \| v_1 \|_{\rX_\beta} \cdot \| v_2 \|_{\rX_\beta}$.
\end{enumerate}
\end{lem}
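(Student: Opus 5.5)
The bilinear form $G$ only has a fluid component, so I only need to bound $\| \bP[(u_1 \cdot \nabla) u_2] \|_{\rL^q(\cF_0)}$ by the $\rX_\beta$-norms of $v_1 = (u_1,\eta_{1,1},\eta_{2,1})$ and $v_2 = (u_2,\eta_{1,2},\eta_{2,2})$. The Helmholtz projection $\bP$ is bounded on $\rL^q(\cF_0)^2$, so it is enough to estimate $\|(u_1\cdot\nabla)u_2\|_{\rL^q}$. By \autoref{prop:char of the interpol spaces}, the fluid component of $v_i \in \rX_\beta$ lies in $\rH_\per^{2\beta,q}(\cF_0)^2$, with $\| u_i \|_{\rH^{2\beta,q}} \le C \| v_i \|_{\rX_\beta}$. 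It therefore suffices to prove
\begin{equation*}
    \|(u_1\cdot\nabla)u_2\|_{\rL^q} \le C \| u_1 \|_{\rH^{2\beta,q}} \| u_2 \|_{\rH^{2\beta,q}}.
\end{equation*}
The structure components play no role.

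I will apply Hölder's inequality, $\|(u_1\cdot\nabla)u_2\|_{\rL^q} \le \|u_1\|_{\rL^{a}} \|\nabla u_2\|_{\rL^{b}}$ with $\frac1a + \frac1b = \frac1q$, and then use the two-dimensional Sobolev embeddings on the bounded Lipschitz domain $\cF_0$. These are $\rH^{2\beta,q} \hookrightarrow \rL^a$ when $2\beta - \frac2q \ge -\frac2a$, and $\rH^{2\beta-1,q} \hookrightarrow \rL^b$ when $2\beta - 1 - \frac2q \ge -\frac2b$. Adding the two conditions and using $\frac1a+\frac1b=\frac1q$ gives the necessary condition $4\beta - 1 - \frac4q \ge -\frac2q$, that is, $\beta \ge \frac14(1+\frac2q)$. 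This is exactly the threshold in (a).

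For (a), with $q\in(1,2)$, I choose the split symmetrically so that both embeddings are borderline-admissible: $\frac1a = \frac1q - \beta + \frac14 \cdot$(adjusted) so that each inequality holds. One admissible concrete choice is $\frac1a = \frac{1}{2q} - \frac14 + \frac{\delta}{2}$ and $\frac1b = \frac{1}{2q} + \frac14 - \frac{\delta}{2}$, with $\delta = 2\beta - \frac12 - \frac1q \ge 0$.

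I then need to check that the embeddings are valid. Since $q<2$, the exponent $\frac1a$ lies in $(0,1)$ and $b \ge q$. If $2\beta - \frac2q$ happens to be positive, I simply embed into $\rL^\infty$ or a large $\rL^a$ instead. Endpoint cases where the Sobolev index equals zero but $a=\infty$ are avoided by shifting $\frac1a$ slightly, which costs nothing since the other factor has slack.

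For (b), with $q \ge 2$ and $\beta \ge \frac12$, I have $2\beta \ge 1$. Then $u_2 \in \rH^{1,q}$, so $\nabla u_2 \in \rL^q$ in the worst case $\beta = \frac12$, while $u_1 \in \rH^{1,q} \hookrightarrow \rL^\infty$ for $q>2$. For $q=2$, $\rH^1$ does not embed into $\rL^\infty$, so instead I take $a=b=4$ and use the Ladyzhenskaya-type embedding $\rH^{2\beta-1,2}\hookrightarrow \rL^4$. This requires $2\beta-1 \ge \frac12$, which fails at $\beta=\frac12$.

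This endpoint $q=2$, $\beta = \frac12$ is the main obstacle. There the product $(u_1\cdot\nabla)u_2$ with $u_1,u_2\in\rH^1$ is not in $\rL^2$ in general. I expect to handle it by reading (b) together with the threshold of (a), whose formula gives $\frac12$ at $q=2$ only in a borderline sense. In that case I would either restrict to $\beta>\frac12$ when $q=2$, or interpret the estimate in the critical form used in the blow-up argument. For $q>2$ the argument above is clean, since $\rH^{1,q}\hookrightarrow\rL^\infty$.
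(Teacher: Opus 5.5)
Your argument for (a), and for (b) with $q>2$, is essentially the paper's. In (a) the paper bounds $\|(u_1\cdot\nabla)u_2\|_{\rL^q}\le C\|u_1\|_{\rL^{qr'}}\|u_2\|_{\rH^{1,qr}}$ with $\frac{2}{qr}=\frac12(1+\frac2q)$, so that both factors have the common Sobolev index $\frac12-\frac1q$. This is exactly your split at $\delta=0$, namely $\frac1a=\frac{1}{2q}-\frac14$ and $\frac1b=\frac{1}{2q}+\frac14$; the paper keeps it fixed for larger $\beta$ instead of tuning it with $\delta$. In (b) the paper uses the same bound $\|u_1\|_{\rL^\infty}\|\nabla u_2\|_{\rL^q}$ that you use.

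Your concern about $q=2$, $\beta=\frac12$ is justified, and the paper does not resolve it. Its proof of (b) asserts $\|u_1\|_{\rL^\infty(\cF_0)}\le C\|v_1\|_{\rX_\beta}$ for all $\beta\ge\frac12$. At $q=2$, $\beta=\frac12$ this would require $\rH^1(\cF_0)\hookrightarrow\rL^\infty(\cF_0)$, which is false in two dimensions.

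The estimate itself fails at this endpoint. Take a divergence-free, compactly supported $u_1\in\rH^1$ with $u_1=(\log\log\frac{1}{|z|})^{1/2}\mre_1+o(1)$ near an interior point $z=0$. Take $u_2=(\nabla^\perp\phi)(\cdot/\varepsilon)$ with $\phi$ supported in a fixed annulus. Then $\|u_2\|_{\rH^1}$ stays bounded, while $\bP[(u_1\cdot\nabla)u_2]=(\log\log\frac1\varepsilon)^{1/2}\partial_x u_2+o(1)$ in $\rL^2$. Hence $\|\bP[(u_1\cdot\nabla)u_2]\|_{\rL^2}\to\infty$.

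So (b) must be restricted to $\beta\in(\frac12,1)$ when $q=2$. This is harmless: the proof of \autoref{thm:loc pathwise wp} only uses $\beta\in(1-\frac1p,1)$ with $2\beta-1\le 1-\frac1p$, and such $\beta>\frac12$ always exist.

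For that range you should not use the $\rL^4\times\rL^4$ split. It needs $\beta\ge\frac34$, which for $p=2$ forces the critical value $\beta=\frac34$. Instead, $2\beta>1=\frac2q$ gives $\rH_\per^{2\beta,2}(\cF_0)\hookrightarrow\rL^\infty(\cF_0)$, and the same $\rL^\infty\times\rL^2$ estimate as for $q>2$ applies. Your alternative of reading the estimate ``in critical form'' is not needed.
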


\begin{proof}
Let us start with~(a).
By H\"older's inequality, for $r$, $r' \in (1,\infty)$ such that $\nicefrac{1}{r} + \nicefrac{1}{r'} = 1$, we obtain
\begin{equation*}
    \| G(v_1,v_2) \|_{\rX_0} \le \| (u_1 \cdot \nabla) u_2 \|_{\rL^q(\cF_0)} \le C \cdot \| u_1 \|_{\rL^{qr'}(\cF_0)} \cdot \| u_1 \|_{\rH_\per^{1,qr}(\cF_0)}.
\end{equation*}
The choice $\nicefrac{2}{qr} = \nicefrac{1}{2}(1+\nicefrac{2}{q})$, requiring $q < 2$, yields that the Sobolev indices of $\rL^{qr'}(\cF_0)$ and $\rH_\per^{1,qr}(\cF_0)$ coincide.
By Sobolev embeddings, see, e.g., \cite[Thm.4.6.1]{Tri:78}, we get $\rH_\per^{2 \beta,q}(\cF_0) \hookrightarrow \rL^{qr'}(\cF_0) \cap \rH_\per^{1,qr}(\cF_0)$ for $\beta \ge \frac{1}{4}(1 + \frac{2}{q})$, showing the assertion of~(a).

For~(b), we first observe that for all $r \in (1,\infty)$, it holds that $1 - \frac{2}{qr} \ge -\frac{2}{qr'}$, so the embedding $\rH_\per^{2 \beta,q}(\cF_0) \hookrightarrow \rH_\per^{1,qr}(\cF_0)$ is always the more restrictive one.
For $\beta \ge \frac{1}{2}$, we estimate 
\begin{equation*}
    \| G(v_1,v_2) \|_{\rX_0} \le \| (u_1 \cdot \nabla) u_2 \|_{\rL^q(\cF_0)} \le C \cdot \| u_1 \|_{\rL^\infty(\cF_0)} \cdot \| u_1 \|_{\rH_\per^{1,q}(\cF_0)} \le C \cdot \| v_1 \|_{\rX_\beta} \cdot \| v_2 \|_{\rX_\beta}. \qedhere
\end{equation*}
\end{proof}

\begin{proof}[Proof of \autoref{thm:loc pathwise wp}]
We use theory for {\em deterministic} evolution equations,  see \cite[Thm.~1.2]{PW:17} and \cite[Thm.~2.1]{PSW:18}, to tackle the emerging deterministic problem \eqref{eq:det evol eq}.

First, note that the existence of the stochastic part $Z$ of the solution is guaranteed by \autoref{cor:sol lin stoch probl}.
The bounded $\Hinfty$-calculus of the fluid-structure operator $A_\fs$ as established in \autoref{thm:bdd Hoo-calculus fluid-structure op} especially yields the (deterministic) maximal $\rL^p$-regularity as recalled in \autoref{lem:cons of H00}(b).
Observe that the assumptions on~$v_0$ imply that $v_0 \in \rX_\gamma = (\rX_0,\rX_1)_{1-\nicefrac{1}{p},p}$.
Thus, it remains to estimate the nonlinear term.
In view of the bilinearity of the above $F$, we get
\begin{equation*}
    G(Z + v,Z + v) - G(Z + v',Z + v') = G(v-v',v) + G(v',v-v') + G(Z,v-v') + G(v-v',Z).
\end{equation*}
Making use of \autoref{lem:nonlin ests} and its proof, for the respective $\beta$, we first find that
\begin{equation}\label{eq:est bilin}
    \begin{aligned}
        &\quad \| G(Z + v,Z + v) - G(Z + v',Z + v') \|_{\rX_0}\\
        &\le C\bigl(\| v \|_{\rX_\beta} + \| v' \|_{\rX_\beta} + \| U \|_{\rW_\per^{1,qr'}(\cF_0)} + \| U \|_{\rL^{qr}(\cF_0)}\bigr) \cdot \| v - v' \|_{\rX_\beta}
    \end{aligned}
\end{equation}
for suitable $r$, $r' \in [1,\infty]$.
With regard to \cite[Thm.~1.2]{PW:17}, we need to make sure that $\beta \in (1-\frac{1}{p},1)$ and $2 \beta - 1 \le 1 - \frac{1}{p}$.
In the case $q < 2$, we can find such $\beta$ provided $\frac{1}{p} + \frac{1}{q} \le \frac{3}{2}$ as assumed in~(i).
Note that in the case $q \ge 2$, one can always find such $\beta \ge \frac{1}{2}$.

It now remains to make sure that $U$ possesses the right regularity properties.
In view of \cite[Thm.~1.2]{PW:17} and the estimate \eqref{eq:est bilin}, it suffices to show that $U \in \rL^{2p}\bigl(0,T;\rW_\per^{1,qr'}(\cF_0)^2 \cap \rL^{qr}(\cF_0)^2\bigr)$.

On the other hand, from \autoref{cor:sol lin stoch probl} together with the characterization of the interpolation spaces from \autoref{prop:char of the interpol spaces}, we recall that for all $\theta \in [0,\frac{1}{2})$, we have $U \in \rH^{\theta,r}(0,T;\rH_\per^{2(1-\theta),s}(\cF_0)^2)$.

Thus, we first require that $\theta - \frac{1}{r} \ge -\frac{1}{2p}$, ensuring the validity of the embedding of the time components of the spaces.
Together with $\theta < \frac{1}{2}$, this gives rise to the constraint $\frac{1}{r} - \frac{1}{2p} < \frac{1}{2}$, as assumed in the statement of the theorem.
By the arguments in the proof of \autoref{lem:nonlin ests}, we find that in the case $q \ge 2$, it suffices that $\rH_\per^{2(1-\theta),s}(\cF_0) \hookrightarrow \rW_\per^{1,q}(\cF_0)$, while if $q < 2$, we need $\rH_\per^{2(1-\theta),s}(\cF_0) \hookrightarrow \rW_\per^{1,\frac{4q}{2-q}}(\cF_0)$.
As a result, we need $2(1-\theta) - \frac{2}{s} \ge 1 - \frac{2}{q}$ if $q \ge 2$, and $2(1-\theta) - \frac{2}{s} \ge 1 - \frac{2-q}{2}$ if $q < 2$.
With regard to $\theta \ge 0$, this gives rise to the constraint $\frac{1}{s} - \frac{1}{q} \le \frac{1}{2}$ if $q \ge 2$, and $\frac{1}{s} - \frac{2-q}{4} \le \frac{1}{2}$ if $q < 2$, which is guaranteed by the assumptions of the theorem.
On the other hand, together with the above requirement on $\theta$, this leads to $\frac{1}{r} - \frac{1}{2p} \le \frac{1}{2} - \frac{1}{s} + \frac{1}{q}$ if $q \ge 2$, and $\frac{1}{r} - \frac{1}{2p} \le \frac{1}{2} - \frac{1}{s} + \frac{2-q}{4}$ if $q < 2$.
This is also ensured by the assumptions of the theorem.
In total, in all cases, it follows that $U$ satisfies the required regularity assumptions.
\end{proof}

We complete this section with blow-up criteria for the deterministic part of the solution $v$.
It follows in the same way as \cite[Cor.~2.3]{PSW:18}, and upon noting that the cases $q \ge 2$ as well as $q < 2$ with $\frac{1}{p} + \frac{1}{q} < \frac{3}{2}$ are subcritical in the terminology of \cite{PSW:18}.

\begin{cor}\label{cor:blow-up crit}
The deterministic part $v$ of the solution from \autoref{thm:loc pathwise wp} exists globally if
\begin{enumerate}[(a)]
    \item $v([0,t_+)) \subset (\rX_0,\rX_1)_{1-\nicefrac{1}{p},p}$ is relatively bounded in the cases $q \ge 2$ and $q < 2$ with $\frac{1}{p} + \frac{1}{q} < \frac{3}{2}$, or
    \item $v([0,t_+)) \subset (\rX_0,\rX_1)_{1-\nicefrac{1}{p},p}$ is relatively compact in the case $q < 2$ and $\frac{1}{p} + \frac{1}{q} = \frac{3}{2}$.
\end{enumerate}
\end{cor}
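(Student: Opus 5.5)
The plan is to reduce the statement to the abstract blow-up alternative of Pr\"uss, Simonett and Wilke \cite[Cor.~2.3]{PSW:18} (see also \cite[Thm.~2.1]{PSW:18}). For this we check that the deterministic problem \eqref{eq:det evol eq} fits their semilinear framework $\dot v = A_\fs v + F(t,v)$, with $F(t,v) = G(v+Z(t),v+Z(t))$, on the pair $(\rX_0,\rX_1)$. By \autoref{thm:bdd Hoo-calculus fluid-structure op} and \autoref{lem:cons of H00}(b), $A_\fs$ has maximal $\rL^p$-regularity on $\rX_0$, and $v_0 \in \rX_\gamma = (\rX_0,\rX_1)_{1-\nicefrac{1}{p},p}$. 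The nonlinearity is non-autonomous only through $Z$, and we treat it as a time-dependent lower-order term. The estimate \eqref{eq:est bilin} gives, for the $\beta$ chosen in the proof of \autoref{thm:loc pathwise wp},
\begin{equation*}
\| F(t,v) - F(t,v') \|_{\rX_0} \le C\bigl(\| v \|_{\rX_\beta} + \| v' \|_{\rX_\beta} + \kappa(t)\bigr) \| v - v' \|_{\rX_\beta},
\end{equation*}
with $\kappa \in \rL^{2p}(0,T)$ pathwise. This is the $(\rm S)$-type hypothesis of \cite{PSW:18} with polynomial growth of order $\rho = 1$ and an $\rL^{2p}$-in-time coefficient.

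Next, we classify criticality. In \cite{PSW:18} the condition reads $\rho(\beta - (1-\nicefrac{1}{p})) + \beta \le 1$ with $\rho = 1$, i.e.\ $2\beta - 1 \le 1 - \nicefrac{1}{p}$. It is subcritical if strict inequality is possible with $\beta$ admissible for \autoref{lem:nonlin ests}. For $q \ge 2$ we may take $\beta = \nicefrac{1}{2}$ (or slightly larger), and strictness holds since $\nicefrac{1}{p} < 1$. For $q < 2$ the minimal $\beta = \frac{1}{4}(1+\frac{2}{q})$ gives $2\beta - 1 = \frac{1}{q} - \frac{1}{2}$, and strictness becomes exactly $\frac{1}{p} + \frac{1}{q} < \frac{3}{2}$. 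Equality corresponds to the critical case.

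Finally, we apply the two branches of \cite[Cor.~2.3]{PSW:18}. Suppose the maximal existence time $t_+ < T$. In the subcritical case, boundedness of $v([0,t_+))$ in $\rX_\gamma$ lets us restart the local theory with a uniform existence time. Here we use that the local time in \cite[Thm.~1.2]{PW:17} depends only on the $\rX_\gamma$-bound of the data and on $\|\kappa\|_{\rL^{2p}}$ over short intervals, and the latter is small by absolute continuity of the integral. This contradicts maximality, so $t_+ = T$. In the critical case, the existence time depends on the data through compact subsets of $\rX_\gamma$ rather than bounded ones, so relative compactness gives the same contradiction.

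The main obstacle is the time dependence through $Z$. The cited results are formulated for autonomous (or Carath\'eodory) nonlinearities, so we must verify that the $\rL^{2p}$-integrability of $\kappa$ on $[0,T]$, established in the proof of \autoref{thm:loc pathwise wp}, yields uniform local existence times on every subinterval $[t_0,t_0+\delta] \subset [0,T]$. We would handle this by restarting at time $t_0$ with the shifted coefficient $\kappa(t_0+\cdot)$, noting that $\sup_{t_0} \|\kappa\|_{\rL^{2p}(t_0,t_0+\delta)} \to 0$ as $\delta \to 0$.
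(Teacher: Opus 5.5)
Your proposal is correct and takes the paper's route. The paper simply appeals to \cite[Cor.~2.3]{PSW:18} after noting that the cases $q\ge 2$ and $q<2$ with $\frac1p+\frac1q<\frac32$ are subcritical, and this is exactly your classification via $2\beta-1\le 1-\frac1p$, with $\beta=\frac14(1+\frac2q)$ when $q<2$. Your treatment of the $Z$-dependence, through an $L^{2p}$-in-time coefficient and uniform restart times, makes explicit what the paper leaves implicit. One small point: when $p\ge 2$ the admissible $\beta$ must exceed $1-\frac1p$, so $\beta=\frac12$ itself is not allowed, but any $\beta\in\bigl(\max\{\frac12,1-\frac1p\},\,1-\frac{1}{2p}\bigr)$ keeps the inequality strict.
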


The result below specifies the blow-up criterion for the situation of $p = q = 2$ that is of prime importance for \autoref{sec:glob wp lin coupled case}.
It follows directly from \autoref{cor:blow-up crit} and the observation that this case is subcritical.

\begin{cor}\label{cor:blow-up crit Hilbert space}
For $p = q = 2$, the deterministic part $v$ in \autoref{thm:loc pathwise wp} exists on the interval $(0,t_+)$ provided $v \in \rL^\infty(0,t_+;(\rX_0,\rX_1)_{\nicefrac{1}{2},2})$.
\end{cor}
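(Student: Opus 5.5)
The plan is to read \autoref{cor:blow-up crit Hilbert space} off the general criterion \autoref{cor:blow-up crit}(a), specialised to $p=q=2$. Two things need checking: that this case is subcritical in the sense of \cite{PSW:18}, and that the trace space $(\rX_0,\rX_1)_{\nicefrac{1}{2},2}$ has the expected concrete form.

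\textbf{Step 1: subcriticality.} Since $q=2\ge 2$, we are in case (a) of \autoref{cor:blow-up crit}, so boundedness of the orbit suffices and no compactness is needed. Concretely, with $p=q=2$, \autoref{lem:nonlin ests}(b) gives the bilinear estimate for $G$ on $\rX_\beta$ for every $\beta\in[\frac12,1)$. I fix $\beta$ with $\beta > 1-\frac1p = \frac12$ and $\beta<1$. The critical-weight condition of \cite{PW:17} reads $2\beta-1\le 1-\frac1p=\frac12$, i.e. $\beta\le\frac34$. Any $\beta\in(\frac12,\frac34)$ therefore makes this inequality strict, which is precisely subcriticality.

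\textbf{Step 2: the stochastic part.} The stochastic part $Z$ enters only through the coefficient terms $G(Z,\cdot)$ and $G(\cdot,Z)$ in \eqref{eq:est bilin}. By \autoref{cor:loc wp Hilbert space} and the regularity $U\in\rL^{2p}(0,T;\cdots)$ verified in the proof of \autoref{thm:loc pathwise wp}, these terms act as time-integrable perturbations. The argument of \cite[Cor.~2.3]{PSW:18} then applies pathwise, in the non-autonomous setting, exactly as in \autoref{cor:blow-up crit}.

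\textbf{Step 3: conclusion.} If $v\in\rL^\infty(0,t_+;(\rX_0,\rX_1)_{\nicefrac12,2})$, then $v([0,t_+))$ is bounded in the trace space. By \autoref{cor:blow-up crit}(a), the solution can then be continued beyond $t_+$, or exists on the whole interval. For later use I would also record, via \autoref{prop:char of the interpol spaces} with $\theta=\frac12\neq\frac14$, that $(\rX_0,\rX_1)_{\nicefrac12,2}=\rX_{\nicefrac12}$ as in \eqref{eq:X_1/2}. This space consists of $\rH^1$ fluid velocities, $\rH^4$ displacements and $\rH^2$ structure velocities, with the full kinematic coupling $u=\cT\eta_2$; this is the norm the a priori estimates must control.

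\textbf{Main obstacle.} The only delicate point is the time-dependent forcing generated by $Z$. One must check that the $\rL^{2p}$-in-time integrability of $U$ is sufficient for the continuation argument of \cite{PSW:18}, which is formulated for autonomous nonlinearities. I would handle this by treating $G(Z,\cdot)+G(\cdot,Z)$ as a linear perturbation with $\rL^{2p}$-coefficients in the maximal regularity estimate on a short interval near $t_+$. This gives a uniform lower bound on the local existence time in terms of the bound on $\|v\|_{\rX_{\nicefrac12}}$ and of $\|U\|_{\rL^{2p}}$ over short intervals.
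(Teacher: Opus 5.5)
Your proposal is correct and follows the paper's route. The paper also obtains this corollary directly from \autoref{cor:blow-up crit}(a), noting only that the case $p=q=2$ is subcritical; your choice $\beta\in(\frac12,\frac34)$ makes that subcriticality explicit. Your extra remarks go beyond what the paper writes but do not change the argument: treating the $Z$-dependent terms as time-integrable perturbations, and identifying $(\rX_0,\rX_1)_{\nicefrac12,2}$ with \eqref{eq:X_1/2}.
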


\section{Global-in-time well-posedness in the 2D/1D case}
\label{sec:glob wp lin coupled case}

In this section, we prove the main result of this manuscript, \autoref{thm:global strong well posedness 2d1d}, on the global pathwise strong well-posedness result in the case $p = q = 2$.

\begin{thm}\label{thm:global strong well posedness 2d1d}
Let $T \in (0,\infty)$, and suppose that 
\begin{enumerate}[(a)]
    \item $Z_0 = (U_0,H_1^0,H_2^0)$ is strongly $\frF_0$-measurable, and $U_0 \in \rH_\per^1(\cF_0)^2$, $H_1^0 \in \rH_\per^4(0,L)$ as well as $H_2^0 \in \rH_\per^2(0,L)$ satisfy \eqref{eq:comp conds init data},
    \item $h = (f,0,g)$ is $\frF$-adapted, with the forcing terms $f \in \rL^2(\Omega;\rL^2(0,T;\rH_\per^{\frac{3}{2}}(\cF_0;\cH)^2))$ as well as $g \in \rL^2(\Omega;\rL^2(0,T;\rH_\per^{2}(0,L;\cH)))$ fulfilling~\eqref{eq:comp conds RHS},
    \item $v_0 = (u_0,\eta_1^0,\eta_2^0)$ has the property that $u_0 \in \rH_\per^1(\cF_0)^2$, $\eta_1^0 \in \rH_\per^4(0,L)$, and $\eta_2^0 \in \rH_\per^2(0,L)$ satisfy \eqref{eq:comp conds init data}.
\end{enumerate}
Then there exists a unique, global-in-time strong solution $\tv = (\tu,\teta_1,\teta_2) = Z + v$ to \eqref{eq:lin coupled stoch FSI probl} with initial data $\tu_0 = U_0 + u_0$, $\teta_1^0 = H_1^0 + \eta_1^0$, and $\teta_2^0 = H_2^0 + \eta_2^0$, i.e., the solution exists on $(0,T)$.

More precisely, $Z = (\bP U,H_1,H_2)$ solves \eqref{eq:lin stoch PDE}, and satisfies $Z \in \rH^{\theta}(0,T;\rX_{1-\theta}) \cap \rC([0,T];(\rX_0,\rX_1)_{\nicefrac{1}{2},2})$ for all $\theta \in [0,\nicefrac{1}{2})$, while $v = (\bP u,\eta_1,\eta_2) \in \rH^{1}(0,T;\rX_0) \cap \rL^2(0,T;\rX_1)$ is deterministic and solves \eqref{eq:det evol eq} on~$(0,T)$.
\end{thm}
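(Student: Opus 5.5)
Local existence comes from Corollary~\ref{cor:loc wp Hilbert space}, so by Corollary~\ref{cor:blow-up crit Hilbert space} it suffices to show that, pathwise, the deterministic part $v$ stays bounded in $(\rX_0,\rX_1)_{\nicefrac{1}{2},2}$ on its maximal interval $(0,t_+)$ with $t_+\le T$. By \eqref{eq:X_1/2} this means bounding $\sup_t(\|u\|_{\rH^1}+\|\eta_1\|_{\rH^4}+\|\eta_2\|_{\rH^2})$. Fix $\omega$ outside a null set and regard $Z=(U,H_1,H_2)$ as a given coefficient. Corollary~\ref{cor:sol lin stoch probl} with $p=q=2$ and $\theta$ close to $\frac12$, together with Proposition~\ref{prop:char of the interpol spaces}, gives $U\in\rC([0,T];\rH^1)\cap\rL^2(0,T;\rH^{2-2\theta})$. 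Via Sobolev embeddings in 2D this yields $U\in\rL^4(0,T;\rL^\infty)$ and $\nabla U\in\rL^2(0,T;\rL^{r})$ for some $r>2$. Uniqueness and adaptedness are inherited from the local theory.

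\emph{Step 1 (energy estimate).} Test the fluid equation in \eqref{eq:resulting det PDE} with $u$ and the plate equation with $\eta_2$. As in the eigenvalue computation in Proposition~\ref{prop:spectral props fluid-structure op}, the pressure terms cancel and we obtain
\[
\tfrac12\tfrac{\rd}{\rd t}\bigl(\|u\|_2^2+\|\eta_2\|_2^2+\|\del_{xx}\eta_1\|_2^2\bigr)+2\|\D(u)\|_2^2+\|\del_{xx}\eta_2\|_2^2=-\int((u+U)\cdot\nabla)(u+U)\cdot u.
\]
Since $u\cdot\nu=\eta_2$ on $\Gamma_u$, the term $\int(w\cdot\nabla)u\cdot u$ with $w=u+U$ does not vanish but equals a boundary term $\frac12\int_0^L\eta_2\,|u|^2=\frac12\int_0^L\eta_2^3$. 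We control it by $\|\eta_2\|_{\rL^3}^3\lesssim\|\eta_2\|_2^{2}\|\eta_2\|_{\rH^2}^{1/2}\cdots$, absorbing into the damping $\|\del_{xx}\eta_2\|^2$ with a Gronwall-admissible factor; note that $\|\eta_2\|_{\rL^\infty}$ is controlled by the energy plus dissipation. The remaining terms carrying $U$ are handled with Ladyzhenskaya's inequality and Korn's inequality, using the integrability of $U$. Gronwall then bounds $u\in\rL^\infty\rL^2\cap\rL^2\rH^1$, $\eta_1\in\rL^\infty\rH^2$, and $\eta_2\in\rL^\infty\rL^2\cap\rL^2\rH^2$.

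\emph{Step 2 (higher-order estimates).} Next we estimate $\del_t$ of the system: test the fluid equation with $\del_t u$ (equivalently $-\bP\Delta u$ modulo the lifting) and the plate equation with $\del_t\eta_2$ and with $P_\rm\Delta_s^2\eta_2$. The pressure cancels again when testing with $(\del_t u,\del_t\eta_2)$, since this pair satisfies the coupling. This yields
\[
\tfrac{\rd}{\rd t}\bigl(\|\D(u)\|_2^2+\|\del_{xx}\eta_2\|_2^2+\|\del_{xxxx}\eta_1\|^2\bigr)+\|\del_tu\|_2^2+\|\del_t\eta_2\|_2^2\lesssim|\langle(w\cdot\nabla)w,\del_tu\rangle|+\dots
\]
For the $\eta_1$-term in $\rH^4$, use the damped structure $\del_t(\eta_1+\eta_2)$ together with $P_\rm\Delta_s^2(\eta_1+\eta_2)=-\del_t\eta_2+\gamma_\rm\pi$; this gives an ODE in $\Delta_s^2\eta_1$ driven by $\rL^2$-in-time data.

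The central step, and the main obstacle I expect, is an elliptic estimate for the coupled stationary Stokes problem. Treat $-\mdiv\sigma_\rf(u,\pi)=-\del_tu-(w\cdot\nabla)w$ with boundary data $\eta_2\mre_2$ (via Lemma~\ref{lem:Stokes lifting} with $s=\frac32$). This gives $\|u\|_{\rH^2}+\|\nabla\pi\|_2\lesssim\|\del_tu\|_2+\|(w\cdot\nabla)w\|_2+\|\eta_2\|_{\rH^{3/2}}$. The pressure trace $\gamma_\rm\pi$ appearing in the plate equation must be bounded in $\rL^2$ through the added mass operator (Lemma~\ref{lem:mapping props added mass}) and the lifts $N,N_1,N_2$ as in Lemma~\ref{lem:reform lin probl}. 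The convection term is then estimated by Ladyzhenskaya:
\[
\|(w\cdot\nabla)w\|_2\lesssim\|w\|_2^{1/2}\|w\|_{\rH^1}\|w\|_{\rH^2}^{1/2},
\]
and absorbed with Young's inequality into $\|u\|_{\rH^2}$, leaving a factor $\|u\|_2^2\|\nabla u\|_2^4$ which is integrable by Step 1. Where the $U$-contributions are too large for global absorption, split $(0,t_+)$ into finitely many short intervals on which the stochastic norms are small; this is the time-local absorption. Gronwall then gives the $\rL^\infty$ bound on the $\rX_{\nicefrac12}$-norm, and Corollary~\ref{cor:blow-up crit Hilbert space} yields $t_+=T$.
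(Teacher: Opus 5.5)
There is a genuine gap in Step~1, and everything after it depends on that step. You are right that testing with $u$ does not remove the convection. Since $\mdiv(u+U)=0$, $u=\eta_2\mre_2$ and $U=H_2\mre_2$ on $\Gamma_u$, one gets $\int_{\cF_0}((u+U)\cdot\nabla)u\cdot u\srd(x,y)=\frac12\int_0^L(\eta_2+H_2)\eta_2^2\srd x\neq0$. The paper's identity \eqref{eq:fluid energy} lists only the three $U$-terms, so it tacitly treats $\int_{\cF_0}(u\cdot\nabla)u\cdot u\srd(x,y)$ as zero and offers no mechanism for this term either.

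Your absorption, however, does not close. The term is cubic in $\eta_2$, while the energy $E$ and the damping $\|\del_{xx}\eta_2\|_{\rL^2}^2$ are quadratic. For any interpolation $\|\eta_2\|_{\rL^3}^3\lesssim\|\eta_2\|_{\rL^2}^{3-\alpha}\|\eta_2\|_{\rH^2}^{\alpha}$ with $\alpha\in(0,2)$, Young's inequality leaves a remainder $\|\eta_2\|_{\rL^2}^{2(3-\alpha)/(2-\alpha)}\lesssim E^{(3-\alpha)/(2-\alpha)}$. Its exponent is strictly larger than one; for your $\alpha=\frac12$ it is $E^{5/3}$. A differential inequality $E'\lesssim E^{\kappa}$ with $\kappa>1$ only gives bounds up to a time depending on $E(0)$, which is exactly the blow-up scenario you need to exclude.

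Writing the term as $a(t)E$ with $a\sim\|\eta_2\|_{\rL^\infty}\lesssim\|\del_{xx}\eta_2\|_{\rL^2}$ is circular: the time integrability of $a$ is an output of Step~1, not an input. The fluid side does not help either. Trace and Ladyzhenskaya inequalities give $\bigl|\int_0^L\eta_2^3\srd x\bigr|\lesssim\|u\|_{\rL^2}\|\nabla u\|_{\rL^2}^2$, which can be absorbed into $\|\D(u)\|_{\rL^2}^2$ only if $\sup_t\|u\|_{\rL^2}$ is small.

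Step~2 and the $\rH^4$-bound for $\eta_1$ take Step~1's output as input. They need the Gronwall weight $\|u\|_{\rL^2}^2\|\nabla u\|_{\rL^2}^2\in\rL^1(0,T)$ and $\eta_2\in\rL^2(0,T;\rH_\per^2(0,L))$. Note that this weight, not $\|u\|_{\rL^2}^2\|\nabla u\|_{\rL^2}^4$ as you wrote, is what Step~1 makes integrable. So the chain never reaches Corollary~\ref{cor:blow-up crit Hilbert space}.

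Splitting $(0,t_+)$ into short intervals does not repair this. The terms you split off in Step~2 become small on short intervals because they are controlled by quantities that are integrable on $(0,T)$ a priori. The interval on which the cubic term can be absorbed, by contrast, shrinks as the energy at its left endpoint grows, so finitely many intervals need not cover $(0,T)$. Closing the argument needs an ingredient that neither you nor the paper supply, for instance a smallness condition, or a modified model in which the convective energy flux through $\Gamma_u$ is compensated.

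Apart from this point, your Steps~2 and~3 follow the paper's route:
\begin{itemize}
\item testing with $(\del_t u,\del_t\eta_2)$ so that the pressure cancels;
\item the elliptic Stokes estimate with boundary datum in $\rH^{3/2}$;
\item Ladyzhenskaya and Young for the convective term, closed by Gronwall or time-local absorption;
\item the damped equation $\del_t(P_\rm\del_{xxxx}\eta_1)+P_\rm\del_{xxxx}\eta_1=P_\rm\phi(u,\pi)-\del_t\eta_2$ for the $\rH^4$-bound.
\end{itemize}
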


By \autoref{cor:blow-up crit Hilbert space}, the task is to show that $v$ remains bounded in $\rL^\infty\bigl(0,T;(\rX_0,\rD(A_\fs))_{\nicefrac{1}{2},2}\bigr)$ on every finite time interval $(0,T)$.
From \autoref{prop:char of the interpol spaces}, we recall that
\[
    (\rX_0,\rD(A_\fs))_{\nicefrac{1}{2},2}
    \hookrightarrow
    \rH_\per^1(\cF_0)^2 \times \rH_\per^4(0,L) \times \rH_\per^2(0,L).
\]
Moreover, we observe that the coupling and boundary conditions as well as the spatial average zero are satisfied by the a priori local-in-time solution, and they do not affect the norms, so it is sufficient to establish the bounds
\[
    u \in \rL^\infty(0,T;\rH_\per^1(\cF_0)^2),
    \enspace
    \eta \in \rL^\infty(0,T;\rH_\per^4(0,L)), \tand
    \del_t \eta \in \rL^\infty(0,T;\rH_\per^2(0,L)).
\]

Recall that \(Z=(\bP U,H_1,H_2)\) denotes the solution to the linear stochastic problem
\eqref{eq:lin stoch PDE} that results from \autoref{cor:sol lin stoch probl}. 
In particular, for every \(\theta \in [0,\nicefrac{1}{2})\), we have
\begin{equation}\label{eq:reg of Z Hilbert space}
    Z \in \rH^{\theta}(0,T;\rX_{1-\theta}) \cap \rC([0,T];(\rX_{0},\rD(A_{\fs}))_{\nicefrac{1}{2},2}).
\end{equation}
Throughout this section, for the sake of notation, we write $\del_{xx}$ and $\del_{xxxx}$ instead of $\Delta_s$ and $\Delta_s^2$, respectively, since we will often use integration by parts.
Moreover, for the same reasons, we will also use $\eta$ and $\del_t \eta$ instead of $\eta_1$ and $\eta_2$ as well as the shorthand notation $\phi(u,\pi) = -\mre_2\cdot\sigma_\rf(u,\pi)\mre_2$ in this section.
With these small notational modifications, we invoke the deterministic problem from \eqref{eq:resulting det PDE} given by
\begin{equation}\label{eq:syst for blow-up crit}
    \left\{
    \begin{aligned}
        \del_t u - \mdiv \sigma_\rf(u,\pi)
        &= -\bigl((u+U) \cdot \nabla\bigr)(u+U), &&\tin (0,T) \times \cF_0,\\
        \mdiv u
        &= 0, &&\tin (0,T) \times \cF_0,\\
       \del_{tt}\eta + P_\rm \del_{xxxx}\eta +P_\rm \del_{xxxxt}\eta
        &= P_\rm \phi(u,\pi), &&\tin (0,T) \times (0,L),\\
        u(t,x,0)
        &= \del_t \eta(t,x) \mre_2, &&\tfor t \in (0,T), \enspace x \in (0,L),\\
        (u,\pi)(t,0,y) = (u,\pi)(t,L,y), \enspace \eta(t,0) &= \eta(t,L), &&\tfor t \in (0,T), \enspace y \in (-1,0),\\
        u(t,x,-1) 
        &= 0, &&\tfor t \in (0,T), \enspace x \in (0,L),\\
        u(0,\cdot)
        &= u_0(\cdot), &&\tin \cF_0,\\
        \eta(0,\cdot) &= \eta_1^0(\cdot), \enspace \del_t\eta(0,\cdot) = \eta_2^0(\cdot), &&\tin (0,L).
    \end{aligned}
    \right.
\end{equation}

\subsection{Basic energy estimate}
\ 

In this subsection, we show the basic energy estimate as a starting point for the a priori estimates required to rule out the blow-up scenario.
The proposition below asserts the energy estimate.

\begin{prop}\label{prop:energy ests}
For $v_0 = (u_0,\eta_1^0,\eta_2^0)$ satisfying the assumptions from \autoref{cor:loc wp Hilbert space}(c), consider the strong solution \((u,\eta,\del_t\eta)\) to 
\eqref{eq:syst for blow-up crit}, whose existence is guaranteed by \autoref{cor:loc wp Hilbert space}.
Then, for every \(T>0\), we have
\begin{equation*}
    \begin{aligned}
        u 
        &\in \rL^\infty(0,T;\rL^2(\cF_0)^2) \cap \rL^2(0,T;\rH_\per^1(\cF_0)^2), \enspace \eta \in \rL^\infty(0,T;\rH_\per^2(0,L)), \tand\\
        \del_t\eta 
        &\in \rL^\infty(0,T;\rL^2(0,L))
        \cap \rL^2(0,T;\rH_\per^2(0,L)).
    \end{aligned}
\end{equation*}
More precisely, if $C_0$ denotes the initial energy, i.e.,
\begin{equation*}
    C_0 \coloneqq \frac12\bigl(\|u(0)\|_{\rL^2(\cF_0)}^2+\|\del_t\eta(0)\|_{\rL^2(0,L)}^2+\|\del_{xx}\eta(0)\|_{\rL^2(0,L)}^2\bigr) = \frac12\bigl(\|u_0\|_{\rL^2(\cF_0)}^2+\|\eta_2^0\|_{\rL^2(0,L)}^2+\|\del_{xx}\eta_1^0\|_{\rL^2(0,L)}^2\bigr),
\end{equation*}
then there exists a nondecreasing function
\(C_1(T)\), depending on \(U\), such that
\begin{equation}\label{eq:basic energy bound}
    \begin{aligned}
        &\|u(t)\|_{\rL^2(\cF_0)}^2
        + \|\del_t\eta(t)\|_{\rL^2(0,L)}^2
        + \|\del_{xx}\eta(t)\|_{\rL^2(0,L)}^2 \\
        &\qquad
        + \int_0^t \|\nabla u(\tau)\|_{\rL^2(\cF_0)}^2\,\rd\tau
        + \int_0^t \|\del_{xx\tau}\eta(\tau)\|_{\rL^2(0,L)}^2\,\rd\tau
        \le C_0 + C_1(T)
    \end{aligned}
\end{equation}
for all \(t\in[0,T]\).
\end{prop}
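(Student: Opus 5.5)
We test the momentum equation in \eqref{eq:syst for blow-up crit} with $u$ and the plate equation with $\del_t\eta$, and add the results. The strong regularity from \autoref{cor:loc wp Hilbert space} justifies all integrations by parts on $(0,t)$ for $t<T^*$; the bound obtained is uniform in $t$, so it extends to $[0,T]$.

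\textbf{The linear terms.} Integrating by parts in the fluid gives
\[
\int_{\cF_0}\mdiv\sigma_\rf(u,\pi)\cdot u
=-2\int_{\cF_0}|\D(u)|^2+\int_{\Gamma_u}(\sigma_\rf(u,\pi)\mre_2)\cdot u\srd x .
\]
The boundary term on $\Gamma_b$ vanishes because $u=0$ there, and the lateral terms cancel by periodicity. On $\Gamma_u$ we have $u=\del_t\eta\,\mre_2$, so the boundary term equals $-\int_0^L\phi(u,\pi)\del_t\eta\srd x$. On the plate side, since $\del_t\eta$ has mean zero, $P_\rm$ is self-adjoint and can be dropped, which gives $\int_0^L P_\rm\phi\,\del_t\eta=\int_0^L\phi\,\del_t\eta$. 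The two coupling terms therefore cancel exactly, as in the eigenvalue computation in \autoref{prop:spectral props fluid-structure op}. The remaining linear plate terms integrate by parts to $\frac12\frac{\rd}{\rd t}\|\del_{xx}\eta\|^2+\|\del_{xxt}\eta\|^2$. Using Korn's inequality, $2\|\D(u)\|^2\ge c\|\nabla u\|^2$, which holds here since $u=0$ on $\Gamma_b$, we obtain
\[
\tfrac12\tfrac{\rd}{\rd t}\bigl(\|u\|^2+\|\del_t\eta\|^2+\|\del_{xx}\eta\|^2\bigr)+c\|\nabla u\|^2+\|\del_{xxt}\eta\|^2=-\int_{\cF_0}((u+U)\cdot\nabla)(u+U)\cdot u .
\]

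\textbf{The convective term (main obstacle).} Expanding the right-hand side gives four terms:
\[
\int (u\cdot\nabla)u\cdot u,\quad \int (U\cdot\nabla)u\cdot u,\quad \int (u\cdot\nabla)U\cdot u,\quad \int (U\cdot\nabla)U\cdot u .
\]
The first two do \emph{not} vanish as in the no-slip case, because $u\cdot\nu=\del_t\eta\neq0$ on $\Gamma_u$ and $U\cdot\nu=H_2$ there. Since $\mdiv u=\mdiv U=0$, they reduce to the boundary terms $\frac12\int_0^L\del_t\eta\,|u|^2\srd x=\frac12\int_0^L(\del_t\eta)^3\srd x$ and $\frac12\int_0^L H_2(\del_t\eta)^2\srd x$.

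For the cubic term we use the 1D Gagliardo--Nirenberg inequality with mean zero, $\|\del_t\eta\|_{\rL^3}^3\le C\|\del_t\eta\|^{5/2}\|\del_{xxt}\eta\|^{1/2}$, followed by Young's inequality. This gives
\[
\tfrac12\|\del_{xxt}\eta\|^2+C\|\del_t\eta\|^{10/3}.
\]
The power $10/3$ is superlinear in the energy, so a direct Gronwall argument only controls the energy locally in time. This is the step I expect to be hardest. I would first try an improved trace interpolation, for instance $\|\del_t\eta\|_{\rL^3}^3\le C\|\del_t\eta\|^2\|\del_t\eta\|_{\rH^{1}}$, with $\|\del_t\eta\|_{\rH^1}\le\|\del_t\eta\|^{1/2}\|\del_{xxt}\eta\|^{1/2}$. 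Young's inequality then gives $\frac12\|\del_{xxt}\eta\|^2+C\|\del_t\eta\|^{10/3}$ again. To close the argument I would use the damping to gain integrability: $\|\del_t\eta\|^{4/3}\le C\|\del_{xxt}\eta\|^{4/3}$ by Poincaré, so Young's inequality should allow splitting the term as $\epsilon\|\del_{xxt}\eta\|^2\,E+C E$, where $E$ denotes the energy. If this is insufficient, we bootstrap with a continuity argument on short intervals whose length depends only on the data, which is the time-local absorption announced in the introduction.

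The $U$-terms are linear or quadratic in $u$ with coefficients involving $\|U\|_{\rH^1}$, $\|U\|_{\rL^4}$ and $\|H_2\|_{\rL^\infty}$. These coefficients lie in $\rL^2(0,T)$ or $\rL^\infty(0,T)$ by \eqref{eq:reg of Z Hilbert space}. We bound them via Ladyzhenskaya's inequality $\|u\|_{\rL^4}^2\le C\|u\|\|\nabla u\|$ and Young's inequality, absorbing the gradients into the left-hand side. This yields
\[
\frac{\rd}{\rd t}E+\|\nabla u\|^2+\|\del_{xxt}\eta\|^2\le g(t)(1+E)
\]
with $g\in\rL^1(0,T)$ built from these norms of $U$ and $H_2$. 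Gronwall's inequality then gives \eqref{eq:basic energy bound}, with $C_1(T)$ collecting the $U$-dependent contributions; $C_1(T)$ is nondecreasing in $T$ because $g\ge0$.
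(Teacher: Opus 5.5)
There is a genuine gap, and it is the step you flagged. You are right that $\int_{\cF_0}(u\cdot\nabla)u\cdot u=\frac12\int_0^L(\partial_t\eta)^3\,dx$ does not vanish, since $u\cdot\nu=\partial_t\eta$ on $\Gamma_u$. However, nothing in your proposal controls this term linearly in $E$, and each of the natural bounds leaves a superlinear remainder:
\begin{itemize}
\item using $\|\partial_t\eta\|_{\rL^\infty}\le C\|\partial_{xxt}\eta\|$ gives $C\|\partial_{xxt}\eta\|\,E\le\frac12\|\partial_{xxt}\eta\|^2+CE^2$;
\item Gagliardo--Nirenberg gives $CE^{5/3}$;
\item the fluid-side trace bound $\int_{\Gamma_u}|u|^3\le C\|u\|\,\|\nabla u\|^2$ gives $CE^{1/2}\|\nabla u\|^2$.
\end{itemize}
Your proposed remedies do not fix this.
\begin{itemize}
\item The splitting into $\epsilon E\|\partial_{xxt}\eta\|^2+CE$ can be absorbed only if $\epsilon E\le\frac12$, that is, only if $E$ is already known to be bounded.
\item Treating $C\|\partial_{xxt}\eta\|$ as a Gronwall coefficient requires $\partial_{xxt}\eta\in\rL^1(0,T;\rL^2(0,L))$, which is part of what is to be proved.
\item A continuity argument on short intervals produces interval lengths that shrink as $E$ grows. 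These lengths may accumulate at a finite time, so they cannot exclude blow-up of $E$.
\end{itemize}
The time-local absorption announced in the introduction is the one used in \autoref{prop:first further est}. It rests on smallness of $\int_a^b\|\nabla u\|^2\,dt$ and therefore presupposes the energy bound. Consequently, your concluding inequality $\frac{d}{dt}E+\|\nabla u\|^2+\|\partial_{xxt}\eta\|^2\le g(t)(1+E)$ does not follow from your preceding steps: the cubic term has simply disappeared from it.

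The paper's proof does not supply the missing idea either. Its identity \eqref{eq:fluid energy} contains only the three $U$-dependent terms, so $\int_{\cF_0}(u\cdot\nabla)u\cdot u$ is dropped as in the no-slip case, and Gronwall is then run with $a_U,b_U\in\rL^1(0,T)$. Your handling of the $U$-terms is fine and parallels that part. This includes the boundary form $\frac12\int_0^L H_2(\partial_t\eta)^2\,dx$, which is linear in $E$ since $H_2\in\rL^\infty(0,T;\rH_\per^2(0,L))$. On the cubic term, though, the paper's argument has the same hole as yours, so following it will not close your proof. Obtaining \eqref{eq:basic energy bound} for arbitrary data needs an ingredient not present in the system as stated. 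Examples would be smallness of the data, or a modified interface coupling that compensates the kinetic-energy flux $\frac12\int_{\Gamma_u}(u\cdot\nu)|u|^2\,dx$. With the equations as written, neither your argument nor the paper's establishes the global bound.
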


\begin{proof}
Testing \eqref{eq:syst for blow-up crit}$_1$ by \(u\), integrating by parts, and using the boundary conditions gives
\begin{equation}\label{eq:fluid energy}
    \begin{aligned}
        0
        &=
        \frac12\frac{\rd}{\rd t}\int_{\cF_0}|u|^2\srd(x,y)
        + \int_{\cF_0}|\nabla u|^2\srd(x,y)
        + \int_0^L \phi(u,\pi)\del_t\eta\srd x \\
        &\quad
        + \int_{\cF_0}
        (U\cdot\nabla)u\cdot u
        +(u\cdot\nabla)U\cdot u
        +(U\cdot\nabla)U\cdot u
        \srd(x,y).
    \end{aligned}
\end{equation}
The three perturbative terms in \eqref{eq:fluid energy}$_2$ are estimated as follows. 
First, recalling the regularity properties of $Z$ from \eqref{eq:reg of Z Hilbert space}, and using the characterization of the interpolation spaces from \autoref{prop:char of the interpol spaces} joint with Sobolev embeddings, we get \(U\in \rL^2(0,T;\rL^\infty(\cF_0)^2)\).
Hence, H\"older's and Young's inequalities yield
\begin{equation}\label{eq:fluid pert1}
    \int_{\cF_0}(U\cdot\nabla)u\cdot u\srd(x,y)
    \le
    \frac14\|\nabla u\|_{\rL^2(\cF_0)}^2 + C \|U\|_{\rL^\infty(\cF_0)}^2 \|u\|_{\rL^2(\cF_0)}^2.
\end{equation}
Second, since
\[
    U\in \rL^2(0,T;\rH_\per^2(\cF_0)^2)
    \hookrightarrow
    \rL^2(0,T;\rW_\per^{1,r}(\cF_0)^2) \text{ for every } r<\infty,
\]
we may choose $r$, $r' \in (2,\infty)$ such that $\frac{1}{r} + \frac{1}{r'} = \frac{1}{2}$, and Sobolev embeddings yield that $\rH_\per^1(\cF_0) \hookrightarrow \rL^{r'}(\cF_0)$ for all $r' < \infty$.
H\"older's, Young's, and Poincar\'e's inequalities then lead to
\begin{equation}\label{eq:fluid pert2}
    \begin{aligned}
        \int_{\cF_0}(u\cdot\nabla)U\cdot u\srd(x,y)
        &\le \| u \|_{\rL^{r'}(\cF_0)} \| \nabla U \|_{\rL^r(\cF_0)} \| u \|_{\rL^2(\cF_0)}\\
        &\le \frac{1}{4 C_P}\| u \|_{\rH_\per^1(\cF_0)}^2 + C\| \nabla U \|_{\rL^r(\cF_0)}^2 \| u \|_{\rL^2(\cF_0)}^2\\
        &\le \frac{1}{4} \| \nabla u \|_{\rL^2(\cF_0)}^2 + C\| \nabla U \|_{\rL^r(\cF_0)}^2 \| u \|_{\rL^2(\cF_0)}^2,
    \end{aligned}
\end{equation}
where $C_P > 0$ denotes a constant related to the Poincar\'e inequality.

Similarly as in \eqref{eq:fluid pert1}, we obtain
\begin{equation}\label{eq:fluid pert3}
    \int_{\cF_0}(U\cdot\nabla)U\cdot u\srd(x,y)
    \le \frac14\|\nabla U\|_{\rL^2(\cF_0)}^2 + C \|U\|_{\rL^\infty(\cF_0)}^2 \|u\|_{\rL^2(\cF_0)}^2.
\end{equation}

Testing \eqref{eq:syst for blow-up crit}$_3$ by \(\del_t\eta\) gives
\begin{equation}\label{eq:plate energy}
    \frac12\frac{\rd}{\rd t}\int_0^L|\del_t\eta|^2\srd x
    + \frac12\frac{\rd}{\rd t}\int_0^L|\del_{xx}\eta|^2\srd x
    + \int_0^L|\del_{xxt}\eta|^2\srd x
    - \int_0^L\phi(u,\pi)\del_t\eta\srd x
    =0 .
\end{equation}
Adding \eqref{eq:fluid energy} and \eqref{eq:plate energy}, the coupling term $\phi(u,\pi)$ cancels. 
Using \eqref{eq:fluid pert1}--\eqref{eq:fluid pert3}, we obtain
\begin{equation}\label{eq:basic diff inequality}
    \begin{aligned}
        &\quad \frac12\frac{\rd}{\rd t}\|u\|_{\rL^2(\cF_0)}^2
        +\frac12\|\nabla u(t)\|_{\rL^2(\cF_0)}^2
        +\frac12\frac{\rd}{\rd t}\|\del_t\eta\|_{\rL^2(0,L)}^2
        +\frac12\frac{\rd}{\rd t}\|\del_{xx}\eta\|_{\rL^2(0,L)}^2
        +\|\del_{xxt}\eta(t)\|_{\rL^2(0,L)}^2 \\
        &\le \frac{1}{4} \|\nabla U(t)\|_{\rL^2(\cF_0)}^2 + C \bigl(\|U(t)\|_{\rL^\infty(\cF_0)}^2 + \| \nabla U(t) \|_{\rL^r(\cF_0)}^2\bigr)\|u(t)\|_{\rL^2(\cF_0)}^2.
    \end{aligned}
\end{equation}
Define now the energy
\[
    E(t)
    \coloneqq
    \frac12\|u(t)\|_{\rL^2(\cF_0)}^2
    +\frac12\|\del_t\eta(t)\|_{\rL^2(0,L)}^2
    +\frac12\|\del_{xx}\eta(t)\|_{\rL^2(0,L)}^2
\]
as well as
\begin{equation*}
    a_U(t) \coloneqq C\bigl(\|U(t)\|_{\rL^\infty(\cF_0)}^2 + \| \nabla U(t) \|_{\rL^r(\cF_0)}^2\bigr) \tand b_U(t) \coloneqq \frac{1}{4}\| \nabla U(t) \|_{\rL^2(\cF_0)}^2.
\end{equation*}
The above arguments imply $a_U$, $b_U \in \rL^1(0,T)$ for all $T > 0$.
The inequality \eqref{eq:basic diff inequality} yields that
\begin{equation*}
    E'(t) + \frac{1}{2} \| \nabla u(t) \|_{\rL^2(\cF_0)}^2 + \| \del_{xxt} \eta(t) \|_{\rL^2(0,L)}^2 \le a_U(t) E(t) + b_U(t).
\end{equation*}
Gronwall's inequality leads to
\begin{equation*}
    E(t) \le \left(E(0) + \int_0^t b_U(\tau) \srd \tau\right) \mre^{\int_0^t a_U(\tau) \srd \tau}.
\end{equation*}
Thus, integrating in time in \eqref{eq:basic diff inequality}, we find that
\[
    \begin{aligned}
        &E(t)
        + \frac12 \int_0^t
            \|\nabla u(\tau)\|_{\rL^2(\cF_0)}^2\,\rd\tau
        + \int_0^t
            \|\del_{xx\tau}\eta(\tau)\|_{\rL^2(0,L)}^2\,\rd\tau \\
        &\le
        E(0)
        + \int_0^t
        \left[
            b_U(\tau)
            +
            a_U(\tau)
            \left(
                E(0)
                +
                \int_0^\tau b_U(s)\,\rd s
            \right)
            \mre^{
                \int_0^\tau a_U(s)\,\rd s
            }
        \right]
        \rd\tau
    \end{aligned}
\]
for all $t \in [0,T]$, showing the assertion.
\end{proof}

\subsection{Estimate for the fluid and structure velocities}
\ 

In this subsection, we derive the estimate needed to control
\begin{equation*}
    u \in \rL^\infty(0,T;\rH_\per^1(\cF_0)^2) \cap \rH^1(0,T;\rL^2(\cF_0)^2) \tand \del_t\eta \in \rL^\infty(0,T;\rH_\per^2(0,L))
    \cap \rH^1(0,T;\rL^2(0,L)).
\end{equation*}
The idea is to test the fluid equation by \(\del_t u\) and the plate equation by
\(\del_{tt}\eta\). 
This is compatible with the kinematic boundary condition, since, at least formally,
\[
    \del_t u(t,x,0)=\del_{tt}\eta(t,x)\mre_2 .
\]

The following elliptic estimate, which we recall from \cite[Lemma~1]{GH:16}, will be an important ingredient to control the convective term.

\begin{lem}\label{lem:ell est Stokes}
Let \(f\in\rL^2(\cF_0)^2\) and $\del_t\eta \in \rH_{\per,\rm}^{\frac32}(0,L)$.
Then there exists a unique solution
\[
    (u,\pi)
    \in
    \rH_\per^2(\cF_0)^2
    \times \rH_{\per,\rm}^1(\cF_0)
\]
to the stationary Stokes system
\begin{equation}\label{eq:stat Stokes probl}
    \left\{
    \begin{aligned}
        -\Delta u+\nabla \pi
        &=f, \enspace \mdiv u =0,
        &&\tin\cF_0,\\
        u(x,0)
        &=\del_t\eta(x)\mre_2,
        &&\tfor x\in(0,L),\\
        u(x,-1)
        &=0,
        &&\tfor x\in(0,L),\\
        (u,\pi)(0,y)
        &= (u,\pi)(L,y), &&\tfor y \in (-1,0).
    \end{aligned}
    \right.
\end{equation}
Moreover, there exists \(C>0\) such that
\begin{equation}\label{eq:ell est Stokes}
    \|u\|_{\rH_\per^2(\cF_0)}
    + \|\pi\|_{\rH_\per^1(\cF_0)}
    \le
    C\Bigl(
        \|f\|_{\rL^2(\cF_0)}
        + \|\del_t\eta\|_{\rH_\per^{\frac32}(0,L)}
    \Bigr).
\end{equation}
\end{lem}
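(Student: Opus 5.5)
The plan is to reduce \eqref{eq:stat Stokes probl} to a Stokes problem with homogeneous Dirichlet data by a divergence-free lifting of the boundary datum, and then to invoke $\rH^2$-regularity of the Stokes operator $A_0$.

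For the lifting, set $b \coloneqq \del_t\eta \in \rH_{\per,\rm}^{\nicefrac{3}{2}}(0,L)$. Since $b$ has spatial average zero, the datum $b\mre_2$ belongs to $\rW_{\per,n}^{\nicefrac{3}{2},2}(0,L)$. This is exactly the compatibility condition $\int_{\Gamma_u} u\cdot\nu \srd x = 0$ required by $\mdiv u = 0$ together with $u = 0$ on $\Gamma_b$. With $q=2$ and $s=\nicefrac{3}{2} = 2-\nicefrac1q$ (the endpoint admitted in \autoref{lem:Stokes lifting}), \autoref{lem:mapping props Stokes lifting} yields $w \coloneqq D_\fl b \in \rH_\per^2(\cF_0)^2$ and $\psi \coloneqq D_\pr b \in \rH_\per^1(\cF_0)/\R$, with
\[
\|w\|_{\rH^2_\per(\cF_0)} + \|\psi\|_{\rH^1_\per(\cF_0)/\R} \le C\|b\|_{\rH^{\nicefrac32}_\per(0,L)}.
\]
The pair $(w,\psi)$ solves the homogeneous Stokes system $-\Delta w+\nabla\psi=0$, $\mdiv w=0$ with the correct boundary values. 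We fix the representative of $\psi$ with zero mean.

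For the homogeneous part, write $u = w + z$ and $\pi = \psi + \varrho$. Then $(z,\varrho)$ solves $-\Delta z + \nabla\varrho = f$, $\mdiv z=0$, $z=0$ on $\Gamma_u\cup\Gamma_b$, with periodicity in $x$. Equivalently, $-A_0 z = \bP f$ with $z\in\rD(A_0)$. By \autoref{lem:Hoo-calc Stokes op}, $-A_0$ is sectorial on $\rL^2_\sigma(\cF_0)$. In addition, $0\in\rho(A_0)$, since $\int|\nabla z|^2 = \langle -A_0 z, z\rangle$ and the Poincaré inequality (from $z=0$ on $\Gamma_b$) give coercivity. Hence $\|z\|_{\rH^2}\le C\|(-A_0)z\|_{\rL^2} \le C\|f\|_{\rL^2}$, using the definition of $\rD(A_0)\subset\rH^2_\per$ with the graph norm equivalent to the $\rH^2$ norm. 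The pressure is recovered from $\nabla\varrho = (\Id-\bP)(f+\Delta z)$, normalized to have zero mean. This gives $\|\varrho\|_{\rH^1}\le C(\|f\|_{\rL^2}+\|z\|_{\rH^2})$ via Poincaré–Wirtinger. Adding the two parts yields existence and estimate \eqref{eq:ell est Stokes}.

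Uniqueness follows by the energy argument. Take the difference of two solutions, test with the velocity difference (which vanishes on $\Gamma_u\cup\Gamma_b$), and use periodicity to kill the lateral boundary terms. This forces the velocity to vanish, so the pressure gradient vanishes, and the zero-mean normalization in $\rH^1_{\per,\rm}$ then forces the pressure to vanish.

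The main obstacle is justifying the endpoint case $s=2-\nicefrac1q=\nicefrac32$ of \autoref{lem:Stokes lifting} on the periodic strip with the correct norm. In particular, the lifting must indeed be $\rH^2$ rather than $\rH^{2-\eps}$, since trace spaces at integer-adjacent indices can be delicate. If that is doubtful, I would instead work in Fourier series in $x$: for each mode $k$, solve the resulting ODE system in $y\in(-1,0)$ explicitly, and check that the $\rH^2$ norm of the lifting is bounded by $\sum_k(1+|k|)^3|\hat b_k|^2$. The mode $k=0$ vanishes by the zero-average condition.
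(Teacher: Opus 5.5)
Your proof is correct, but it is not the paper's route: the paper gives no argument for this lemma and simply imports it from \cite[Lemma~1]{GH:16}. You instead derive it from the paper's own linear theory. The boundary datum is absorbed by the Stokes lifting $(D_\fl b, D_\pr b)$ of \autoref{lem:mapping props Stokes lifting} with $q=2$, $s=\nicefrac{3}{2}$, and the remainder is $z=(-A_0)^{-1}\bP f$.

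The $\rH^2$ bound on $z$ rests on two facts you should state explicitly. First, $A_0$ is closed, so on $\rD(A_0)$ the graph norm and the $\rW^{2,2}$ norm are equivalent by the open mapping theorem. Second, $0\in\rho(A_0)$. Your coercivity estimate only yields injectivity and closed range; surjectivity comes from the dense range that is part of sectoriality (or from self-adjointness for $q=2$).

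Your route buys self-containedness: it exhibits the lemma as a corollary of \autoref{lem:Stokes lifting} and \autoref{lem:Hoo-calc Stokes op}. It saves no analysis, though. The $\rH^2$-regularity of the Dirichlet Stokes problem is exactly what is encoded in the claim that $A_0$ is closed on $\rD(A_0)\subset\rW^{2,2}_\per(\cF_0)^2$. Moreover, both lemmas you invoke are only sketched in the paper, whereas the citation rests on a published proof.

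The obstacle you flag is not one. The endpoint $s=2-\nicefrac{1}{q}$ is included in \autoref{lem:Stokes lifting}, and for $q=2$ the space $\rW^{\nicefrac{3}{2},2}_\per(0,L)$ coincides with $\rH^{\nicefrac{3}{2}}_\per(0,L)$, the exact trace space of $\rH^2$.

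You can even avoid the lifting lemma altogether. Since $\int_0^L b\srd x=0$, the function $B(x)\coloneqq-\int_0^x b(\xi)\srd\xi$ is periodic and lies in $\rH^{\nicefrac{5}{2}}_\per(0,L)$. Extend the traces $(B,0)$ at $y=0$ to some $\Psi\in\rH^3_\per(\cF_0)$ that vanishes near $\Gamma_b$. This produces the divergence-free field $W=(\del_y\Psi,-\del_x\Psi)\in\rH^2_\per(\cF_0)^2$ with $W=b\mre_2$ on $\Gamma_u$ and $W=0$ on $\Gamma_b$. Then $(u-W,\pi)$ solves the homogeneous Dirichlet problem with force $f+\Delta W\in\rL^2(\cF_0)^2$, and only the $A_0$ step is needed.
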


We are now in a position to tackle the estimates described in the beginning of this subsection.

\begin{prop}\label{prop:first further est}
For $v_0 = (u_0,\eta_1^0,\eta_2^0)$ as specified in \autoref{cor:loc wp Hilbert space}(c), from \autoref{cor:loc wp Hilbert space}, recall the strong solution \((u,\eta,\del_t\eta)\) to 
\eqref{eq:syst for blow-up crit}.
Then, for every \(T>0\), we have
\[
    u \in \rL^\infty(0,T;\rH_\per^1(\cF_0)^2) \cap \rH^1(0,T;\rL^2(\cF_0)^2), \tand \del_t\eta \in \rL^\infty(0,T;\rH_\per^2(0,L)) \cap \rH^1(0,T;\rL^2(0,L)).
\]
\end{prop}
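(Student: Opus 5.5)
The plan is the usual higher-order energy method of Grandmont--Hillairet \cite{GH:16}: test the fluid equation in \eqref{eq:syst for blow-up crit} by $\del_t u$ and the plate equation by $\del_{tt}\eta$, and then control the convective terms with the elliptic estimate of \autoref{lem:ell est Stokes}. Since $\del_t u = \del_{tt}\eta\,\mre_2$ on $\Gamma_u$ and $\del_t u = 0$ on $\Gamma_b$, the coupling terms cancel after adding the two identities. Indeed, integrating by parts gives $\int_{\cF_0}(-\mdiv\sigma_\rf(u,\pi))\cdot\del_t u = \frac12\frac{\rd}{\rd t}\int_{\cF_0}|\nabla u|^2 + \int_0^L \phi(u,\pi)\del_{tt}\eta$. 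Here I use $\mdiv u=0$ and periodicity to replace $2\|\D(u)\|^2$ by $\|\nabla u\|^2$; the boundary contribution from $\nabla u^\top$ is handled as in the energy estimate. The plate side yields $\|\del_{tt}\eta\|^2 + \frac12\frac{\rd}{\rd t}\|\del_{xxt}\eta\|^2 + \int_0^L \del_{xx}\eta\,\del_{xxtt}\eta$. The last term I rewrite as $\frac{\rd}{\rd t}\int_0^L \del_{xx}\eta\,\del_{xxt}\eta - \|\del_{xxt}\eta\|^2$. Its time integral is controlled by \autoref{prop:energy ests}: the boundary-in-time part is at most $\frac14\|\del_{xxt}\eta\|^2 + C\|\del_{xx}\eta\|^2$, and the bulk part is $\del_{xxt}\eta \in \rL^2\rL^2$. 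This produces
\begin{equation*}
\frac{\rd}{\rd t}\Bigl(\tfrac12\|\nabla u\|^2+\tfrac12\|\del_{xxt}\eta\|^2 + \textstyle\int_0^L\del_{xx}\eta\,\del_{xxt}\eta\Bigr) + \|\del_t u\|^2 + \|\del_{tt}\eta\|^2 \le \|\del_{xxt}\eta\|^2 + \Bigl|\int_{\cF_0}((u+U)\cdot\nabla)(u+U)\cdot\del_t u\Bigr|.
\end{equation*}

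Next I split the convective term into four parts. The term $(U\cdot\nabla)U$ is bounded by $\frac18\|\del_t u\|^2 + C\|U\|_{\rL^\infty}^2\|\nabla U\|^2$, which is integrable in time by \eqref{eq:reg of Z Hilbert space}, because $U\in \rC([0,T];\rH^1)\cap\rL^2(0,T;\rH^2)$. The mixed terms are bounded by $\|U\|_{\rL^\infty}\|\nabla u\|\|\del_t u\|$ and by $\|u\|_{\rL^4}\|\nabla U\|_{\rL^4}\|\del_t u\|$. With Ladyzhenskaya's inequality and Poincar\'e, these give $\frac18\|\del_t u\|^2 + a(t)\|\nabla u\|^2$ with $a\in\rL^1(0,T)$.

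The main obstacle is the purely nonlinear term $\int (u\cdot\nabla)u\cdot\del_t u$. I would use the 2D bound
\begin{equation*}
\|(u\cdot\nabla)u\|\le\|u\|_{\rL^4}\|\nabla u\|_{\rL^4}\le C\|u\|^{1/2}\|\nabla u\|\,\|u\|_{\rH^2}^{1/2}
\end{equation*}
and then bound $\|u\|_{\rH^2}$ via \autoref{lem:ell est Stokes} at each fixed time. This treats \eqref{eq:syst for blow-up crit}$_1$ as a Stokes system with right-hand side $f=-\del_t u-((u+U)\cdot\nabla)(u+U)$ and boundary datum $\del_t\eta$, giving $\|u\|_{\rH^2}\le C(\|\del_t u\|+\|(u+U)\cdot\nabla(u+U)\|+\|\del_t\eta\|_{\rH^{3/2}})$. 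The term $\|(u\cdot\nabla) u\|$ reappears on the right, so it has to be absorbed. I expect this to work through Young's inequality: $\|(u\cdot\nabla) u\|\le \frac12\|u\|_{\rH^2}/C + C\|u\|^2\|\nabla u\|^4$. After absorption I get $\|u\|_{\rH^2}\le C(\|\del_t u\|+\|\del_t\eta\|_{\rH^2}+\|u\|^2\|\nabla u\|^4+\text{$U$-terms})$. If this global absorption turns out to be too lossy, I will fall back on the paper's ``time-local absorption'' on short subintervals.

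Inserting this bound and using Young's inequality gives
\begin{equation*}
\Bigl|\int (u\cdot\nabla)u\cdot\del_t u\Bigr|\le \tfrac18\|\del_t u\|^2 + C\|u\|^2\|\nabla u\|^4 + C\|\nabla u\|^2\|\del_{xxt}\eta\|^2 + \dots
\end{equation*}
The remaining terms come from $U$. The $\del_t\eta$ contribution uses $\|\del_t\eta\|_{\rH^{3/2}}\le C\|\del_{xxt}\eta\|$, which is legitimate by average zero and periodicity. Setting $Y=\|\nabla u\|^2+\|\del_{xxt}\eta\|^2$, I obtain $Y'\le g(t)Y + h(t)$ with $g = C(\|u\|^2\|\nabla u\|^2+\|\del_{xxt}\eta\|^2+a)$. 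The coefficient $g$ is in $\rL^1(0,T)$ by \autoref{prop:energy ests}, since $u\in\rL^\infty\rL^2\cap\rL^2\rH^1$ and $\del_{xxt}\eta\in\rL^2\rL^2$. Gronwall's inequality then gives $u\in\rL^\infty\rH^1$ and $\del_t\eta\in\rL^\infty\rH^2$. Integrating the dissipative terms in time gives $\del_t u\in\rL^2\rL^2$ and $\del_{tt}\eta\in\rL^2\rL^2$, that is, the $\rH^1(0,T;\rL^2)$ bounds. All manipulations are justified for the local strong solution from \autoref{cor:loc wp Hilbert space} on $(0,T^*)$, and the bounds are uniform in $T^*$.
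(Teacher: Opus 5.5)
Your proposal is correct and follows essentially the same route as the paper: you test with $\del_t u$ and $\del_{tt}\eta$, integrate the $\del_{xxxx}\eta\,\del_{tt}\eta$ term by parts in time, control $(u\cdot\nabla)u$ by Ladyzhenskaya's inequality combined with \autoref{lem:ell est Stokes} and absorption, and close with Gronwall, whose coefficient $\|u\|^2\|\nabla u\|^2$ lies in $\rL^1(0,T)$ by \autoref{prop:energy ests} (this is equivalent to the paper's time-local absorption on a partition where $\int_{t_{j-1}}^{t_j}\|\nabla u\|^2\srd t$ is small). One bookkeeping slip: absorbing in the unsquared bound gives $\|(u\cdot\nabla)u\|\le \frac{1}{2C}\|u\|_{\rH^2}+C\|u\|\,\|\nabla u\|^2$, since $\|u\|^2\|\nabla u\|^4$ belongs to the squared bound for $\|(u\cdot\nabla)u\|^2$, and only this version yields your correct final term $C\|u\|^2\|\nabla u\|^4$ rather than a non-closable $\|\nabla u\|^6$.
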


\begin{proof}
Testing \eqref{eq:syst for blow-up crit}$_1$ by \(\del_t u\), integrating by parts, and using the boundary condition gives
\begin{equation}\label{eq:fluid 2nd est}
    \begin{aligned}
        0
        &=
        \|\del_tu\|_{\rL^2(\cF_0)}^2
        + \int_{\cF_0}(u\cdot\nabla)u\cdot\del_tu\srd(x,y)
        + \frac12\frac{\rd}{\rd t}\|\nabla u\|_{\rL^2(\cF_0)}^2
        + \int_0^L \phi(u,\pi)\del_{tt}\eta\srd x \\
        &\quad
        + \int_{\cF_0}
        (U\cdot\nabla)u\cdot\del_tu
        +(u\cdot\nabla)U\cdot\del_tu
        +(U\cdot\nabla)U\cdot\del_tu
        \srd(x,y).
    \end{aligned}
\end{equation}
First, we handle the terms involving \(U\).
In fact, H\"older's and Young's inequalities first yield that
\begin{equation*}
    \begin{aligned}
        \int_{\cF_0} (U\cdot\nabla)u\cdot\del_tu \srd(x,y) 
        &\le \| U \|_{\rL^\infty(\cF_0)} \| \nabla u \|_{\rL^2(\cF_0)} \| \del_t u \|_{\rL^2(\cF_0)}\\
        &\le C_1 \| U \|_{\rL^\infty(\cF_0)}^4 + C_2 \| \nabla u \|_{\rL^2(\cF_0)}^4 + \frac{1}{16} \| \del_t u \|_{\rL^2(\cF_0)}^2.
    \end{aligned}
\end{equation*}
For the next term, we first note that for $r \in (1,\infty)$, interpolation implies that
\begin{equation*}
    \| u \|_{\rL^{2r}(\cF_0)} \le C \| u \|_{\rL^{2}(\cF_0)}^{\frac{1}{r}} \| u \|_{\rH_\per^1(\cF_0)}^{1-\frac{1}{r}} \le C \| u \|_{\rL^{2}(\cF_0)}^{\frac{1}{r}} \| \nabla u \|_{\rL^2(\cF_0)}^{1-\frac{1}{r}}.
\end{equation*}
Thus, for such $r \in (1,\infty)$, Young's inequality leads to
\begin{equation*}
    \| u \|_{\rL^{2r}(\cF_0)}^2 \le C\bigl(\| u \|_{\rL^2(\cF_0)}^2 + \| \nabla u \|_{\rL^2(\cF_0)}^2\bigr).
\end{equation*}
Using this estimate together with H\"older's and Young's inequalities, we find that
\begin{equation*}
    \begin{aligned}
        \int_{\cF_0} (u\cdot\nabla)U\cdot\del_tu \srd(x,y) 
        &\le \| u \|_{\rL^{2r}(\cF_0)} \| \nabla U \|_{\rL^{2r'}(\cF_0)} \| \del_t u \|_{\rL^2(\cF_0)}\\
        &\le C_1 \| \nabla U \|_{\rL^{2r'}(\cF_0)}^2 + C_2\bigl(\| u \|_{\rL^2(\cF_0)}^4 + \| \nabla u \|_{\rL^2(\cF_0)}^4\bigr) + \frac{1}{16} \| \del_t u \|_{\rL^2(\cF_0)}^2.
    \end{aligned}
\end{equation*}
For the third term, by \eqref{eq:reg of Z Hilbert space}, for $\theta \in (\frac{1}{4},\frac{1}{2})$, we have 
\begin{equation*}
    U \in \rH^{\theta}(0,T;\rH_\per^{2(1-\theta)}(\cF_0)^2) \hookrightarrow \rL^4(0,T;\rL^\infty(\cF_0)^2) \cap \rL^4(0,T;\rH_\per^1(\cF_0)^2).
\end{equation*}
Hence, we get
\begin{equation*}
    \begin{aligned}
        \int_{\cF_0} (U\cdot\nabla)U\cdot\del_tu
        \srd(x,y)
        &\le \| U \|_{\rL^\infty(\cF_0)} \| \nabla U \|_{\rL^2(\cF_0)} \| \del_t u \|_{\rL^2(\cF_0)}\\
        &\le C_1\bigl(\| U \|_{\rL^\infty(\cF_0)}^4 + \| \nabla U \|_{\rL^2(\cF_0)}^4\bigr) + \frac{1}{16} \| \del_t u \|_{\rL^2(\cF_0)}^2.
    \end{aligned}
\end{equation*}
Concatenating the above estimates, we obtain
\begin{equation}\label{eq:pert fluid all 2nd est}
    \begin{aligned}
        &\quad \left|
        \int_{\cF_0}
        (U\cdot\nabla)u\cdot\del_tu
        +(u\cdot\nabla)U\cdot\del_tu
        +(U\cdot\nabla)U\cdot\del_tu
        \srd(x,y)
        \right| \\
        &\le
        \frac{3}{16}\|\del_tu\|_{\rL^2(\cF_0)}^2 + C_2\bigl(\| u \|_{\rL^2(\cF_0)}^4 + \| \nabla u \|_{\rL^2(\cF_0)}^4\bigr)\\
        &\quad +C_1\bigl(\| U \|_{\rL^\infty(\cF_0)}^4 + \| \nabla U \|_{\rL^{2r'}(\cF_0)}^2 + \| \nabla U \|_{\rL^2(\cF_0)}^4\bigr).
    \end{aligned}
\end{equation}
Here the terms in \eqref{eq:pert fluid all 2nd est}$_3$ belongs to \(\rL^1(0,T)\) by the regularity of \(U\) following from \eqref{eq:reg of Z Hilbert space}.

Testing \eqref{eq:syst for blow-up crit}$_3$ by \(\del_{tt}\eta\) yields
\begin{equation}\label{eq:plate 2nd est}
    \|\del_{tt}\eta\|_{\rL^2(0,L)}^2
    + \int_0^L \del_{xxxx}\eta\,\del_{tt}\eta\srd x
    + \frac12\frac{\rd}{\rd t}
        \|\del_{xxt}\eta\|_{\rL^2(0,L)}^2
    - \int_0^L \phi(u,\pi)\del_{tt}\eta\srd x
    =0 .
\end{equation}
Adding \eqref{eq:fluid 2nd est} and \eqref{eq:plate 2nd est}, and using
\eqref{eq:pert fluid all 2nd est}, gives
\begin{equation}\label{eq:tested total eq}
    \begin{aligned}
        &\quad \frac{13}{16}\|\del_tu(t)\|_{\rL^2(\cF_0)}^2
        + \int_{\cF_0}(u\cdot\nabla)u\cdot\del_tu\srd(x,y)
        + \frac12\frac{\rd}{\rd t}\|\nabla u\|_{\rL^2(\cF_0)}^2 \\
        &\qquad
        + \|\del_{tt}\eta(t)\|_{\rL^2(0,L)}^2
        + \int_0^L \del_{xxxx}\eta\,\del_{tt}\eta\srd x
        + \frac12\frac{\rd}{\rd t}
            \|\del_{xxt}\eta\|_{\rL^2(0,L)}^2 \\
        &\le
        B_U(t)
        + C_2\bigl(\| u(t) \|_{\rL^2(\cF_0)}^4 + \| \nabla u(t) \|_{\rL^2(\cF_0)}^4\bigr),
    \end{aligned}
\end{equation}
where
\[
    B_U(t)
    \coloneqq
    C_1\bigl(\| U(t) \|_{\rL^\infty(\cF_0)}^4 + \| \nabla U(t) \|_{\rL^{2r'}(\cF_0)}^2 + \| \nabla U(t) \|_{\rL^2(\cF_0)}^4\bigr)
    \in \rL^1(0,T).
\]

We next handle the term involving \(\del_{xxxx}\eta\). By integration by parts in space and time,
\begin{equation}\label{eq:plate high integration by parts}
    \begin{aligned}
        &\quad \int_0^T\int_0^L
        \del_{xxxx}\eta\,\del_{tt}\eta
        \srd x\,\rd t\\
        &=
        \int_0^T\int_0^L
        \del_{xx}\eta\,\del_{xxtt}\eta
        \srd x\,\rd t \\
        &=
        -\int_0^T\|\del_{xxt}\eta(t)\|_{\rL^2(0,L)}^2\,\rd t
        + \int_0^L
        \del_{xx}\eta(T)\del_{xxt}\eta(T)
        \srd x
        - \int_0^L
        \del_{xx}\eta(0)\del_{xxt}\eta(0)
        \srd x .
    \end{aligned}
\end{equation}
Integrating \eqref{eq:tested total eq} in time, and using
\eqref{eq:plate high integration by parts}, for all $t \in [0,T]$, we obtain
\begin{equation}\label{eq:a priori est}
    \begin{aligned}
        &\quad \frac{13}{16}\int_0^t
            \|\del_\tau u(\tau)\|_{\rL^2(\cF_0)}^2\,\rd \tau
        + \frac12\|\nabla u(t)\|_{\rL^2(\cF_0)}^2
        + \int_0^t
            \|\del_{\tau\tau}\eta(\tau)\|_{\rL^2(0,L)}^2\,\rd \tau
        + \frac12\|\del_{xxt}\eta(t)\|_{\rL^2(0,L)}^2 \\
        &\le
        \Tilde{C}_0
        - \int_0^t\int_{\cF_0}
            (u\cdot\nabla)u\cdot\del_\tau u
            \srd(x,y)\,\rd \tau
        + \int_0^t
            \|\del_{xx\tau}\eta(\tau)\|_{\rL^2(0,L)}^2\,\rd \tau
        - \int_0^L
            \del_{xx}\eta(t)\del_{xxt}\eta(t)
            \srd x \\
        &\quad
        + \int_0^t B_U(\tau)\,\rd \tau
        + C_2\int_0^t
            \bigl(
                \|u(\tau)\|_{\rL^2(\cF_0)}^4
                + \|\nabla u(\tau)\|_{\rL^2(\cF_0)}^4
            \bigr)\,\rd \tau ,
    \end{aligned}
\end{equation}
where \(\Tilde{C}_0\) depends only on the initial data $u_0$, $\eta_1^0$, and $\eta_2^0$.

The term 
\[
    \int_0^t B_U(\tau)\,\rd \tau
\]
is bounded for every $t \in (0,T)$ due to $B_U \in \rL^1(0,T)$ as observed above, while the term 
\[
    \int_0^t\|\del_{xx\tau}\eta(\tau)\|_{\rL^2(0,L)}^2\,\rd \tau,
\]
is bounded by $C_0 + C_1(T)$ for every $t \in (0,T)$ by virtue of \autoref{prop:energy ests}.
Moreover, by Young's inequality and
\eqref{eq:basic energy bound}, for every $t \in (0,T)$, we get
\begin{equation*}
    \begin{aligned}
        \left|
    \int_0^L
    \del_{xx}\eta(t)\del_{xxt}\eta(t)\srd x
    \right|
    &\le
    C\|\del_{xx}\eta(t)\|_{\rL^2(0,L)}^2
    + \frac14\|\del_{xxt}\eta(t)\|_{\rL^2(0,L)}^2\\
    &\le C(C_0 + C_1(T))
    + \frac14\|\del_{xxt}\eta(t)\|_{\rL^2(0,L)}^2 .        
    \end{aligned}
\end{equation*}

It remains to estimate the convective term. By Young's inequality,
\begin{equation}\label{eq:convective young}
    \left|
    \int_0^t\int_{\cF_0}
        (u\cdot\nabla)u\cdot\del_\tau u
        \srd(x,y)\,\rd \tau
    \right|
    \le
    \frac12\int_0^t
        \|((u\cdot\nabla)u)(\tau)\|_{\rL^2(\cF_0)}^2\,\rd \tau
    + \frac12\int_0^t
        \|\del_\tau u(\tau)\|_{\rL^2(\cF_0)}^2\,\rd \tau .
\end{equation}
The idea is to absorb the term involving the time derivative into the left-hand side, and to handle the convective term with Ladyzhenskaya's inequality, which requires controlling the Laplacian, so \autoref{lem:ell est Stokes} will be used.
We first handle the terms involving $U$.
To this end, we recall that $U \in \rL^4(0,T;\rL^\infty(\cF_0))$, and for $\theta \in (\frac{1}{4},\frac{1}{3})$, it follows from~\eqref{eq:reg of Z Hilbert space} that
\begin{equation*}
    U \in \rH^{\theta}(0,T;\rH_\per^{2(1-\theta)}(\cF_0)) \hookrightarrow \rL^4(0,T;\rH_\per^{1,3}(\cF_0)).
\end{equation*}
H\"older's, and Young's inequalities, the embedding $\rH^1(\cF_0) \hookrightarrow \rL^6(\cF_0)$, and Poincar\'e's inequality imply
\begin{equation*}
    \begin{aligned}
        \| (U \cdot \nabla)u \|_{\rL^2(\cF_0)}
        &\le \| U \|_{\rL^\infty(\cF_0)} \| \nabla u \|_{\rL^2(\cF_0)} \le \frac{1}{2} \| U \|_{\rL^\infty(\cF_0)}^2 + \frac{1}{2} \| \nabla u \|_{\rL^2(\cF_0)}^2,\\
        \| (u \cdot \nabla)U \|_{\rL^2(\cF_0)}
        &\le \| u \|_{\rL^6(\cF_0)} \| \nabla U \|_{\rL^3(\cF_0)} \le C \| \nabla u \|_{\rL^2(\cF_0)}^2 + \| \nabla U \|_{\rL^3(\cF_0)}^2, \tand\\
        \| (U \cdot \nabla)U \|_{\rL^2(\cF_0)}
        &\le \| U \|_{\rL^\infty(\cF_0)} \| \nabla U \|_{\rL^2(\cF_0)} \le \frac{1}{2} \| U \|_{\rL^\infty(\cF_0)}^2 + \| \nabla U \|_{\rL^2(\cF_0)}^2.
    \end{aligned}
\end{equation*}
We set $f = -\del_t u - ((u+U) \cdot \nabla)(u+U)$.
Using the above estimates, we get
\begin{equation}\label{eq:est of f}
    \| f(t) \|_{\rL^2(\cF_0)} \le \| \del_t u(t) \|_{\rL^2(\cF_0)} + \| ((u\cdot\nabla)u)(t)\|_{\rL^2(\cF_0)} + C_2 \| \nabla u(t) \|_{\rL^2(\cF_0)}^2 + c_U(t),
\end{equation}
where $c_U(t) \coloneqq C_1\bigl(\| U(t) \|_{\rL^\infty(\cF_0)}^2 + \| \nabla U(t) \|_{\rL^3(\cF_0)}^2 + \| \nabla U(t) \|_{\rL^2(\cF_0)}^2\bigr)$.
By \autoref{lem:ell est Stokes} applied to a Stokes system of the form \eqref{eq:stat Stokes probl} with right-hand side $f$ and \eqref{eq:est of f}, we then obtain
\begin{equation}\label{eq:est of the Laplacian}
    \begin{aligned}
        &\quad \|\Delta u(t)\|_{\rL^2(\cF_0)}\\
        &\le C'(t)\Bigl(
        \|\del_tu(t)\|_{\rL^2(\cF_0)} + \|((u\cdot\nabla)u)(t)\|_{\rL^2(\cF_0)} + \|\del_t\eta(t)\|_{\rH_\per^{\frac32}(0,L)} + C_2 \| \nabla u(t) \|_{\rL^2(\cF_0)}^2 + c_U(t)\Bigr),
    \end{aligned}
\end{equation}
where \(C'(t)\) is a nondecreasing function of time, bounded on bounded intervals.

By Ladyzhenskaya's inequality in two dimensions,
\[
    \|(u\cdot\nabla)u\|_{\rL^2(\cF_0)}^2
    \le
    C\|u\|_{\rL^2(\cF_0)}
    \|\nabla u\|_{\rL^2(\cF_0)}^2
    \|\Delta u\|_{\rL^2(\cF_0)}.
\]
Combining this with \eqref{eq:est of the Laplacian} and using $\|\del_t\eta\|_{\rH_\per^{\frac32}(0,L)} \le C\|\del_{xxt}\eta\|_{\rL^2(0,L)}$, for \(Y(t)=\|((u\cdot\nabla)u)(t)\|_{\rL^2(\cF_0)}^2\), we get
\[
    Y(t)
    \le
    \Tilde{C}'(t) A(t)\bigl(
        \|\del_tu(t)\|_{\rL^2(\cF_0)}
        + Y^{1/2}(t)
        + \|\del_{xxt}\eta(t)\|_{\rL^2(0,L)}
        + \|\nabla u(t)\|_{\rL^2(\cF_0)}^2
        + c_U(t)
    \bigr),
\]
where $\Tilde{C}'(t)$ is again nondecreasing in $t$ and bounded on $(0,T)$, and \(A(t)=\|u(t)\|_{\rL^2(\cF_0)}\|\nabla u(t)\|_{\rL^2(\cF_0)}^2\). 
Applying Young's inequality to the term \(A(t)Y^{1/2}(t)\) and absorbing \(\frac12Y(t)\) into the left-hand side yields
\begin{equation}\label{eq:est of convective term}
    \begin{aligned}
        \|((u\cdot\nabla)u)(t)\|_{\rL^2(\cF_0)}^2
        &\le
        \Tilde{C}'(t)C_3\|u(t)\|_{\rL^2(\cF_0)}^2
        \|\nabla u(t)\|_{\rL^2(\cF_0)}^4
        + \frac18\|\del_tu(t)\|_{\rL^2(\cF_0)}^2\\
        &\quad 
        + \|\del_{xxt}\eta(t)\|_{\rL^2(0,L)}^2 + C_2^2 \| \nabla u(t) \|_{\rL^2(\cF_0)}^4
        + c_U^2(t),
    \end{aligned}
\end{equation}
where $c_U \in \rL^2(0,T)$, and hence $c_U^2 \in \rL^1(0,T)$ by the above arguments.

Using the basic energy estimate \eqref{eq:basic energy bound} to control $\|u\|_{\rL^\infty(0,t;\rL^2(\cF_0))}^2$ and $\|\del_{xxt}\eta\|_{\rL^2(0,t;\rL^2(0,L))}^2$, this implies
\begin{equation}\label{eq:integrated convective estimate}
    \begin{aligned}
        &\left|
        \int_0^t\int_{\cF_0}
            (u\cdot\nabla)u\cdot\del_\tau u
            \,\srd(x,y)\,\rd \tau
        \right| \\
        &\le
        \frac{9}{16}
        \int_0^t
            \|\del_\tau u(\tau)\|_{\rL^2(\cF_0)}^2\,\rd \tau
        +
        C_T
        \int_0^t
            \|\nabla u(\tau)\|_{\rL^2(\cF_0)}^4\,\rd \tau
        +
        C_T,
    \end{aligned}
\end{equation}
where $C_T$ depends on $C_0 + C_1(T)$, $\Tilde{C}'(T)$, and the above norms of $U$.

Inserting the previous estimates into \eqref{eq:a priori est}, and decreasing the positive constants on the left-hand side if necessary, gives, for every $t \in (0,T)$, the estimate
\begin{equation}\label{eq:almost closed high estimate}
    \begin{aligned}
        &\quad \frac{1}{4}\int_0^t
            \|\del_\tau u(\tau)\|_{\rL^2(\cF_0)}^2\,\rd \tau
        + \frac12\|\nabla u(t)\|_{\rL^2(\cF_0)}^2
        + \int_0^t
            \|\del_{\tau\tau}\eta(\tau)\|_{\rL^2(0,L)}^2\,\rd \tau
        + \frac14\|\del_{xx\tau}\eta(t)\|_{\rL^2(0,L)}^2 \\
        &\le \Tilde{C}_0 + C_T + C_T \int_0^t \| \nabla u(\tau) \|_{\rL^2(\cF_0)}^4 \srd \tau,
    \end{aligned}
\end{equation}
where by a slight abuse of notation, the constant $C_T$ may be different from the estimate \eqref{eq:integrated convective estimate}.

We close the estimate by absorption.
As a preparation, we define
\[
    F(a,b) \coloneqq \int_a^b \| \nabla u(t) \|_{\rL^2(\cF_0)}^2 \srd t.
\]
The basic energy estimate yields that $F(a,b) < \infty$, and the absolute continuity of the integral implies that $F(a,b) \to 0$ as $b - a \to 0$.

For $0 \le a < b \le T$, define
\[
    X(a,b) \coloneqq \sup_{a\le t\le b}
    \left(
        \frac{1}{2}\|\nabla u(t)\|_{\rL^2(\cF_0)}^2
        + \frac{1}{4}\|\del_{xxt}\eta(t)\|_{\rL^2(0,L)}^2
    \right)
    +
    \frac{1}{4}\int_a^b
        \|\del_tu(t)\|_{\rL^2(\cF_0)}^2\,\rd t
    +
    \int_a^b
        \|\del_{tt}\eta(t)\|_{\rL^2(0,L)}^2\,\rd t .
\]

For suitable constants $D_0(a)$ and $D_1(b)$, which is nondecreasing in $b$, \eqref{eq:almost closed high estimate} yields that
\begin{equation*}
    \begin{aligned}
        X(a,b) 
        &\le D_0(a) + D_1(b) + D_1(b) \int_a^b \| \nabla u(t) \|_{\rL^2(\cF_0)}^2 \srd t \sup_{a\le t\le b} \|\nabla u(t)\|_{\rL^2(\cF_0)}^2\\
        &\le  D_0(a) + D_1(b) + D_1(b) F(a,b) X(a,b).
    \end{aligned}
\end{equation*}
Now, choosing $b > a$ such that $D_1(b) F(a,b) \le \frac{1}{2}$, we get $X(a,b) \le 2 D_0(a) + 2 D_1(b)$.
Since \(F(0,T)<\infty\), by absolute continuity of the integral we may
choose a finite partition $0=t_0<t_1<\cdots<t_N=T$
with
\[
    D_1(T)F(t_{j-1},t_j)\le \frac12
    \qquad\text{for every }j=1,\dots,N.
\]
Applying the preceding estimate successively on each interval \([t_{j-1},t_j]\), and using the already obtained endpoint bound as part of
the initial constant \(D_0(t_{j-1})\), yields \(X(0,T)<\infty\).
\end{proof}

\subsection{Estimate for the structure displacement}
\ 

The purpose of this subsection is to prove the bound for \(\eta\) in \(\rL^\infty(0,T;\rH_\per^4(0,L))\), which will complete the required a priori estimates together with those obtained in the previous subsection.

First, we invoke the following auxiliary estimate, cf.~\cite[Cor.~2]{GH:16}. It can also be obtained from \autoref{lem:ell est Stokes} together with the trace theorem.

\begin{lem}\label{lem:est of the boundary term}
Let \(f\in\rL^2(\cF_0)^2\) as well as $\del_t\eta\in\rH_{\per,\rm}^{\frac32}(0,L)$, and let \((u,\pi)\) be the solution to \eqref{eq:stat Stokes probl} that results from \autoref{lem:ell est Stokes}. 
Then there exists \(C>0\) such that for the coupling term $\phi(u,\pi)$, we get
\begin{equation}\label{eq:boundary stress estimate}
    \|\phi(u,\pi)\|_{\rH_\per^{\frac12}(0,L)}
    \le
    C\bigl(
        \|f\|_{\rL^2(\cF_0)}
        + \|\del_t\eta\|_{\rH_\per^{\frac32}(0,L)}
    \bigr).
\end{equation}
\end{lem}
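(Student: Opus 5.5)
The plan is to combine \autoref{lem:ell est Stokes} with the trace theorem, as the remark before the statement suggests. By \autoref{lem:ell est Stokes}, the solution $(u,\pi)$ of \eqref{eq:stat Stokes probl} satisfies $u\in\rH_\per^2(\cF_0)^2$, $\pi\in\rH_{\per,\rm}^1(\cF_0)$, together with the bound \eqref{eq:ell est Stokes}. The coupling term is
\[
    \phi(u,\pi)=-\mre_2\cdot\sigma_\rf(u,\pi)\mre_2=-2\,\del_y u_2+\pi
\]
evaluated on $\Gamma_u=(0,L)\times\{0\}$. It is therefore a linear combination of traces of first derivatives of $u$ and of $\pi$ itself.

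\emph{Step 1 (trace).} I will invoke the trace theorem $\tr_{\Gamma_u}\colon\rH_\per^1(\cF_0)\to\rH_\per^{\frac12}(0,L)$. Periodicity in $x$ is inherited from the periodic setting. Near $\Gamma_u$ the domain is a flat strip, so the standard half-space trace result applies after a cutoff in $y$ that separates $\Gamma_u$ from $\Gamma_b$.

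\emph{Step 2 (apply).} Since $\del_y u_2\in\rH_\per^1(\cF_0)$ with $\|\del_y u_2\|_{\rH^1}\le\|u\|_{\rH^2}$, and $\pi\in\rH_\per^1(\cF_0)$, Step 1 gives
\[
    \|\phi(u,\pi)\|_{\rH_\per^{\frac12}(0,L)}\le C\bigl(\|u\|_{\rH_\per^2(\cF_0)}+\|\pi\|_{\rH_\per^1(\cF_0)}\bigr).
\]
Inserting \eqref{eq:ell est Stokes} then yields \eqref{eq:boundary stress estimate}.

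The only delicate point is the pressure normalization. The pressure is determined only up to a constant, so $\phi$ depends on that choice. \autoref{lem:ell est Stokes} fixes $\pi$ in $\rH_{\per,\rm}^1(\cF_0)$, i.e.\ with zero mean over $\cF_0$, and with this choice the $\rH^1$ norm of $\pi$ (not merely of $\nabla\pi$) is controlled. Adding any other normalization constant to $\pi$ would change $\phi$ by that constant, which the estimate would then also have to absorb. In the plate equation only $P_\rm\phi$ enters, and $P_\rm$ is bounded on $\rH_\per^{\frac12}(0,L)$, so the normalization is irrelevant there. I therefore expect no genuine obstacle beyond stating the trace theorem in this periodic strip geometry.
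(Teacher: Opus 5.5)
Your proposal is correct and follows the route the paper indicates. The paper gives no separate proof; it cites \cite[Cor.~2]{GH:16} and remarks that the estimate also follows from \autoref{lem:ell est Stokes} together with the trace theorem, which is exactly what you carry out, including the zero-mean pressure normalization. As a side remark, on $\Gamma_u$ one even has $\del_y u_2 = -\del_x u_1 = 0$ by the boundary condition and $\mdiv u = 0$, so $\phi(u,\pi)$ reduces to the trace of $\pi$, though your argument does not need this.
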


In the proposition below, we discuss the estimate of $\eta$.

\begin{prop}\label{prop:H4 eta estimate}
Let $v_0 = (u_0,\eta_1^0,\eta_2^0)$ be as specified in \autoref{cor:loc wp Hilbert space}(c), and consider the associated strong solution $(u,\eta,\del_t \eta)$ to \eqref{eq:syst for blow-up crit}.
Then, for every \(T>0\), we have $\eta\in\rL^\infty(0,T;\rH_\per^4(0,L))$.
\end{prop}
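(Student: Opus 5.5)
The plan is to treat the plate equation \eqref{eq:syst for blow-up crit}$_3$ as a first-order-in-time parabolic equation for $\xi \coloneqq P_\rm\del_{xxxx}\eta$, using the Kelvin--Voigt damping. Rewriting it as
\begin{equation*}
    \del_t \xi + \xi = P_\rm \phi(u,\pi) - \del_{tt}\eta,
\end{equation*}
we see that $\xi$ solves a scalar ODE in time, pointwise in $x$, with forcing $r \coloneqq P_\rm\phi(u,\pi) - \del_{tt}\eta$. Variation of constants gives
\begin{equation*}
    \xi(t) = \mre^{-t}\xi(0) + \int_0^t \mre^{-(t-\tau)} r(\tau)\srd\tau.
\end{equation*}
Since $\eta_1^0 \in \rH_\per^4(0,L)$, we have $\xi(0) \in \rL^2(0,L)$. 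Hence
\begin{equation*}
    \|\xi(t)\|_{\rL^2(0,L)} \le \|\xi(0)\|_{\rL^2(0,L)} + \sqrt{t}\,\|r\|_{\rL^2(0,t;\rL^2(0,L))},
\end{equation*}
and it suffices to show $r \in \rL^2(0,T;\rL^2(0,L))$.

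The term $\del_{tt}\eta$ already lies in $\rL^2(0,T;\rL^2(0,L))$ by \autoref{prop:first further est}, where $\int_0^T \|\del_{tt}\eta\|_{\rL^2}^2\srd t \le X(0,T)$. For the coupling term we apply \autoref{lem:est of the boundary term} at each time, with $f = -\del_t u - ((u+U)\cdot\nabla)(u+U)$ and boundary datum $\del_t\eta$. This gives
\begin{equation*}
    \|\phi(u,\pi)\|_{\rL^2(0,L)} \le C\bigl(\|\del_t u\|_{\rL^2} + \|((u+U)\cdot\nabla)(u+U)\|_{\rL^2} + \|\del_t\eta\|_{\rH^{\frac32}_\per(0,L)}\bigr).
\end{equation*}
The three pieces are then controlled in $\rL^2(0,T)$ as follows.
\begin{enumerate}[(i)]
    \item $\del_t u \in \rL^2(0,T;\rL^2)$ by \autoref{prop:first further est}.
    \item $\|\del_t\eta\|_{\rH^{\frac32}} \le C\|\del_{xxt}\eta\|_{\rL^2}$, which is bounded in $\rL^\infty(0,T)$ by \autoref{prop:first further est}.
    \item The convective term splits into $(u\cdot\nabla)u$ and the $U$-terms. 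By \eqref{eq:est of convective term}, the former is bounded in $\rL^2(0,T)$, since $\|u\|_{\rL^\infty\rL^2}$ and $\|\nabla u\|_{\rL^\infty\rL^2}$ are now finite; the latter are bounded by $C\|\nabla u\|^2 + c_U$ with $c_U \in \rL^2(0,T)$.
\end{enumerate}
Consequently $r \in \rL^2(0,T;\rL^2)$, and so $\xi \in \rL^\infty(0,T;\rL^2)$.

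To finish, note that $\eta(t)$ has zero average and is periodic. Elliptic regularity for $\del_{xxxx}$ on the torus, together with the observation that $P_\rm\del_{xxxx}\eta = \del_{xxxx}\eta$ for periodic $\eta$, gives
\begin{equation*}
    \|\eta(t)\|_{\rH^4} \le C\bigl(\|\xi(t)\|_{\rL^2} + \|\eta(t)\|_{\rL^2}\bigr),
\end{equation*}
and the last term is controlled by \eqref{eq:basic energy bound}. The result is $\eta \in \rL^\infty(0,T;\rH^4_\per(0,L))$.

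The main obstacle is justifying that $\del_{tt}\eta$ appears only in $\rL^2_t\rL^2_x$. A naive approach treats $\del_{xxxx}\eta$ elliptically at each fixed time, and that would require pointwise-in-time control of $\del_{tt}\eta$, which is not available. The ODE/semigroup formulation in $\xi$ avoids this. A secondary point is making the identification of $\phi(u,\pi)$ with the Stokes solution of \autoref{lem:est of the boundary term} rigorous: the pressure is normalized there, whereas $P_\rm$ removes constants. This should go through for strong solutions at almost every time.
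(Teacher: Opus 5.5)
Your proposal is correct and follows essentially the paper's route. The paper tests the plate equation by $\del_{xxxx}\eta$, which is the energy-method counterpart of your variation-of-constants formula for $\xi$; it then bounds $\|\phi(u,\pi)\|_{\rL^2(0,L)}^2$ exactly as you do, via \autoref{lem:est of the boundary term}, \eqref{eq:est of f}, \eqref{eq:est of convective term}, and the bounds $\del_{tt}\eta,\del_t u\in\rL^2(0,T;\rL^2)$ from \autoref{prop:first further est}.
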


\begin{proof}
Testing \eqref{eq:syst for blow-up crit}$_3$ by \(\del_{xxxx}\eta\) gives
\begin{equation}\label{eq:H4 plate test}
    \int_0^L
        \del_{tt}\eta\,\del_{xxxx}\eta\srd x
    + \|\del_{xxxx}\eta\|_{\rL^2(0,L)}^2
    + \frac12\frac{\rd}{\rd t}
        \|\del_{xxxx}\eta\|_{\rL^2(0,L)}^2
    =
    \int_0^L
        \phi(u,\pi)\del_{xxxx}\eta\srd x .
\end{equation}
The first term on the left-hand side is estimated by Young's inequality:
\[
    \left|
    \int_0^L
        \del_{tt}\eta\,\del_{xxxx}\eta\srd x
    \right|
    \le
    \frac14\|\del_{xxxx}\eta\|_{\rL^2(0,L)}^2
    + C\|\del_{tt}\eta\|_{\rL^2(0,L)}^2 .
\]
For the right-hand side, Young's inequality as well as \autoref{lem:est of the boundary term} applied to the case of a right-hand side $f = -\del_t u + ((u+U) \cdot \nabla)(u+U)$, the estimate of $f$ from \eqref{eq:est of f}, and \eqref{eq:est of convective term} for the estimate of the convective term imply
\begin{equation*}
    \begin{aligned}
        &\quad \left|\int_0^L\phi(u,\pi)\del_{xxxx}\eta\srd x\right|\\
        &\le \frac14\|\del_{xxxx}\eta\|_{\rL^2(0,L)}^2
        + C\|\phi(u,\pi)\|_{\rL^2(0,L)}^2\\
        &\le \frac14\|\del_{xxxx}\eta\|_{\rL^2(0,L)}^2
        + C\bigl(\| f \|_{\rL^2(\cF_0)}^2 + \| \del_{xxt} \eta \|_{\rL^2(0,L)}^2\bigr)\\
        &\le \frac14\|\del_{xxxx}\eta\|_{\rL^2(0,L)}^2 + C\bigl(\| \del_t u(t) \|_{\rL^2(\cF_0)}^2 + \| ((u\cdot\nabla)u)(t)\|_{\rL^2(\cF_0)}^2 + C_2^2 \| \nabla u(t) \|_{\rL^2(\cF_0)}^4 + c_U^2(t)\bigr)\\
        &\le
        \frac14\|\del_{xxxx}\eta\|_{\rL^2(0,L)}^2
        + C\Bigl(
            \|\del_tu\|_{\rL^2(\cF_0)}^2
            + \bigl(\Tilde{C}'(t)C_3
                \|u\|_{\rL^2(\cF_0)}^2 + C_2^2\bigr)
                \|\nabla u\|_{\rL^2(\cF_0)}^4
            + \|\del_{xxt}\eta\|_{\rL^2(0,L)}^2
            + c_U^2(t)
        \Bigr).
    \end{aligned}
\end{equation*}

Substituting these bounds into \eqref{eq:H4 plate test}, we obtain
\begin{equation}\label{eq:H4 diff inequality}
    \begin{aligned}
        &\quad \frac12\frac{\rd}{\rd t}
            \|\del_{xxxx}\eta\|_{\rL^2(0,L)}^2
        + \frac12\|\del_{xxxx}\eta\|_{\rL^2(0,L)}^2\\
        &\le
        C\|\del_{tt}\eta\|_{\rL^2(0,L)}^2
        + C\|\del_tu\|_{\rL^2(\cF_0)}^2
        + \bigl(C'(t)C_3
                \|u\|_{\rL^2(\cF_0)}^2 + C_2^2\bigr)
            \|\nabla u\|_{\rL^2(\cF_0)}^4\\
         &\qquad + C\|\del_{xxt}\eta\|_{\rL^2(0,L)}^2
        + C c_U^2(t).
    \end{aligned}
\end{equation}
The right-hand side belongs to \(\rL^1(0,t)\) for every $t \in (0,T)$, since for every $t \in (0,T)$ by \autoref{prop:energy ests} and \autoref{prop:first further est}, we have
\[
    \int_0^t \|\nabla u(\tau)\|_{\rL^2(\cF_0)}^4\,\rd \tau
    \le
    \|\nabla u\|_{\rL^\infty(0,t;\rL^2(\cF_0))}^2
    \int_0^t \|\nabla u(\tau)\|_{\rL^2(\cF_0)}^2\,\rd \tau
    <\infty.
\]
Therefore, integrating~\eqref{eq:H4 diff inequality} over \((0,t)\), \(t\le T\), yields $\sup_{0\le t\le T} \|\del_{xxxx}\eta(t)\|_{\rL^2(0,L)}^2<\infty$.
Together with the lower-order energy estimate, this proves $\eta\in\rL^\infty(0,T;\rH_\per^4(0,L))$, as desired.
\end{proof}

\subsection{Proof of \autoref{thm:global strong well posedness 2d1d}}
\ 

We are finally in a position to prove the global strong pathwise well-posedness.

\begin{proof}[Proof of \autoref{thm:global strong well posedness 2d1d}]
The local-in-time existence has been shown in \autoref{cor:loc wp Hilbert space}.
Moreover, the estimates in \autoref{prop:energy ests}, \autoref{prop:first further est}, and \autoref{prop:H4 eta estimate} show that the norm specified in the blow-up criterion \autoref{cor:blow-up crit Hilbert space} remains finite on $(0,T)$ for every finite $T > 0$, so a maximal time of existence $t_+$ of the deterministic part $v$ of the solution with $t_+ < T$ is ruled out, showing the assertion of the theorem.
\end{proof}

\section{Local-in-time well-posedness and blow-up criteria in the 3D/2D case}\label{sec:further discussion}

In this section, we elaborate on the local-in-time pathwise well-posedness of the 3D/2D linearly coupled stochastic FSI problem, and we also provide blow-up criteria in the end.

First, let us make precise the setting in the present case.
Here, the underlying domain will be $\cG_0 = (0,L) \times (0,L) \times (-1,0)$, and we still use $\tu$, $\tpi$, and $\teta$ to denote the fluid velocity and pressure as well as the plate displacement, respectively.
To shorten notation, we will also write $G = (0,L) \times (0,L)$ in this case, so $\cG_0 = G \times (-1,0)$.
The space $\rL_\rm^q(G)$ is defined in the same way as $\rL_\rm^q(0,L)$ in \eqref{eq:avg zero space and projection}, while by a slight abuse of notation, we will still denote the associated projection in the wake of \eqref{eq:avg zero space and projection} by $P_\rm$.
Another relevant piece of notation will be $\Delta_\rH f \coloneqq \del_{xx} f + \del_{yy} f$, while in this case, $\nabla$ and $\mdiv$ represent the gradient and divergence in 3D in this case.
Moreover, we observe that the definition of periodic functions is analogous to the 2D/1D case.

With these preparations, and for the stochastic setting as specified around \eqref{eq:repr cylindr Wiener proc}, the full stochastic problem is given by
\begin{equation}\label{eq:3D lin coupled stoch FSI}
    \left\{
    \begin{aligned}
        \rd\tu +(\tu\cdot\nabla)\tu\srd t-\mdiv\sigma_\rf(\tu,\tpi)\srd t
        &=
        f\srd W_\cH, , \enspace \mdiv\tu = 0,
        &&\tin (0,T)\times\cG_0,\\
        \rd\del_t\teta
        +\bigl(P_\rm \Delta_\rH\teta+P_\rm\Delta_\rH\teta\bigr)\srd t
        &=
        -P_\rm\bigl(\mre_3\cdot\sigma_\rf(\tu,\tpi)\mre_3\bigr)\srd t\\
        &\quad +g\srd W_\cH,
        &&\tin (0,T)\times G,\\
        \tu(t,x,y,0)
        &=
        \del_t\teta(t,x,y)\mre_3,
        &&\tfor t\in(0,T), \enspace (x,y)\in G,\\
        \tu(t,x,y,-1)
        &=
        0,
        &&\tfor t\in(0,T), \enspace (x,y)\in G,\\
        \tu(0,\cdot)
        &=
        \tu_0(\cdot),
        &&\tin\cG_0,\\
        \teta(0,\cdot)
        &=
        \teta_1^0(\cdot),
        \enspace
        \del_t\teta(0,\cdot)
        =
        \teta_2^0(\cdot),
        &&\tin G,
    \end{aligned}
    \right.
\end{equation}
and similarly as in \eqref{eq:lin coupled stoch FSI probl}, the problem is complemented by periodic boundary conditions at the lateral boundary of the domain.
In the above, the fluid velocity, the fluid pressure, and the structure displacement are again random variables, i.e.,
\[
    \tu\colon (0,T)\times\Omega\times\cG_0\to\R^3,
    \enspace
    \tpi\colon (0,T)\times\Omega\times\cG_0\to\R, \tand \teta\colon (0,T)\times\Omega\times G\to\R.
\]

Let us now elaborate on the associated linear stochastic problem and its solvability.
Using the same notation as in the 2D/1D case for the variables, and denoting by $\Delta_s^2$ the $\rL^q$-realization of $\Delta_\rH^2$ subject to periodic boundary conditions, the underlying linear stochastic problem is given by
\begin{equation}\label{eq:3D lin stoch PDE}
    \left\{
    \begin{aligned}
        \rd U - \mdiv \sigma_\rf(U,\Pi) \srd t
        &= f \srd W_\cH, \enspace \mdiv U = 0, &&\tin (0,T) \times \cG_0,\\
        \rd H_1 
        &= H_2 \srd t, &&\tin (0,T) \times G,\\
        \rd H_2 + \bigl(P_\rm \Delta_s^2 H_1 + P_\rm \Delta_s^2 H_2\bigr) \srd t
        &= -P_\rm\bigl(\mre_3 \cdot \sigma_\rf(U,\Pi) \mre_3\bigr) \srd t\\
        &\quad + g \srd W_\cH, &&\tin (0,T) \times G,\\
        U(t,x,y,0) 
        &= H_2(t,x,y) \mre_3, &&\tfor t \in (0,T), \enspace (x,y) \in G,\\
        U(t,x,y,-1) 
        &= 0, &&\tfor t \in (0,T), \enspace (x,y) \in G,\\
        U(0,\cdot)
        &= U_0(\cdot), &&\tin \cG_0,\\
        H_1(0,\cdot) &= H_1^0(\cdot), \enspace H_2(0,\cdot) = H_2^0(\cdot), &&\tin G,
    \end{aligned}
    \right.
\end{equation}
supplemented by periodic boundary conditions at the lateral boundary.
By adapting the approach from \autoref{sec:Hoo-calculus fluid-structure op}, we find that the corresponding linear deterministic problem can be reformulated in terms of a fluid-structure operator, and similar as in \autoref{thm:bdd Hoo-calculus fluid-structure op}, it is possible to show the boundedness of its $\Hinfty$-calculus.
This paves the way for the solvability of \eqref{eq:3D lin stoch PDE} in an analogous manner as in \autoref{cor:sol lin stoch probl}.
The regularity properties of this solution will be made precise in the statement of the theorem below.

Next, as in the 2D/1D case, it is possible to reduce the problem to a deterministic one, given by
\begin{equation}\label{eq:resulting det PDE 3D}
    \left\{
    \begin{aligned}
        \del_t u - \mdiv \sigma_\rf(u,\pi)
        &= -\bigl((u+U) \cdot \nabla\bigr)(u+U), \enspace \mdiv u = 0, &&\tin (0,T) \times \cG_0,\\
        \del_t \eta_1
        &= \eta_2, &&\tin (0,T) \times G,\\
        \del_t \eta_2 + P_\rm \Delta_s^2 \eta_1 
        &= -P_\rm \Delta_s^2 \eta_2 - P_\rm (\mre_3 \cdot \sigma_\rf(u,\pi)\mre_3), &&\tin (0,T) \times G,\\
        u(t,x,y,0)
        &= \eta_2(t,x,y) \mre_3, &&\tfor t \in (0,T), \enspace (x,y) \in G,\\
        u(t,x,y,-1) 
        &= 0, &&\tfor t \in (0,T), \enspace (x,y) \in G,\\
        u(0,\cdot)
        &= u_0(\cdot), &&\tin \cG_0,\\
        \eta_1(0,\cdot) &= \eta_1^0(\cdot), \enspace \eta_2(0,\cdot) = \eta_2^0(\cdot), &&\tin G,
    \end{aligned}
    \right.
\end{equation}
where periodic boundary conditions at the lateral boundary are taken into account.

Let us now state the local-in-time pathwise strong well-posedness in the present 3D/2D case.
For simplicity of the statement, we focus here on the situation of a Hilbert space for the deterministic part of the solution.
Note that similarly as in \autoref{thm:loc pathwise wp}, this result also generalizes to a broader range of integrability parameters $p$ and $q$.
Let us note that the spaces $\rX_0$ and $\rX_1$ denote the 3D analogues of the spaces introduced in \eqref{eq:ground space} and \eqref{eq:fluid-structure op}, respectively, while $\rX_{0,s}$, $\rX_{1,s}$, and $\rX_{s,\theta}$ are defined as in \eqref{eq:X_s,theta}.
For the characterization of the interpolation spaces, an analogous result to \autoref{prop:char of the interpol spaces} can be obtained.

\begin{thm}\label{thm:loc pathwise wp 3D}
Let $T \in (0,\infty)$, as well as $r \ge 4$ and $s > 2$.
Besides, assume that 
\begin{enumerate}[(a)]
    \item $Z_0 = (U_0,H_1^0,H_2^0)$ is strongly $\frF_0$-measurable such that $U_0 \in \rB_{sr,\per}^{2-\frac{2}{r}}(\cG_0)^3$, $H_1^0 \in \rW_\per^{4,s}(G)$, and $H_2^0 \in \rB_{sr,\per}^{4-\frac{4}{s}}(G)$ satisfy $\mdiv U_0 = 0$ in $\cG_0$, $\int_G H_1^0 \srd(x,y) = \int_G H_2^0 \srd(x,y) = 0$, $U_0(x,y,-1) = 0$, and $U_0(x,y,0) = H_2^0(x,y) \mre_3$ for all $(x,y) \in G$.
    \item $h = (f,0,g)$ is $\frF$-adapted, with $f \in \rL^r(\Omega;\rL^r(0,T;\rW_\per^{1+\frac{1}{s},s}(\cG_0;\cH)^3))$ and $g \in \rW_\per^{2,s}(G;\cH)$ such that $\bP f(x,y,-1) = 0$ and $\bP f(x,y,0) = \overline{g}(x,y)$ for all $(x,y) \in G$, where $\overline{g} = M_s^{-1} P_\rm g + M_s^{-1} \gamma_\rm N_2 f$, where $M_s^{-1}$, $\gamma_\rm$, and $N_2$ denote the 3D analogues of the inverse of the added mass operator, the modified trace operator, and the Neumann lifting operator,
    \item for the deterministic part $v_0 = (u_0,\eta_1^0,\eta_2^0)$ of the initial data, it is valid that $u_0 \in \rH_{\per}^1(\cG_0)^3$, $\eta_1^0 \in \rH_\per^{4}(G)$, and $\eta_2^0 \in \rH_{\per}^{2}(G)$ fulfill $\mdiv u_0 = 0$ in $\cG_0$, $\int_G \eta_1^0 \srd(x,y) = \int_G \eta_2^0 \srd(x,y) = 0$, $u_0(x,y,-1) = 0$, and $u_0(x,y,0) = \eta_2^0(x,y) \mre_3$ for all $(x,y) \in G$.
\end{enumerate}
Then there exists a unique, local-in-time strong solution $\tv = (\tu,\teta_1,\teta_2) = Z + v$ to \eqref{eq:3D lin coupled stoch FSI}, where $Z = (\bP U,H_1,H_2)$ with $Z \in \rH^{\theta,r}(0,T;\rX_{s,1-\theta}) \cap \rC([0,T];(\rX_{0,s},\rX_{1,s})_{1-\nicefrac{1}{r},r})$ for all $\theta \in [0,\nicefrac{1}{2})$ is the solution to the linear stochastic PDE \eqref{eq:3D lin stoch PDE}, and there is $T^* \in (0,T]$ so that the deterministic function $v = (\bP u,\eta_1,\eta_2) \in \rH^1(0,T^*;\rX_0) \cap \rL^2(0,T^*;\rX_1)$ solves \eqref{eq:resulting det PDE 3D}.
\end{thm}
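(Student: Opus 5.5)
The plan mirrors the proof of \autoref{thm:loc pathwise wp}, now with $p = q = 2$ for the deterministic part in three space dimensions. As a first step, I would take over the linear theory from \autoref{sec:Hoo-calculus fluid-structure op} for the 3D fluid-structure operator $A_\fs$, as indicated in the discussion preceding the theorem. This means the following. The Stokes lifting (\autoref{lem:mapping props Stokes lifting}) and the Neumann liftings carry over to the 3D setting. The added mass operator lemma (\autoref{lem:mapping props added mass}) carries over as well, where the dimension enters only through the Sobolev embedding into $\rL^2(G)$ used in the kernel argument. The decoupling similarity transform and the block-matrix result \autoref{lem:bdd Hoo-calculus block op matrix}(a) again apply, the exponent $\delta < \frac{1}{2s}$ being unchanged. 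The perturbation argument via \autoref{lem:stability and pert results Hoo}(c) and the spectral analysis of \autoref{prop:spectral props fluid-structure op} also go through, since the energy identity is dimension independent. Together these give $-A_\fs \in \Hinfty$ with angle $<\frac{\pi}{2}$, both on the $\rL^s$-based ground space $\rX_{0,s}$ and on the Hilbert space $\rX_0$. Stochastic maximal regularity as in \autoref{prop:sol lin stoch probl} and \autoref{cor:sol lin stoch probl} then produces $Z$ with the stated regularity. The hypotheses (a) and (b) are exactly the 3D translations of the compatibility conditions \eqref{eq:comp conds init data} and \eqref{eq:comp conds RHS}, and they place $Z_0$ and $\th$ in the right trace and $\gamma$-radonifying spaces.

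Second, I would subtract $Z$ and treat \eqref{eq:resulting det PDE 3D} as the semilinear equation $v' = A_\fs v + F(v+Z)$ on $\rX_0$ with $v_0 \in (\rX_0,\rX_1)_{\nicefrac12,2}$, to which I apply \cite[Thm.~1.2]{PW:17}. Maximal $\rL^2$-regularity follows from \autoref{lem:cons of H00}(b). For the nonlinearity I need a 3D analogue of \autoref{lem:nonlin ests}. Using $\|(u_1\cdot\nabla)u_2\|_{\rL^2} \le \|u_1\|_{\rL^6}\|\nabla u_2\|_{\rL^3}$ together with $\rH^{2\beta} \hookrightarrow \rL^6 \cap \rH^{1,3}$ for $2\beta \ge \frac32$, I obtain $\|G(v_1,v_2)\|_{\rX_0} \le C\|v_1\|_{\rX_\beta}\|v_2\|_{\rX_\beta}$ with $\beta = \frac34$. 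This satisfies $\beta \in (\frac12,1)$ and $2\beta - 1 = \frac12 \le 1-\frac1p$, so the 3D Hilbert case is critical but admissible.

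Third, for the cross terms $G(Z,v-v')$ and $G(v-v',Z)$ I would use
\begin{equation*}
    \|(U\cdot\nabla)w\|_{\rL^2} + \|(w\cdot\nabla)U\|_{\rL^2} \le C\bigl(\|U\|_{\rL^\infty} + \|\nabla U\|_{\rL^3}\bigr)\|w\|_{\rH^{\nicefrac32}}.
\end{equation*}
It then suffices to have $U \in \rL^{4}(0,T;\rL^\infty(\cG_0) \cap \rW^{1,3}(\cG_0))$, since \cite{PW:17} requires the coefficient of the Lipschitz estimate to be in $\rL^{2p} = \rL^4$ in time. I expect this to be the main obstacle. From $U \in \rH^{\theta,r}(0,T;\rH^{2(1-\theta),s})$ with $\theta < \frac12$, the time embedding into $\rL^4$ needs $\theta - \frac1r \ge -\frac14$, which holds for all $\theta \ge 0$ when $r \ge 4$, and this explains the assumption $r \ge 4$. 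The space embedding needs $2(1-\theta) - \frac3s > 0$ for $\rL^\infty$, and $2(1-\theta) - \frac3s \ge 1 - 1$ for $\rW^{1,3}$. Taking $\theta = 0$ (or $\theta$ small), both hold because $s > 2 > \frac32$, and $s > 2$ is also what makes stochastic maximal regularity available for $r \ne 2$. Finally, I would check that $\rX_{s,1-\theta}$ sits inside the fluid components' Sobolev space by the 3D analogue of \autoref{prop:char of the interpol spaces}. The theorem of \cite{PW:17} then yields a unique local solution $v \in \rH^1(0,T^*;\rX_0) \cap \rL^2(0,T^*;\rX_1)$, and $\tv = Z + v$ solves \eqref{eq:3D lin coupled stoch FSI}.
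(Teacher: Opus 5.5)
Your proposal is correct and follows the paper's route. The paper's proof consists only of the remark that one repeats the argument of \autoref{thm:loc pathwise wp} with the nonlinear estimates of \autoref{lem:nonlin ests} adapted to 3D. Your choice $\beta = \frac34$ is critical but still admissible for \cite{PW:17}, since $2\beta - 1 = \frac12 = 1 - \frac1p$. Together with $U \in \rL^4(0,T;\rL^\infty(\cG_0)\cap\rW^{1,3}(\cG_0))$, which $r \ge 4$ and $s > 2$ guarantee, this is exactly the adaptation the paper has in mind.
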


The proof of \autoref{thm:loc pathwise wp 3D} is similar to that of \autoref{thm:loc pathwise wp} upon making the required adjustments for the 3D case in the estimates of the nonlinear terms discussed in \autoref{lem:nonlin ests}.

We complete this section with a blow-up criterion for the deterministic part of the solution.

\begin{cor}
The deterministic part $v$ of the solution from \autoref{thm:loc pathwise wp 3D} exists globally provided $v([0,t_+)) \subset (\rX_0,\rX_1)_{\nicefrac{1}{2},2}$ is relatively bounded.
\end{cor}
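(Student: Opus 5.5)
The plan is to mirror the proofs of \autoref{cor:blow-up crit} and \autoref{cor:blow-up crit Hilbert space}, now in the 3D/2D setting with $p = q = 2$. I would reduce to the abstract blow-up alternative of \cite[Cor.~2.3]{PSW:18} for the deterministic non-autonomous semilinear problem
\begin{equation*}
    v' = A_\fs v + F(v+Z), \quad v(0) = v_0,
\end{equation*}
which is the 3D analogue of \eqref{eq:det evol eq}. That result says that if the nonlinearity is subcritical and $v([0,t_+))$ is relatively bounded in the trace space $\rX_\gamma = (\rX_0,\rX_1)_{\nicefrac{1}{2},2}$, then the solution extends beyond $t_+$. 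So the maximal time must equal $T$, and since $T$ is arbitrary, the solution is global.

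\textbf{Step 1: maximal $\rL^2$-regularity of the 3D operator.} I use the 3D analogue of \autoref{thm:bdd Hoo-calculus fluid-structure op}, asserted in the discussion preceding \autoref{thm:loc pathwise wp 3D}. Together with \autoref{lem:cons of H00}(b) on the Hilbert space $\rX_0$, this gives the linear input required by \cite{PW:17, PSW:18}. By the analogue of \autoref{prop:char of the interpol spaces}, the space $\rX_\beta = [\rX_0,\rX_1]_\beta$ has fluid component in $\rH_\per^{2\beta}(\cG_0)^3$.

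\textbf{Step 2: subcriticality of the bilinear term.} This is the crucial check, because a critical nonlinearity would demand relative compactness rather than boundedness, as in \autoref{cor:blow-up crit}(b). In 3D, Hölder and Sobolev give
\begin{equation*}
    \| (u_1\cdot\nabla)u_2 \|_{\rL^2} \le \| u_1 \|_{\rL^{3}} \| \nabla u_2 \|_{\rL^{6}},
\end{equation*}
and more symmetrically one may take $\rL^4 \times \rW^{1,4}$. The embedding $\rH^{2\beta} \hookrightarrow \rW^{1,4}$ in 3D requires $2\beta - \tfrac32 \ge 1 - \tfrac34$, that is $\beta \ge \tfrac78$; the factor in $\rL^4$ then holds trivially. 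This yields
\begin{equation*}
    \| G(v_1,v_2) \|_{\rX_0} \le C \| v_1 \|_{\rX_\beta} \| v_2 \|_{\rX_\beta} \quad \text{for } \beta = \tfrac34.
\end{equation*}
\emph{Correction:} on reflection the pairing $\rL^6 \times \rW^{1,3}$ is better. Here $\rH^{2\beta} \hookrightarrow \rW^{1,3}$ needs $2\beta \ge \tfrac32$, and $\rH^{3/2} \hookrightarrow \rL^6$ holds, so $\beta = \tfrac34$ suffices. The PW condition for subcriticality with $p = 2$ and a quadratic nonlinearity is $2\beta - 1 < 1 - \tfrac1p = \tfrac12$, i.e.\ $\beta < \tfrac34$, whereas $\beta = \tfrac34$ gives equality, which is the critical case. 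I would therefore try to lower $\beta$ using mixed exponents: put $(u_1\cdot\nabla)u_2$ into $\rX_\beta$-norms $\beta_1,\beta_2$ with $\beta_1+\beta_2 < \tfrac32$, using the general form $\sum_j (\beta_j - 1 + \tfrac1p)$ of \cite{PSW:18}. This is the main obstacle. If strict subcriticality fails, I would instead invoke the critical version of the criterion. In that case relative boundedness must be upgraded to relative compactness via the extra time regularity $v \in \rH^1(0,t_+;\rX_0)$, which should follow on bounded sets from the maximal-regularity estimate with the forcing bound.

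\textbf{Step 3: the stochastic terms $G(Z,v)$, $G(v,Z)$, $G(Z,Z)$.} These are treated as in the proof of \autoref{thm:loc pathwise wp}, as non-autonomous coefficients. With $r \ge 4$ and $s > 2$, the regularity $Z \in \rH^{\theta,r}(0,T;\rX_{s,1-\theta})$ gives $U \in \rL^4(0,T;\rW^{1,3}(\cG_0)\cap\rL^\infty(\cG_0))$ for suitable $\theta < \tfrac12$. Hence these terms are Lipschitz on bounded subsets of $\rX_\beta$ with a coefficient in $\rL^{2}(0,T)$ (or $\rL^{2p}$), which is exactly what \cite[Thm.~1.2]{PW:17} and \cite[Cor.~2.3]{PSW:18} allow. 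They do not affect the blow-up alternative. Combining Steps 1 through 3 with the abstract criterion then yields the corollary.
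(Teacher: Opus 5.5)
Your overall route is the one the paper intends. The paper gives no separate proof; as in \autoref{cor:blow-up crit Hilbert space}, the claim is meant to follow from \cite[Cor.~2.3]{PSW:18} once two things are in place: maximal regularity (the 3D analogue of \autoref{thm:bdd Hoo-calculus fluid-structure op}) and subcriticality of the nonlinearity, with the $Z$-dependent terms handled as in the proof of \autoref{thm:loc pathwise wp}. Your Steps~1 and~3 are fine.

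The gap is Step~2. Subcriticality is the only point that separates ``relatively bounded'' from ``relatively compact'' in \cite{PSW:18}, and you leave it unresolved, tentatively settling on the borderline value $\beta=\frac34$. Your $\rL^4\times\rW^{1,4}$ display even asserts $\beta=\frac34$, although your own computation there gives $\beta\ge\frac78$.

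The missing idea is to balance the exponents. In 3D,
\begin{equation*}
    \|(u_1\cdot\nabla)u_2\|_{\rL^2(\cG_0)}
    \le \|u_1\|_{\rL^{12}(\cG_0)}\,\|\nabla u_2\|_{\rL^{\nicefrac{12}{5}}(\cG_0)}
    \le C\,\|u_1\|_{\rH^{\nicefrac54}(\cG_0)}\,\|u_2\|_{\rH^{\nicefrac54}(\cG_0)},
\end{equation*}
using the embeddings $\rH^{\nicefrac54}\hookrightarrow\rL^{12}$ and $\rH^{\nicefrac14}\hookrightarrow\rL^{\nicefrac{12}{5}}$. This gives $\|G(v_1,v_2)\|_{\rX_0}\le C\|v_1\|_{\rX_\beta}\|v_2\|_{\rX_\beta}$ with $\beta=\frac58\in(\frac12,1)$. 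Then $2\beta-1=\frac14<\frac12=1-\frac1p$, so the case $p=q=2$ is strictly subcritical. Equivalently, $\mu_c=\frac1p+2\beta-1=\frac34<1$: the critical space is $\rH^{\nicefrac12}$, the scaling-critical space for 3D Navier--Stokes, which is strictly weaker than the trace space $\rH^1$. With this estimate, \cite[Cor.~2.3]{PSW:18} yields the criterion under mere relative boundedness, exactly as in 2D.

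Your mixed-exponent idea points in this direction, since $\rL^6\times\rW^{1,3}$ gives $\frac12+\frac34<\frac32$. However, in the single-$\beta$ form of the hypothesis used in \cite{PW:17,PSW:18} and in the paper, the difference term $G(v_2,v_1-v_2)$ still forces $\beta=\frac34$ in both slots, which is borderline. The balanced estimate avoids this.

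Your fallback for the critical case does not work. The bound $v\in\rH^1(0,t;\rX_0)\cap\rL^2(0,t;\rX_1)$ holds for each $t<t_+$, but uniformity up to $t_+$ is exactly what is at stake. At criticality, a bound in the trace space alone does not close the maximal-regularity estimate. Indeed, $\|v\|_{\rX_{\nicefrac34}}^2\le C\|v\|_{\rX_{\nicefrac12}}\|v\|_{\rX_1}$ gives
\begin{equation*}
\|G(v,v)\|_{\rL^2(0,t;\rX_0)}\le C\sup_{\tau}\|v(\tau)\|_{\rX_{\nicefrac12}}\|v\|_{\rL^2(0,t;\rX_1)},
\end{equation*}
and there is no small factor to absorb. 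Membership of $v$ in the maximal-regularity class on all of $(0,t_+)$ is equivalent to extendability, so this step is circular. In a genuinely critical situation, the statement with ``relatively bounded'' would not follow from \cite{PSW:18} at all.
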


\medskip 

{\bf Acknowledgements.}
F.B.~acknowledges the support by the German National Academy of Sciences Leopoldina through the Leopoldina Fellowship Program with grant number~LPDS 2024-07. 
He also extends his gratitude to the Basque Center for Applied Mathematics for their generous hospitality during his visit, where a portion of this work was completed.
M.H.~acknowledges the support by DFG project FOR~5528. 
A.R.~is supported by the Grant RYC2022-036183-I funded by 
MICIU/AEI/10.13039/501100011033 
and by ESF+. 
A.R.~has been partially supported by the Basque Government through the BERC 2022-2025 program and by the Spanish State Research Agency through BCAM Severo Ochoa CEX2021-001142-S and through project PID2023-146764NB-I00 funded by MICIU/AEI/10.13039/501100011033 and cofunded by the European Union.

\bibliography{strong_stochastic_FSI}

@article{mindrilua2026existence,
  title={Existence of weak solutions for incompressible fluid-Koiter shell interactions with {N}avier slip boundary condition},
  author={M{\^\i}ndril{\u{a}}, Claudiu and Roy, Arnab},
  journal={arXiv preprint arXiv:2602.20016},
  year={2026}
}

@article {MR1808843,
    AUTHOR = {Koch, Herbert and Tataru, Daniel},
     TITLE = {Well-posedness for the {N}avier-{S}tokes equations},
   JOURNAL = {Adv. Math.},
  FJOURNAL = {Advances in Mathematics},
    VOLUME = {157},
      YEAR = {2001},
    NUMBER = {1},
     PAGES = {22--35},
      ISSN = {0001-8708,1090-2082},
   MRCLASS = {35Q30},
  MRNUMBER = {1808843},
MRREVIEWER = {Radjesvarane\ Alexandre},
       DOI = {10.1006/aima.2000.1937},
       URL = {https://doi.org/10.1006/aima.2000.1937},
}

@book {MR4475666,
    AUTHOR = {Bedrossian, Jacob and Vicol, Vlad},
     TITLE = {The mathematical analysis of the incompressible {E}uler and
              {N}avier-{S}tokes equations---an introduction},
    SERIES = {Graduate Studies in Mathematics},
    VOLUME = {225},
 PUBLISHER = {American Mathematical Society, Providence, RI},
      YEAR = {[2022] \copyright 2022},
     PAGES = {xiii+218},
      ISBN = {[9781470470494]},
   MRCLASS = {35-01 (35Q30 35Q31)},
  MRNUMBER = {4475666},
       DOI = {10.1090/gsm/225},
       URL = {https://doi.org/10.1090/gsm/225},
}

@article {AV:25,
    AUTHOR = {Agresti, Antonio and Veraar, Mark},
     TITLE = {Nonlinear {SPDE}s and maximal regularity: an extended survey},
   JOURNAL = {NoDEA Nonlinear Differential Equations Appl.},
  FJOURNAL = {NoDEA. Nonlinear Differential Equations and Applications},
    VOLUME = {32},
      YEAR = {2025},
    NUMBER = {6},
      ISSN = {1021-9722,1420-9004},
   MRCLASS = {60H15 (35K57 35K90 47D06 76M35)},
  MRNUMBER = {4952170},
       DOI = {10.1007/s00030-025-01090-2},
       URL = {https://doi.org/10.1007/s00030-025-01090-2},
       NOTE = {Paper No. 123}
}

@book {CKMT:25,
    AUTHOR = {{\v{C}}ani\'{c}, Sun\v{c}ica and Kuan, Jeffrey and Muha, Boris
              and Tawri, Krutika},
     TITLE = {Deterministic and stochastic fluid-structure interaction},
    SERIES = {Advances in Mathematical Fluid Mechanics},
 PUBLISHER = {Birkh\"{a}user/Springer, Cham},
      YEAR = {[2025] \copyright 2025},
     PAGES = {xvii+613},
      ISBN = {978-3-032-00897-8; 978-3-032-00898-5},
   MRCLASS = {74-01 (35Q35 74B05 74F10 74Hxx 74S60 76Dxx)},
  MRNUMBER = {5033898},
       DOI = {10.1007/978-3-032-00898-5},
       URL = {https://doi.org/10.1007/978-3-032-00898-5},
}

@book {Gal:11,
    AUTHOR = {Galdi, G. P.},
     TITLE = {An introduction to the mathematical theory of the
              {N}avier-{S}tokes equations},
    SERIES = {Springer Monographs in Mathematics},
   EDITION = {Second},
      NOTE = {Steady-state problems},
 PUBLISHER = {Springer, New York},
      YEAR = {2011},
     PAGES = {xiv+1018},
      ISBN = {978-0-387-09619-3},
   MRCLASS = {35Q30 (35-02 76D03 76D05 76D07)},
  MRNUMBER = {2808162},
       DOI = {10.1007/978-0-387-09620-9},
       URL = {https://doi.org/10.1007/978-0-387-09620-9},
}

@article {DP:26,
    AUTHOR = {Debussche, Arnaud and Pappalettera, Umberto},
     TITLE = {Second order perturbation theory of two-scale systems in fluid
              dynamics},
   JOURNAL = {J. Eur. Math. Soc. (JEMS)},
  FJOURNAL = {Journal of the European Mathematical Society (JEMS)},
    VOLUME = {28},
      YEAR = {2026},
    NUMBER = {4},
     PAGES = {1533--1595},
      ISSN = {1435-9855,1435-9863},
   MRCLASS = {76F02 (60H15 60H30)},
  MRNUMBER = {5032984},
       DOI = {10.4171/jems/1501},
       URL = {https://doi.org/10.4171/jems/1501},
}

@book {FL:23,
    AUTHOR = {Flandoli, Franco and Luongo, Eliseo},
     TITLE = {Stochastic partial differential equations in fluid mechanics},
    SERIES = {Lecture Notes in Mathematics},
    VOLUME = {2330},
 PUBLISHER = {Springer, Singapore},
      YEAR = {[2023] \copyright 2023},
     PAGES = {viii+199},
      ISBN = {978-981-99-0387-0; 978-981-99-0385-6},
   MRCLASS = {76M35 (35Q35 35R60 60H15)},
  MRNUMBER = {4628185},
MRREVIEWER = {Luigi Amedeo Bianchi},
       DOI = {10.1007/978-981-99-0385-6},
       URL = {https://doi.org/10.1007/978-981-99-0385-6},
}

@book {Sohr:01,
    AUTHOR = {Sohr, Hermann},
     TITLE = {The {N}avier-{S}tokes equations},
    SERIES = {Birkh\"{a}user Advanced Texts: Basler Lehrb\"{u}cher.
              [Birkh\"{a}user Advanced Texts: Basel Textbooks]},
      NOTE = {An elementary functional analytic approach},
 PUBLISHER = {Birkh\"{a}user Verlag, Basel},
      YEAR = {2001},
     PAGES = {x+367},
      ISBN = {3-7643-6545-5},
   MRCLASS = {35Q30 (35Q35 76D03 76D05)},
  MRNUMBER = {1928881},
MRREVIEWER = {Joel\ David\ Avrin},
       DOI = {10.1007/978-3-0348-8255-2},
       URL = {https://doi.org/10.1007/978-3-0348-8255-2},
}

@article {FK:64,
    AUTHOR = {Fujita, Hiroshi and Kato, Tosio},
     TITLE = {On the {N}avier-{S}tokes initial value problem. {I}},
   JOURNAL = {Arch. Rational Mech. Anal.},
  FJOURNAL = {Archive for Rational Mechanics and Analysis},
    VOLUME = {16},
      YEAR = {1964},
     PAGES = {269--315},
      ISSN = {0003-9527},
   MRCLASS = {35.79},
  MRNUMBER = {166499},
MRREVIEWER = {P.\ C.\ Fife},
       DOI = {10.1007/BF00276188},
       URL = {https://doi.org/10.1007/BF00276188},
}

@book {Lad:69,
    AUTHOR = {Ladyzhenskaya, O. A.},
     TITLE = {The mathematical theory of viscous incompressible flow},
    SERIES = {Mathematics and its Applications, Vol. 2},
      NOTE = {Second English edition, revised and enlarged,
              Translated from the Russian by Richard A. Silverman and John
              Chu},
 PUBLISHER = {Gordon and Breach Science Publishers, New York-London-Paris},
      YEAR = {1969},
     PAGES = {xviii+224},
   MRCLASS = {35.00 (76.00)},
  MRNUMBER = {254401},
}

@article {Hopf:51,
    AUTHOR = {Hopf, Eberhard},
     TITLE = {\"{U}ber die {A}nfangswertaufgabe f\"{u}r die hydrodynamischen
              {G}rundgleichungen},
   JOURNAL = {Math. Nachr.},
  FJOURNAL = {Mathematische Nachrichten},
    VOLUME = {4},
      YEAR = {1951},
     PAGES = {213--231},
      ISSN = {0025-584X,1522-2616},
   MRCLASS = {76.1X},
  MRNUMBER = {50423},
MRREVIEWER = {J.\ Kamp\'{e} de F\'{e}riet},
       DOI = {10.1002/mana.3210040121},
       URL = {https://doi.org/10.1002/mana.3210040121},
}

@article {Ler:34,
    AUTHOR = {Leray, Jean},
     TITLE = {Sur le mouvement d'un liquide visqueux emplissant l'espace},
   JOURNAL = {Acta Math.},
  FJOURNAL = {Acta Mathematica},
    VOLUME = {63},
      YEAR = {1934},
    NUMBER = {1},
     PAGES = {193--248},
      ISSN = {0001-5962,1871-2509},
   MRCLASS = {DML},
  MRNUMBER = {1555394},
       DOI = {10.1007/BF02547354},
       URL = {https://doi.org/10.1007/BF02547354},
}

@article {HM:06,
    AUTHOR = {Hairer, Martin and Mattingly, Jonathan C.},
     TITLE = {Ergodicity of the 2{D} {N}avier-{S}tokes equations with
              degenerate stochastic forcing},
   JOURNAL = {Ann. of Math. (2)},
  FJOURNAL = {Annals of Mathematics. Second Series},
    VOLUME = {164},
      YEAR = {2006},
    NUMBER = {3},
     PAGES = {993--1032},
      ISSN = {0003-486X,1939-8980},
   MRCLASS = {37L55 (35Q30 35R60 60H15 76D05 76M35)},
  MRNUMBER = {2259251},
MRREVIEWER = {Hakima\ Bessaih},
       DOI = {10.4007/annals.2006.164.993},
       URL = {https://doi.org/10.4007/annals.2006.164.993},
}

@article {Goo:23b,
    AUTHOR = {Goodair, Daniel},
     TITLE = {Navier-{S}tokes equations with {N}avier boundary conditions
              and stochastic {L}ie transport: well-posedness and inviscid
              limit},
   JOURNAL = {J. Differential Equations},
  FJOURNAL = {Journal of Differential Equations},
    VOLUME = {429},
      YEAR = {2025},
     PAGES = {1--49},
      ISSN = {0022-0396,1090-2732},
   MRCLASS = {35Q30 (35R60)},
  MRNUMBER = {4865749},
MRREVIEWER = {Hui\ Liu},
       DOI = {10.1016/j.jde.2025.02.036},
       URL = {https://doi.org/10.1016/j.jde.2025.02.036},
}

@InProceedings{Goo:23a,
author="Goodair, Daniel",
editor="Chapron, Bertrand
and Crisan, Dan
and Holm, Darryl
and M{\'e}min, Etienne
and Radomska, Anna",
title="Existence and Uniqueness of Maximal Solutions to a 3D Navier-Stokes Equation with Stochastic Lie Transport",
booktitle="Stochastic Transport in Upper Ocean Dynamics",
year="2023",
publisher="Springer International Publishing",
address="Cham",
pages="87--107",
isbn="978-3-031-18988-3"
}

@article {Hol:15,
    AUTHOR = {Holm, Darryl D.},
     TITLE = {Variational principles for stochastic fluid dynamics},
   JOURNAL = {Proc. A},
  FJOURNAL = {Proceedings A},
    VOLUME = {471},
      YEAR = {2015},
    NUMBER = {2176},
      ISSN = {1364-5021,1471-2946},
   MRCLASS = {35Q35 (35R60 60H15 76B99 76D06 76N99)},
  MRNUMBER = {3325187},
MRREVIEWER = {Marko\ Nedeljkov},
       DOI = {10.1098/rspa.2014.0963},
       URL = {https://doi.org/10.1098/rspa.2014.0963},
       NOTE = {Paper No. 20140963},
}

@article {BF:20,
    AUTHOR = {Bianchi, Luigi Amedeo and Flandoli, Franco},
     TITLE = {Stochastic {N}avier-{S}tokes equations and related models},
   JOURNAL = {Milan J. Math.},
  FJOURNAL = {Milan Journal of Mathematics},
    VOLUME = {88},
      YEAR = {2020},
    NUMBER = {1},
     PAGES = {225--246},
      ISSN = {1424-9286,1424-9294},
   MRCLASS = {35R60 (35Q35 60H50 76D06)},
  MRNUMBER = {4103436},
MRREVIEWER = {Paul\ Andr\'{e}\ Razafimandimby},
       DOI = {10.1007/s00032-020-00312-9},
       URL = {https://doi.org/10.1007/s00032-020-00312-9},
}

@article {FR:08,
    AUTHOR = {Flandoli, Franco and Romito, Marco},
     TITLE = {Markov selections for the 3{D} stochastic {N}avier-{S}tokes
              equations},
   JOURNAL = {Probab. Theory Related Fields},
  FJOURNAL = {Probability Theory and Related Fields},
    VOLUME = {140},
      YEAR = {2008},
    NUMBER = {3-4},
     PAGES = {407--458},
      ISSN = {0178-8051,1432-2064},
   MRCLASS = {76D05 (35Q30 35R60 60H15 60H30 76M35)},
  MRNUMBER = {2365480},
MRREVIEWER = {Hakima\ Bessaih},
       DOI = {10.1007/s00440-007-0069-y},
       URL = {https://doi.org/10.1007/s00440-007-0069-y},
}

@article {DPD:03,
    AUTHOR = {Da Prato, Giuseppe and Debussche, Arnaud},
     TITLE = {Ergodicity for the 3{D} stochastic {N}avier-{S}tokes
              equations},
   JOURNAL = {J. Math. Pures Appl. (9)},
  FJOURNAL = {Journal de Math\'{e}matiques Pures et Appliqu\'{e}es.
              Neuvi\`eme S\'{e}rie},
    VOLUME = {82},
      YEAR = {2003},
    NUMBER = {8},
     PAGES = {877--947},
      ISSN = {0021-7824},
   MRCLASS = {60H15 (35Q30 35R60 76D05 76M35)},
  MRNUMBER = {2005200},
MRREVIEWER = {Marek\ Capi\'{n}ski},
       DOI = {10.1016/S0021-7824(03)00025-4},
       URL = {https://doi.org/10.1016/S0021-7824(03)00025-4},
}

@article {FG:95,
    AUTHOR = {Flandoli, Franco and G{\k{a}}tarek, Dariusz},
     TITLE = {Martingale and stationary solutions for stochastic
              {N}avier-{S}tokes equations},
   JOURNAL = {Probab. Theory Related Fields},
  FJOURNAL = {Probability Theory and Related Fields},
    VOLUME = {102},
      YEAR = {1995},
    NUMBER = {3},
     PAGES = {367--391},
      ISSN = {0178-8051,1432-2064},
   MRCLASS = {60H15 (35Q30 35R60 76D05 76M35)},
  MRNUMBER = {1339739},
MRREVIEWER = {Marek\ Capi\'{n}ski},
       DOI = {10.1007/BF01192467},
       URL = {https://doi.org/10.1007/BF01192467},
}

@article {Ben:95,
    AUTHOR = {Bensoussan, A.},
     TITLE = {Stochastic {N}avier-{S}tokes equations},
   JOURNAL = {Acta Appl. Math.},
  FJOURNAL = {Acta Applicandae Mathematicae},
    VOLUME = {38},
      YEAR = {1995},
    NUMBER = {3},
     PAGES = {267--304},
      ISSN = {0167-8019,1572-9036},
   MRCLASS = {35R60 (35Q30 60H15 76D05 76M35)},
  MRNUMBER = {1326637},
MRREVIEWER = {Ana\ Bela\ Cruzeiro},
       DOI = {10.1007/BF00996149},
       URL = {https://doi.org/10.1007/BF00996149},
}

@article {BT:73,
    AUTHOR = {Bensoussan, A. and Temam, R.},
     TITLE = {\'{E}quations stochastiques du type {N}avier-{S}tokes},
   JOURNAL = {J. Functional Analysis},
  FJOURNAL = {Journal of Functional Analysis},
    VOLUME = {13},
      YEAR = {1973},
     PAGES = {195--222},
      ISSN = {0022-1236},
   MRCLASS = {60H15 (35Q10)},
  MRNUMBER = {348841},
MRREVIEWER = {J.\ A.\ Goldstein},
       DOI = {10.1016/0022-1236(73)90045-1},
       URL = {https://doi.org/10.1016/0022-1236(73)90045-1},
}

@article {KC:22,
    AUTHOR = {Kuan, Jeffrey and \v{C}ani\'{c}, Sun\v{c}ica},
     TITLE = {A stochastically perturbed fluid-structure interaction problem
              modeled by a stochastic viscous wave equation},
   JOURNAL = {J. Differential Equations},
  FJOURNAL = {Journal of Differential Equations},
    VOLUME = {310},
      YEAR = {2022},
     PAGES = {45--98},
      ISSN = {0022-0396},
   MRCLASS = {76M35 (74F10)},
  MRNUMBER = {4352602},
MRREVIEWER = {Benedetta Ferrario},
       DOI = {10.1016/j.jde.2021.11.028},
       URL = {https://doi-org.libproxy.berkeley.edu/10.1016/j.jde.2021.11.028},
}

@article {KC:24,
    AUTHOR = {Kuan, Jeffrey and \v{C}ani\'{c}, Sun\v{c}ica},
     TITLE = {Well-posedness of solutions to stochastic fluid-structure
              interaction},
   JOURNAL = {J. Math. Fluid Mech.},
  FJOURNAL = {Journal of Mathematical Fluid Mechanics},
    VOLUME = {26},
      YEAR = {2024},
    NUMBER = {1},
      ISSN = {1422-6928},
   MRCLASS = {76M35 (74F10 76D03)},
  MRNUMBER = {4668054},
MRREVIEWER = {Chengfeng Sun},
       DOI = {10.1007/s00021-023-00839-y},
       URL = {https://doi-org.libproxy.berkeley.edu/10.1007/s00021-023-00839-y},
      NOTE = {Paper No. 4},
}

@article {Taw:24,
    AUTHOR = {Tawri, Krutika},
     TITLE = {A stochastic fluid-structure interaction problem with the
              {N}avier-slip boundary condition},
   JOURNAL = {SIAM J. Math. Anal.},
  FJOURNAL = {SIAM Journal on Mathematical Analysis},
    VOLUME = {56},
      YEAR = {2024},
    NUMBER = {6},
     PAGES = {7508--7544},
      ISSN = {0036-1410},
   MRCLASS = {60H15 (35A01 35D30 35Q30 35R60 74F10)},
  MRNUMBER = {4823178},
       DOI = {10.1137/24M164029X},
       URL = {https://doi-org.libproxy.berkeley.edu/10.1137/24M164029X},
}

@article {TC:25,
    AUTHOR = {Tawri, Krutika and \v{C}ani\'c, Sun\v{c}ica},
     TITLE = {Existence of martingale solutions to a nonlinearly coupled
              stochastic fluid-structure interaction problem},
   JOURNAL = {Comm. Partial Differential Equations},
  FJOURNAL = {Communications in Partial Differential Equations},
    VOLUME = {50},
      YEAR = {2025},
    NUMBER = {3},
     PAGES = {353--406},
      ISSN = {0360-5302,1532-4133},
   MRCLASS = {60H15 (35A01 35Q30 35R60 76)},
  MRNUMBER = {4870991},
       DOI = {10.1080/03605302.2025.2450375},
       URL = {https://doi.org/10.1080/03605302.2025.2450375},
}

@article {Taw:25,
    AUTHOR = {Tawri, Krutika},
     TITLE = {A 2{D} stochastic nonlinearly coupled fluid-structure
              interaction problem in compliant arteries with unrestricted
              structural displacement},
   JOURNAL = {J. Differential Equations},
  FJOURNAL = {Journal of Differential Equations},
    VOLUME = {431},
      YEAR = {2025},
      ISSN = {0022-0396,1090-2732},
   MRCLASS = {35R60 (35D30 35Q30 35R37 60H15 74F15)},
  MRNUMBER = {4882811},
MRREVIEWER = {Elena\ Alexanrovna\ Strelnikova},
       DOI = {10.1016/j.jde.2025.113243},
       URL = {https://doi.org/10.1016/j.jde.2025.113243},
      NOTE = {Paper No. 113243},
}

@article {BMM:24,
    AUTHOR = {Breit, Dominic and Mensah, Prince Romeo and Moyo, Thamsanqa
              Castern},
     TITLE = {Martingale solutions in stochastic fluid-structure
              interaction},
   JOURNAL = {J. Nonlinear Sci.},
  FJOURNAL = {Journal of Nonlinear Science},
    VOLUME = {34},
      YEAR = {2024},
    NUMBER = {2},
      ISSN = {0938-8974},
   MRCLASS = {76M35 (60H15 74F10 76D05 76D09)},
  MRNUMBER = {4702649},
       DOI = {10.1007/s00332-023-10012-4},
       URL = {https://doi-org.libproxy.berkeley.edu/10.1007/s00332-023-10012-4},
      NOTE = {Paper No. 34},
}

@article {BMR:26,
    AUTHOR = {Brandt, Felix and M\^{i}ndril\u{a}, Claudiu and Roy, Arnab},
     TITLE = {Strong time-periodic solutions for a multilayered
              fluid-structure interaction system with nonlinear coupling},
   JOURNAL = {Nonlinearity},
  FJOURNAL = {Nonlinearity},
    VOLUME = {39},
      YEAR = {2026},
    NUMBER = {3},
      ISSN = {0951-7715,1361-6544},
   MRCLASS = {35Q30 (35B10 35R35 74F10)},
  MRNUMBER = {5048990},
       DOI = {10.1088/1361-6544/ae5203},
       URL = {https://doi.org/10.1088/1361-6544/ae5203},
    NOTE = {Paper No. 035018},
}

@article {DHP:01,
    AUTHOR = {Desch, Wolfgang and Hieber, Matthias and Pr\"{u}ss, Jan},
     TITLE = {{$L^p$}-theory of the {S}tokes equation in a half space},
   JOURNAL = {J. Evol. Equ.},
  FJOURNAL = {Journal of Evolution Equations},
    VOLUME = {1},
      YEAR = {2001},
    NUMBER = {1},
     PAGES = {115--142},
      ISSN = {1424-3199,1424-3202},
   MRCLASS = {35Q35 (35K90 76D03 76D07)},
  MRNUMBER = {1838323},
MRREVIEWER = {Maria\ Specovius-Neugebauer},
       DOI = {10.1007/PL00001362},
       URL = {https://doi.org/10.1007/PL00001362},
}

@article {BR:26,
    AUTHOR = {Brandt, Felix and Roy, Arnab},
     TITLE = {Strong well-posedness for a stochastic fluid-rigid body system
              via stochastic maximal regularity},
   JOURNAL = {J. Lond. Math. Soc. (2)},
  FJOURNAL = {Journal of the London Mathematical Society. Second Series},
    VOLUME = {113},
      YEAR = {2026},
    NUMBER = {5},
      ISSN = {0024-6107,1469-7750},
   MRCLASS = {Prelim},
  MRNUMBER = {5071027},
       DOI = {10.1112/jlms.70561},
       URL = {https://doi.org/10.1112/jlms.70561},
       NOTE = {Paper No. e70561},
}

@article {BBH:26,
    AUTHOR = {Binz, Tim and Brandt, Felix and Hieber, Matthias},
     TITLE = {Interaction of geophysical flows with sea ice dynamics},
   JOURNAL = {NoDEA Nonlinear Differential Equations Appl.},
  FJOURNAL = {NoDEA. Nonlinear Differential Equations and Applications},
    VOLUME = {33},
      YEAR = {2026},
    NUMBER = {2},
      ISSN = {1021-9722,1420-9004},
   MRCLASS = {35Q35 (35K59 35Q86 86A05 86A10)},
  MRNUMBER = {5004673},
       DOI = {10.1007/s00030-025-01169-w},
       URL = {https://doi.org/10.1007/s00030-025-01169-w},
       NOTE = {Paper No. 35},
}

@article {Nau:15,
    AUTHOR = {Nau, Tobias},
     TITLE = {The {$L^p$}-{H}elmholtz projection in finite cylinders},
   JOURNAL = {Czechoslovak Math. J.},
  FJOURNAL = {Czechoslovak Mathematical Journal},
    VOLUME = {65(140)},
      YEAR = {2015},
    NUMBER = {1},
     PAGES = {119--134},
      ISSN = {0011-4642,1572-9141},
   MRCLASS = {35Q30 (35J20 35J25 42B15 46E40)},
  MRNUMBER = {3336028},
       DOI = {10.1007/s10587-015-0163-8},
       URL = {https://doi.org/10.1007/s10587-015-0163-8},
}

@article {PW:17,
    AUTHOR = {Pr\"{u}ss, Jan and Wilke, Mathias},
     TITLE = {Addendum to the paper ``{O}n quasilinear parabolic evolution
              equations in weighted {$L_p$}-spaces {II}'' [{MR}3250797]},
   JOURNAL = {J. Evol. Equ.},
  FJOURNAL = {Journal of Evolution Equations},
    VOLUME = {17},
      YEAR = {2017},
    NUMBER = {4},
     PAGES = {1381--1388},
      ISSN = {1424-3199,1424-3202},
   MRCLASS = {35K90 (35B30 35B65 35K55 35K57 35R35)},
  MRNUMBER = {3722064},
       DOI = {10.1007/s00028-017-0382-6},
       URL = {https://doi.org/10.1007/s00028-017-0382-6},
}

@article {PSW:18,
    AUTHOR = {Pr\"{u}ss, Jan and Simonett, Gieri and Wilke, Mathias},
     TITLE = {Critical spaces for quasilinear parabolic evolution equations
              and applications},
   JOURNAL = {J. Differential Equations},
  FJOURNAL = {Journal of Differential Equations},
    VOLUME = {264},
      YEAR = {2018},
    NUMBER = {3},
     PAGES = {2028--2074},
      ISSN = {0022-0396,1090-2732},
   MRCLASS = {35K59 (35B40 35K58 35K90 35Q30 35Q35 76D05)},
  MRNUMBER = {3721420},
MRREVIEWER = {Stefanie\ Sonner},
       DOI = {10.1016/j.jde.2017.10.010},
       URL = {https://doi.org/10.1016/j.jde.2017.10.010},
}

@article {vNVW:12b,
    AUTHOR = {van Neerven, Jan and Veraar, Mark and Weis, Lutz},
     TITLE = {Maximal {$L^p$}-regularity for stochastic evolution equations},
   JOURNAL = {SIAM J. Math. Anal.},
  FJOURNAL = {SIAM Journal on Mathematical Analysis},
    VOLUME = {44},
      YEAR = {2012},
    NUMBER = {3},
     PAGES = {1372--1414},
      ISSN = {0036-1410,1095-7154},
   MRCLASS = {60H15 (35R60 46B09 47D06)},
  MRNUMBER = {2982717},
MRREVIEWER = {Elisa\ Al\`os},
       DOI = {10.1137/110832525},
       URL = {https://doi.org/10.1137/110832525},
}

@book {HvNVW:17,
    AUTHOR = {Hyt\"{o}nen, Tuomas and van Neerven, Jan and Veraar, Mark and
              Weis, Lutz},
     TITLE = {Analysis in {B}anach spaces. {V}ol. {II}. {P}robabilistic
              methods and operator theory},
    SERIES = {Ergebnisse der Mathematik und ihrer Grenzgebiete. 3. Folge. A
              Series of Modern Surveys in Mathematics [Results in
              Mathematics and Related Areas. 3rd Series. A Series of Modern
              Surveys in Mathematics]},
    VOLUME = {67},
 PUBLISHER = {Springer, Cham},
      YEAR = {2017},
     PAGES = {xxi+616},
      ISBN = {978-3-319-69807-6; 978-3-319-69808-3},
   MRCLASS = {46-02 (42B35 46E30 47-02 60B11 60H30)},
  MRNUMBER = {3752640},
MRREVIEWER = {Adam\ Os\polhk ekowski},
       DOI = {10.1007/978-3-319-69808-3},
       URL = {https://doi.org/10.1007/978-3-319-69808-3},
}

@article {vNVW:12a,
    AUTHOR = {van Neerven, Jan and Veraar, Mark and Weis, Lutz},
     TITLE = {Stochastic maximal {$L^p$}-regularity},
   JOURNAL = {Ann. Probab.},
  FJOURNAL = {The Annals of Probability},
    VOLUME = {40},
      YEAR = {2012},
    NUMBER = {2},
     PAGES = {788--812},
      ISSN = {0091-1798,2168-894X},
   MRCLASS = {60H15 (35B65 42B25 47A60 47D06)},
  MRNUMBER = {2952092},
MRREVIEWER = {Feng-Yu\ Wang},
       DOI = {10.1214/10-AOP626},
       URL = {https://doi.org/10.1214/10-AOP626},
}

@phdthesis{Bra:24, title={Geophysical Flow Models: An Approach by Quasilinear Evolution Equations}, url={https://tuprints.ulb.tu-darmstadt.de/handle/tuda/11844}, DOI={https://doi.org/10.26083/tuprints-00027378}, abstractNote={This thesis develops rigorous analysis of geophysical flow models in the context of Hibler’s viscous-plastic sea ice model by means of quasilinear evolution equations. In a first step, well-posedness results for a fully parabolic variant are shown. Another focal point is the interaction problem of sea ice with a rigid body. Moreover, a coupled atmosphere-sea ice-ocean model is analyzed from a rigorous mathematical point of view. The first part of the thesis is completed by the local strong well-posedness of a parabolic-hyperbolic variant of Hibler’s model. In the second part of the thesis, frameworks to quasilinear time periodic evolution equations are presented. One approach relies on maximal periodic regularity and the Arendt-Bu theorem, whereas the other one is based on the classical Da Prato-Grisvard theorem. Finally, applications of these frameworks to Hibler’s sea ice model, Keller-Segel systems as well as a Nernst-Planck-Poisson type system are provided.}, author={Brandt, Felix Christopher Helmut Ludwig}, school={Technische Universit\"at Darmstadt}, year={2024}}

@article {NS:12,
    AUTHOR = {Nau, Tobias and Saal, J\"{u}rgen},
     TITLE = {{$H^\infty$}-calculus for cylindrical boundary value problems},
   JOURNAL = {Adv. Differential Equations},
  FJOURNAL = {Advances in Differential Equations},
    VOLUME = {17},
      YEAR = {2012},
    NUMBER = {7-8},
     PAGES = {767--800},
      ISSN = {1079-9389},
   MRCLASS = {35J40 (35K35)},
  MRNUMBER = {2963804},
MRREVIEWER = {Yakov\ Yakubov},
}

@book {Nau:12,
    AUTHOR = {Nau, Tobias},
     TITLE = {{$L^p$}-theory of cylindrical boundary value
              problems},
      NOTE = {An operator-valued Fourier multiplier and functional calculus
              approach,
              Dissertation, University of Konstanz, Konstanz, 2012},
 PUBLISHER = {Springer Spektrum, Wiesbaden},
      YEAR = {2012},
     PAGES = {xvi+188},
      ISBN = {978-3-8348-2504-9; 978-3-8348-2505-6},
   MRCLASS = {35-02 (35K90 42B20 47A56)},
  MRNUMBER = {2987207},
       DOI = {10.1007/978-3-8348-2505-6},
       URL = {https://doi.org/10.1007/978-3-8348-2505-6},
}

@article {MRR:20,
    AUTHOR = {Maity, Debayan and Raymond, Jean-Pierre and Roy, Arnab},
     TITLE = {Maximal-in-time existence and uniqueness of strong solution of
              a 3{D} fluid-structure interaction model},
   JOURNAL = {SIAM J. Math. Anal.},
  FJOURNAL = {SIAM Journal on Mathematical Analysis},
    VOLUME = {52},
      YEAR = {2020},
    NUMBER = {6},
     PAGES = {6338--6378},
      ISSN = {0036-1410},
   MRCLASS = {35Q35 (35Q30 74F10 76D05)},
  MRNUMBER = {4189724},
       DOI = {10.1137/18M1178451},
       URL = {https://doi.org/10.1137/18M1178451},
}

@article {BdV:04,
    AUTHOR = {Beir\~{a}o da Veiga, H.},
     TITLE = {On the existence of strong solutions to a coupled
              fluid-structure evolution problem},
   JOURNAL = {J. Math. Fluid Mech.},
  FJOURNAL = {Journal of Mathematical Fluid Mechanics},
    VOLUME = {6},
      YEAR = {2004},
    NUMBER = {1},
     PAGES = {21--52},
      ISSN = {1422-6928},
   MRCLASS = {35Q30 (74F10 76D03 76D05 76F10 76Z05)},
  MRNUMBER = {2027753},
MRREVIEWER = {Shu Ming Sun},
       DOI = {10.1007/s00021-003-0082-5},
       URL = {https://doi.org/10.1007/s00021-003-0082-5},
}

@article {Gra:08,
    AUTHOR = {Grandmont, C\'{e}line},
     TITLE = {Existence of weak solutions for the unsteady interaction of a
              viscous fluid with an elastic plate},
   JOURNAL = {SIAM J. Math. Anal.},
  FJOURNAL = {SIAM Journal on Mathematical Analysis},
    VOLUME = {40},
      YEAR = {2008},
    NUMBER = {2},
     PAGES = {716--737},
      ISSN = {0036-1410},
   MRCLASS = {35Q35 (35D05 74F10 76D03)},
  MRNUMBER = {2438783},
MRREVIEWER = {Dmitry A. Vorotnikov},
       DOI = {10.1137/070699196},
       URL = {https://doi.org/10.1137/070699196},
}

@article {CDEG:05,
    AUTHOR = {Chambolle, Antonin and Desjardins, Beno\^{i}t and Esteban, Maria
              J. and Grandmont, C\'{e}line},
     TITLE = {Existence of weak solutions for the unsteady interaction of a
              viscous fluid with an elastic plate},
   JOURNAL = {J. Math. Fluid Mech.},
  FJOURNAL = {Journal of Mathematical Fluid Mechanics},
    VOLUME = {7},
      YEAR = {2005},
    NUMBER = {3},
     PAGES = {368--404},
      ISSN = {1422-6928},
   MRCLASS = {35Q35 (35D05 35Q30 74F10 76D03)},
  MRNUMBER = {2166981},
MRREVIEWER = {Changxing Miao},
       DOI = {10.1007/s00021-004-0121-y},
       URL = {https://doi.org/10.1007/s00021-004-0121-y},
}

@Article{LR:14,
 Author = {Lengeler, Daniel and  R\r{u}{\v{z}}i{\v{c}}ka, Michael},
 Title = {Weak solutions for an incompressible {Newtonian} fluid interacting with a {Koiter} type shell},
 FJournal = {Archive for Rational Mechanics and Analysis},
 Journal = {Arch. Ration. Mech. Anal.},
 ISSN = {0003-9527},
 Volume = {211},
 Number = {1},
 Pages = {205--255},
 Year = {2014},
 Language = {English},
 DOI = {10.1007/s00205-013-0686-9},
 zbMATH = {6260961},
 Zbl = {1293.35211}
}

@Article{MC13,
 Author = {Muha, Boris and {\v{C}}ani{\'c}, Suncica},
 Title = {Existence of a weak solution to a nonlinear fluid-structure interaction problem modeling the flow of an incompressible, viscous fluid in a cylinder with deformable walls},
 FJournal = {Archive for Rational Mechanics and Analysis},
 Journal = {Arch. Ration. Mech. Anal.},
 ISSN = {0003-9527},
 Volume = {207},
 Number = {3},
 Pages = {919--968},
 Year = {2013},
 Language = {English},
 DOI = {10.1007/s00205-012-0585-5},
 zbMATH = {6148986},
 Zbl = {1260.35148}
}

@book {PS:16,
    AUTHOR = {Pr\"{u}ss, Jan and Simonett, Gieri},
     TITLE = {Moving Interfaces and Quasilinear Parabolic Evolution
              Equations},
    SERIES = {Monographs in Mathematics},
    VOLUME = {105},
 PUBLISHER = {Birkh\"{a}user/Springer, [Cham]},
      YEAR = {2016},
     PAGES = {xix+609},
      ISBN = {978-3-319-27697-7; 978-3-319-27698-4},
   MRCLASS = {35-02 (35B30 35K93 35R35 47F05 58Jxx 76A15 80A22)},
  MRNUMBER = {3524106},
MRREVIEWER = {Glen E. Wheeler},
       DOI = {10.1007/978-3-319-27698-4},
       URL = {https://doi.org/10.1007/978-3-319-27698-4},
}

@book {Ama:95,
    AUTHOR = {Amann, Herbert},
     TITLE = {Linear and Quasilinear Parabolic Problems. {V}ol. {I}. Abstract Linear Theory},
    SERIES = {Monographs in Mathematics},
    VOLUME = {89},
 PUBLISHER = {Birkh\"{a}user Boston, Inc., Boston, MA},
      YEAR = {1995},
     PAGES = {xxxvi+335},
      ISBN = {3-7643-5114-4},
   MRCLASS = {34Gxx (35Kxx 46M35 46N20 47D06 47N20)},
  MRNUMBER = {1345385},
MRREVIEWER = {Paolo Acquistapace},
       DOI = {10.1007/978-3-0348-9221-6},
       URL = {https://doi.org/10.1007/978-3-0348-9221-6},
}

@book {Tri:78,
    AUTHOR = {Triebel, Hans},
     TITLE = {Interpolation Theory, Function Spaces, Differential Operators},
    SERIES = {North-Holland Mathematical Library},
    VOLUME = {18},
 PUBLISHER = {North-Holland Publishing Co., Amsterdam-New York},
      YEAR = {1978},
     PAGES = {528},
      ISBN = {0-7204-0710-9},
   MRCLASS = {46E35 (35Jxx 46M35)},
  MRNUMBER = {503903},
MRREVIEWER = {Robert D. Brown},
}

@article {DHP:03,
    AUTHOR = {Denk, Robert and Hieber, Matthias and Pr\"{u}ss, Jan},
     TITLE = {{$\mathcal{R}$}-boundedness, {F}ourier multipliers and problems of
              elliptic and parabolic type},
   JOURNAL = {Mem. Amer. Math. Soc.},
  FJOURNAL = {Memoirs of the American Mathematical Society},
    VOLUME = {166},
      YEAR = {2003},
    NUMBER = {788},
     PAGES = {viii+114},
      ISSN = {0065-9266},
   MRCLASS = {35-02 (35J30 35K25 42B15)},
  MRNUMBER = {2006641},
MRREVIEWER = {Alain Brillard},
       DOI = {10.1090/memo/0788},
       URL = {https://doi.org/10.1090/memo/0788},
}

@article {GH:16,
    AUTHOR = {Grandmont, C\'{e}line and Hillairet, Matthieu},
     TITLE = {Existence of global strong solutions to a beam-fluid
              interaction system},
   JOURNAL = {Arch. Ration. Mech. Anal.},
  FJOURNAL = {Archive for Rational Mechanics and Analysis},
    VOLUME = {220},
      YEAR = {2016},
    NUMBER = {3},
     PAGES = {1283--1333},
      ISSN = {0003-9527},
   MRCLASS = {74F10 (35A01 35D35 35Q30 35Q74 74G25)},
  MRNUMBER = {3466847},
MRREVIEWER = {Merab Svanadze},
       DOI = {10.1007/s00205-015-0954-y},
       URL = {https://doi.org/10.1007/s00205-015-0954-y},
}

@article {GHL:19,
    AUTHOR = {Grandmont, C\'{e}line and Hillairet, Matthieu and Lequeurre,
              Julien},
     TITLE = {Existence of local strong solutions to fluid-beam and
              fluid-rod interaction systems},
   JOURNAL = {Ann. Inst. H. Poincar\'{e} C Anal. Non Lin\'{e}aire},
  FJOURNAL = {Annales de l'Institut Henri Poincar\'{e} C. Analyse Non Lin\'{e}aire},
    VOLUME = {36},
      YEAR = {2019},
    NUMBER = {4},
     PAGES = {1105--1149},
      ISSN = {0294-1449},
   MRCLASS = {76D05 (35D35 35Q35 74F10)},
  MRNUMBER = {3955112},
       DOI = {10.1016/j.anihpc.2018.10.006},
       URL = {https://doi.org/10.1016/j.anihpc.2018.10.006},
}

@article {Ray:07,
    AUTHOR = {Raymond, J.-P.},
     TITLE = {Stokes and {N}avier-{S}tokes equations with nonhomogeneous
              boundary conditions},
   JOURNAL = {Ann. Inst. H. Poincar\'{e} C Anal. Non Lin\'{e}aire},
  FJOURNAL = {Annales de l'Institut Henri Poincar\'{e} C. Analyse Non Lin\'{e}aire},
    VOLUME = {24},
      YEAR = {2007},
    NUMBER = {6},
     PAGES = {921--951},
      ISSN = {0294-1449},
   MRCLASS = {35Q30 (35Q35 76D05 76D07)},
  MRNUMBER = {2371113},
MRREVIEWER = {Vladimir V. Shelukhin},
       DOI = {10.1016/j.anihpc.2006.06.008},
       URL = {https://doi.org/10.1016/j.anihpc.2006.06.008},
}

@article {AH:23,
    AUTHOR = {Agresti, Antonio and Hussein, Amru},
     TITLE = {Maximal {$L^p$}-regularity and {$H^\infty$}-calculus for block
              operator matrices and applications},
   JOURNAL = {J. Funct. Anal.},
  FJOURNAL = {Journal of Functional Analysis},
    VOLUME = {285},
      YEAR = {2023},
    NUMBER = {11},
      ISSN = {0022-1236},
   MRCLASS = {47A55 (35M13 35Q35 47A60 47D06)},
  MRNUMBER = {4642564},
MRREVIEWER = {Julio Delgado},
       DOI = {10.1016/j.jfa.2023.110146},
       URL = {https://doi.org/10.1016/j.jfa.2023.110146},
       NOTE = {Paper No. 110146},
}

@article {MT:21,
    AUTHOR = {Maity, Debayan and Takahashi, Tak\'{e}o},
     TITLE = {{$L^p$} theory for the interaction between the incompressible
              {N}avier-{S}tokes system and a damped plate},
   JOURNAL = {J. Math. Fluid Mech.},
  FJOURNAL = {Journal of Mathematical Fluid Mechanics},
    VOLUME = {23},
      YEAR = {2021},
    NUMBER = {4},
      ISSN = {1422-6928},
   MRCLASS = {35Q30 (74F10 76D03 76D05)},
  MRNUMBER = {4323336},
       DOI = {10.1007/s00021-021-00628-5},
       URL = {https://doi.org/10.1007/s00021-021-00628-5},
      NOTE = {Paper No. 103},
}

@incollection {SS:92,
    AUTHOR = {Simader, Christian G. and Sohr, Hermann},
     TITLE = {A new approach to the {H}elmholtz decomposition and the
              {N}eumann problem in {$L^q$}-spaces for bounded and exterior
              domains},
 BOOKTITLE = {Mathematical problems relating to the {N}avier-{S}tokes
              equation},
    SERIES = {Ser. Adv. Math. Appl. Sci.},
    VOLUME = {11},
     PAGES = {1--35},
 PUBLISHER = {World Sci. Publ., River Edge, NJ},
      YEAR = {1992},
   MRCLASS = {35J05 (35Q30)},
  MRNUMBER = {1190728},
MRREVIEWER = {Volker Vogelsang},
       DOI = {10.1142/9789814503594\_0001},
       URL = {https://doi.org/10.1142/9789814503594_0001},
}

@book {Tem:79,
    AUTHOR = {Temam, Roger},
     TITLE = {Navier-{S}tokes Equations},
    SERIES = {Studies in Mathematics and its Applications},
    VOLUME = {2},
   EDITION = {Revised},
      NOTE = {Theory and numerical analysis,
              With an appendix by F. Thomasset},
 PUBLISHER = {North-Holland Publishing Co., Amsterdam-New York},
      YEAR = {1979},
     PAGES = {x+519},
      ISBN = {0-444-85307-3},
   MRCLASS = {35Q10 (49D99 65P05 76D05)},
  MRNUMBER = {603444},
}

@article {LM:62,
    AUTHOR = {Lions, J.-L. and Magenes, E.},
     TITLE = {Problemi ai limiti non omogenei. {V}},
   JOURNAL = {Ann. Scuola Norm. Sup. Pisa Cl. Sci. (3)},
  FJOURNAL = {Annali della Scuola Normale Superiore di Pisa. Classe di
              Scienze. Serie III},
    VOLUME = {16},
      YEAR = {1962},
     PAGES = {1--44},
      ISSN = {0391-173X},
   MRCLASS = {35.45},
  MRNUMBER = {146527},
MRREVIEWER = {C. B. Morrey, Jr.},
}

@article {Wei:01,
    AUTHOR = {Weis, Lutz},
     TITLE = {Operator-valued {F}ourier multiplier theorems and maximal
              {$L_p$}-regularity},
   JOURNAL = {Math. Ann.},
  FJOURNAL = {Mathematische Annalen},
    VOLUME = {319},
      YEAR = {2001},
    NUMBER = {4},
     PAGES = {735--758},
      ISSN = {0025-5831},
   MRCLASS = {42B15 (47D06)},
  MRNUMBER = {1825406},
MRREVIEWER = {Alfonso Montes-Rodr\'{\i}guez},
       DOI = {10.1007/PL00004457},
       URL = {https://doi.org/10.1007/PL00004457},
}

@incollection {McIY:90,
    AUTHOR = {McIntosh, Alan and Yagi, Atsushi},
     TITLE = {Operators of type {$\omega$} without a bounded {$H_\infty$}
              functional calculus},
 BOOKTITLE = {Miniconference on {O}perators in {A}nalysis ({S}ydney, 1989)},
    SERIES = {Proc. Centre Math. Anal. Austral. Nat. Univ.},
    VOLUME = {24},
     PAGES = {159--172},
 PUBLISHER = {Austral. Nat. Univ., Canberra},
      YEAR = {1990},
   MRCLASS = {47A60 (47A10)},
  MRNUMBER = {1060121},
MRREVIEWER = {J\"{o}rg Eschmeier},
}

@incollection {KW:04,
    AUTHOR = {Kunstmann, Peer C. and Weis, Lutz},
     TITLE = {Maximal {$L_p$}-regularity for parabolic equations, {F}ourier
              multiplier theorems and {$H^\infty$}-functional calculus},
 BOOKTITLE = {Functional Analytic Methods for Evolution Equations},
    SERIES = {Lecture Notes in Math.},
    VOLUME = {1855},
     PAGES = {65--311},
 PUBLISHER = {Springer, Berlin},
      YEAR = {2004},
   MRCLASS = {47D06 (34G10 35D10 35J55 35K20 35K90 42B20 47A60)},
  MRNUMBER = {2108959},
MRREVIEWER = {Xuan Thinh Duong},
       DOI = {10.1007/978-3-540-44653-8\_2},
       URL = {https://doi-org.libproxy.berkeley.edu/10.1007/978-3-540-44653-8_2},
}

@book {Kat:95,
    AUTHOR = {Kato, Tosio},
     TITLE = {Perturbation Theory for Linear Operators},
    SERIES = {Classics in Mathematics},
      NOTE = {Reprint of the 1980 edition},
 PUBLISHER = {Springer-Verlag, Berlin},
      YEAR = {1995},
     PAGES = {xxii+619},
      ISBN = {3-540-58661-X},
   MRCLASS = {47A55 (46-00 47-00)},
  MRNUMBER = {1335452},
}
\bibliographystyle{siam}

\end{document}